\def\@url#1{{\tt\def~{\lower3.5pt\hbox{\char'176}}\def\_{\char'137}#1}}
\let\fullref\autoref
\def\makeautorefname#1#2{\expandafter\def\csname#1autorefname\endcsname{#2}}
                   \let\c@lemma\c@theorem
\newtheorem{thm}{Theorem}[subsection]
\newtheorem{cor}{Corollary}[subsection]
\newtheorem{prop}{Proposition}[subsection]
\newtheorem{lem}{Lemma}[subsection]
\theoremstyle{definition}
\newtheorem{defn}{Definition}[subsection]
\newtheorem{exmp}{Example}[subsection]
\newtheorem{rem}{Remark}[subsection]
\newtheorem{warn}{Warning}[subsection]
\newtheorem{convention}{Convention}[subsection]
\newtheorem{notation}{Notation}[subsection]
\let\c@lem=\c@thm
\let\c@cor=\c@thm
\let\c@prop=\c@thm
\let\c@lem=\c@thm
\let\c@defn=\c@thm
\let\c@exmps=\c@thm
\let\c@rem=\c@thm
\let\c@warn=\c@thm
\let\c@claim=\c@thm
\let\c@quest=\c@thm
\let\c@notation=\c@thm
\numberwithin{equation}{subsection}
\newcommand{\A}{\mathbb{A}}
\newcommand{\Z}{\mathbb{Z}}
\newcommand{\spi}{\underline{\pi}}
\DeclareSymbolFontAlphabet{\scr}{rsfs}
\newcommand{\smsh}{\wedge}
\newcommand{\ra}{\rightarrow}
\newcommand{\xra}{\xrightarrow}
\newcommand{\SH}[1]{\mathcal{SH}({#1})}
\newcommand{\Ho}{\operatorname{Ho}}
\def\quickop#1{\expandafter\newcommand\csname #1\endcsname{\operatorname{#1}}}
\DeclareMathOperator{\Gal}{Gal}
\DeclareMathOperator{\Tot}{Tot}
\newcommand{\KQ}{\mathsf{KO}}
\newcommand{\KGL}{\mathsf{KGL}}
\newcommand{\BGL}{\mathsf{BGL}}
\newcommand{\MGL}{\mathsf{MGL}}
\newcommand{\mmS}{\mathbb S}
\newcommand{\KT}{\mathsf{KT}}
\newcommand{\HR}{\mathsf{HR}}
\newcommand{\HT}{\mathsf{HT}}
\newcommand{\HA}{\mathsf{HA}}
\newcommand{\HB}{\mathsf{HB}}
\newcommand{\EGgeom}{\mathbb{E}G}
\newcommand{\EGg}[1]{\mathbb{E}{#1}}
\DeclareMathOperator{\Spec}{Spec}
\newcommand{\cat}{\mathsf}
\newcommand{\vp}{\varphi}
\newcommand{\ve}{\varepsilon}
\newcommand{\triv}[1]{{\operatorname{Triv}_{#1}}}
\newcommand{\lot}[1]{\otimes ^{\mathbb L}_{#1}}
\newcommand{\rhom}[1]{\mathbb R\negthinspace \operatorname{Hom}_{#1}}
\newcommand{\ho}{\operatorname{Ho}}
\newcommand{\htriv}[1]{\operatorname{Triv}_{#1}^{h}}
\newcommand{\hfp}[1]{(-)^{h#1}}
\newcommand{\Hfp}[2]{#2^{h#1}}
\newcommand{\horb}[1]{(-)_{h#1}}
\newcommand{\Horb}[2]{#2_{h#1}}
\newcommand{\Gmot}{G\text{-}\spre (\Sm{S})}
\newcommand{\pGmot}{G\text{-}\spre_* (\Sm{S})}
\newcommand{\motG}{\spre (\Sm{S})\text{-}G}
\newcommand{\pmotG}{\spre_{*} (\Sm{S})\text{-}G}
\newcommand{\WE}{\mathcal{W}}
\newcommand{\Fib}{\mathcal{F}}
\newcommand{\Cof}{\mathcal{C}}
\DeclareMathOperator{\LLP}{LLP}
\DeclareMathOperator{\RLP}{RLP} 
\newcommand{\sK}{\mathsf K}
\newcommand{\sM}{\mathsf M}
\newcommand{\sN}{\mathsf N}
\newcommand{\sP}{\mathsf P}
\newcommand{\sfC}{\mathsf C}
\newcommand{\spre}{\mathsf{sPre}}
\newcommand{\trivial}{\mathsf{Triv}}
\newcommand{\spe}{\mathsf{Sp}^\Sigma}
\newcommand{\ti}{\times}
\newcommand{\bA}{\mathbb{A}}
\newcommand{\lra}{\longrightarrow}
\newcommand{\alg}{\mathsf{Alg}}
\newcommand{\calg}{\mathsf{CAlg}}
\newcommand{\CC}{\mathsf{C}}
\newcommand\adjunction[4]{\xymatrix{#1\ar @<1.18ex>[rr]^-{#3}&\perp&#2\ar @<1.18ex>[ll]^-{#4}}}
\definecolor{darkspringgreen}{rgb}{0.09, 0.45, 0.27}
\definecolor{darkterracotta}{rgb}{0.8, 0.31, 0.36}
	\definecolor{darkcoral}{rgb}{0.8, 0.36, 0.27}
	\definecolor{indiagreen}{rgb}{0.07, 0.53, 0.03}
	\definecolor{mountainmeadow}{rgb}{0.19, 0.73, 0.56}
	\definecolor{mountbattenpink}{rgb}{0.6, 0.48, 0.55}
	\definecolor{palatinatepurple}{rgb}{0.41, 0.16, 0.38}
	\definecolor{cinnamon}{rgb}{0.82, 0.41, 0.12}
	\definecolor{chocolate}{rgb}{0.82, 0.41, 0.12}
\newcommand{\sSet}{\mathsf{sSet}}
\newcommand{\psSet}{\mathsf{sSet}_*}
\newcommand{\Pre}[1]{\mathsf{Pre}(#1)} 
\newcommand{\sPre}[1]{\mathsf{sPre}(#1)} 
\newcommand{\psPre}[1]{\mathsf{sPre}_*(#1)} 
\newcommand{\Sm}[1]{\mathsf{Sm}_{/#1}} 
\newcommand{\aspreS}{\mathsf{Alg}(S)}
\newcommand{\caspreS}{\mathsf{CAlg}(S)}
\newcommand{\aGspreS}{G\text{-}\mathsf{Alg}(S)}
\newcommand{\caGspreS}{G\text{-}\mathsf{CAlg}(S)}
\newcommand{\aspreSG}{\mathsf{Alg}(S)\text{-}G}
\newcommand{\caspreSG}{\mathsf{CAlg}(S)\text{-}G}
\newcommand{\spc}{\mathsf{Spc}(S)}
\newcommand{\pspc}{{\mathsf{Spc}_*}(S)}
\newcommand{\Gspc}{G\text{-}\mathsf{Spc}(S)}
\newcommand{\Gpspc}{G\text{-}{\mathsf{Spc}_*}(S)}
\newcommand{\Gspcl}{G\text{-}\mathsf{Spc}(S)_{\mathrm {left}}}
\newcommand{\pGspcl}{{G\text{-}\mathsf{Spc}_*}(S)_{\mathrm {left}}}
\newcommand{\Gspcr}{G\text{-}\mathsf{Spc}(S)_{\mathrm {right}}}
\newcommand{\spcG}{\mathsf{Spc}(S)\text{-}G}
\newcommand{\spclG}{\mathsf{Spc}(S)\text{-}G_{\mathrm {left}}}
\newcommand{\spcrG}{\mathsf{Spc}(S)\text{-}G_{\mathrm {right}}}
\newcommand{\aspc}{\mathsf{AlgSpc}(S)}
\newcommand{\aGspcl}{G\text{-}\mathsf{AlgSpc}(S)_{\mathrm {left}}}
\newcommand{\aspcGr}{\mathsf{AlgSpc}(S)\text{-}G_{\mathrm {right}}}
\newcommand{\caspc}{\mathsf{CAlgSpc}(S)}
\newcommand{\caGspcl}{G\text{-}\mathsf{CAlgSpc}(S)_{\mathrm {left}}}
\newcommand{\caspcGr}{\mathsf{CAlgSpc}(S)\text{-}G_{\mathrm {right}}}
\newcommand{\spt}{\mathsf{Sp}_{X}(S)}
\newcommand{\Gspt}{G\text{-}\mathsf{Sp}_{X}(S)}
\newcommand{\Gsptr}{G\text{-}\mathsf{Sp}_{X}(S)_{\mathrm {right}}}
\newcommand{\Gsptl}{G\text{-}\mathsf{Sp}_{X}(S)_{\mathrm {left}}}
\newcommand{\sptrG}{\mathsf{Sp}_{X}(S)\text{-}G_{\mathrm {right}}}
\newcommand{\aspt}{\mathsf{AlgSp}_{X}(S)}
\newcommand{\aGspt}{G\text{-}\mathsf{AlgSp}_{X}(S)}
\newcommand{\aGsptl}{G\text{-}\mathsf{AlgSp}_{X}(S)_{\mathrm {left}}}
\newcommand{\asptG}{\mathsf{AlgSp}_{X}(S)\text{-}G}
\newcommand{\asptrG}{\mathsf{AlgSp}_{X}(S)\text{-}G_{\mathrm {right}}}
\newcommand{\caGspt}{G\text{-}\mathsf{CAlgSp}_{X}(S)}
\newcommand{\casptG}{\mathsf{CAlgSp}_{X}(S)\text{-}G}
\newcommand{\Pspt}{{\mathsf{Sp}_{X}(S)}_{\mathrm{pos}}}
\newcommand{\caspt}{\mathsf{CAlgSp}_{X}(S)}
\newcommand{\Pcaspt}{\mathsf{CAlgSp}_{X}(S)_{\mathrm{pos}}}
\newcommand{\GPsptr}{G\text{-}\mathsf{Sp}_{X}(S)_{\mathrm {pos,{right}}}}
\newcommand{\GPsptl}{G\text{-}\mathsf{Sp}_{X}(S)_{\mathrm {pos,{left}}}}
\newcommand{\PsptrG}{\mathsf{Sp}_{X}(S)\text{-}G_{\mathrm {pos,{right}}}}
\newcommand{\caGPsptl}{{G\text{-}\mathsf{CAlgSp}_{X}(S)}_{\mathrm{pos,{left}}}}
\newcommand{\caPsptrG}{{\mathsf{CAlgSp}_{X}(S)\text{-}G}_{\mathrm{pos,{right}}}}
\newcommand{\caPsptG}{\mathsf{CAlgPSp}_{X}(S)\text{-}G}
\title{Motivic homotopical Galois extensions}
\author[Beaudry]{Agn\`es Beaudry}
\address[Beaudry]{Department of Mathematics, University of Colorado at Boulder}
\author[Hess]{Kathryn Hess}
\address[Hess]{SV UPHESS BMI, \'Ecole Polytechnique F\'ed\'erale de Lausanne}
\author[Kedziorek]{Magdalena Kedziorek}
\address[Kedziorek]{SV UPHESS BMI, \'Ecole Polytechnique F\'ed\'erale de Lausanne}
\author[Merling]{Mona Merling}
\address[Merling]{Department of Mathematics, Johns Hopkins University}
\author[Stojanoska]{Vesna Stojanoska}
\address[Stojanoska]{Department of Mathematics, University of Illinois at Urbana-Champaign}
\begin{document}

\begin{abstract}
We establish a formal framework for Rognes's homotopical Galois theory and adapt it to the context of motivic spaces and spectra. We discuss examples of Galois extensions between Eilenberg-MacLane motivic spectra and between the Hermitian and algebraic $K$-theory spectra.
\end{abstract}

\maketitle

\tableofcontents

\section{Introduction}
Voevodsky's solution to the Bloch-Kato conjecture (\cite{voe1}, \cite{voe2}) established the importance of motivic homotopy theory as a tool for algebraic geometry. Many classical results of homotopy theory have since been translated to the motivic world. In this spirit, this paper adapts homotopical Galois theory to the context of motivic spaces and spectra.

In motivic homotopy theory, one studies algebraic varieties from a homotopy-theoretic perspective. For each base scheme $S$, there is a stable motivic homotopy category $\SH{S}$, analogous to the classical stable homotopy category, whose objects represent homotopy-invariant cohomology theories for algebraic varieties over $S$. Here, homotopies are defined using the affine line as a substitute for the unit interval. From an abstract point of view, $\SH{S}$ is the homotopy category of a stable, symmetric monoidal model category, making it amenable to techniques from model category theory. 

The theory of Galois extensions of rings, introduced by Chase, Harrison, and Rosenberg \cite{c-h-r} in the early 1960's and further elaborated by Knus and Ojanguran \cite{knus-ojanguran} ten years later, generalizes Galois theory of fields. Inspired by Rognes's homotopical generalization of the theory of Galois extensions to ring spectra \cite{rognes}, we develop here an analogous theory for motivic ring spaces and spectra, establish a number of important properties of motivic Galois extensions, and provide concrete examples of motivic Galois extensions.

\subsection*{Description of the results}

We first establish an abstract framework for derived Galois theory, motivated by \cite{rognes}.  Let $(\cat M, \otimes, I)$ be a symmetric monoidal model category. There are well known conditions under which there are induced model category structures on the category $\cat {Alg}$ of algebras in $\cat M$, as well as on the categories of left and right modules over any algebra in $\cat M$, so that every morphism of algebras $A\to B$ induces an extension/restriction-of-coefficients adjunction
$$\adjunction{\Ho \cat {Mod}_{A}}{\Ho \cat {Mod}_{B}} {-\lot A B}{\rhom A(B,-)}.$$  Moreover, if the model category structure on $\cat M$ is nice enough, for all bialgebras $H$ in $\cat M$, there is an induced model category structure on the category ${}_{H}\cat {Alg}$ of left $H$--algebras such that
the trivial $H$--module functor $\operatorname{Triv}_{H}\colon \cat {Alg} \to {}_{H}\cat {Alg}$ and its right adjoint, the $H$-fixed points functor   $(-)^{H}\colon {}_{H}\cat {Alg} \to \cat {Alg}$, form a Quillen pair, and thus descend to an adjunction 
$$\adjunction{\Ho \cat {Alg}}{\Ho {}_{H}\cat {Alg}}{\htriv H}{\hfp H}.$$ 
Similarly, there is an induced extension/restriction-of-coefficients adjunction
$$\adjunction{\Ho \cat M}{\Ho \cat M} {-\lot{} H}{\rhom{}(H,-)}.$$

Given the existence of the adjunctions above, we define \emph{homotopical Galois data} (\fullref{defn:galois-data}) in the monoidal model category $(\cat M, \otimes , I)$ to consist of
\begin{itemize}
\item a dualizable Hopf algebra $H$ in $\cat M$,
\item an algebra $A$ and an $H$--algebra $B$ in $\cat M$, and 
\item a morphism $\vp\colon \operatorname{Triv}_{H}A \to B$ in $\ho ({}_{H}\cat {Alg})$.
\end{itemize} 
We denote this data by $\vp:  \triv{H}{A}\to B^{\circlearrowleft H}$. 

For Galois data $\vp:  \triv{H}{A}\to B^{\circlearrowleft H}$, let   $\beta_{\vp}\colon B\lot A B \to \rhom{}(H,B)$ be the transpose in the homotopy category of the map $B\lot A B\lot{} H \to B$ given by the $H$--action and the multiplication on $B$.
If both $\vp^{hH}\colon A \to B^{hH}$ and $\beta_{\vp}\colon B\lot AB \to \rhom{} (H,B)$ are isomorphisms, then $\vp\colon \operatorname{Triv}_{H}A \to B$ is a \emph{homotopical $H$--Galois extension} (\fullref{defn:galois-ext}).

It follows easily from the definition that if $H$ is a dualizable Hopf algebra (\fullref{defn:dualizable}) that is cofibrant in $\cat M$, then its counit $\ve\colon H\to I$ induces a homotopical $H$-Galois extension $\triv HA \to \operatorname{Hom}(H, A)^{\circlearrowleft H}$ for all algebras $A$ (\fullref{exmp:trivial-ext}). Under the conditions of \fullref{conv:modelcat2},  we show  that if $ \triv{H}{A}\to B^{\circlearrowleft H}$ is a homotopical $H$-Galois extension, then $B$ is dualizable as an $A$-module, and characterize faithful homotopical Galois extensions in terms of dualizability (\fullref{prop:galois-dualizable} and \fullref{prop:characterize}). We can then prove invariance of homotopical Galois extensions of commutative algebras under extension of coefficients (\fullref{sec:cobasechange}) and establish the forward part of a Galois correspondence, obtaining a homotopical $K$--Galois subextension of  any faithful homotopical $H$--Galois extension of commutative algebras for every ``allowable'' Hopf algebra map $K\to H$ (\fullref{thm:galois-corresp}).

Fixing a smooth scheme $S$ and a pointed motivic space $X$, we focus in \fullref{sec:motmodstructures} on the case where $\cat M$ is the category of (pointed) motivic spaces over $S$, denoted $\spc$ (respectively, $\pspc$), or the category of motivic $X$-spectra equipped with its usual stable model structure, denoted $\spt$.  A motivic $X$-spectrum is a sequence $(Y_{0},Y_{1},..., Y_{n},...)$ of pointed motivic spaces, where $Y_{n}$ is equipped with a $\Sigma_{n}$-action for every $n$, together with structure maps $Y_{n}\wedge X \to Y_{n+1}$ that are appropriately $\Sigma_{n+1}$-equivariant. For any finite group $G$, which can naturally be seen  as a bialgebra in any of these categories,  we equip with model structures the category  of (pointed) motivic $G$-spaces, $\Gspc$ (respectively, $\Gpspc$), i.e.,  of $G$-objects in $\spc$ (respectively, in $\pspc$), and the category $\Gspt$ of motivic $\triv GX$-spectra of motivic $G$-spaces.  In both the nonequivariant and the equivariant cases, we establish the existence of induced model structures on categories of modules over algebras, of algebras, and of commutative algebras satisfying the conditions of \fullref{conv:modelcat2}.    We conclude that (pointed) motivic spaces and spectra fit into the formal framework for homotopical Galois theory of  \fullref{sec:gal-formal} (cf. \fullref{sec:motivic-framework}).

Finally, in \fullref{sec:examples} we provide concrete examples of motivic Galois extensions analogous to known Galois extensions of classical spectra. We first consider Galois extensions of motivic Eilenberg-MacLane spectra. For any abelian group $A$, let $\HA$ denote the \emph{motivic} Eilenberg-MacLane spectrum, i.e., the representing object for motivic cohomology with coefficients in $A$.  For any homomorphism of commutative rings $R\to T$ and any subgroup $G$ of $\operatorname{Aut}_{R}(T)$, we show that $\HR\to \HT$ is a homotopical $G$-Galois extension if and only if $R\to T$ is $G$-Galois in the classical sense (\fullref{thm:galEM}). Our second example is analogous to the classical extension $KO \to KU$. Based on major results of Hu, Kriz and Ormsby in \cite{hko} and Berrick, Karoubi, Schlichting and {{\O}}stv\ae{}r \cite{bako} on Thomason's  homotopy limit problem \cite{thomason}, 
we state conditions under which the extension from $\KQ$, the motivic spectrum representing Hermitian $K$-theory, to $\KGL$, the motivic spectrum representing algebraic $K$-theory, is a homotopical $C_{2}$-Galois extension and prove that, in this case, it is faithful on $\eta$--complete modules (\fullref{thm:simKQ}).

\subsection*{Related and future work}
Equivariant motivic homotopy theory has emerged in the last decade as an important topic of study. It appeared first in Deligne \cite{Del}, in relation with Voevodsky's solution of the Bloch-Kato conjecture. This was followed by the study of equvariant motivic spectra by Hu, Kriz, and Ormsby in \cite{hko}, who used them to study Karoubi's Hermitian $K$-theory and the motivic cobordism spectrum as $C_2$--spectra. Recently, there has been a serious push to develop solid foundations for equivariant motivic homotopy theory, in particular, by Heller, Krishna, Ormsby, Voineagu, {{\O}}stv\ae{}r and others (see for example \cite{CR}, 
\cite{HVO}, 
\cite{HKO15},
\cite{Her}, 
\cite{Hoy17}).

A significant difference between our work and that of the authors mentioned above is that they develop equivariant motivic homotopy theory starting from simplicial presheaves on $G$-schemes, rather than from presheaves of simplicial  $G$-sets on schemes, as we do. Motivic $G$-spaces in their sense can be seen as $G$-objects in motivic spaces, but the functor translating one to the other is not faithful, so the relationship between the two types of structure is somewhat delicate. Moreover the stabilizations we consider here are performed only with respect to motivic spaces equipped with trivial $G$-actions. Equivariant motivic homotopy theory built from simplicial presheaves on $G$-schemes, stabilized with respect to motivic spaces with potentially non-trivial actions, has the advantage of capturing more geometry.

In this paper we also consider only \emph{simplicial Galois extensions}, in the sense that the homotopy fixed points of a motivic $G$-space (or spectrum) $X$  that is nonequivariantly fibrant are computed as the fixed points of $\Hom (EG, X)$, where $EG$ is the geometric realization of an appropriate replacement of the usual simplicial $G$-set $E_{\bullet} G$. Unlike the situation in classical homotopy theory, $EG$, though a contractible free motivic $G$-space, is not the universal object with this property. Another contractible free motivic $G$-space does have the desired universal property, the \emph{\'etale} or \emph{geometric} $\mathbb{E} G$; one could hope to define \emph{geometric Galois extensions} in terms of homotopy fixed points computed with respect to $\mathbb{E} G$. 

An unpublished result of Heller (\fullref{prop:hell}) states that, within the right framework, $\KQ \to \KGL$ is a \emph{geometric} $C_2$--Galois extension, as long as the based scheme has characteristic different from $2$. We intend to develop a framework for \emph{geometric} Galois extensions of motivic spectra and to compute examples of such in future work. These computations will probably be harder than those presented here because the spectral sequences that we use in the simplicial case are not available in the geometric case. However, there should be rich geometric examples that we cannot capture in the simplicial framework. We discuss this  further in \fullref{sec:etalegalext}. Finally, in \fullref{sec:hopfgalext} we outline our plans to elaborate a theory of homotopic Hopf-Galois extensions of motivic ring spectra analogous to Rognes's theory for classical ring spectra \cite[Section 12]{rognes}.

\subsection*{Acknowledgments}
We thank Jeremiah Heller for extensive conversations and insight, and we thank both Jeremiah Heller and David Gepner for sharing drafts of their work in progress with us. We thank the Banff International Research Station for its hospitality and support during the Women in Topology II meeting, and we especially thank the organizers of WIT who made this project possible. 
This research was partially supported by the Simons Foundation, by the Mathematisches Forschungsinstitut Oberwolfach, and by the Hausdorff Institute for Mathematics in Bonn. This material is also based upon work supported by the National Science Foundation under Grant No. DMS-1606479 and Grant No. DMS-1612020.

\section{A formal homotopical framework for Galois theory}\label{sec:gal-formal}
We introduce here a formal framework in which to study a homotopical version of Galois theory, generalizing Rognes's Galois theory of commutative ring spectra \cite{rognes}.  In later sections we show that motivic spaces and motivic spectra both fit into this framework.

\begin{convention}\label{conv:modelcat1} Throughout this section we work in a pointed, symmetric monoidal model category $(\cat M, \otimes , I)$. We denote the internal hom functor by $\operatorname{Hom}$.  
\end{convention}

\subsection{Algebra in $\cat M$}

We refer to monoids and bimonoids in $\cat M$ as \emph{algebras} and \emph{bialgebras}, respectively.  We denote the category of all algebras by $\cat {Alg}$ and its full subcategory of commutative algebras by $\cat {CAlg}$. For any algebra $A$, the category of right $A$-modules is denoted $\cat {Mod}_{A}$ and, for any pair of $A$-modules $M$ and $N$, we denote by $\operatorname{Hom}_{A}(M,N)$ the obvious equalizer in $\cat M$.

For any bialgebra $(H,\mu, \Delta, \eta, \ve)$ in $\cat M$, the monoidal product on $\cat M$ induces a symmetric monoidal structure on the category ${}_{H}\cat {Mod}$ of left $H$-modules, given by 
$$(M, \lambda)\otimes (M', \lambda')=(M\otimes M', \widetilde \lambda),$$
where  $\widetilde \lambda$ is the composite 
$$H\otimes (M\otimes M' ) \xrightarrow {\Delta \otimes M\otimes M'} (H\otimes H) \otimes (M\otimes M' )\cong (H\otimes M)\otimes (H\otimes M') \xrightarrow {\lambda\otimes \lambda'} M\otimes M'.$$

Let  ${}_{H}\cat {Mod}_{A}$, ${}_{H}\cat {Alg}$, and ${}_{H}\cat {CAlg}$ denote the categories of $(H,A)$-bimodules, of $H$-algebras, i.e.  algebras in $({}_{H}\cat {Mod}, \otimes , I)$,  and of commutative $H$-algebras, respectively.

 A bialgebra $H$ in $\cat M$ is a \emph {Hopf algebra} if it admits an antipode, i.e., a morphism $\chi\colon H\to H$ in $\cat M$ such that the composite
 $$H \xrightarrow \Delta H\otimes  H \xrightarrow {H\otimes \chi} H\otimes H \xrightarrow \mu H$$
 is the identity.

The \emph{trivial (left) $H$-module} functor, denoted $\triv H\colon \cat M \to {}_{H}\cat {Mod}$  is defined by 
$\triv H X=(X, \ve\otimes X)$ for every object $X$ in $\cat M$.
Since $\triv HX \cong \operatorname{Hom}(I,X)\cong I\otimes X$, where $I$ is considered to be endowed with its trivial $H$-module structure, it admits both a left and a right adjoint. We call its right adjoint, $\operatorname{Hom}_{H}(I,-)$, the \emph{$H$-fixed points functor} and denote it  $(-)^{H}\colon {}_{H}\cat {Mod}\to \cat M$; the left adjoint, $I\otimes_{H} -$, is the \emph{$H$-orbits functor} denoted $(-)_{H}\colon {}_{H}\cat {Mod}\to \cat M$.   There are analogous adjunctions for right $H$-module structures, as well as induced fixed point and orbit adjunctions when $\cat M$ is replaced by $\cat {Mod}_{A}$, $\cat {Alg}$, and $\cat {CAlg}$.  

\begin{rem}\label{rem:fp-orb}
Recall that a right $H$-module structure on $M$ induces a left $H$-module structure on $\operatorname{Hom}(M,X)$, and vice-versa.
Observe that for any $H$-module $M$ and any object $X$ in $\cat M$, 
\begin{equation}
\label{eqn:fp-orb}
\operatorname{Hom}(M_{H},X)\cong \operatorname{Hom}(I\otimes _{H}M, X)\cong \operatorname{Hom}_{H}\big(I, \operatorname{Hom}(M,X)\big)\cong \operatorname{Hom}(M,X)^{H}.
\end{equation}
\end{rem}

\begin{convention}\label{conv:modelcat2} We suppose that  the categories $\cat {Mod}_{A}$, $\cat {Alg}$, $\cat {CAlg}$, ${}_{H}\cat {Mod}_{A}$, ${}_{H}\cat {Alg}$, $\cat {Alg}_{H}$, ${}_{H}\cat {CAlg}$, and $\cat {CAlg}_{H}$  admit model category structures such that, {for every algebra $A$, every bialgebra $H$, every algebra morphism $\vp\colon A\to B$, and every bialgebra morphism $\iota: K \to H$}, the following adjunctions are Quillen pairs
 $$\adjunction{\cat{Mod}_{A}}{\cat {Mod}_{B}}{-\otimes_{A}B}{\vp^{*}}\quad\text{and}\quad\adjunction{{}_{H}\cat{Mod}_{A}}{{}_{H}\cat {Mod}_{B}}{-\otimes_{A}B}{\vp^{*}} $$
 $$\adjunction{\cat{Mod}_{A}}{{}_{H}\cat{Mod}_{A}}{\triv H}{(-)^{H}}\quad\text{and}\quad \adjunction{{}_{H}\cat{Mod}_{A}}{{}_{K}\cat{Mod}_{A}}{\iota^{*}}{\operatorname{Hom}_{K}(H, -)}$$
 $$\adjunction{\cat{Alg}}{{}_{H}\cat {Alg}}{\triv H}{(-)^{H}}\quad\text{and}\quad \adjunction{\cat{Alg}_{H}}{\cat {Alg}}{(-)_{H}}{\triv H}$$
$$\adjunction{\cat{CAlg}}{{}_{H}\cat {CAlg}}{\triv H}{(-)^{H}}\quad\text{and}\quad \adjunction{\cat{CAlg}_{H}}{\cat {CAlg}}{(-)_{H}}{\triv H}$$
  $$\adjunction{{}_{H}\cat{Alg}}{{}_{K}\cat {Alg}}{\iota^{*}}{\operatorname{Hom}_{K}(H,-)}\quad\text{and}\quad \adjunction{{}_{H}\cat{CAlg}}{{}_{K}\cat {CAlg}}{\iota^{*}}{\operatorname{Hom}_{K}(H,-)},$$
and similarly when $H$ acts on the right. We assume also that every algebra $A$ is cofibrant as a module over itself, i.e., as an object in $\cat {Mod}_{A}$.
Finally, for every fibrant object $X$ in $\cat M$, the adjunction below is also a Quillen pair, where opposite categories are equipped with the opposite model structure:
$$\adjunction{\cat {Mod}_{H}^{op}}{{}_{H}\cat {Mod}}{\operatorname{Hom}(-,X)}{\operatorname{Hom}(-,X)}$$
 \end{convention}

\begin{rem} All $\triv H\dashv (-)^H$ and $(-)_H  \dashv \triv H$ adjunctions descend to adjunctions on the homotopy category, denoted $\htriv H\dashv \hfp H$ and $\horb H \dashv \htriv H$.
Furthermore,
for every right $H$-module $M$ and any object $X$ in $\ho\cat M$, 
 { \begin{equation}\label{eqn:hfp-horb}
\rhom{}(\Horb HM,X)\cong \operatorname{Hom} \big( (M^{c})_{H}, X^{f}\big) \cong \operatorname{Hom}(M^{c},X^{f})^{H}\cong \rhom{} (M,X)^{hH},
\end{equation}}
where $M^{c}$ denotes a cofibrant replacement in $\cat {Mod}_{H}$ and $X^{f}$ a fibrant replacement in $\cat M$.  The first isomorphism follows from the definition of left derived functors and the fact that $\cat M$ is a monoidal model category, while the second isomorphism is a special case of (\ref{eqn:fp-orb}).  The last isomorphism is a consequence of the fact that $\operatorname{Hom}(-,X^{f}): \cat {Mod}_{H}^{op}\to {}_{H}\cat {Mod}$ is a right Quillen functor, by \fullref{conv:modelcat2}.
\end{rem}
 
 In \fullref{sec:motmodstructures}, we show that model structures satisfying \fullref{conv:modelcat1} and \fullref{conv:modelcat2} exist when the underlying category is that of (pointed) motivic spaces or of motivic spectra.
 
\begin{rem}\label{rem:mult-hom} Note that for any algebra $A$, there is a multiplication on  $\operatorname{Hom}(H, A)$ given by the composite
$$\operatorname{Hom}(H, A)\otimes \operatorname{Hom}(H, A) \to \operatorname{Hom}(H\otimes H, A\otimes A)\xrightarrow {\operatorname{Hom}(\Delta, \mu)} \operatorname{Hom}(H, A),$$
where the first arrow is the transpose of evaluation on $H\otimes H$, and the second is determined by the comultiplication on $H$ and the multiplication on $A$. 

There is also a right $H$-action on $\operatorname{Hom}(H, A)$ given by the transpose of the composite
$$\operatorname{Hom}(H,A)\otimes H \otimes H \to \operatorname{Hom}(H,A) \otimes H \to A,$$
where the first arrow uses the multiplication in $H$, and the last arrow is the counit of the tensor-hom adjunction, {i.e., evaluation at $H$}.  More informally, we can think of $H$ as acting on $\operatorname{Hom}(H, A)$ by multiplication on the left in the domain; see also \fullref{lem:horb-free}. Note furthermore that $\operatorname{Hom}(H, A)^{H}\cong A$ for every algebra $A$, as the composite of the forgetful functor with $\triv H$ is obviously the identity. {This} implies that the composite of their {right} adjoints must be isomorphic to the identity.

On a similar note, for any object $X$ in $\cat M$, there is a natural algebra structure on $\operatorname{Hom}(X,X)$, given by ``composition,'' i.e., the transpose of the composite
$$\operatorname{Hom}(X,X)\otimes \operatorname{Hom}(X,X) \otimes X \to \operatorname{Hom}(X,X)\otimes X \to X,$$
where both arrows are built from the counit of the tensor-hom adjunction.
\end{rem}

\subsection{Derived structures}
In this section we make explicit the derived monoidal structure often implicity applied in stable homotopy theory, in particular by {Rognes} {in} \cite{rognes}.

The axioms of a monoidal model category imply that the homotopy category $\ho \cat M$ of $\cat M$ admits an induced closed symmetric monoidal structure, with tensor product $-\lot{} -$, internal hom $\rhom{} (-,-)$, and unit object $I^{c}$, a cofibrant replacement of the unit object in $\cat M$. The localization functor $\cat M \to \ho \cat M$ is monoidal, which implies that it preseves algebra and module structures. Henceforth in this paper, any expression involving $-\lot{}-$ or $\rhom{}(-,-)$ is to be considered as an object in $\ho \cat M$ and not in $\cat M$.

For any objects $X,Y$ in $\ho \cat M$, applying the functor $-\lot{} X$ to the component at $Y$ of the counit of the adjunction
$$\adjunction{\ho\cat M}{\ho\cat M}{-\lot{} X}{\rhom{}(X,-)}$$
produces a map
$$\rhom{}(X,Y)\lot{} X\lot{} Z \to Y\lot{}Z $$
for any object $Z$ in $\ho \cat M$. Taking the transpose of the composite
$$\rhom{}(X,Y) \lot{} Z \lot {} X\cong \rhom{}(X,Y) \lot{} X \lot {} Z \to Y\lot{} Z,$$
we obtain a map
$$\nu:\rhom{}(X,Y)\lot{} Z  \to \rhom{}(X,Y\lot{} Z),$$
which plays an important role in homotopical Galois theory.

Following {Lewis et al.} \cite{lewis-may}, as formulated by Rognes in \cite{rognes}, we introduce an appropriate notion of dualizability for objects in $\cat M$, when seen as objects in $\ho \cat M$.

\begin{defn}\label{defn:dualizable} 
The \emph{dual} of an object $X$ in $\ho \cat M$ is $DX=\rhom{}(X, I)$. An object $X$  is \emph{dualizable} if $\nu\colon DX \lot{} X \to \rhom{}(X,X)$ is an isomorphism.
\end{defn}

The proof of the lemma below can be found in \cite[\S III.1] {lewis-may}. 

\begin{lem}\label{lem:dualizable-properties} Let $X$ and $Z$ be objects in $\ho \cat M$.
\begin{enumerate}
\item If $X$ is dualizable, so is $DX$, and the canonical map $X \to DDX$ is an isomorphism.
\item If $X$ or $Z$ is dualizable, then $\nu:\rhom{}(X,Y)\lot{} Z  \to \rhom{}(X,Y\lot{} Z)$ is an isomorphism for every object $Y$.  In particular, if $X$ is dualizable, then 
$\nu:DX\lot{} Z  \to \rhom{}(X,Z)$ is an isomorphism for every object $Z$.
\end{enumerate}
\end{lem}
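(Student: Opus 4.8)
The plan is to reduce \fullref{defn:dualizable} to the classical notion of a rigid (strongly dualizable) object, characterized by a coevaluation map satisfying the triangle identities, and then to run the standard diagram chases in the closed symmetric monoidal category $\ho\cat M$; this is in essence the argument of \cite[\S III.1]{lewis-may}. Since $\ho\cat M$ is closed symmetric monoidal, every object $X$ comes with a canonical evaluation $\mathrm{ev}\colon DX\lot{} X\to I$, namely the counit of the $(-\lot{} X)\dashv\rhom{}(X,-)$ adjunction, and by construction the map $\nu\colon DX\lot{} X\to\rhom{}(X,X)$ of \fullref{defn:dualizable} is the one adjoint to $\mathrm{ev}$; more generally, all of the maps $\nu$ appearing in the lemma are assembled from evaluation counits and are natural in each variable.

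\textbf{Step 1 (dual-pair reformulation).} First I would show that $X$ is dualizable if and only if there is a map $\mathrm{coev}\colon I\to X\lot{} DX$ such that, with the evaluation $\mathrm{ev}$ above, the triangle identities
$$X\cong I\lot{} X\xrightarrow{\mathrm{coev}\lot{} X}X\lot{} DX\lot{} X\xrightarrow{X\lot{}\mathrm{ev}}X\lot{} I\cong X$$
$$DX\cong DX\lot{} I\xrightarrow{DX\lot{}\mathrm{coev}}DX\lot{} X\lot{} DX\xrightarrow{\mathrm{ev}\lot{} DX}I\lot{} DX\cong DX$$
both equal the identity. The ``if'' direction is routine: from such a $\mathrm{coev}$ one writes down an explicit inverse to $\nu\colon DX\lot{} X\to\rhom{}(X,X)$ and checks it using the first triangle identity. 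For the ``only if'' direction, assuming $\nu$ is invertible, one takes $\mathrm{coev}$ to be the image of $\mathrm{id}_X\colon I\to\rhom{}(X,X)$ under $\nu^{-1}$ followed by the symmetry $DX\lot{} X\cong X\lot{} DX$, and then verifies the two triangle identities by unwinding the definition of $\nu$ and invoking naturality of $\nu$ and of $\mathrm{ev}$. I expect this last verification to be the main obstacle: it is not conceptually deep, but it requires careful bookkeeping with the associativity, unit, and symmetry isomorphisms of $\ho\cat M$.

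\textbf{Step 2 (part (1)).} Given the dual pair $(X,DX,\mathrm{ev},\mathrm{coev})$, applying the symmetry isomorphism exchanges the roles of $\mathrm{ev}$ and $\mathrm{coev}$ and so exhibits $DX$ as a dualizable object; concretely, the canonical comparison map $X\to D(DX)=\rhom{}(DX,I)$, namely the one adjoint to $\mathrm{ev}$ composed with the symmetry $X\lot{} DX\cong DX\lot{} X$, is an isomorphism whose inverse is built from $\mathrm{coev}$ via the triangle identities of Step 1. This yields both assertions of part (1).

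\textbf{Step 3 (part (2)).} Assume first that $X$ is dualizable. For the special case $\nu\colon DX\lot{} Z\to\rhom{}(X,Z)$ (i.e.\ $Y=I$), define a candidate inverse
$$\rhom{}(X,Z)\cong I\lot{}\rhom{}(X,Z)\xrightarrow{\mathrm{coev}\lot{}\mathrm{id}}X\lot{} DX\lot{}\rhom{}(X,Z)\xrightarrow{\sigma}DX\lot{} X\lot{}\rhom{}(X,Z)\xrightarrow{DX\lot{}\mathrm{ev}_Z}DX\lot{} Z,$$
where $\mathrm{ev}_Z\colon X\lot{}\rhom{}(X,Z)\to Z$ is evaluation and $\sigma$ transposes the first two tensor factors; the triangle identities show that this composes with $\nu$ to the identity on both sides. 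For general $Y$ (still with $X$ dualizable), the composite of isomorphisms $\rhom{}(X,Y)\lot{} Z\cong DX\lot{} Y\lot{} Z\cong\rhom{}(X,Y\lot{} Z)$, where the first arrow is $\nu^{-1}\lot{} Z$ and the second is $\nu$, is identified with the map $\nu$ of the statement by a naturality chase. For the remaining case, $Z$ dualizable: by part (1), $Z\cong D(DZ)$, so applying the special case to the dualizable object $DZ$ gives natural isomorphisms $Y\lot{} Z\cong\rhom{}(DZ,Y)$ and $\rhom{}(X,Y)\lot{} Z\cong\rhom{}(DZ,\rhom{}(X,Y))$; combining these with the internal-hom adjunction isomorphisms $\rhom{}(X,\rhom{}(DZ,Y))\cong\rhom{}(X\lot{} DZ,Y)\cong\rhom{}(DZ,\rhom{}(X,Y))$ produces the desired isomorphism $\rhom{}(X,Y)\lot{} Z\cong\rhom{}(X,Y\lot{} Z)$, which a final naturality check identifies with $\nu$. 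The concluding sentence of the lemma is the case $Y=I$ of what has been proved.
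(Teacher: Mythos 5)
Your argument is correct. The paper itself gives no proof of this lemma — it simply defers to \cite[\S III.1]{lewis-may} — and what you have written out (the equivalence of the $\nu$-isomorphism condition with the existence of a coevaluation satisfying the triangle identities, then the standard diagram chases to deduce (1) and (2)) is a faithful reconstruction of exactly that Lewis--May development, so you are taking essentially the same route as the cited source. One small imprecision worth fixing: $\nu\colon DX\lot{}X\to\rhom{}(X,X)$ is not literally the adjoint of the evaluation $\mathrm{ev}\colon DX\lot{}X\to I$ (that adjoint is $\mathrm{id}_{DX}$); rather $\nu$ is the transpose of $(\mathrm{ev}\lot{}X)\colon DX\lot{}X\lot{}X\to X$ after the appropriate symmetry, as in the paper's definition — but this does not affect the substance of the argument.
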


\begin{rem}\label{rem:dual-tensor}  
 \fullref{lem:dualizable-properties} implies that if $X$ is dualizable, then there is a natural isomorphism $D(X\lot{} X)\cong DX\lot{} DX$.
 \end{rem}

\begin{rem} If $A$ is a commutative algebra in $\cat M$, then, under our hypotheses on $\cat M$, the category $(\cat {Mod}_{A}, \otimes_{A}, A)$ is also a closed, symmetric monoidal category, with internal hom $\operatorname{Hom}_{A}(-,-)$, in which the definition and lemma above make sense.  Given an $A$-module $M$, we write $D_{A}M= \rhom A(M,A)$ and say that $M$ is \emph{$A$-dualizable} if $\nu\colon D_{A}M\lot A M \to \rhom A(M,M)$ is an isomorphism. Recall that the categories of left and right $A$-modules are isomorphic when $A$ is commutative, so that we could equally well work with left $A$-modules. 
\end{rem}

\begin{rem}\label{rem:cancellation} 
Recall that by \fullref{conv:modelcat2} any algebra $A$ is cofibrant as a module over itself.
It follows that there are natural isomorphisms $N\lot A A  \xrightarrow \cong N$  and $N\xrightarrow \cong \rhom A(A, N)$ for every right $A$-module $N$.  In particular, for any morphism of algebras $\vp: A \to B$, there are natural isomorphisms
$$\rhom B(B\lot A M, N)\cong \rhom B\big(B, \rhom A (M,N) \big) \cong  \rhom A(M, N)$$
for  all right $A$-modules $M$ and $N$.
\end{rem}

\begin{lem}\label{lem:extend-dualizable}  Let $\vp: A\to B$ be a morphism of commutative algebras.  If $M$ is a dualizable right $A$-module, then $M\lot A B$ is a dualizable $B$-module.
\end{lem}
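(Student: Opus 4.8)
The plan is to show directly that the natural map $\nu\colon D_B(M\lot A B)\lot B (M\lot A B)\to \rhom B(M\lot A B, M\lot A B)$ is an isomorphism, by reducing everything to the corresponding statement for $M$ over $A$, which holds by hypothesis. The main tool is base change: for commutative $A$ and a morphism $\vp\colon A\to B$, the extension-of-scalars functor $-\lot A B\colon \ho\cat{Mod}_A\to \ho\cat{Mod}_B$ is symmetric monoidal, so it sends the dual object $D_AM$ to an object canonically equivalent to $D_B(M\lot A B)$, provided one knows that $D_AM\lot A B$ really does represent the $B$-linear dual. The key identity is $\rhom B(M\lot A B, B)\cong \rhom A(M,B)$, which is exactly the content of \fullref{rem:cancellation} applied with $N=B$ regarded as an $A$-module along $\vp$; combined with $\nu\colon D_AM\lot A B\to \rhom A(M,B)$, which is an isomorphism because $M$ is $A$-dualizable (this is the ``in particular'' clause of \fullref{lem:dualizable-properties}(2), taking $Z=B$), we get $D_AM\lot A B\cong D_B(M\lot A B)$.

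First I would record that since $-\lot A B$ is a (strong) monoidal left adjoint on homotopy categories, it carries the coevaluation and evaluation maps exhibiting $M$ as $A$-dualizable to maps exhibiting $M\lot A B$ as $B$-dualizable, once the target of the dualizing data is identified correctly. Concretely: the $A$-dualizability of $M$ means there are maps $\eta\colon A\to M\lot A D_AM$ and $\varepsilon\colon D_AM\lot A M\to A$ satisfying the triangle identities; applying $-\lot A B$ yields maps $B\to (M\lot A B)\lot B (D_AM\lot A B)$ and $(D_AM\lot A B)\lot B (M\lot A B)\to B$ satisfying the triangle identities, which exhibit $M\lot A B$ as dualizable with dual $D_AM\lot A B$. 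Then I would invoke the uniqueness of duals (\fullref{lem:dualizable-properties}-style formalism, i.e. the standard fact that duals are unique up to canonical isomorphism) together with the identification $D_AM\lot A B\cong \rhom B(M\lot A B,B)=D_B(M\lot A B)$ from the previous paragraph to conclude that $M\lot A B$ is $B$-dualizable in the sense of \fullref{defn:dualizable} relative to the closed symmetric monoidal category $(\cat{Mod}_B,\otimes_B,B)$.

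The step I expect to require the most care is the bookkeeping that ensures the abstract ``dualizability via coevaluation/evaluation data'' coincides with the specific map-$\nu$-is-an-isomorphism definition used in \fullref{defn:dualizable}, and that the monoidal functor $-\lot A B$ genuinely commutes with the formation of $\nu$; this is routine in the sense that it only uses naturality of $\nu$ and strong monoidality, but it is the point where one must be honest about which internal-hom adjunction is being used ($\rhom A$ versus $\rhom B$) and about the cofibrancy of $A$ over itself from \fullref{conv:modelcat2}, which is what makes the cancellation isomorphisms of \fullref{rem:cancellation} available. Everything else is a formal consequence of \fullref{rem:cancellation} and \fullref{lem:dualizable-properties}(2).
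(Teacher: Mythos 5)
Your proposal is correct and follows essentially the same route as the paper, which simply defers to the formal argument of Rognes's Lemma 6.2.3 (with smash and function spectra replaced by $\lot{}$ and $\rhom{}$, and \fullref{rem:cancellation} supplying the identification $\rhom B(M\lot AB,-)\cong\rhom A(M,-)$). Your first paragraph is exactly that argument; the second-paragraph detour through coevaluation/evaluation data is a standard equivalent reformulation (via \cite[\S III.1]{lewis-may}) but is not needed once you have the chain of isomorphisms identifying $D_B(M\lot AB)\lot B(M\lot AB)$ with $\rhom B(M\lot AB,M\lot AB)$ through $\nu$.
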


\begin{proof}
The purely formal proof of Lemma 6.2.3 in \cite{rognes}, replacing smashes and function spectra by derived tensor and Hom, establishes this lemma, where  \fullref{rem:cancellation} provides the necessary isomorphisms.  
\end{proof}

\begin{defn} Let $A$ be an algebra in $\cat M$.  A left $A$-module $M$ is \emph{(homotopically) faithful} if whenever a right $A$-module $N$ is such that $N\lot A M\cong *$, it follows that $N\cong *$.  A morphism of algebras $\vp : A \to B$ is \emph{faithful} if $B$ is faithful as an $A$-module with respect to the module structure induced by $\vp$. 
\end{defn}

\begin{lem}\label{lem:faithful} Let $\vp: A\to B$ be a morphism of algebras, and let $M$ be a left $A$-module.
\begin{enumerate}
\item If $M$ is faithful as an $A$-module, then $B\lot A M$ is faithful as a $B$-module.
\item If $B$ is faithful as an $A$-module, and $B\lot A M$ is faithful as a $B$-module, then $M$ is faithful as an $A$-module.
\end{enumerate}
\end{lem}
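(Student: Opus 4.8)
The plan is to prove each of the two statements by a direct chase through the adjunctions, using only formal properties of the derived tensor product and the cancellation isomorphisms of \fullref{rem:cancellation}. For part (1), suppose $M$ is faithful as a left $A$-module. Let $N'$ be a right $B$-module with $N'\lot B(B\lot A M)\cong *$. Using associativity of the derived tensor product and the isomorphism $N'\lot B B\cong N'$ from \fullref{rem:cancellation}, we get
$$N'\lot B(B\lot A M)\cong (N'\lot B B)\lot A M\cong N'\lot A M,$$
where on the right $N'$ is regarded as a right $A$-module via restriction along $\vp$. Hence $N'\lot A M\cong *$, so faithfulness of $M$ over $A$ forces $N'\cong *$. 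Since $B\lot A M$ is certainly a left $B$-module (it is the extension of coefficients of a left $A$-module), this shows it is faithful over $B$.

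For part (2), assume both that $B$ is faithful as an $A$-module and that $B\lot A M$ is faithful as a $B$-module, and suppose $N$ is a right $A$-module with $N\lot A M\cong *$. The idea is to apply $B\lot A(-)$ and reorganize: by the same associativity and cancellation manipulations as above,
$$(N\lot A B)\lot B(B\lot A M)\cong N\lot A(B\lot A M)\cong (N\lot A M)\lot A B\cong *\lot A B\cong *,$$
using commutativity of $A$ (so that the two $A$-module structures on $B\lot A M$ agree and the tensor factors may be permuted) together with \fullref{rem:cancellation}. Faithfulness of $B\lot A M$ over $B$ now gives $N\lot A B\cong *$, and then faithfulness of $B$ over $A$ gives $N\cong *$, as desired.

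The one point requiring a little care — and the place I expect the only real friction — is bookkeeping of the left/right module structures and the commutativity hypotheses needed to make the reassociations above legitimate in $\ho\cat M$: the statement of the lemma is for general algebras $A$ and $B$, but the manipulation $(N\lot A M)\lot A B\cong N\lot A(B\lot A M)$ in part (2) secretly uses that $B$ is central enough for the relevant tensor products to be defined and to commute past one another (cf. the commutative hypothesis already imposed in \fullref{lem:extend-dualizable}). So the proof will, strictly speaking, proceed under the same commutativity conventions in force for the module categories of \fullref{conv:modelcat2}; granting that, the argument is purely formal, mirroring Lemma 6.3.2 of \cite{rognes} with smash products and function spectra replaced by $\lot{}$ and $\rhom{}$, exactly as in the proof of \fullref{lem:extend-dualizable}.
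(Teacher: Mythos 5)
Your proof is correct and takes the same route as the paper's, which simply observes that Rognes's Lemmas 4.3.3 and 4.3.4 generalize via \fullref{rem:cancellation}; your write-up is the explicit unfolding of that reference. Your caveat about commutativity in part (2) is well-placed: the reassociation $N\lot{A}(B\lot{A}M)\cong(N\lot{A}M)\lot{A}B$ does require $A$ to be commutative (it is $A$, not $B$, whose centrality is doing the work), Rognes's originals are stated for commutative $S$-algebras, and the paper's formulation for general ``algebras'' tacitly inherits the same hypothesis—which is in any case the only setting in which the lemma is applied.
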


\begin{proof} \fullref{rem:cancellation} implies that the proofs of Lemmas 4.3.3 and 4.3.4 in \cite{rognes} can be generalized to establish this result.
\end{proof}

\subsection{Norm maps}
We introduce in this section an abstract analogue of the norm map constructed in \cite[Part II, Section 5]{rognes} and prove that it is a weak equivalence under conditions analogous to those in \cite[Part II, Theorem 5.2.5]{rognes}.
  
Let $H$ be a bialgebra in $\cat M$, with multiplication $\mu$ and comultiplication $\Delta$.  Consider the following diagram of adjunctions.
$$ \xymatrix@R=4pc@C=4pc{ \ho\cat M \ar@{}[r]|{\perp} \ar@<1ex>[d]^{\htriv H} \ar@<1ex>[r]^{\htriv H} & \ho \cat {Mod}_{H}  \ar@<-1ex>[d]_{\htriv H} \ar@<1ex>[l]^{\hfp H} \\  
\ar@{}[r]|{\top} \ar@{}[u]|{\dashv} \ho{}_{H}\cat {Mod} \ar@<-1ex>[r]_{\htriv H} \ar@<1ex>[u]^{\horb H} &\ho( {}_{H}\cat {Mod}_{H}) \ar@<-1ex>[u]_{(-)_{hH}} \ar@<-1ex>[l]_{\hfp H}\ar@{}[u]|{\vdash} }$$ 
To be consistent with the approach in \cite{rognes}, fixed points are computed with respect to right $H$-actions and orbits with respect to left $H$-actions. Since 
$$(-)_{H} \circ \triv H =\triv H \circ (-)_{H}\colon {}_{H} \cat {Mod}\to  \cat {Mod}_{H},$$
it follows that 
$$\horb H \circ \htriv H =\htriv H \circ \horb H\colon  \ho({}_{H}\cat {Mod})\to  \ho (\cat {Mod}_{H}),$$
and therefore the theory of mates {(see Kelly and Street \cite{kelly-street})} implies that there exists a natural transformation 
\begin{equation}\label{eqn:kappa}\kappa\colon \horb H\circ \hfp H \Longrightarrow \hfp H\circ \horb H\colon \ho( {}_{H}\cat {Mod}_{H}) \to \ho \cat M.
\end{equation}

\begin{lem}\label{lem:horb-free} Let $H$ be a bialgebra in $\cat M$ such that the underlying object in $\cat M$ is cofibrant. For any object $X$ in $\ho \cat M$, 
$$ \Horb H {(X\lot {} H)} \cong X \cong \Hfp H{\rhom{}(H,X)}.$$
\end{lem}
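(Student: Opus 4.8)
The plan is to verify the two isomorphisms separately, each by exhibiting a suitable (co)fibrant replacement and then invoking the adjunction identities already recorded in the excerpt, especially \eqref{eqn:hfp-horb} together with \fullref{rem:mult-hom}.

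For the second isomorphism $X \cong \Hfp H{\rhom{}(H,X)}$, I would start from \fullref{rem:mult-hom}, which records that $\operatorname{Hom}(H,A)^{H}\cong A$ at the point-set level for algebras $A$, but the relevant underlying statement is simply that $\operatorname{Hom}(H,X)^{H}\cong X$ for every object $X$ in $\cat M$, since the composite of the forgetful functor ${}_{H}\cat{Mod}\to\cat M$ with $\triv H$ is the identity, hence so is the composite of the right adjoints. To derive the homotopical version, take a fibrant replacement $X^{f}$ of $X$ in $\cat M$. Because $H$ is cofibrant in $\cat M$, the functor $\operatorname{Hom}(-,X^{f})\colon \cat{Mod}_{H}^{op}\to {}_{H}\cat{Mod}$ is right Quillen (this is exactly the last clause of \fullref{conv:modelcat2}), and evaluated at the cofibrant object $H$ it produces a fibrant object $\operatorname{Hom}(H,X^{f})$ in ${}_{H}\cat{Mod}$. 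Thus $\rhom{}(H,X)$ is computed by $\operatorname{Hom}(H,X^{f})$, and $\Hfp H{\rhom{}(H,X)}$ — the right derived functor of $(-)^{H}$ — is computed by applying the (right Quillen, hence fibrancy-preserving) functor $(-)^{H}$ to this fibrant object, giving $\operatorname{Hom}(H,X^{f})^{H}\cong X^{f}\cong X$ in $\ho\cat M$.

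For the first isomorphism $\Horb H{(X\lot{}H)}\cong X$, the cleanest route is to apply \fullref{rem:mult-hom} in conjunction with the chain \eqref{eqn:hfp-horb}: for $M$ a right $H$-module, $\rhom{}(\Horb HM,X)\cong \rhom{}(M,X)^{hH}$. However, a more direct argument is available. The point-set identity $(X\otimes H)_{H}\cong X$ holds because $(-)_{H}=I\otimes_{H}(-)$ is left adjoint to $\triv H$, and $X\otimes H$ is the free right $H$-module on $X$, whose orbits recover $X$; concretely, $I\otimes_{H}(X\otimes H)\cong X\otimes(I\otimes_{H}H)\cong X\otimes I\cong X$. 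For the derived statement, take a cofibrant replacement $X^{c}$ of $X$ in $\cat M$; then $X^{c}\otimes H$ is cofibrant in $\cat{Mod}_{H}$, since $-\otimes H\colon \cat M\to\cat{Mod}_{H}$ is left Quillen (being left adjoint to the forgetful functor, which is right Quillen because $\cat M$ is a monoidal model category and $H$ is cofibrant — alternatively this is subsumed in \fullref{conv:modelcat2}). So $X\lot{}H$ is computed by $X^{c}\otimes H$, and $\Horb H{(X\lot{}H)}$, the left derived functor of $(-)_{H}$, is computed by applying $(-)_{H}$ to this cofibrant object, yielding $(X^{c}\otimes H)_{H}\cong X^{c}\cong X$ in $\ho\cat M$.

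The main obstacle — really the only nontrivial point — is making sure that the (co)fibrant replacements are taken in the right categories and that the free/cofree functors $-\otimes H$ and $\operatorname{Hom}(H,-)$ genuinely send the chosen (co)fibrant objects to (co)fibrant objects in the module categories, so that no further replacement is needed when computing the derived functors $\Horb H{-}$ and $\Hfp H{-}$. This is precisely what the Quillen-pair hypotheses of \fullref{conv:modelcat2} and the cofibrancy of $H$ in $\cat M$ are designed to guarantee; once those are in place, the argument is the formal manipulation of adjoints sketched above, with no homotopy-theoretic subtlety remaining.
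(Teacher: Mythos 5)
Your proof is correct and follows essentially the same strategy as the paper: both reduce to the point-set observation that $(-)_H\circ(-\otimes H)$ and $(-)^H\circ\operatorname{Hom}(H,-)$ are isomorphic to the identity and then pass to derived functors, using cofibrancy of $H$ in $\cat M$ to identify $-\lot{}H$ and $\rhom{}(H,-)$ with the total derived free and cofree module functors. One small clarification: in your argument for the second isomorphism, where you invoke the last clause of \fullref{conv:modelcat2} to conclude that $\operatorname{Hom}(H,X^{f})$ is fibrant in ${}_H\cat{Mod}$, the cofibrancy you actually need is that of $H$ as an object of $\cat{Mod}_H$ (guaranteed by \fullref{conv:modelcat2}, since $H$ is an algebra and hence cofibrant over itself), not the hypothesis that $H$ is cofibrant in $\cat M$ (which is used only to identify $\rhom{}(H,X)$ with $\operatorname{Hom}(H,X^{f})$).
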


\begin{proof}Consider the composable adjunctions
$$\xymatrix{\cat M\ar @<1.18ex>[rr]^{-\otimes H}&\perp&\cat M_{H}\ar @<1.18ex>[ll]^{\eta^{*}}\ar @<1.18ex>[rr]^{(-)_{H}}&\perp&\cat M\ar @<1.18ex>[ll]^{\triv H}}$$
where $\eta \colon I \to H$ is the unit of $H$, and so $\eta^\ast$ is the forgetful functor. Because $\eta^{*}\circ \triv H$ is the identity, the composite $(-)_{H}\circ (-\otimes H)$ is isomorphic to the identity, whence the composite
$$\xymatrix {\ho\cat M\ar[rr]^{-\lot{}H} && \ho\cat M_{H}\ar [rr]^{\horb H}&&\ho\cat M}$$
is also isomorphic to the identity.  We use here that the total left derived functor of $-\otimes H \colon \cat M \to \cat M_{H}$ is indeed $-\lot{} H$,  since $H$ is cofibrant.

Similarly, there are composable adjunctions
$$\xymatrix{\cat M\ar @<1.18ex>[rr]^{\triv H}&\perp&{}_{H}\cat M\ar @<1.18ex>[ll]^{(-)^{H}}\ar @<1.18ex>[rr]^{\eta^{*}}&\perp&\cat M\ar @<1.18ex>[ll]^{\operatorname{Hom}(H,-)}}$$
with $\eta^{*}\circ \triv H$ equal to the identity, whence the composite
$$\xymatrix {\ho\cat M\ar[rr]^{\rhom{}(H,-)} && \ho{}_{H}\cat M\ar [rr]^{\hfp H}&&\ho\cat M}$$
is isomorphic to the indentity. We use again that the total right derived functor of $\operatorname{Hom}(H, -)\colon \cat M \to {}_{H}\cat M$ is  $\rhom{}(H,-)$, since $H$ is cofibrant.
\end{proof}

Essential to the definition of the norm map is a sort of ``Poincar\'e duality'' result, generalizing \cite[Part II, Theorem 3.1.4]{rognes}, which we establish below as \fullref{prop:poincare}. A key element in the proof of this duality result is the following construction.

Let $H$ be a dualizable Hopf algebra such that the underlying object in $\cat M$ is cofibrant, from which it follows that $H\otimes H\cong H\lot{} H$; we  suppress this isomorphism henceforth, to simplify notation. Let 
$$\alpha \colon DH \lot{} H \to DH$$ be the right action of \fullref{rem:mult-hom}, in the derived closed monoidal structure on $\ho \cat M$.
The \emph{shear map} $\text{sh}\colon DH \lot{} H \to DH \lot{} H$ associated to $H$ is the composite 
$$DH \lot{} H \xrightarrow {DH \lot{}\Delta}  DH \lot{} H\lot{} H \xrightarrow {\alpha \lot {} H} DH \lot{} H.$$
It is easy to check (cf. \cite[Part II, Lemma 3.1.2(3)]{rognes}) that the shear map is a map of right $H$-modules, with respect to the ``free'' action
$$DH\lot{} H\lot {} H  \xrightarrow {DH\lot{} \mu} DH\lot {} H$$
in the source and the ``diagonal'' action
$$DH \lot{} H\lot{} H \xrightarrow{DH\lot{} H\lot{} (\chi\lot{} H)\Delta} DH \lot{} H\lot{} H \lot{} H\xrightarrow{(\alpha\lot{} \mu)\circ (324)} DH\lot {}H$$
in the target, where $(324)$ denotes the permutation $3\to 2 \to 4\to 3$.

Since $H$ admits an antipode $\chi$, the shear map is an isomorphism, with inverse 
$$(\alpha\lot {}ÊH) \circ (DH \lot{} \chi\lot{} H)\circ (DH \lot{} \Delta).$$
A straightforward computation shows that this map is indeed the inverse of the shear map (cf. \cite[Lemma 3.1.3]{rognes}).

The antipode $\chi$ on $H$ also enables us to define what Rognes calls \emph{actions through inverses} of $H$ on $DH$, as follows.  The left $H$-action through inverses on $DH$ is given by the composite
$$H\lot{} DH \xrightarrow \cong DH\lot{} H \xrightarrow {DH\lot{} \chi} DH \lot {}H \xrightarrow \alpha H,$$
where the first map is the symmetry isomorphism. The right $H$-action through inverses is defined similarly.

{\begin{exmp} Let $\cat M$ be the category $\cat {Ab}$ of abelian groups, seen as a model category with weak equivalences the isomorphisms, and let $G$ be a finite group. If $H$ is $\Z[G]$, then the shear map $\text{sh}$ sends $f(-)\otimes g$ in $D\Z[G] \otimes \Z[G]$ to $f(g-) \otimes g$. The left $\Z[G]$-action through inverses sends $g \otimes f(-)$ to $f(g^{-1}-)$.
\end{exmp}}

\begin{prop}\label{prop:poincare} If $H$ is a dualizable Hopf algebra in $\cat M$ such that the underlying object in $\cat M$ is cofibrant, then there is an isomorphism $DH \lot{} \Hfp HH \xrightarrow \cong H$ that is equivariant with respect to
\begin{itemize}
\item the left action through inverses on $DH$ and the usual left $H$-action on $\Hfp HH$ and $H$, and
\item the right action through inverses on $DH$, the trivial right action on $\Hfp HH$, and the usual right action on $H$.
\end{itemize}
\end{prop}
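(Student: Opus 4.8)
The plan is to build the claimed isomorphism $DH\lot{}\Hfp HH\xrightarrow{\cong}H$ out of the shear map, which is the only genuinely nontrivial ingredient available. First I would identify $\Hfp HH$: by \fullref{lem:horb-free}, applied with $X=I^c$ (the unit), we have $\Hfp H{\rhom{}(H,X)}\cong X$ for all $X$, but here we need the $H$--fixed points of $H$ itself with respect to the \emph{right} regular action, so the identification is not immediate. Instead I would argue that $\Hfp HH$ is a retract/summand controlled by the antipode: informally, the composite $I\xrightarrow{\eta}H$ exhibits $I$ inside $H^{hH}$, and the point of the Poincar\'e-duality statement is precisely that $DH\lot{}H^{hH}$ is \emph{free of rank one} on $H^{hH}$ in a way that recovers $H$. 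Concretely, I would start from the shear isomorphism $\text{sh}\colon DH\lot{}H\xrightarrow{\cong}DH\lot{}H$ of \fullref{rem:mult-hom}, which intertwines the free right $H$--action in the source with the diagonal right $H$--action in the target. Taking $(-)^{hH}$ with respect to those right actions and using \fullref{conv:modelcat2} to see that $(-)^{hH}$ is applied to a genuine Quillen-derived functor, the source side becomes $\Hfp H{DH\lot{}H}\cong DH\lot{}\Hfp H H$ after using that $DH$ is dualizable (\fullref{lem:dualizable-properties}, so it commutes past $(-)^{hH}$ by the adjunction/mate formalism and \eqref{eqn:hfp-horb}), while on the target side the diagonal action is, up to the antipode twist, exactly the ``action through inverses'' coupled with the regular action, whose fixed points I would compute to be $H$ itself.

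In more detail, the key steps, in order, are: (1) record that $H\otimes H\cong H\lot{}H$ since $H$ is cofibrant, and that $DH$ is dualizable with $DDH\cong H$ by \fullref{lem:dualizable-properties}(1); (2) write down the shear map and its inverse as in the excerpt, together with the explicit description of the two $H$--module structures (free vs.\ diagonal), citing \cite[Part II, Lemma 3.1.2(3)]{rognes}; (3) apply the derived right-$H$ fixed-point functor $\Hfp H{-}$ to the shear isomorphism; on the free side use \fullref{lem:horb-free} together with \eqref{eqn:hfp-horb} and dualizability of $DH$ to extract $DH\lot{}\Hfp HH$; (4) on the diagonal side, decompose the diagonal action as the ``external'' regular right action on the last $H$--factor composed with the action-through-inverses on $DH$ built from $\chi$, and compute its fixed points to be $H$ (with its usual right action) — this is where the antipode does the real work, turning a twisted diagonal into something whose $(-)^{hH}$ is visibly $H$; (5) assemble the composite and, finally, (6) chase the equivariance for the remaining \emph{left} $H$--actions and for the residual \emph{right} actions through inverses, using naturality of the shear map in $H$ and the fact that $\chi$ is an anti-homomorphism of coalgebras.

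The main obstacle I expect is step (4) together with the bookkeeping in step (6): disentangling the permutation $(324)$ and the two occurrences of $\Delta$ and $\chi$ in the diagonal action so as to see cleanly that $\Hfp H{(DH\lot{}H,\ \text{diag})}\cong H$ \emph{equivariantly} for the right action through inverses on $DH$ and the trivial action on $\Hfp HH$. This is essentially a diagram chase in $\ho\cat M$ using the Hopf identities (counitality, coassociativity, the antipode axiom, and compatibility of $\Delta$ with $\mu$), and the temptation to ``compute with elements'' (as in the $\Z[G]$ example) must be replaced by a purely arrow-theoretic argument. I would handle it by reducing, via \fullref{rem:mult-hom} and \fullref{lem:horb-free}, to the statement that the relevant twisted $H$--module structure on $DH\lot{}H$ is isomorphic, as a right $H$--module, to $DH\lot{}H$ with the free action on a \emph{trivialized} first factor — i.e.\ to $\Hfp HH\lot{}H$ with the free action — which is exactly the content of the shear isomorphism read backwards. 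Everything else is formal manipulation of adjunctions and mates, following \cite[Part II, \S3.1]{rognes} line by line with smashes replaced by $\lot{}$ and function spectra by $\rhom{}$, as \fullref{lem:extend-dualizable} and \fullref{lem:faithful} were handled above.
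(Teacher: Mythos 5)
Your high-level strategy — apply $\hfp H$ to the shear isomorphism, identify the two sides, and read off the conclusion — is exactly the paper's. You also correctly recognize that the free (source) side should come out as $DH\lot{}\Hfp HH$, and that the antipode is what makes the shear map invertible.

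However, the diagonal (target) side is where your proposal has a genuine gap, and it is the step that carries the whole proof. You propose to compute $\Hfp H{(DH\lot{}H,\ \mathrm{diag})}$ by ``reducing to the shear isomorphism read backwards'' — but the shear isomorphism read backwards simply identifies the diagonal module with the free module, so its fixed points would again be $DH\lot{}\Hfp HH$, not $H$. That turns the argument into a tautology: you would be showing $\Hfp H{(\mathrm{free})}\cong \Hfp H{(\mathrm{diag})}\cong \Hfp H{(\mathrm{free})}$, which gives no new information. (The parenthetical ``$\Hfp HH\lot{}H$ with the free action'' also looks like a misidentification: trivializing the $H$-action on the first factor does not replace $DH$ by $\Hfp HH$.) The paper's route for the diagonal side is different and is what supplies the content: one uses dualizability to rewrite $DH\lot{}H\cong\rhom{}(H,H)$, then recognizes the diagonal action as an action on the source copy of $H$, applies \eqref{eqn:hfp-horb} to convert fixed points into orbits, $\Hfp H{\rhom{}(H,H)}\cong\rhom{}(\Horb HH,H)$, and finally uses $\Horb HH\cong I$ (a special case of \fullref{lem:horb-free}) to land on $\rhom{}(I,H)\cong H$. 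In other words, the antipode does its work entirely inside the shear isomorphism; what ``does the real work'' on the diagonal side is the homotopy-orbits computation $\Horb HH\cong I$, not another antipode trick.

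A smaller misattribution: for the free side you cite \fullref{lem:horb-free} and \eqref{eqn:hfp-horb}, but neither is what is needed there. Since the $H$-action on the free side lives on the codomain of $\rhom{}(H,-)$ (the domain $H$ has the trivial action), the relevant fact is that $\rhom{}(H,-)$ is a right adjoint and therefore commutes with fixed points, yielding $\Hfp H{\rhom{}(H,H)}\cong\rhom{}(H,\Hfp HH)$, followed by dualizability to get $DH\lot{}\Hfp HH$. Equation \eqref{eqn:hfp-horb} concerns $H$-actions on the domain of $\rhom{}$ and is what the paper uses for the diagonal side, not the free side.
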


\begin{rem} The \emph{dualizing spectra} $\mathbb S^{adG}{={\mathbb S}[G]^{hG}}$ of \cite {rognes} are particular examples of the  object $\Hfp HH$, which we therefore refer to as the \emph{dualizing object} of $H$.
\end{rem}

{\begin{exmp} In $\cat {Ab}$ with $H=\Z[G]$ for a finite group $G$, this isomorphism is the map that sends $f$ in $D\Z[G] \otimes \Z[G]^G \cong D\Z[G]$ to $\sum_{g \in G} f(g)g$ in $\Z[G]$.
\end{exmp}}

\begin{proof}[{Proof of \fullref{prop:poincare}.}] The argument here is inspired by the proof of Theorem 3.1.4 in Part II of \cite{rognes}.  

Since the shear map is an isomorphism of right $H$-modules, it induces a isomorphism 
$$\text{sh}^{hH}: (DH \lot{} H)^{hH} \xrightarrow \cong (DH \lot{} H)^{hH}.$$
With respect to the $H$-action in the \emph{source} of the shear map, 
$$DH\lot {} \Hfp HH \cong \rhom{} (H, \Hfp HH)\cong \Hfp H{\rhom{} (H,H)}\cong \Hfp H{(DH\lot{} H)},$$
where the $H$-action on $DH$ (and thus on the source copy of $H$ in $\rhom{} (H,-)$) is taken to be trivial at each step.  
Finally, with respect to the diagonal $H$-action in the \emph{target} of the shear map, 
$$ \Hfp H{(DH\lot{} H)} \cong \Hfp H{\rhom{} (H,H)}\cong \rhom{} (\Horb HH, H)\cong \rhom{} (I, H)\cong H,$$
where the second isomorphism was established in (\ref {eqn:hfp-horb}).

See \cite[Part II, Theorem 3.1.4]{rognes} for a discussion of the required equivariance.
\end{proof}

 \begin{prop}\label{prop:sadg}  If $H$ is a dualizable Hopf algebra in $\cat M$ such that the underlying object in $\cat M$ is cofibrant, then for every left $H$-module $M$
 $$M\lot {} \Hfp HH \cong \Hfp H{(M\lot {} H)}$$ 
 as left $H$-modules.
 \end{prop}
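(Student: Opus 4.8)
The plan is to deduce this from the Poincar\'e duality isomorphism of \fullref{prop:poincare} by tensoring with $M$ and pushing the identification through the homotopy fixed point functor, in the spirit of the arguments following \cite[Part II, Theorem 3.1.4]{rognes}.

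First I would record the formal consequences of the hypotheses that make the manipulations legitimate. Since $H$ is cofibrant, $H\otimes H\cong H\lot{}H$, and the total derived functors of $-\otimes H$ and $\operatorname{Hom}(H,-)$ are $-\lot{}H$ and $\rhom{}(H,-)$, as used in \fullref{lem:horb-free}. Since $H$ is dualizable, $DH$ is dualizable with $DDH\cong H$ (\fullref{lem:dualizable-properties}(1)), and the maps $\nu$ of \fullref{lem:dualizable-properties}(2) are isomorphisms; in particular $M\lot{}DH\cong DH\lot{}M\cong\rhom{}(H,M)$ and $\rhom{}(H,Y)\lot{}Z\cong\rhom{}(H,Y\lot{}Z)$ for all $Y,Z$. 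Finally, as in the proof of \fullref{prop:poincare}, $\rhom{}(H,-)$ is a right Quillen functor, so it commutes with $\hfp H$.

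For the main step, tensor the equivalence $DH\lot{}\Hfp HH\xrightarrow{\cong}H$ of \fullref{prop:poincare} on the left with $M$, carrying its given $H$-action. Using $M\lot{}DH\cong\rhom{}(H,M)$, this becomes an isomorphism
$$\rhom{}(H,M)\lot{}\Hfp HH\cong M\lot{}H$$
which is equivariant for the diagonal left $H$-action on both sides, and for the right $H$-action that is free on the $H$-tensor-factor on the right while acting trivially on the $\Hfp HH$-factor on the left. Applying $\hfp H$ and using (i) that $\Hfp HH$ is dualizable with trivial right $H$-action, so that $-\lot{}\Hfp HH$ is right Quillen and commutes with $\hfp H$, and (ii) that $\Hfp H{\rhom{}(H,M)}\cong M$ by \fullref{lem:horb-free} --- the ``action through inverses'' twist on $\rhom{}(H,M)$ coming from the $DH$-factor not affecting this, by the equivariance analysis behind \fullref{prop:poincare} --- yields
$$\Hfp H{(M\lot{}H)}\;\cong\;\Hfp H{\rhom{}(H,M)}\lot{}\Hfp HH\;\cong\;M\lot{}\Hfp HH,$$
and following the diagonal left actions through each isomorphism gives the asserted identification of left $H$-modules. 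An alternative that avoids invoking dualizability of $\Hfp HH$ is to mimic the proof of \fullref{prop:poincare} directly: tensor the shear map for $H$ with $M$, and compute $\hfp H$ of the source (free action) and of the target (diagonal action) of $M\lot{}DH\lot{}H$ separately, using \fullref{lem:horb-free}, the isomorphism $(\ref{eqn:hfp-horb})$, and the fact that $\rhom{}(H,-)$ commutes with $\hfp H$.

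I expect the main obstacle to be the equivariance bookkeeping. Several $H$-module structures are simultaneously in play --- the free, diagonal, and trivial actions, and the ``actions through inverses'' on $DH$ --- and they must be matched at every step; as in \fullref{prop:poincare}, it is the compatibility with the actions through inverses that is genuinely delicate, and one leans on \cite[Part II, Theorem 3.1.4]{rognes} there. A lesser point, if one follows the first route, is checking that $\Hfp HH$ is dualizable; this can be extracted from \fullref{prop:poincare} by dualizing $DH\lot{}\Hfp HH\cong H$ and cancelling a free factor using \fullref{lem:horb-free}, or sidestepped via the shear-map route above.
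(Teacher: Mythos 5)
Your proof is correct in spirit, but it takes a genuinely different route from the paper's, and the difference is worth noting. You tensor the Poincar\'e duality isomorphism $DH\lot{}\Hfp HH \cong H$ with $M$ and then apply $\hfp H$ \emph{from the outside}, which forces you to commute $-\lot{}\Hfp HH$ past $\hfp H$ and hence to establish, as a separate point, that $\Hfp HH$ is dualizable (with trivial right $H$-action). The paper instead works \emph{inside out}: starting from the left-hand side, it inserts the pair $\Hfp H\rhom{}(H,-)$ around $M\lot{}\Hfp HH$ using \fullref{lem:horb-free}, rewrites $\rhom{}(H,-)$ as $DH\lot{}-$ by dualizability of $H$, commutes $M$ past $DH$, and then invokes \fullref{prop:poincare} to replace $DH\lot{}\Hfp HH$ by $H$ \emph{underneath} $\hfp H$:
\begin{align*}
M\lot{}\Hfp HH &\cong \Hfp H{\rhom{}(H, M\lot{}\Hfp HH)} \cong \Hfp H{(DH\lot{}M\lot{}\Hfp HH)} \cong \Hfp H{(M\lot{}DH\lot{}\Hfp HH)} \cong \Hfp H{(M\lot{}H)}.
\end{align*}
This never needs dualizability of $\Hfp HH$ nor any commutation of $\hfp H$ with a tensor factor, so it is both shorter and technically lighter. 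Your shear-map alternative is closer in spirit to the paper's, but still essentially reproves Poincar\'e duality with a parameter $M$, whereas the paper applies \fullref{prop:poincare} as a black box after the horb-free reduction. Both of your routes are viable; the paper's argument simply avoids the extra lemma your main route leans on.
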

 
\begin{proof} The argument here is inspired by the proof of Lemma 6.4.2 in \cite{rognes}.
 
There is a sequence of isomorphisms 
 \begin{align*}
 M\lot {} \Hfp HH &\cong \Hfp H{\rhom{}( H, M\lot {} \Hfp HH)}\\
 &\cong \Hfp H {(DH \lot{} M\lot {} \Hfp HH)}\\
 &\cong \Hfp H {(M\lot {}DH \lot{}  \Hfp HH)}\\
 &\cong \Hfp H{(M\lot {} H)}.
 \end{align*}
The first isomorphism follows from \fullref{lem:horb-free}, the second from the dualizability of $H$, and the last from \fullref{prop:poincare}.
 \end{proof}

The definition, theorem, and proof below are inspired by \cite[Part II, Definition 5.2.2 and Theorem 5.2.5]{rognes}.
 
 \begin{defn}  Let $H$ be a dualizable Hopf algebra such that the underlying object in $\cat M$ is cofibrant, and let $M$ be a {right $H$-module in $\cat M$}.  
 The \emph{norm map} for $M$ is the composite
 $$\xymatrix{\Horb H{(M\lot{}\Hfp HH)}\ar[r]^(0.45){\cong}& \Horb H {\big(\Hfp H{(M\lot {} H)}\big)} \ar[r]^(0.5){\kappa}& \Hfp H {\big(\Horb H{(M\lot {} H)}\big)}\ar [r]^(0.65){\cong}& \Hfp HM,}$$
 where the first isomorphism is a consequence of \fullref{prop:sadg}, the middle map is a component of the natural transformation (\ref{eqn:kappa}), and the last isomorphism follows from an ``untwisting'' lemma of {Lewis et al.} \cite[p. 76]{lewis-may}, which implies that $(M\lot{} H)_{hH}\cong M\lot{} H_{hH}\cong M$, equivariantly with respect to the right $H$-action.  
 \end{defn}
 
 \begin{thm}\label{thm:norm}  Let $H$ be a dualizable Hopf algebra such that the underlying object in $\cat M$ is cofibrant. If $M=H\lot{} X$ is the left $H$-module induced up from some $X\in \operatorname{Ob}\ho\cat M$, then the associated norm map
 $$\Horb H{(M\lot{}\Hfp HH)} \to \Hfp HM$$
 is an isomorphism.
 \end{thm}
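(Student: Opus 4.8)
The plan is to reduce the statement for $M = H \lot{} X$ to the case $M = H$ itself (i.e., $X = I$) by a naturality/untwisting argument, and then to verify the case $M = H$ by unwinding the definition of the norm map together with \fullref{prop:poincare} and \fullref{lem:horb-free}. First I would observe that the norm map is natural in the right $H$-module $M$, so it suffices to understand what happens when $M$ is free. For free modules there is an ``untwisting'' isomorphism $H \lot{} X \cong H \lot{} (\eta^\ast \htriv{H} X)$ of right $H$-modules, where the $H$-action on the left is the free one and on the right is through the first factor; more usefully, all the functors appearing in the norm map — $\Horb H{(-)}$, $\Hfp H{(-)}$, $- \lot{} \Hfp HH$, $- \lot{} H$ — commute with $- \lot{} X$ when the $H$-action is only on the other tensor factor, since $- \lot{} X$ is a left Quillen functor and the relevant derived functors can be computed accordingly. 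Thus the entire norm-map diagram for $M = H \lot{} X$ is obtained from the one for $M = H$ by applying $- \lot{} X$, and it suffices to treat $M = H$.

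For $M = H$ (with its usual right action, i.e.\ the free module on $X = I$), I would trace through the four-stage composite defining the norm map. The first isomorphism comes from \fullref{prop:sadg}, giving $\Horb H{(H \lot{} \Hfp HH)} \cong \Horb H{\big(\Hfp H{(H \lot{} H)}\big)}$. The key point is then the middle map $\kappa$: the mate natural transformation $\kappa\colon \horb H \circ \hfp H \Rightarrow \hfp H \circ \horb H$ of \eqref{eqn:kappa}. I would argue that $\kappa$ is an isomorphism when evaluated on an object of $\ho({}_H\cat{Mod}_H)$ of the form $\rhom{}(H, N)$ with trivial $H$-action on the $\rhom{}$-source — equivalently on a ``cofree'' or ``coinduced'' object — because on such objects both composites reduce, via \fullref{lem:horb-free} and \eqref{eqn:hfp-horb}, to a computation that identifies them with $N$, and the mate respects this identification. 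Concretely, by \fullref{lem:horb-free} and the untwisting lemma from \cite{lewis-may}, $\Horb H{(H \lot{} H)} \cong H$ and $\Hfp H{(H \lot{} H)} \cong DH \lot{} \Hfp HH$ (using dualizability and \fullref{prop:poincare}), and one checks that $\kappa$ identifies $\Horb H{\big(\Hfp H{(H \lot{} H)}\big)}$ with $\Hfp H{\big(\Horb H{(H \lot{} H)}\big)} \cong \Hfp HH$ compatibly with these.

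The main obstacle I anticipate is verifying that the mate $\kappa$ is an isomorphism on the relevant objects, and in particular keeping careful track of which $H$-actions (free vs.\ diagonal vs.\ trivial, via the shear map and \fullref{prop:poincare}) are in play at each stage; this is exactly the bookkeeping that \fullref{prop:poincare} and the shear-map equivariance were set up to handle. One clean way to organize this is to note that $\kappa$ is built, via the theory of mates, from the commuting square $(-)_H \circ \triv H = \triv H \circ (-)_H$, so on a free module $H \lot{} X$ the counit/unit maps whose composite defines $\kappa$ become isomorphisms by \fullref{lem:horb-free} — essentially because ``induction followed by (co)restriction'' trivializes. Once $\kappa$ is known to be an isomorphism here, the remaining isomorphisms in the norm-map composite are precisely \fullref{prop:sadg} and the \cite{lewis-may} untwisting lemma already cited in the definition, so the composite is an isomorphism and the theorem follows. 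I would then remark that this is the analogue of \cite[Part II, Theorem 5.2.5]{rognes} and that the general (non-free) norm map need not be an equivalence — that is the content of the ``homotopy limit problem'' appearing later in the examples.
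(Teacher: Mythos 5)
Your overall strategy --- identify the source and target of the norm-map composite with $X\lot{}\Hfp HH$ using the untwisting lemma, \fullref{prop:poincare}, \fullref{prop:sadg}, and \fullref{lem:horb-free}, then check that the norm respects these identifications --- is the same one the paper uses: the paper computes the source as $(H\lot{}X\lot{}\Hfp HH)_{hH}\cong X\lot{}\Hfp HH$ by untwisting, and the target as $\Hfp H{(H\lot{}X)}\cong\Hfp H{(DH\lot{}\Hfp HH\lot{}X)}\cong\Hfp H{\rhom{}(H,X\lot{}\Hfp HH)}\cong X\lot{}\Hfp HH$. Your reorganization into ``reduce to $M=H$, then show $\kappa$ is an isomorphism there'' is the same bookkeeping repackaged.

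However, the justification you give for the reduction step is incorrect and conceals a circularity. You assert that $\hfp H$ commutes with $-\lot{}X$ ``since $-\lot{}X$ is a left Quillen functor and the relevant derived functors can be computed accordingly.'' That is not a valid inference: $\hfp H$ is a total \emph{right} derived functor, and homotopy fixed points do not commute with tensoring in general (indeed the failure of such commutation is exactly what makes norm maps interesting). The specific commutation $\Hfp H{(H\lot{}X)}\cong X\lot{}\Hfp HH$ does hold, but only because $H$ is dualizable, and establishing it is precisely the content of \fullref{prop:sadg}, via \fullref{prop:poincare} and \fullref{lem:horb-free} --- the very lemmas you subsequently cite to handle the $M=H$ case. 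If you replace the Quillen-functor justification with a direct appeal to \fullref{prop:sadg}, your reduction becomes sound, although at that point you are essentially done without needing the reduction at all; the paper avoids the issue by working directly with $M=H\lot{}X$ and never invoking a general commutation principle. One further slip in the same direction: for $M=H$, \fullref{prop:sadg} gives $\Hfp H{(H\lot{}H)}\cong H\lot{}\Hfp HH$, not $DH\lot{}\Hfp HH$; the latter is isomorphic to $H$ by \fullref{prop:poincare}, which is a different object in general.
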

 
 \begin{proof} An ``untwisting'' argument shows that the source of the norm can be simplified as
 $$(H\lot{} X \lot{} \Hfp HH)_{hH}\cong X \lot{} \Hfp HH,$$ 
 while the target of the norm map is
 $$\Hfp H {(H\lot{} X)} \cong \Hfp H{(DH \lot{} \Hfp HH \lot{} X)}\cong \Hfp H{\rhom{}(H, X\lot{} \Hfp HH)} \cong X\lot {} \Hfp HH.$$
 One can check that the norm map respects these identifications.
 \end{proof}

\subsection{Homotopical Galois theory}

Generalizing Definition 4.1.3 in \cite{rognes}, we can formulate a notion of homotopical Galois extensions in a pointed, symmetric monoidal model category $\cat M$ satisfying \fullref{conv:modelcat1} and \fullref{conv:modelcat2} as follows.  We begin by chararcterizing the sort of objects we study.

\begin{defn}\label{defn:galois-data} \emph{Galois data} in the monoidal model category $(\cat M, \otimes , I)$ consist of 
\begin{itemize}
\item a dualizable Hopf algebra $H$,
\item an algebra $A$,
\item an $H$-algebra $B$, where $H$ acts on the left, and
\item a morphism $\vp:  \triv{H}{A}\to B$ of $H$-algebras,
\end{itemize}
which we denote $\vp:  \triv{H}{A}\to B^{\circlearrowleft H}$.   If $A$ and $B$ are both commutative algebras, then we say that $\vp:  \triv{H}{A}\to B^{\circlearrowleft H}$ is \emph{commutative Galois data}.

More generally, any {such} morphism in $\ho ({}_{H}\cat {Alg})$, where $H$ is a dualizable Hopf algebra, is \emph{homotopical Galois data}. 
\end{defn}

 Recall from above the trivial action-fixed points adjunction 
 $$\adjunction{\cat{Alg}}{{}_{H}\cat {Alg}}{\triv H}{(-)^{H}},$$
 which induces an adjunction
 $$\adjunction{\ho\cat{Alg}}{\ho {}_{H}\cat {Alg}}{\htriv H}{\hfp {H}},$$
 where $\hfp H$ is called the \emph{homotopy fixed points} functor.  Since $\triv H$ preserves all weak equivalences, $\Hfp H {(\triv HA)}\cong A$ in $\ho \cat {Alg}$ for all algebras $A$; the analogous result holds for commutative algebras as well. 
 
Given (commutative) Galois data $\vp:  \triv{H}{A}\to B^{\circlearrowleft H}$ (strict or homotopical),  let  $$\vp ^{hH}: A \to B^{hH}$$ in $\ho \cat {Alg}$ (or $\ho \cat {CAlg}$) denote the image of $\vp$ under the homotopy fixed points functor.   Another important morphism associated to $\vp$, 
$$\beta_{\vp}: B\lot{A} B \to \rhom{}(H,B)$$ 
in $\ho \cat M,$ 
is defined to be the transpose of
$$B\lot {A} (B\lot{} H) \xrightarrow {B\lot{A} \rho_{B}} B\lot {A} B \xrightarrow {\bar \mu} B,$$
where $\bar \mu$ denotes the morphism induced by the multiplication map of $B$, which can be viewed as an $A$-algebra via $\vp$.

\begin{defn}\label{defn:galois-ext}   A \emph{homotopical $H$-Galois extension} consists of (strict or homotopical) Galois data $$\vp:  \triv{H}{A}\to B^{\circlearrowleft H}$$ such that both of the morphisms
  $\vp ^{hH}: A \to B^{hH}$ and $\beta_{\vp}: B\lot{A} B \to \rhom{}(H,B)$ 
are isomorphisms.
\end{defn}

\begin{rem}Since the localization functor $\cat M \to \ho \cat M$ sends weak equivalences to isomorphisms, being a homotopical Galois extension in $\cat M$ is invariant under weak equivalences in the source and target.
\end{rem}

\begin{rem}\label{rem:exab}
The definition above generalizes the classical definition of a $G$-Galois extension of rings, where $G$ is a group. Rings are algebras in the category $\cat {Ab}$ of abelian groups, equipped with the usual tensor product, and the group ring $\mathbb Z[G]$ is naturally a bialgebra in  $\cat {Ab}$.  Actions of $G$ on a ring $R$ correspond to $\mathbb Z[G]$-module structures on $R$.  If $\cat {Ab}$ is seen as a model category in which the weak equivalences are the isomorphisms, then a homotopical $\mathbb Z[G]$-Galois extension of rings  $\triv{\mathbb Z[G]}{A}\to B^{\circlearrowleft \mathbb Z[G]}$ is exactly a $G$-Galois extension.  

Similarly, a $G$-Galois extension of ring spectra, in the sense of Rognes \cite [Definition 4.1.3]{rognes}, is exactly a homotopical $\mathbb S[G]$-Galois extension, where $\mathbb S[G]$ denotes the group-ring spectrum. 
\end{rem}

\begin{exmp}\label{exmp:trivial-ext}  Let $H$ be a dualizable Hopf algebra  in $\cat M$ that is cofibrant as an object in $\cat M$. For every algebra $A$, there is Galois data  $$\tau_{A}:\triv H A \to \operatorname{Hom}(H, A)^{\circlearrowleft H},$$ called the \emph{trivial extension of $A$}, induced by the counit $\ve: H\to I$.    By \fullref{lem:horb-free}, $\rhom{}(H, A)^{hH}\cong A$, thus
$$\big(\tau_{A}: \triv H A\to \rhom{}(H, A)\big)^{hH}\cong (\id_{A}:A \to A).$$

As for $ \beta_{\tau_{A}}$, observe that since $H$ is dualizable, there are isomorphisms
\begin{align*}
\rhom{} \big(H, \rhom{}(H,A)\big) &\cong DH \lot{} \rhom{}(H,A)\\
&\cong DH \lot{} A\lot {A}\rhom{}(H,A)\\
&\cong \rhom{}(H,A)\lot A \rhom{} (H,A).
\end{align*}
It follows that $\beta_{\tau_{A}}$ is also an isomorphism and thus that $\tau_{A}$ is a homotopical Galois extension.
\end{exmp}

\subsection{Dualizability and Galois extensions}

Here we establish formal generalizations of results from \cite{rognes} that clarify the relationship between dualizability and Galois extensions.

Recall that the localization map $\gamma:\cat M \to \ho\cat M$ is monoidal and therefore sends algebras and modules in $(\cat M, \otimes, I)$ to algebras and modules in $(\ho \cat M, \lot{}, I^{c})$.  If a coalgebra $C$ in $\cat M$ is cofibrant as an object in $\cat M$, then its image under $\gamma$ is a coalgebra in $\ho \cat M$, since $C\otimes C \cong C\lot{} C$ in $\ho \cat M$.  In particular, if $H$ is a bialgebra in $\cat M$ that is cofibrant as an object in $\cat M$, then its image in $\ho \cat M$ is also a bialgebra.

\begin{defn} Let $\vp:  \triv{H}{A}\to B^{\circlearrowleft H}$ be Galois data, where $H$ is cofibrant as an object in $\cat M$.  The \emph{twisted algebra} $B\langle H\rangle$  in $\ho \cat M$ consists of the object $B\lot{} H$, equipped with the multiplication determined as a free left $B$-module and free right $H$-module map by the composite
$$H\lot{} B\xrightarrow {\Delta \lot{} B} H \lot{} H \lot{} B \xrightarrow {H \lot{} \lambda}  H  \lot{} B \cong B  \lot{} H,$$
i.e., apply $B\lot{} -$ on the left and $-\lot{} H$ on the right to the composite above, then postcompose with multiplication in $B$ on the left and in $H$ on the right, to land in $B\lot{} H$.
\end{defn}

\begin{exmp}
If $\cat {Ab}$ is as in \fullref{rem:exab} and $H = \Z[G]$, so that $B$ is a $G$-module, then $B\langle H\rangle$ is the twisted group ring with coefficients in $B$. Its underlying abelian group is $B[G]$, with multiplication determined by $(b_1[g_1])(b_2[g_2]) = b_1 g(b_2)[g_1g_2]$.
\end{exmp}

Note that $\vp\lot{} \eta: A \to B\langle H\rangle$ is a morphism of algebras and therefore induces right and left $A$-actions on $B\langle H\rangle$. Moreover, $B\langle H\rangle$ is naturally a left $B$-module, as well as a left $H$-module, since it is a tensor product of left $H$-modules. 

There is a morphism of $A$-algebras
$$j_{\vp}:B\langle H\rangle \to \rhom{A}(B,B)$$
given by taking the transpose of the composite
$$(B\lot{} H)\lot{A} B\to  B\lot{A}B \to B,$$
where the first arrow is induced by the left action of $H$ on $B$, 
and the second is given by the multiplication in $B$.    The morphism $j_{\vp}$ is a morphism of left $B\langle H \rangle$-modules, since it is a morphism of algebras, and therefore of left $B$-modules and left $H$-modules.

\begin{lem}\label{lem:j} Let $\vp\colon  \triv{H}{A}\to B^{\circlearrowleft H}$ be Galois data, where $H$ is dualizable, and the object in $\cat M$ underlying $H$ is cofibrant. If $\beta_{\vp}: B\lot{A} B \to \rhom{}(H,B)$  is an isomorphism, then

\begin{enumerate}
\item for each right $B$-module $M$, there is a natural isomorphism $$\beta_{\vp,M }: M\lot{A} B \to \rhom{}(H,M);$$
\item the natural map $j_{\vp}:B\langle H\rangle \to \rhom A(B,B)$ is an isomorphism; and
\item  for each right $B$-module $M$, there is a natural isomorphism  
$$j_{\vp, M}: M\lot{} H\to \rhom A(B,M).$$
\end{enumerate}
\end{lem}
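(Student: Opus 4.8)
The plan is to follow the strategy of Rognes's Proposition 6.2.1 (and the surrounding results), translating smash products and function spectra into the derived tensor $\lot{}$ and derived internal hom $\rhom{}(-,-)$, and invoking the cancellation isomorphisms of \fullref{rem:cancellation} wherever a module-over-itself identification is needed. I would prove the three statements essentially in the order (1), then (2), then (3), since each feeds the next.

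For \textbf{(1)}, start from the hypothesis that $\beta_{\vp}\colon B\lot{A}B\to\rhom{}(H,B)$ is an isomorphism. For a general right $B$-module $M$, define $\beta_{\vp,M}$ as the transpose of the map $(M\lot{A}B)\lot{}H\to M\lot{A}B\to M$ built from the $H$-action on the right factor $B$ and the $B$-module structure on $M$; equivalently, $\beta_{\vp,M}=(M\lot{B}\beta_{\vp})$ after the identification $M\lot{A}B\cong M\lot{B}(B\lot{A}B)$ coming from \fullref{rem:cancellation}. Since $-\lot{B}M$ (or rather $M\lot{B}-$) is a functor on $\ho\cat {Mod}_B$ and $\beta_{\vp}$ is an isomorphism of $B$-modules, applying it yields that $\beta_{\vp,M}$ is an isomorphism, naturally in $M$. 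The only mild care needed is to check that $\beta_{\vp}$ is genuinely a map of right $B$-modules (with the $B$-action on the target $\rhom{}(H,B)$ coming from the target copy of $B$), so that $M\lot{B}-$ may legitimately be applied; this is the same bookkeeping as in \cite{rognes}.

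For \textbf{(2)}, I would show $j_{\vp}$ is an isomorphism by identifying both its source and target with $\rhom{}(H,B)$ in a compatible way. On the target side, dualizability of $H$ gives $\rhom A(B,B)\cong$ (after the cancellation isomorphisms of \fullref{rem:cancellation} for the morphism $\vp$, which turn $\rhom A$-expressions into $\rhom{}$-expressions tensored up) something equivalent to $\rhom{}(H,B)$ via part (1) with $M=B$. On the source side, $B\langle H\rangle$ has underlying object $B\lot{}H$, and since $H$ is dualizable, $B\lot{}H\cong B\lot{}DDH\cong\rhom{}(DH,B)$; one then checks this matches $\rhom{}(H,B)$ under the duality $DDH\cong H$ of \fullref{lem:dualizable-properties}. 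The real content is verifying that the composite of these identifications is precisely $j_{\vp}$ — i.e., that the formal map defined by transposing $(B\lot{}H)\lot{A}B\to B\lot{A}B\to B$ agrees with the "abstract nonsense" isomorphism. This diagram chase, using that $\beta_{\vp}$ is the transpose of the analogous structure map, is where I expect the bulk of the work; it is the motivic analogue of Rognes's verification and is purely formal once one is careful about which $H$-action and which $B$-action sits on which factor.

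For \textbf{(3)}, combine (1) and (2): for a right $B$-module $M$,
\[
M\lot{}H\;\cong\;M\lot{B}(B\lot{}H)\;\cong\;M\lot{B}B\langle H\rangle\;\xrightarrow[\ \cong\ ]{M\lot{B}j_{\vp}}\;M\lot{B}\rhom A(B,B)\;\cong\;\rhom A(B,M),
\]
where the first isomorphism is \fullref{rem:cancellation}, the third is (2), and the last uses that $B$ is dualizable as an $A$-module — which we are entitled to assume here since $\beta_{\vp}$ being an isomorphism forces it (alternatively, invoke \fullref{prop:galois-dualizable} once $\vp^{hH}$ is known to be an isomorphism, or argue directly that $j_{\vp}$ being an isomorphism exhibits $B$ as $A$-dualizable with dual $DH\lot{}\cdots$). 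Define $j_{\vp,M}$ to be this composite; naturality in $M$ is immediate from naturality of each piece. The main obstacle throughout is the bookkeeping in step (2): keeping track of left versus right $H$- and $B$-module structures across the duality isomorphisms, exactly as in \cite[Part II]{rognes}, rather than any genuinely new difficulty arising from the motivic setting.
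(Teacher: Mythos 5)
Your proposal follows the right overall template (mimic Rognes, replacing smash/function-spectra by $\lot{}$/$\rhom{}$), and your part (1) is essentially the argument the paper intends (Rognes Lemma~6.1.2(a), not Proposition~6.2.1 as you cite). But your parts (2) and (3) have a structural problem that would make the argument circular.

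In part (3) you deduce $j_{\vp,M}$ being an isomorphism from $j_{\vp}$ being one via $M\lot{B}\rhom A(B,B)\cong\rhom A(B,M)$, and you acknowledge this last isomorphism requires $B$ to be $A$-dualizable. None of your suggested justifications for that dualizability are available here: appealing to \fullref{prop:galois-dualizable} is circular (its proof invokes exactly this \fullref{lem:j}); the bare hypothesis that $\beta_{\vp}$ is an isomorphism does not by itself force $B$ to be $A$-dualizable (that conclusion needs the full Galois hypothesis together with the norm equivalence of \fullref{thm:norm}); and the "argue directly" option is precisely the content of \fullref{prop:galois-dualizable}, so it cannot be assumed. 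The paper avoids this entirely: it treats (2) as a special case of (3), and proves (3) directly following Rognes's Lemma~6.1.2(c). The key move is the cancellation isomorphism of \fullref{rem:cancellation}, namely $\rhom A(B,M)\cong\rhom B(B\lot A B,M)$, followed by applying $\beta_{\vp}$ on the inner $B\lot A B$; this yields a chain
$\rhom A(B,M)\cong\rhom B(B\lot A B,M)\cong\rhom B(\rhom{}(H,B),M)\cong\rhom B(DH\lot{}B,M)\cong\rhom{}(DH,M)\cong H\lot{}M\cong M\lot{}H$,
which never touches $A$-dualizability of $B$ and uses only the dualizability of $H$.

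Separately, your sketch of part (2) conflates $\rhom{}(DH,B)$ with $\rhom{}(H,B)$. Writing $B\lot{}H\cong B\lot{}DDH\cong\rhom{}(DH,B)$ is fine, but $\rhom{}(DH,B)\cong H\lot{}B$ is not the same object as $\rhom{}(H,B)\cong DH\lot{}B$ unless $H$ happens to be self-dual (true for $H=G_+$, not assumed in the abstract framework). Likewise, "part (1) with $M=B$" gives $B\lot A B\cong\rhom{}(H,B)$, not $\rhom A(B,B)\cong\rhom{}(H,B)$; the extra adjunction step is the content you need to make this precise, and it is exactly the cancellation isomorphism used in the chain above.
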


\begin{proof}  The proof of (1) is essentially identical to that of Lemma 6.1.2 (a) in \cite{rognes}, once we replace smashing and function spectra by derived tensors and homs.  Statement (2) is a special case of (3), of which the proof follows the lines of that of Lemma 6.1.2 (c) in \cite{rognes}, again replacing smashing and function spectra by derived tensors and homs and weak equivalences by isomorphisms.  In particular, $j_{\vp, M}$ is obtained by taking the transpose of the composite
$$(M\lot{} H)\lot{A} B\to  M\lot{A}B \to M,$$
where the first arrow is induced by the left action of $H$ on $B$, and the second is given by the right $B$-action. 
\end{proof}

For all Galois data $\vp\colon  \triv{H}{A}\to B^{\circlearrowleft H}$ and all cofibrant right $A$-modules $M$, there is a useful, natural map $\omega: M\lot A B^{hH}\to (M\lot A B)^{hH}$ in $\ho \cat M$, constructed as follows.  Let 
$$M\otimes _{A}-\colon A\negthinspace \downarrow \negthinspace \cat {Alg} \to \cat M$$
denote the functor defined on objects by sending $\vp\colon A \to B$ to  $M\otimes_{A} B$, where $B$ is endowed with the left $A$-module structure induced by $\vp$.  We define 
$$M\otimes _{A}-\colon \triv HA\negthinspace \downarrow \negthinspace {}_{H}\cat {Alg}  \to {}_{H}\cat {Mod}$$
similarly.
Observe that the composite
$$\xymatrix{A\negthinspace \downarrow \negthinspace \cat {Alg}\ar [rr]^(0.4){\triv H}&&\triv HA\negthinspace \downarrow \negthinspace {}_{H}\cat {Alg} \ar [rr]^(0.6){M\otimes_{A}-}&&{}_{H}\cat {Mod}}$$
is equal to the composite
$$\xymatrix{A\negthinspace \downarrow \negthinspace \cat {Alg}\ar [rr]^{M\otimes _{A}-}&&\cat M \ar [rr]^{\triv H}&&{}_{H}\cat {Mod},}$$
from which it follows that 
$$(M\lot A -)\circ \htriv H =\htriv H \circ (M\lot A -),$$
where we deduce from the cofibrancy of $M$ that $M\lot A-$ is indeed the {total} left derived functor of $M\otimes _{A}-$.
The theory of mates  \cite{kelly-street} then implies that there is a natural transformation
$$\omega: M\lot A \Hfp H{(-)} \Longrightarrow \Hfp H {(M\lot A-)}\colon \ho (\triv HA\negthinspace \downarrow \negthinspace {}_{H}\cat {Alg}) \to \ho\cat M.$$

\begin{lem}\label{lem:psi} Let $\vp:  \triv{H}{A}\to B^{\circlearrowleft H}$ be Galois data, and let $M$ be a cofibrant right $A$-module.  If
\begin{enumerate}
\item $\vp$ is a homotopical Galois extension, or
\item $M$ is dualizable as an $A$-module,
\end{enumerate}
then the natural map
$$\omega: M\lot A B^{hH}\to (M\lot A B)^{hH}$$
is an isomorphism.
\end{lem}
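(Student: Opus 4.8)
The plan is to treat the two cases separately, reducing each to the assertion that $M\lot A-$ carries the homotopy limit computing $B^{hH}$ to the homotopy limit computing $(M\lot A B)^{hH}$; since $\omega$ was produced (via the theory of mates) from the strict equality $(M\lot A-)\circ\htriv H=\htriv H\circ(M\lot A-)$, this commutation is exactly what forces $\omega$ to be an isomorphism at $B$. Case~(2) is then purely formal, transcribing the dualizable case of Rognes's Lemma~6.2.2: if $M$ is dualizable over $A$, then by the $A$-linear analogue of \fullref{lem:dualizable-properties} there is a natural isomorphism $M\lot A-\cong\rhom A(DM,-)$, where $DM$ denotes the $A$-dual of $M$. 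Being an internal hom out of a fixed object, $\rhom A(DM,-)$ is a right adjoint and therefore preserves all homotopy limits; in particular it commutes with $\hfp H$, which by \fullref{conv:modelcat2} is the total right derived functor of the right Quillen functor $(-)^H$. Under this identification $\omega$ becomes the canonical comparison expressing that a right adjoint commutes with a limit, so it is an isomorphism at every object, in particular at $B$.

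For case~(1), $M$ is arbitrary but $\vp$ is a homotopical $H$-Galois extension, so $\vp^{hH}\colon A\to B^{hH}$ and $\beta_\vp\colon B\lot A B\to\rhom{}(H,B)$ are both isomorphisms; here I would follow Rognes closely. Using \fullref{lem:j}(1) (available because $\beta_\vp$ is an isomorphism) together with the standard identification of the cobar complex computing $\hfp H$ with the Amitsur complex, one rewrites $(M\lot A B)^{hH}$ as the totalization of the cosimplicial object $[n]\mapsto M\lot A B^{\lot A(n+1)}$, and $B^{hH}$ as $\Tot\big([n]\mapsto B^{\lot A(n+1)}\big)$, whose augmentation $A\to\Tot$ is the isomorphism $\vp^{hH}$; under these identifications $\omega$ is $M\lot A-$ applied to this augmentation. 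Since $\vp$ is Galois, $B$ is dualizable over $A$ (\fullref{prop:galois-dualizable}), hence so is each $B^{\lot A(n+1)}$ (by \fullref{lem:extend-dualizable} together with transitivity of dualizability under change of base), and dualizing term by term turns both totalizations into $\rhom A(-,-)$ of a geometric realization. The key point is then that the Galois condition forces that realization to be dualizable --- equivalently, the Amitsur tower along $\vp$ converges at a finite stage, equivalently $B$ generates $\ho\cat{Mod}_A$ as a thick subcategory --- which one extracts, as in \cite{rognes}, from the dualizability of $B$ over $A$ together with the vanishing of the relevant Tate construction (cf.\ \fullref{thm:norm}). Granting this, $M\lot A-$ commutes with the totalization, so $(M\lot A B)^{hH}\cong M\lot A B^{hH}\cong M\lot A A\cong M$ by \fullref{rem:cancellation}, compatibly with $\omega$.

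The formal ingredients above --- the cobar/Amitsur identification, the computation of $\omega$ at $B$, and the manipulations of $\hfp H$ --- transcribe the corresponding arguments of \cite{rognes} essentially verbatim, replacing smashes and function spectra by derived tensors and homs. The genuine obstacle is the descent property invoked near the end of case~(1): it must be established independently of \fullref{lem:psi}, resting instead on \fullref{prop:galois-dualizable} and \fullref{thm:norm}, and it is the step in which the full strength of ``$\vp$ Galois'' --- not merely that $\beta_\vp$ is an isomorphism, but also that $\vp^{hH}$ is one and that $H$ is dualizable --- is used essentially. If $\vp$ is in addition \emph{faithful}, there is a shortcut avoiding this step: the augmented Amitsur complex $M\to M\lot A B^{\lot A\bullet+1}$ acquires a splitting after base change along $\vp$, so that its totalization is recognized after applying $-\lot A B$, and one concludes because a faithful $\vp$ reflects isomorphisms.
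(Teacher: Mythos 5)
Your argument for case~(2) is correct and is exactly Rognes's proof of \cite[Lemma~6.2.6]{rognes}: dualizability of $M$ over $A$ lets you replace $M\lot A-$ by the right adjoint $\rhom A(D_A M,-)$, which then commutes with $\hfp H$.  The trouble is case~(1).  You invoke \fullref{prop:galois-dualizable} to conclude that $B$ is dualizable over $A$, but in the paper's logical order \fullref{prop:galois-dualizable} comes \emph{after} \fullref{lem:psi} and its proof explicitly cites \fullref{lem:psi} (mirroring Rognes, whose Proposition~6.2.1 applies Lemma~6.1.3 to the $A$-module $D_A B$, which at that stage is not yet known to be dualizable, so case~(2) alone would not suffice).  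Routing case~(1) through \fullref{prop:galois-dualizable} is therefore circular.  Rognes's actual proof of Lemma~6.1.3, which is what case~(1) is meant to transcribe, works directly from the Galois hypotheses --- $\vp^{hH}$ and $\beta_\vp$ isomorphisms, $H$ dualizable, \fullref{lem:j}, and the norm equivalence of \fullref{thm:norm} --- and nowhere presupposes dualizability of $B$ over $A$.

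There is a second, independent problem in case~(1): the claims that ``the Amitsur tower along $\vp$ converges at a finite stage'' and that ``$B$ generates $\ho\cat{Mod}_A$ as a thick subcategory'' are not consequences of the Galois axioms established anywhere in the paper or in \cite{rognes}, and they are strictly stronger than what a homotopical Galois extension gives you.  Indeed, the commutation of $M\lot A-$ with $\hfp H$ along a Galois extension is precisely the content of \fullref{lem:psi}~(1), so assuming a stronger descent or generation statement in order to deduce it inverts the intended logic.  You do correctly identify \fullref{thm:norm} and the Tate-type vanishing as the relevant machinery, but they must act directly on the comparison $M\lot A B^{hH}\to(M\lot A B)^{hH}$ --- for instance via the equivariant cell filtration of $EG$ and the fact that $\beta_\vp$ exhibits $M\lot A B\lot A B$ as a coinduced $H$-module via \fullref{lem:j}~(1) --- rather than through the unavailable dualizability of $B$.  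For what it is worth, the paper's own proof is just a citation: it asserts that the proofs of \cite[Lemmas~6.1.3 and~6.2.6]{rognes} generalize verbatim once smash products and function spectra are replaced by derived tensors and derived homs, so your case~(2) matches it and your case~(1) does not.
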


\begin{proof} The proofs of  \cite[Lemma 6.1.3]{rognes} and \cite[Lemma 6.2.6]{rognes} generalize easily to this context, for the proofs of (1) and (2), respectively, where we replace occurences of ``weak equivalence'' with ``isomorphism.''
\end{proof}

\begin{prop}\label{prop:galois-dualizable} If $\vp:  \triv{H}{A}\to B^{\circlearrowleft H}$ is a homotopical Galois extension, where the objects in $\cat M$ underlying $B$ and $H$ are cofibrant, then $B$ is a dualizable $A$-module.
\end{prop}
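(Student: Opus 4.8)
The plan is to mimic the proof of the corresponding statement in Rognes (which is Proposition 6.2.1 in \cite{rognes}), translating smashes and function spectra into the derived tensor $\lot{}$ and derived internal hom $\rhom{}(-,-)$, and using the formal tools already assembled in this section. The key observation is that dualizability of $B$ as an $A$-module is a condition that can be checked after a faithful (or at least suitably conservative) base change; here the relevant base change is along $\vp\colon A \to B$ itself, after which $B\lot A B$ acquires an extremely rigid description thanks to the Galois condition.

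\medskip

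First I would record that, since $\vp$ is a homotopical Galois extension, the map $\beta_{\vp}\colon B\lot A B \to \rhom{}(H,B)$ is an isomorphism in $\ho\cat M$, and hence, by part (1) of \fullref{lem:j}, for every right $B$-module $M$ there is a natural isomorphism $\beta_{\vp,M}\colon M\lot A B \xrightarrow{\ \cong\ } \rhom{}(H,M)$. Applying this with $M=B$ gives $B\lot A B \cong \rhom{}(H,B)$, and since $H$ is dualizable in $\ho\cat M$, \fullref{lem:dualizable-properties}(2) identifies $\rhom{}(H,B)\cong DH\lot{} B$. Thus $B\lot A B$ is, as a right $B$-module, isomorphic to $DH\lot{} B$, which is dualizable over $B$: indeed $DH$ is dualizable in $\ho\cat M$ by \fullref{lem:dualizable-properties}(1) (it is the dual of a dualizable object), so $DH\lot{} B$ is dualizable over $B$ by the base-change statement \fullref{lem:extend-dualizable} applied along $I^{c}\to B$ (alternatively, dualizability of $DH\lot{}B$ as a $B$-module follows directly from dualizability of $DH$ in $\ho\cat M$ together with $B\lot B B\cong B$ from \fullref{rem:cancellation}). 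Hence $B\lot A B$ is a dualizable $B$-module.

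\medskip

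Next I would descend this along $\vp$. By \fullref{lem:extend-dualizable}, $M\lot A B$ being dualizable over $B$ does not by itself force $M$ dualizable over $A$; one needs faithfulness. This is precisely where \fullref{lem:psi}(1) enters: since $\vp$ is a homotopical Galois extension, the natural map $\omega\colon M\lot A B^{hH}\to (M\lot A B)^{hH}$ is an isomorphism for every cofibrant right $A$-module $M$. Combined with $\vp^{hH}\colon A\xrightarrow{\cong} B^{hH}$ (the other half of the Galois condition), this shows that the unit-type map $M \cong M\lot A A \to M\lot A B^{hH}\cong (M\lot A B)^{hH}$ exhibits $M$ as a homotopy fixed point object of $M\lot A B$, with $M$ recoverable from $M\lot A B$ together with its $H$-action. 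Taking $M=B$, one sees $B$ is retract-like detected by $B\lot A B$; more precisely, applying $\omega$ with $M=B$ and using $\beta_{\vp,B}$, the $B$-module dualizability of $B\lot A B$ established above transfers back to $A$-module dualizability of $B$, via the chain $\rhom A(B,A) \cong \rhom A(B,A)\lot A B^{hH}$-type manipulations (this is the step that in \cite{rognes} is phrased as: $B$ is dualizable over $A$ because $D_A B\lot A B \to \rhom A(B,B)$ is checked to be an isomorphism after applying $-\lot A B$, where it becomes $j_{\vp}$ of \fullref{lem:j}(2), an isomorphism).

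\medskip

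So the cleanest route is: (i) by \fullref{lem:j}(2), $j_{\vp}\colon B\langle H\rangle \to \rhom A(B,B)$ is an isomorphism, so $\rhom A(B,B)\cong B\lot{} H$, which is $A$-dualizable (finite cell over $B$, hence dualizable, then restrict—or directly: it is $\rhom{}(DH, B)$ up to the identifications above); (ii) the canonical map $\nu\colon D_A B\lot A B \to \rhom A(B,B)$ becomes an isomorphism after applying $-\lot A B$, because then it may be identified, using \fullref{lem:j}(1) and (3), with an isomorphism between $\rhom{}(H, D_A B\lot A B)$-type objects; (iii) conclude $\nu$ itself is an isomorphism by faithfulness of $\vp$, which holds here since $\vp^{hH}$ is an isomorphism (a Galois extension with $\vp^{hH}$ an isomorphism is in particular detected on homotopy fixed points, giving the needed faithfulness via \fullref{lem:faithful} and \fullref{lem:psi}(1)). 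The main obstacle, and the place requiring genuine care rather than bookkeeping, is step (iii): one must be sure that the hypotheses in hand actually deliver enough faithfulness of $\vp$ to descend the isomorphism $\nu\lot A B$ to $\nu$. In \cite{rognes} this is handled by the dualizability detection lemma together with the Galois hypothesis; I would follow that argument verbatim, invoking \fullref{lem:psi}(1) to replace the spectral-level faithfulness statements, and \fullref{rem:cancellation} to supply the cancellation isomorphisms $N\lot A A\cong N$ and $N\cong \rhom A(A,N)$ used throughout.
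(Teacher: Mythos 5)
The first part of your sketch is fine: since $\beta_{\vp}$ is an isomorphism, \fullref{lem:j} gives $B\lot A B \cong \rhom{}(H,B)\cong DH\lot{} B$, an extended (hence dualizable) $B$-module, and $j_{\vp}\colon B\langle H\rangle \to \rhom A(B,B)$ is an isomorphism. The difficulty, as you yourself flag, is in step (iii): descending the $B$-module information back to $A$.

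There the proposal has a genuine gap. You assert that faithfulness of $\vp$ ``holds here since $\vp^{hH}$ is an isomorphism,'' and that \fullref{lem:faithful} plus \fullref{lem:psi}(1) deliver it. This is false: $A\cong B^{hH}$ does not imply that $N\lot A B\cong *$ forces $N\cong *$, and the paper is careful to keep these apart --- \fullref{prop:characterize} treats faithfulness as an \emph{extra} hypothesis on top of the Galois condition precisely because homotopical Galois extensions need not be faithful (Rognes exhibits nonfaithful examples). \fullref{lem:psi}(1) is a base-change commutation statement for $(-)^{hH}$, not a conservativity statement, so it cannot substitute for faithfulness. Consequently the step ``$\nu\lot A B$ iso $\Rightarrow$ $\nu$ iso'' is unavailable as written, and your proposed route breaks down exactly where you anticipated it might.

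What makes the proposition true without faithfulness --- and what your proposal omits entirely --- is \fullref{thm:norm}, one of the three ingredients the paper cites. The actual argument (following \cite[Proposition 6.2.1]{rognes}) never descends an isomorphism along $-\lot A B$. Instead one computes $D_A B = \rhom A(B,A)\cong \rhom A(B, B^{hH})\cong \rhom A(B,B)^{hH}$, then uses \fullref{lem:j}(2) to identify $\rhom A(B,B)\cong B\langle H\rangle$, which is an extended $H$-module, and then invokes the norm isomorphism of \fullref{thm:norm} (together with \fullref{prop:sadg} and the untwisting) to replace the homotopy fixed points of this extended module by its homotopy orbits. The map $\nu$ is then identified with the resulting equivalence directly, with \fullref{lem:psi}(1) supplying the base-change identifications along the way. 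No faithfulness is needed at any point. To repair your proof you would need to replace the faithful-descent step by this norm-map argument.
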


\begin{proof} The argument in the proof of \cite[Proposition 6.2.1]{rognes} works in this case, with the usual replacements, thanks to \fullref{thm:norm}, \fullref{lem:j}, and \fullref{lem:psi}.  
\end{proof}

We conclude this section with a useful, alternate characterization of faithful derived Galois extensions.
\begin{prop}\label{prop:characterize} If $\vp:  \triv{H}{A}\to B^{\circlearrowleft H}$ is Galois data, where the objects in $\cat M$ underlying $B$ and $H$ are cofibrant, then $\vp$ is a faithful homotopical Galois extension if and only if $\beta_{\vp}$ is an isomorphism, and $B$ is faithful and dualizable as an $A$-module. 
\end{prop}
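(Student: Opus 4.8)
The plan is to settle the forward implication by citing results already established and to do all the work on the converse, where the one new ingredient is the construction of an isomorphism $\vp^{hH}\colon A\to B^{hH}$. \emph{Forward direction}: if $\vp\colon\triv{H}{A}\to B^{\circlearrowleft H}$ is a faithful homotopical Galois extension, then $\beta_\vp$ is an isomorphism and $B$ is faithful as an $A$-module by \fullref{defn:galois-ext} and the definition of a faithful morphism of algebras, while $B$ is dualizable as an $A$-module by \fullref{prop:galois-dualizable}, whose hypotheses hold because $B$ and $H$ are cofibrant in $\cat M$. Nothing more is needed in this direction.

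For the converse, assume $\beta_\vp$ is an isomorphism and that $B$ is faithful and dualizable as an $A$-module. Since $\beta_\vp$ is an isomorphism by hypothesis and $\vp$ is faithful by hypothesis, it remains only to prove that $\vp^{hH}\colon A\to B^{hH}$ is an isomorphism --- equivalently, that its underlying map in $\ho\cat M$ is an isomorphism. First I would extend scalars along $\vp$: applying $B\lot A-$ to $\vp^{hH}$ produces, using \fullref{rem:cancellation}, a map $B\cong B\lot A A\to B\lot A B^{hH}$. Now, since $B$ is dualizable as an $A$-module, \fullref{lem:psi}(2) --- applied to a cofibrant replacement of $B$ in $\cat{Mod}_A$ --- gives a natural isomorphism $\omega\colon B\lot A B^{hH}\xrightarrow{\ \cong\ }(B\lot A B)^{hH}$; moreover $\beta_\vp\colon B\lot A B\to\rhom{}(H,B)$ is equivariant for the $H$-action on $B\lot A B$ through the right tensor factor (as is part of the construction underlying \fullref{lem:j}), hence induces an isomorphism on homotopy fixed points, and \fullref{lem:horb-free} identifies $\rhom{}(H,B)^{hH}\cong B$. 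Therefore
$$B\lot A B^{hH}\xrightarrow{\ \omega\ }(B\lot A B)^{hH}\xrightarrow{\ (\beta_\vp)^{hH}\ }\rhom{}(H,B)^{hH}\xrightarrow{\ \cong\ }B$$
is a chain of isomorphisms, and a diagram chase --- formally the same as the one in the corresponding argument of \cite{rognes}, with smash products and function spectra replaced throughout by derived tensor products and internal homs and weak equivalences by isomorphisms --- shows that its composite with $B\lot A\vp^{hH}$ is the identity of $B$. Hence $B\lot A\vp^{hH}$ is an isomorphism.

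Finally I would invoke the faithfulness of $B$ over $A$, which makes extension of scalars $B\lot A-$ reflect isomorphisms (if the cofiber $C$ of a map satisfies $B\lot A C\cong *$, then $C\cong *$ and the map is an isomorphism): since $B\lot A\vp^{hH}$ is an isomorphism, so is $\vp^{hH}$, and therefore $\vp$ is a faithful homotopical Galois extension. I expect the main obstacle to be the diagram chase of the second paragraph: one must keep track of the natural isomorphisms of \fullref{lem:psi}, \fullref{lem:j} and \fullref{lem:horb-free} together with the $H$-equivariance of $\beta_\vp$ carefully enough to recognize $B\lot A\vp^{hH}$ as the unit of $B$ under the identification $B\lot A B^{hH}\cong B$. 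A lesser point to watch is the conservativity step, which --- as in Rognes's setting --- relies on cofibers (or fibers) being well enough behaved that homotopical faithfulness implies reflection of isomorphisms; this holds in the stable categories relevant to our applications.
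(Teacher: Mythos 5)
Your proof is correct and takes essentially the same route as the paper: the paper's proof simply cites Rognes's Proposition 6.3.2 with "the usual replacements," flagging \fullref{prop:galois-dualizable} for the forward implication and \fullref{lem:psi}(2) for the converse, and these are exactly the two ingredients you invoke at the corresponding stages. The chain of isomorphisms $B\lot A B^{hH}\cong (B\lot A B)^{hH}\cong\rhom{}(H,B)^{hH}\cong B$ and the conservativity of $B\lot A -$ via faithfulness are precisely the content of Rognes's argument that the paper leaves implicit.
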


\begin{proof} The argument in \cite[Proposition 6.3.2]{rognes} works in this case, with the usual replacements, thanks to \fullref{prop:galois-dualizable} and part (2) of \fullref{lem:psi}.
\end{proof}

\subsection{Invariance under cobase change}\label{sec:cobasechange}
Generalizing results of Rognes \cite[Section 7]{rognes}, we now formulate and sketch proofs of invariance results that play an important role in our proof of the forward Galois correspondence for homotopical Galois extensions.  Throughout this section we suppose that $H$ is a dualizable Hopf algebra such that the underlying object in $\cat M$ is cofibrant.

\begin{lem}   Let $\vp:  \triv{H}{A}\to C^{\circlearrowleft H}$ be (strict or homotopical) commutative Galois data, and let $\psi: A \to B$ be  a morphism of commutative algebras.  If $\vp$ is a homotopical Galois extension, and
\begin{enumerate}
\item $C$ is faithful as an $A$-module, or
\item $B$ is dualizable as an $A$-module,
\end{enumerate} 
then the induced algebra map $\bar \vp:  \triv{H}{B}\to B\lot A C^{\circlearrowleft H}$ is also a homotopical Galois extension.  Moreover if (1) holds, then $B\lot A C$ is faithful as an $A$-module.
\end{lem}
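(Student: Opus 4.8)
The plan is to adapt the proof of the corresponding result in Rognes \cite[Section 7]{rognes}, translating smashes and function spectra into derived tensors $-\lot{}-$ and derived Homs $\rhom{}(-,-)$, and replacing ``weak equivalence'' by ``isomorphism'' throughout, in the spirit of the earlier lemmas in this section. First I would observe that $\bar\vp\colon \triv H B \to B\lot A C$ is obtained by applying $B\lot A -$ to $\vp$ (using that $B\lot A -$ commutes with $\htriv H$, as established in the construction of the map $\omega$ preceding \fullref{lem:psi}), so it is genuine Galois data: $H$ is still a dualizable Hopf algebra, $B$ is a commutative algebra, and $B\lot A C$ is a commutative $H$-algebra via the $H$-action on $C$.

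The argument then has two parts, matching the two conditions in \fullref{defn:galois-ext}. For the homotopy-fixed-point condition, I must show $\bar\vp^{hH}\colon B \to (B\lot A C)^{hH}$ is an isomorphism. Since $\vp^{hH}\colon A\to C^{hH}$ is an isomorphism, it suffices to identify $(B\lot A C)^{hH}$ with $B\lot A C^{hH}$, i.e., to check that the natural map $\omega\colon B\lot A C^{hH}\to (B\lot A C)^{hH}$ of \fullref{lem:psi} is an isomorphism. This is exactly where hypotheses (1) and (2) enter: under (2), $B$ is a dualizable $A$-module, so part (2) of \fullref{lem:psi} applies directly; under (1), one instead uses part (1) of \fullref{lem:psi} applied to $\vp$ (so that $\omega$ for the module $B$ over the extension $\vp$ is an isomorphism), together with the faithfulness of $C$, following the pattern of Rognes's argument in his Section 7. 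For the $\beta$-condition, I would compute $\beta_{\bar\vp}\colon (B\lot A C)\lot B (B\lot A C)\to \rhom{}(H, B\lot A C)$. Using \fullref{rem:cancellation} to cancel the $B$'s, the source simplifies to $C\lot A C \lot A B$ (or $(C\lot A C)\lot A B$), and base-changing the isomorphism $\beta_\vp\colon C\lot A C\xra{\cong}\rhom{}(H,C)$ along $\psi$ gives $(C\lot A C)\lot A B\cong \rhom{}(H,C)\lot A B$; one then checks, using dualizability of $H$ and \fullref{lem:dualizable-properties}(2), that $\rhom{}(H,C)\lot A B\cong \rhom{}(H, C\lot A B)\cong\rhom{}(H, B\lot A C)$, and that this composite agrees with $\beta_{\bar\vp}$.

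Finally, for the ``moreover'' clause: assuming (1), so that $C$ is faithful as an $A$-module, I would conclude that $B\lot A C$ is faithful as an $A$-module. The cleanest route is to note that $B\lot A C\cong C\lot A B$ and that faithfulness of $C$ over $A$ passes to the base change $C\lot A B$ over $B$ by part (1) of \fullref{lem:faithful}; but we want faithfulness over $A$, so instead I would argue directly: if $N$ is a right $A$-module with $N\lot A (B\lot A C)\cong *$, then $(N\lot A B)\lot B (B\lot A C)\cong N\lot A B\lot A C\cong (N\lot A C)\lot A B$, and chasing this through the faithfulness of $C$ over $A$ — possibly after first using that, since $\vp$ is a faithful Galois extension in case (1), $C$ is faithful and hence detects triviality — forces $N\cong *$. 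I expect the main obstacle to be bookkeeping: keeping the various module structures ($A$- versus $B$-, left versus right) straight while invoking \fullref{rem:cancellation}, and verifying that the abstractly-constructed maps $\omega$ and $\beta_{\bar\vp}$ really do coincide with the concrete composites one writes down — none of it is conceptually deep, but the coherence checks are where errors hide. Since this is precisely the content of \cite[Section 7]{rognes} transported to our setting, I would state the lemma with a proof of the form: ``The proofs of the relevant results in \cite[Section 7]{rognes} generalize to this context, using \fullref{lem:psi}, \fullref{lem:faithful}, \fullref{rem:cancellation}, and \fullref{lem:dualizable-properties}, with the usual replacements.''
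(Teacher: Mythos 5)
Your main argument takes a genuinely different route from the paper. The paper proves this lemma by appealing to \fullref{prop:characterize}: it shows that $B\lot A C$ is dualizable over $B$ (via \fullref{prop:galois-dualizable} applied to $\vp$, followed by \fullref{lem:extend-dualizable}), that $B\lot A C$ is faithful over $B$ in case (1) (via \fullref{lem:faithful}), and that $\beta_{\bar\vp}$ is an isomorphism, and then invokes the characterization of faithful Galois extensions. You instead verify the two clauses of \fullref{defn:galois-ext} directly, identifying $(B\lot A C)^{hH}$ with $B\lot A C^{hH}$ via $\omega$ from \fullref{lem:psi} and base-changing $\beta_\vp$ to get $\beta_{\bar\vp}$. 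Both routes are legitimate; the characterization route buys you the faithfulness statement for free in case (1), whereas the direct route makes the verification of both Galois axioms explicit and is more transparent in case (2), where $B\lot A C$ need not be faithful over $B$ and \fullref{prop:characterize} does not directly apply. One small wrinkle in your fixed-point argument: since $\vp$ itself is assumed to be a homotopical Galois extension, \fullref{lem:psi}(1) already makes $\omega$ an isomorphism with $M=B$ regardless of whether hypothesis (1) or (2) of this lemma holds, so the case split you describe there (using (2) via \fullref{lem:psi}(2), using (1) together with faithfulness of $C$) is not quite how the logical dependence goes.

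The genuine gap is in your treatment of the ``moreover'' clause. Your first instinct was right — faithfulness of $C$ over $A$ passes to faithfulness of $B\lot A C$ over $B$ by \fullref{lem:faithful}(1) — and that is what the paper needs in order to invoke \fullref{prop:characterize}. But when you then try to upgrade to faithfulness over $A$, the argument stalls exactly where you feared: from $N\lot A (B\lot A C)\cong *$, rearranging gives $(N\lot A B)\lot A C\cong *$, and faithfulness of $C$ over $A$ then yields only $N\lot A B\cong *$; without faithfulness of $B$ over $A$ (which is not assumed), you cannot conclude $N\cong *$. No amount of reordering the tensor factors fixes this, since $\psi\colon A\to B$ is an arbitrary map of commutative algebras. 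The conclusion of the ``moreover'' clause should be read as faithfulness as a \emph{$B$}-module (this is the version that follows from \fullref{lem:faithful}(1) and is what \fullref{prop:characterize} requires, and it matches Rognes's formulation of cobase change for faithful $G$-Galois extensions). As written, your attempted upgrade to $A$-module faithfulness is not a correct proof of anything, and the hedging language (``possibly after first using\ldots forces $N\cong *$'') papers over a step that in fact fails.
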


\begin{proof} See the proofs of \cite[Lemmas 7.1.1 and 7.1.3]{rognes}.  We apply \fullref{prop:characterize}, the hypotheses of which hold thanks to \fullref{lem:extend-dualizable}, \fullref{lem:faithful}, and \fullref{prop:galois-dualizable}, together with the facts that $\vp$ is {a} homotopical Galois {extension} and that $H$ is dualizable.  
\end{proof}

\begin{lem}\label{lem:reflect-galois}  Let $\vp:  \triv{H}{A}\to C^{\circlearrowleft H}$ be (strict or homotopical) commutative Galois data, and let $\psi: A \to B$ be  a morphism of commutative algebras  such that $B$ is faithful and dualizable over $A$.  If the induced algebra map $\bar \vp:  \triv{H}{B}\to B\lot A C^{\circlearrowleft H}$ is a homotopical Galois extension, then so is $\vp$.  Moreover, if $\bar \vp $ is faithful, then so is $\vp$.
\end{lem}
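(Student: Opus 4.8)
The plan is to transport the argument of \cite[Lemma 7.1.2]{rognes} to the present setting, much as in the proof of the preceding lemma. By \fullref{defn:galois-ext}, and since $\bar\vp$ is assumed to be a homotopical Galois extension, it suffices to deduce that
\[\vp^{hH}\colon A\to C^{hH}\qquad\text{and}\qquad \beta_{\vp}\colon C\lot AC\to \rhom{}(H,C)\]
are isomorphisms in $\ho\cat M$ from the fact that $\bar\vp^{hH}$ and $\beta_{\bar\vp}$ are. The descent mechanism is the following: because $B$ is faithful as an $A$-module, a morphism $f$ of right $A$-modules is an isomorphism provided $f\lot AB$ is one --- forming the (homotopy) cofiber $F$ of $f$, the total left derived functor $-\lot AB$ carries $F$ to the cofiber of $f\lot AB$, which is trivial, whence $F\cong *$ by faithfulness. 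Thus for each of the two maps we only have to identify its base change along $\psi$ with the corresponding map for $\bar\vp$. (Note that dualizability and faithfulness of $C$ over $A$ are not needed a priori; they follow a posteriori from \fullref{prop:galois-dualizable}.)

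For $\vp^{hH}$: replacing $B$ by a cofibrant model over $A$ (which does not affect the derived functors), \fullref{lem:psi}(2) applies because $B$ is dualizable as an $A$-module, so the canonical map $\omega\colon B\lot AC^{hH}\to (B\lot AC)^{hH}$ is an isomorphism. Naturality of $\omega$ with respect to $\vp$, together with the identification $\omega_{A}\colon B\lot AA^{hH}\xrightarrow{\cong}(B\lot AA)^{hH}$ and the cancellation isomorphism $B\lot AA\cong B$ of \fullref{rem:cancellation}, yields the factorization $\bar\vp^{hH}=\omega\circ(B\lot A\vp^{hH})$. Since $\bar\vp^{hH}$ and $\omega$ are isomorphisms, so is $B\lot A\vp^{hH}$, and hence $\vp^{hH}$ by the descent mechanism above.

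For $\beta_{\vp}$: \fullref{rem:cancellation} gives $(B\lot AC)\lot B(B\lot AC)\cong B\lot A(C\lot AC)$, and the dualizability of $H$ together with \fullref{lem:dualizable-properties}(2) gives $\rhom{}(H,B\lot AC)\cong DH\lot{}(B\lot AC)\cong B\lot A\rhom{}(H,C)$. Tracing through the construction of $\beta_{(-)}$ as the transpose of the action-and-multiplication map shows that under these identifications $\beta_{\bar\vp}$ corresponds to $B\lot A\beta_{\vp}$; as $\beta_{\bar\vp}$ is an isomorphism, so is $B\lot A\beta_{\vp}$, and hence $\beta_{\vp}$. This shows that $\vp$ is a homotopical $H$-Galois extension. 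Finally, suppose $\bar\vp$ is faithful, i.e.\ $B\lot AC$ is faithful as a $B$-module; since $B$ is faithful as an $A$-module, \fullref{lem:faithful}(2) with $M=C$ shows that $C$ is faithful as an $A$-module, i.e.\ $\vp$ is faithful.

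The main obstacle is the two compatibility verifications --- that $\bar\vp^{hH}$ factors as $\omega\circ(B\lot A\vp^{hH})$, and that $\beta_{\bar\vp}$ matches $B\lot A\beta_{\vp}$ under the cancellation and $H$-dualizability isomorphisms. Each is a diagram chase that reduces to the definitions of $\omega$ and $\beta_{(-)}$ and to naturality, exactly as in \cite[\S7]{rognes}, but must be carried out in $\ho\cat M$. One should also pin down precisely what the descent mechanism requires: that a morphism of $A$-modules whose homotopy cofiber becomes trivial after applying $-\lot AB$ is already an isomorphism. This holds in the stable module categories occurring in \fullref{sec:motmodstructures}, but it uses slightly more than the bare axioms of \fullref{conv:modelcat1}, so it should either be invoked via stability in those applications or recorded as a standing hypothesis here.
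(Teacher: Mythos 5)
Your proposal is correct and follows essentially the same route as the paper, which simply points to Rognes's Lemma~7.1.4 (you cite 7.1.2 by slip) and invokes \fullref{lem:faithful} and part~(2) of \fullref{lem:psi} together with the faithfulness and dualizability of $B$ over~$A$; your filling-in of the compatibility of $\bar\vp^{hH}$ with $\omega\circ(B\lot_A\vp^{hH})$ and of $\beta_{\bar\vp}$ with $B\lot_A\beta_\vp$ under the cancellation and $DH$-isomorphisms is exactly the content that the paper leaves to the reader. Your closing observation is well taken: the cofiber-based descent step (``$B\lot_A f$ an isomorphism and $B$ faithful $\Rightarrow f$ an isomorphism'') uses stability of the module categories, which holds in the motivic spectrum setting of \fullref{sec:motmodstructures} but is not guaranteed by \fullref{conv:modelcat1} alone; the paper inherits this implicit hypothesis from Rognes without comment, so flagging it is a genuine improvement rather than a gap in your argument.
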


\begin{proof} See the proof of \cite[Lemma 7.1.4]{rognes}.  We apply \fullref{lem:faithful} and part (2) of \fullref{lem:psi}, together with the faithfulness and dualizability of $B$ over 
$A$.
\end{proof}

\subsection{From subgroups to subextensions}\label{sec:subtoext}
Motivated by \cite[Theorem 7.2.3]{rognes}, we now establish a forward Galois correspondence for homotopical Galois extensions. Since Rognes's proof of the backward Galois correspondence for commutative ring spectra  \cite[Theorem 11.2.2]{rognes} is far from formal, requiring clever application of Goerss-Hopkins obstruction theory, we do not expect to be able to prove an analogous result in an arbitrary monoidal model category.

\begin{defn} A morphism $\iota\colon K\to H $ of bialgebras in $\cat M$ is \emph{allowable} if $\iota $ extends to an isomorphism $K\lot{}\Horb KH \cong H $ in $\ho {}_{K}\cat {Mod}$.  An allowable morphism $\iota\colon K\to H$ is \emph{normal} if $\Horb KH$ admits an algebra structure such that the counit $\ve: K\to I$ induces an algebra map $H\to \Horb KH$.
\end{defn}

\begin{exmp} The definition above generalizes the definition of allowable subgroup from \cite[Section 7.2]{rognes}.  In particular, if $G$ is a finite group, and $K$ is any subgroup of $G$, then the ring spectrum map $\mathbb S[K] \to \mathbb S[G]$ determined by the inclusion of $K$ into $G$ is allowable.  It is normal if $K$ is a normal subgroup of $G$.

Similarly, for any subgroup $K$ of a finite group $G$, the inclusion $\Z[H] \to \Z [G]$ is an allowable morphism of bialgebras in $\cat{Ab}$.
\end{exmp}

Recall that according to \fullref{conv:modelcat1}, a morphism of bialgebras $\iota\colon K\to H$ induces a right Quillen functor $\iota^{*}\colon {}_{H}\cat {CAlg} \to {}_{K}\cat{CAlg}$.    Let $\mathbb R \iota^{*}$ denote the associated total right derived functor on homotopy categories.  Since $\iota^{*}\circ \triv H =\triv K$, it follows that $\mathbb R\iota^{*}\circ \htriv H =\htriv K$ and thus, by the theory of mates, there is a natural transformation
$$\zeta: \hfp H \Longrightarrow \hfp K \circ \mathbb R\iota ^{*}.$$
For any commutative Galois data $\triv HA \to B^{\circlearrowleft H}$, {there is therefore a map
$$A\cong \Hfp HB \xrightarrow {\zeta _{B}}\Hfp K {\big( \mathbb R\iota^{*}(B)\big)}$$
in $\ho \cat {CAlg}$. 
Further, the counit of the $\htriv K \dashv \hfp K$ adjunction gives a map
$$\htriv K \Hfp K {\big( \mathbb R\iota^{*}(B)\big)} \to \mathbb R \iota^{*}(B)$$
in $\ho {}_{K}\cat {CAlg}$.}

\begin{thm}\label{thm:galois-corresp} Let $H$ and $K$ be dualizable Hopf algebras such that the underlying objects in $\cat M$ are cofibrant. Let $\vp\colon\triv HA \to B^{\circlearrowleft H}$ be a faithful homotopical Galois extension of commutative algebras such that the object underlying $B$ in $\cat M$ is cofibrant.  If $\iota\colon K\to H$ is an allowable morphism, then 
$$\htriv K \Hfp K{\big(\mathbb R \iota^{*}(B) \big)} \to \mathbb R \iota^{*}(B)^{\circlearrowleft K}$$ 
is a homotopical $K$-Galois extension.  If $\iota$ is normal, then 
$$A \to \Hfp K{\big(\mathbb R \iota^{*}(B) \big)}$$
is a homotopical $\Horb KH$-Galois extension.
\end{thm}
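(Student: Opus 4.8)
The plan is to mimic the strategy of Rognes's proof of \cite[Theorem 7.2.3]{rognes}, translating each step into the formal framework built in Sections \ref{sec:gal-formal} and using the invariance and dualizability results established above. Write $L = \mathbb{R}\iota^\ast(B)$ for the underlying $K$-algebra of $B$, and write $C = \Hfp K L$ for the candidate base of the $K$-Galois subextension; the goal is to verify the two defining conditions of \fullref{defn:galois-ext} for the $K$-algebra map $\htriv K C \to L^{\circlearrowleft K}$, namely that the homotopy-fixed-point comparison map and the map $\beta$ are isomorphisms.

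\emph{Step 1: reduce the $\beta$-condition to a statement about $B$.} First I would identify $L \lot C L$ with $B \lot A B$ after base change. Since $\vp$ is a faithful homotopical $H$-Galois extension, \fullref{prop:galois-dualizable} gives that $B$ is dualizable over $A$, and \fullref{lem:j}(3) identifies $B \lot{} H \cong \rhom A(B,B)$; restricting the $H$-action along $\iota$ and using allowability ($K \lot{} \Horb KH \cong H$) I would rewrite $\rhom A(B,B)$ as a free module over $B\langle K\rangle$ of ``rank'' $\Horb KH$. The point is that the twisted algebra $B\langle H\rangle$, restricted to $K$, becomes $B\langle K\rangle \lot{} \Horb KH$, and combining this with \fullref{lem:j}(2) ($j_\vp$ an isomorphism) should yield $j$ for the $K$-data, which is exactly what $\beta_{\text{new}}$ being an isomorphism amounts to via the transpose/dualizability dictionary of \fullref{lem:j}.

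\emph{Step 2: the homotopy-fixed-point condition.} Here I would use the norm-map machinery of \fullref{thm:norm} together with \fullref{lem:psi}. Because $B$ is dualizable over $A$ and $\vp$ is a Galois extension, $\omega\colon M \lot A B^{hH} \to (M\lot A B)^{hH}$ is an isomorphism for dualizable $M$ (part (2) of \fullref{lem:psi}); applying this to the allowable decomposition, and using \fullref{lem:horb-free} and \fullref{prop:sadg} to move the dualizing object $\Hfp KK$ past the relevant functors, I expect to identify $\Hfp K L$ and then check that $\htriv K C \to L$ realizes the counit correctly—i.e., that iterated homotopy fixed points $\Hfp K{\Hfp{\Horb KH}{(-)}} \cong \Hfp H{(-)}$, the formal shadow of the subgroup/quotient decomposition. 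Faithfulness of the new extension should follow from \fullref{lem:faithful}(1) applied to the faithful $A$-module $B$, since $L$ is the same underlying object as $B$.

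\emph{Step 3: the normal case.} When $\iota$ is normal, $\Horb KH$ carries an algebra structure and $H \to \Horb KH$ is an algebra map, so $\Horb KH$ is a dualizable Hopf algebra in $\ho\cat M$ (dualizability of $H$ and $K$ forces dualizability of the ``quotient''), and $C = \Hfp K L$ inherits an $\Horb KH$-algebra structure with $A \to C$ an $\Horb KH$-algebra map via the composite $A \cong \Hfp HB \to \Hfp K L$. One then reruns Steps 1–2 with $(A \to C, \Horb KH)$ in place of $(C \to L, K)$: the $\beta$-map for this data unwinds, using the allowable isomorphism $K \lot{} \Horb KH \cong H$ once more, to the already-established $\beta_\vp$ for the full extension, and the homotopy-fixed-point condition becomes $\Hfp{\Horb KH}C \cong \Hfp{\Horb KH}{\Hfp KB} \cong \Hfp HB \cong A$. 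I would cite the proof of \cite[Theorem 7.2.3]{rognes} for the bookkeeping, noting that the only non-formal ingredient there—some point-set model verification—is subsumed by \fullref{conv:modelcat2}.

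\emph{Main obstacle.} The hard part will be Step 1, specifically making the restriction-along-$\iota$ of the twisted algebra $B\langle H\rangle$ interact correctly with the allowable decomposition $H \cong K\lot{}\Horb KH$ at the level of $\ho\cat M$ with all the module structures (left $B$, left $H$, free right $H$) tracked simultaneously; this is where Rognes's argument is most computational, and transcribing it to abstract derived tensor/Hom requires care that the various ``untwisting'' isomorphisms (cf. \cite[p.~76]{lewis-may}, used in \fullref{thm:norm}) are equivariant for \emph{all} the actions in play, not just one at a time. Everything else is a formal consequence of the lemmas already proved.
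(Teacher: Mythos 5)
Your plan takes a genuinely different route from the paper, and as written it contains a real gap. The paper's proof is essentially a reflection-along-cobase-change argument: writing $C= \Hfp K{\big(\mathbb R \iota^{*}(B) \big)}$ and abusing notation by calling $\mathbb R\iota^*(B)$ simply $B$, one builds a commutative diagram in $\ho\cat M$ with rows $C \to B\lot A C \xrightarrow{\hat\beta_\vp} \Hfp K{\rhom{}(H,B)} \xleftarrow{\upsilon} \rhom{}(\Horb KH,B)$ sandwiched between the analogous $B$-row and $A$-row, with the lefthand squares pushouts. Then $\hat\beta_\vp$ is shown to be an isomorphism using $\omega$ (\fullref{lem:psi}), and $\upsilon$ is an isomorphism by equation (\ref{eqn:hfp-horb}); allowability identifies $\rhom{}(\Horb KH,B) \to \rhom{}(H,B)$ as a trivial $K$-Galois extension (\fullref{exmp:trivial-ext}), hence $B\lot A C \to B\lot A B$ is $K$-Galois; and finally \fullref{lem:reflect-galois}, together with the faithfulness and dualizability of $B$ over $A$, \emph{reflects} the Galois property down to $C \to B$. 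The normal case re-applies \fullref{lem:reflect-galois} to the column $B \to \rhom{}(\Horb KH,B)$, which is trivially $\Horb KH$-Galois. Nothing in the paper's argument attempts to verify the two Galois conditions for $C\to B$ directly.

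Your proposal, by contrast, tries to check those conditions head-on: the $\beta$-condition by tracking the restricted twisted algebra $B\langle H\rangle$ against the allowable decomposition $H\cong K\lot{}\Horb KH$, and the homotopy-fixed-point condition via the norm map. The first of these is the genuine gap: what must be shown is that $B\lot C B \to \rhom{}(K,B)$ is an isomorphism, and there is no direct identification of $B\lot C B$ with anything built from $B\lot A B$ or $B\langle K\rangle$ without first establishing that $C\to B$ is faithfully dualizable — which is precisely the content you would need \fullref{lem:reflect-galois} (or its proof) to supply, and which you never obtain. You flag this as the ``main obstacle'' but leave it unresolved; the paper simply sidesteps it. Two smaller issues: for the allowable case the homotopy-fixed-point condition is a formal triangle identity once $C$ is defined as $\Hfp K B$ and needs no norm-map input at all; and the iterated fixed-point decomposition you invoke, $\Hfp K{\Hfp{\Horb KH}{(-)}}\cong\Hfp H{(-)}$, has the order reversed — for the normal case you would want $\Hfp{\Horb KH}{\Hfp K{(-)}}\cong\Hfp H{(-)}$, and even then the paper avoids any such decomposition by again appealing to the reflection lemma.
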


\begin{proof} Our proof here is inspired by that of  \cite[Theorem 7.2.3]{rognes}.

To simplify notation, let $C= \Hfp K{\big(\mathbb R \iota^{*}(B) \big)}$.  We also abuse notation slightly and write $B$ instead of $\mathbb R \iota^{*}(B)$ and $A$ instead of $\Hfp HB$. Consider the following commutative diagram in $\ho \cat M$.
$$\xymatrix{B\ar [r]& B\lot A B \ar [r]^(0.4){\beta_{\vp}}_(0.4){\cong}& \rhom{}(H,B) & \rhom {}(H,B)\ar [l]_{=}\\
C\ar [u]^{\ve}\ar [r]&B\lot AC\ar[u]\ar [r]^(0.4){\hat \beta_{\vp}}_(0.4){\cong} & \Hfp K{\rhom{}(H,B)}\ar [u]& \rhom{}(\Horb KH, B)\ar [l]_{\upsilon}^{\cong}\ar [u]\\
A\ar[u]^{\zeta}\ar [r]^{\vp}&B\ar[u]\ar [r]^(0.4){=}&B\ar[u]&B\ar[u]\ar [l]_{=}}$$ 
The lefthand squares are pushouts.  The map $\beta_{\vp}$ is an isomorphism, since $\vp $ is a homotopical Galois extension, while the map $\hat\beta_{\vp}$ is equal to the composite
$$B\lot AC \xrightarrow {\omega} \Hfp K{(B\lot A B)} \xrightarrow {\Hfp K{\beta_{\vp}}} \Hfp K{\rhom{}(H,B)}.$$
By part (1) of \fullref{lem:psi}, the map $\omega$ is an isomorphism, whence $\hat\beta_{\vp}$ is as well.  Equation (\ref{eqn:hfp-horb}) implies that the natural map $\upsilon$ is also an isomorphism. 

If $\iota$ is allowable, then
$$\rhom{}(H, B)\cong \rhom{}(K\lot {} \Horb KH, B) \cong \rhom{}\big(K, \rhom{}(\Horb KH, B)\big),$$
whence the upper righthand vertical map
$$ \rhom{}(\Horb KH, B)\to \rhom{}(H, B)$$
is a homotopical $K$-Galois extension, as seen in \fullref{exmp:trivial-ext}.  It follows that 
$$B\lot AC \to B\lot A B$$
is also homotopical $K$-Galois.  Since $B$ is  dualizable over $A$ by \fullref{prop:galois-dualizable} and faithful over $A$ by hypothesis, \fullref{lem:extend-dualizable} and \fullref{lem:faithful} imply that $B\lot AC$ is dualizable and faithful over $C$.  By \fullref{lem:reflect-galois}, we conclude that $\ve\colon C \to B$ is a homotopical $K$-Galois extension.

If $\iota$ is  normal, then \fullref{exmp:trivial-ext} implies that $B\to \rhom{}(\Horb KH, B)$ is homotopical $\Horb KH$-Galois.  Since $B$ is  dualizable over $A$ by \fullref{prop:galois-dualizable} and faithful over $A$ by hypothesis, \fullref{lem:reflect-galois} implies that $\zeta\colon A\to C$ is a homotopical $\Horb KH$-Galois extension, as desired.
\end{proof}

\section{Motivic model structures}\label{sec:motmodstructures}

In this section we establish the existence of model category structures on categories of motivic spaces and spectra to which the formal Galois theory framework of the previous section applies.  We make extensive use of the theory of left- and right-induced model category structures, recalled in detail in \fullref{appendix}.  We refer the reader to \fullref{glossary} for a table presenting all of the numerous model structures constructed in this section.

\subsection{Simplicial presheaves}
Let $\CC$ be a small category, and let $\sPre{\CC}$ denote the category of simplicial presheaves on $\CC$, i.e., functors from $\CC^{op}$ to  $\sSet$, or equivalently, simplicial objects in the category  $\Pre{\CC}$ of set-valued presheaves on $\CC$. Every simplicial set $A$ can be viewed as a constant presheaf, which by slight abuse {of notation} we also denote by $A$. Moreover, if $X $ is an object of $\CC$, we also denote by $X$ the simplicial presheaf it represents, i.e., its image under the Yoneda embedding $\CC \lra \sPre{\CC}$ (constant in the simplicial direction).

Let $\psSet$ be the category of pointed simplicial sets, and denote by $\psPre{\CC}$  the category of pointed simplicial presheaves, i.e.,  functors $F: \CC^{op} \lra \psSet$. 
When we work with pointed simplicial presheaves we compose the Yoneda embedding $\CC \lra \sPre{\CC}$ with the functor adding a disjoint basepoint $\sPre{\CC} \lra \psPre{\CC}$ to get an embedding $\CC \lra \psPre{\CC}$. The category $\psPre{\CC}$ is enriched over $\psSet$.

 On the other hand, if $\CC$ itself has a terminal object $*$, one can also embed the category $\CC_{*}= *\negthinspace\downarrow\negthinspace \CC$ of pointed objects in $\CC$ into $\psPre{\CC}$.  Under the Yoneda embedding, an object $c\colon *\to C$ in $\CC_{*}$   can be seen as a map of simplicial presheaves.  Since the simplicial presheaf associated to $*$ is the terminal object in $\psPre{\CC}$, it follows that the unpointed Yoneda embedding induces a pointed Yoneda embedding.

The categories $\sPre{\CC}$ and $\psPre{\CC}$ can be equipped with several well known model structures, such as the projective, injective, and flasque model structures (see, for example, Isaksen \cite[Theorem 2.2 and 3.7]{isaksen-flasque}). The identity functor induces Quillen equivalences between any pair {among} these three model structures.  For the purposes of this paper, we use the injective model structure, in which cofibrations and weak equivalences are both defined objectwise. The injective model structure is left proper, cellular, and simplicial {(see Lurie \cite[Proposition A.2.8.2 and Remark A.2.8.4]{HTTLurie})}. 

There is a closed monoidal structure on $\sPre{\CC}$ (respectively, $\psPre{\CC}$) given by  the objectwise product  in $\sSet$ (respectively, smash product in $\psSet$). Since the monoidal structure is defined objectwise, algebras, coalgebras, modules, and comodules in $\sPre{\CC}$ and $\psPre{\CC}$ are also defined objectwise, e.g., $A$ is an algebra in $\sPre{\CC}$ if for every $X \in \CC$, $A(X)$ is a simplicial monoid. Together with the objectwise monoidal structure, the injective model structure on $\sPre{\CC}$ (respectively, $\psPre{\CC}$) forms a monoidal model category, since $(\sSet, \times , *)$ (respectively, $(\psSet, \wedge , S^{0})$) is a monoidal model category.

\subsection{Motivic  spaces}
Let $S$ be a Noetherian scheme of finite Krull dimension, and let $\Sm S$ be the category of separated, smooth schemes of finite type over $S$, which we simply call smooth schemes over $S$.
To do motivic homotopy theory, one first builds the model category of motivic spaces, of which the underlying category is the category $\sPre{\Sm{S}}$. Embedding $\Sm{S}$ into $\sPre{\Sm{S}}$ formally adjoins colimits to $\Sm{S}$, which is far from cocomplete. 
 
The injective, projective, and flasque model structures on $\sPre{\Sm{S}}$, which are referred to in this context as \emph{global} model structures, have the drawback that colimits 
in $\Sm{S}$ may not be preserved under the Yoneda embedding. To repair this problem, one localizes the global model category structure with respect to a well chosen Grothendieck topology on $\Sm{S}$. {Usually, one chooses the Nisnevich topology, as this leads to the representability of important motivic invariants, such as $K$-theory.} The resulting model category structure is called a \emph{local} model structure on $\sPre{\Sm{S}}$.

More precisely, the \emph{local injective model structure} on $\sPre{\Sm{S}}$ is the left Bousfield localization of the global injective model structure at the class of all Nisnevich hypercovers \cite[{Definition} 4.1]{isaksen-flasque}. A more direct description of this localization can be given in terms of \emph{elementary distinguished Nisnevich squares}, i.e., cartesian diagrams of schemes
\[\xymatrix{
U \times_X V \ar[r] \ar[d]  & V \ar[d]^-p\\
U \ar[r]^-i & X
} \]
such that $i$ is an open immersion, $p$ is \'etale, and $p^{-1}\big(X \smallsetminus i(U)\big) \to X \smallsetminus i(U)$ is an isomorphism on the induced reduced schemes. The last property ensures that the square is also co-cartesian.  The local injective model structure is then the left Bousfield localization of the global injective model structure on $\sPre{\Sm{S}}$ at the set of maps
\[ U \coprod_{U \times_X V} V \to X \]
ranging over all elementary distinguished squares \cite[Thm. 4.9]{isaksen-flasque}.

Finally, we perform one more left Bousfield localization of the local injective model structure, namely at the maps $X \times \A^1 \to X$ for all $X\in \Sm{S}$.  (In the localized structure, the affine line {thus} plays the role of the unit interval in ordinary homotopy theory.) {We call the resulting model category, denoted $\spc$, the \emph{category of motivic spaces} with the \emph{motivic injective model structure}. Its distinguished classes of maps are called motivic equivalences, motivic fibrations, and motivic cofibrations. Note that the underlying category is just $\sPre{\Sm{S}}$. By a similar process, we construct the model category $\pspc$ of pointed motivic spaces.}

\begin{lem}\label{lem:cat_mot}
The model categories $\spc$  and $\pspc$ are simplicial, left proper, and cellular. Moreover, they are closed symmetric monoidal and satisfy the monoid axiom.
\end{lem}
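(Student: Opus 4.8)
The plan is to obtain all the listed properties by tracking what happens to the global injective model structure on $\sPre{\Sm S}$ (resp.\ $\psPre{\Sm S}$) under the two successive left Bousfield localizations that produce $\spc$ (resp.\ $\pspc$): first at the set of elementary distinguished Nisnevich square maps, and then at the set of projections $X\times\A^1\to X$, $X\in\Sm S$.

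First, the ``soft'' properties. The global injective model structure on simplicial presheaves is left proper, cellular, and simplicial, and each of the two localizing classes is (equivalent to) a \emph{set} of maps, since $\Sm S$ is essentially small; hence the standard theory of left Bousfield localization (Hirschhorn) applies and guarantees that each localization exists and is again left proper, cellular, and simplicial. Since left properness, cellularity, and the simplicial structure are each stable under a single left Bousfield localization, and $\spc$ (resp.\ $\pspc$) is obtained by two of them, $\spc$ and $\pspc$ are left proper, cellular, and simplicial.

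Next, the closed symmetric monoidal structure. The underlying closed symmetric monoidal structure (objectwise product with its internal hom on $\sPre{\Sm S}$; objectwise smash on $\psPre{\Sm S}$) is untouched by Bousfield localization, and the global injective structure is a monoidal model category, so the only thing to check is that the two localizations are \emph{monoidal}, i.e.\ that $\spc$ and $\pspc$ still satisfy the pushout--product axiom. As cofibrations are unchanged under localization, the pushout--product of two cofibrations remains a cofibration, and only the acyclic case is at issue; by the usual reduction, and using that in the injective structure every object is cofibrant (so that representable presheaves generate the cofibrant objects under colimits compatible with the monoidal product), it suffices to verify that the product (resp.\ smash) of each generating map of each localization with an arbitrary smooth scheme $X/S$ is again a local/motivic equivalence. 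For the $\A^1$-projections this is immediate, since $(Y\times\A^1\to Y)\times X$ is the projection $(Y\times X)\times\A^1\to Y\times X$. For the Nisnevich square maps one uses that colimits in presheaf categories are universal, so that the product of $U\coprod_{U\times_X V}V\to X$ with $W$ is the analogous map attached to the square obtained by multiplying the original elementary distinguished Nisnevich square by $W$; this square is again elementary distinguished, because open immersions, \'etale morphisms, and the isomorphism-on-reduced-complements condition are all stable under products with a smooth $S$-scheme. Thus both localizations are monoidal, and $\spc$, $\pspc$ are closed symmetric monoidal model categories; the unit (the terminal presheaf $\Spec S$, resp.\ $S^0$) is cofibrant, so the unit axiom holds as well. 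This verification of monoidality of the two localizations is the one genuinely non-formal point; everything else is a direct appeal to general localization theory or a formal consequence of having all objects cofibrant.

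Finally, the monoid axiom is then automatic. In the injective structure the cofibrations are precisely the objectwise monomorphisms, and this is unchanged by the localizations, so every object of $\spc$ (resp.\ $\pspc$) is cofibrant. In any monoidal model category in which every object is cofibrant, the monoid axiom follows from the pushout--product axiom: for an acyclic cofibration $j\colon A\to B$ and any object $Z$, the map $j\otimes Z$ is the pushout--product of $j$ with the cofibration $\emptyset\to Z$ (resp.\ $0\to Z$ in the pointed case), hence an acyclic cofibration; therefore the class of all maps of the form $j\otimes Z$ with $j$ an acyclic cofibration lies among the acyclic cofibrations, which are closed under pushout and transfinite composition, as required.
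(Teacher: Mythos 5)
Your proof follows the same overall strategy as the paper: the simplicial, left proper, and cellular properties come from the global injective structure and are preserved by left Bousfield localization, and the monoid axiom follows from the pushout--product axiom plus the fact that every object is cofibrant. The one place you diverge is the pushout--product axiom itself: the paper simply cites Pelaez \cite[Corollary 2.3.5]{slice}, whereas you give a direct verification that each of the two localizations is monoidal. This is a legitimate and more self-contained alternative, but it does rest on one point that you assert without justification and that is genuinely non-trivial: that the condition ``$p^{-1}\big(X \smallsetminus i(U)\big) \to X \smallsetminus i(U)$ is an isomorphism on reduced schemes'' is stable under $-\times_S W$ for $W$ smooth over $S$. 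This requires knowing that taking the reduced subscheme structure commutes with smooth base change, i.e.\ that $(Z \times_S W)_{\mathrm{red}} = Z_{\mathrm{red}} \times_S W$; this holds because a scheme flat over a reduced Noetherian base with reduced fibres is reduced, and smooth morphisms have geometrically regular (hence reduced) fibres -- but you should say so explicitly rather than folding it into a list of ``stable under products'' claims. Alternatively, you can sidestep the reduced-complement condition entirely: it suffices to observe that the pair $\{U\times W \hookrightarrow X\times W,\ V\times W \to X\times W\}$ is still a Nisnevich cover and the square is still Cartesian, which already makes $U\times W \coprod_{(U\times_X V)\times W} V\times W \to X\times W$ a Nisnevich-local equivalence without having to establish that the product square is elementary distinguished in the strict sense. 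Likewise, your ``usual reduction'' to checking generating maps against representables (rather than against all cofibrant objects) is correct, but it leans on the facts that every simplicial presheaf is a homotopy colimit of representables and that the class of $Z$ with $s\otimes Z$ a local equivalence is closed under homotopy colimits; a one-line acknowledgment of these closure properties would make the argument watertight. None of these are fatal gaps, but they are the places where a referee would push back.
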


\begin{proof}
The first three conditions are satisfied because these properties hold for the global injective model structure and are preserved by left Bousfield localization. The pushout product axiom follows from {Pelaez} \cite[Corollary 2.3.5]{slice} and immediately implies the monoid axiom, since every object is cofibrant.
\end{proof}

Thanks to this lemma, the next result is an immediate consequence of {Schwede and Shipley} \cite[Theorem 4.1(3)]{schwede-shipley}.
 
\begin{cor}\label{cor:algright}
There exists a model category structure on the category $\aspreS$ of algebras in $\sPre{\Sm{S}}$
right-induced from $\spc$  by the adjunction
$$\adjunction{\sPre{\Sm S}}{\aspreS,}{F}{U}$$
where $F$ denotes the free associative algebra functor and $U$ the forgetful functor.
\end{cor}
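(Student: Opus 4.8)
The plan is to obtain the model structure by a standard transfer (right‑induction) argument, appealing to the theorem of Schwede and Shipley on model structures for monoids \cite[Theorem 4.1(3)]{schwede-shipley}. Recall that the right‑induced structure on $\aspreS$ along the free--forgetful adjunction $F\dashv U$ is, by definition, the one in which a morphism of algebras is a weak equivalence (respectively, a fibration) precisely when its underlying morphism of motivic spaces is one; here $F$ sends a motivic space $X$ to the tensor algebra $\coprod_{n\ge 0}X^{\otimes n}$ and $U$ forgets the multiplication.

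First I would verify that the hypotheses of \cite[Theorem 4.1]{schwede-shipley} hold for $\spc$. That result requires the underlying category to be a cofibrantly generated, closed symmetric monoidal model category satisfying the monoid axiom, with the (co)domains of the generating (acyclic) cofibrations sufficiently small. All of this is supplied by \fullref{lem:cat_mot}: $\spc$ is cellular --- hence cofibrantly generated with the requisite smallness of cells --- closed symmetric monoidal, and satisfies the monoid axiom. (As noted in the proof of \fullref{lem:cat_mot}, every object of $\spc$ is cofibrant, which is exactly what lets the pushout‑product axiom there upgrade to the monoid axiom.)

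Granting this, \cite[Theorem 4.1(3)]{schwede-shipley} produces a cofibrantly generated model structure on the category of monoids in $\spc$, namely $\aspreS$, with generating cofibrations $\{F(i)\}$ and generating acyclic cofibrations $\{F(j)\}$, where $i$ and $j$ range over the generating cofibrations and generating acyclic cofibrations of $\spc$, and with weak equivalences and fibrations created by $U$. This is precisely the right‑induced model structure. There is no real obstacle to speak of: the only non‑formal input --- that the pushout in $\aspreS$ of a map $F(j)$, with $j$ a generating acyclic cofibration, along an arbitrary algebra map still has underlying motivic equivalence --- is exactly what the monoid axiom guarantees, and it is already absorbed into the statement of \cite[Theorem 4.1(3)]{schwede-shipley}. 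Hence the main (and only) task is the routine checking of hypotheses carried out in \fullref{lem:cat_mot}.
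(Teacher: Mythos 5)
Your argument is exactly the paper's: verify via \fullref{lem:cat_mot} that $\spc$ is a cofibrantly generated, closed symmetric monoidal model category satisfying the monoid axiom, then invoke Schwede--Shipley \cite[Theorem 4.1(3)]{schwede-shipley} to transfer the model structure along the free--forgetful adjunction. The extra detail you supply (explicit generating sets, the role of the monoid axiom) is correct and merely unpacks what the paper leaves implicit.
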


We denote this model category structure by $\aspc$ and call it the \emph{category of motivic algebras}.

Hornbostel's \cite[Theorem 3.12]{Hornbostel} implies the following result.
\begin{lem}\label{cor:calgright}
There exists a model category structure on the category $\caspreS$ of commutative algebras in $\sPre{\Sm{S}}$
right-induced from $\spc$ by the adjunction
$$\adjunction{\sPre{\Sm S}}{\caspreS,}{\widetilde{F}}{\widetilde{U}}$$
where $\widetilde{F}$ denotes the free commutative algebra functor and $\widetilde{U}$ the forgetful functor.
\end{lem}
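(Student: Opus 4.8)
The plan is to invoke an existing, general result on right-induced (also called projective) model structures for categories of commutative monoids in a suitable monoidal model category, exactly as the paper proposes by citing Hornbostel's \cite[Theorem 3.12]{Hornbostel}. So the strategy is to verify that $\spc$ satisfies the hypotheses of that theorem. First I would recall the statement of the cited theorem: it produces a right-induced model structure on $\mathsf{CAlg}(\mathcal M)$ along the free-commutative-algebra/forgetful adjunction $\widetilde F\dashv \widetilde U$, provided $\mathcal M$ is a cofibrantly generated (or cellular) symmetric monoidal model category satisfying the monoid axiom and some condition guaranteeing that the symmetric powers $\mathrm{Sym}^n$ interact well with (trivial) cofibrations — typically either that $\mathcal M$ satisfies a symmetric-monoid version of the pushout-product axiom, or that every object is cofibrant (which kills all the subtleties coming from $\Sigma_n$-fixed points), or that $\mathcal M$ has ``cofibrant symmetric sequences'' in the appropriate sense.

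The key steps, in order: (1) Observe that \fullref{lem:cat_mot} already gives us exactly the needed package for $\spc$ (and $\pspc$): it is simplicial, left proper, cellular, closed symmetric monoidal, satisfies the monoid axiom, and — crucially — \emph{every object is cofibrant} (this is used in the proof of \fullref{lem:cat_mot} itself, where the monoid axiom is deduced from the pushout-product axiom precisely because all objects are cofibrant). (2) Point out that when every object of $\mathcal M$ is cofibrant, the homotopical obstructions to transferring a model structure to commutative monoids evaporate: the $n$-th symmetric power functor is automatically left Quillen-ish in the relevant sense, so the generating (trivial) cofibrations of $\mathsf{CAlg}(\mathcal M)$, namely $\widetilde F(I)$ and $\widetilde F(J)$ for $I,J$ the generating (trivial) cofibrations of $\mathcal M$, permit the small-object argument and the acyclicity check. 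This is precisely the setting Hornbostel's theorem is designed for. (3) Invoke \cite[Theorem 3.12]{Hornbostel} to conclude that the right-induced structure exists on $\caspreS$, with weak equivalences and fibrations created by $\widetilde U$; denote this model category $\caspc$. (4) Note the identical argument (with $\pspc$ in place of $\spc$, again all objects cofibrant by \fullref{lem:cat_mot}) yields the pointed analogue.

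The main obstacle — really the only non-bookkeeping point — is confirming that the hypotheses of the cited theorem are met \emph{in the form we have them}, i.e., that ``cellular $+$ monoid axiom $+$ all objects cofibrant $+$ closed symmetric monoidal'' is strong enough to run Hornbostel's transfer. Concretely, one must be sure that pushouts in $\mathsf{CAlg}(\mathcal M)$ along maps $\widetilde F(f)$ with $f$ a generating trivial cofibration are again weak equivalences; this reduces, via the standard filtration of such pushouts by symmetric powers, to knowing that each associated graded piece is built from $f$ by operations (smash, $\Sigma_n$-orbits of a cofibrant object) that preserve trivial cofibrations — and the universal cofibrancy hypothesis is exactly what makes the $\Sigma_n$-orbits step harmless. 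Since \fullref{lem:cat_mot} furnishes all of this and \cite[Theorem 3.12]{Hornbostel} packages the filtration argument, the proof is short: it is the two-line proof already indicated in the excerpt, and my proposal is simply to make explicit that the relevant input is ``every object is cofibrant,'' established in \fullref{lem:cat_mot}, together with cellularity and the monoid axiom, and then cite Hornbostel.
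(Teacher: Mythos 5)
Your route --- verify the hypotheses of Hornbostel's Theorem 3.12 for $\spc$ and cite it --- is the same as the paper's, which offers nothing beyond the one-line citation; the list of properties you pull from \fullref{lem:cat_mot} is the right list to check. However, the load-bearing sentence of your explanation is not correct: it is \emph{not} true that ``when every object of $\mathcal M$ is cofibrant, the homotopical obstructions to transferring a model structure to commutative monoids evaporate,'' nor that universal cofibrancy makes the $\Sigma_n$-orbits step harmless. Cofibrancy of $X$ in $\mathcal M$ does not imply that the strict orbits $X^{\otimes n}/\Sigma_n$ compute the derived (homotopy) orbits, and \emph{that}, not the cofibrancy of the ambient objects, is the genuine obstruction in the pushout-filtration argument you sketch. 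A clean cautionary example is unbounded chain complexes over $\mathbb{F}_p$ with the injective model structure: every object is cofibrant, the category is closed symmetric monoidal and satisfies the monoid axiom, yet the naive transfer to commutative DGAs fails because $\operatorname{Sym}^p$ is not homotopy invariant in characteristic $p$. Likewise, in categories of symmetric spectra one passes to a \emph{positive} model structure to transfer to commutative rings, and this has nothing to do with whether objects are cofibrant. What actually rescues the situation for $\spc$ is specific to the Cartesian monoidal structure on simplicial presheaves --- symmetric powers in $\sSet$, computed objectwise, preserve weak equivalences and monomorphisms in a way that has no analogue in a generic monoidal model category --- and this is precisely what Hornbostel's Theorem 3.12 packages. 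So your conclusion and citation are fine and your proof has the same shape as the paper's, but the explanatory gloss you supply is a slogan that, taken at face value, would ``prove'' false transfer theorems elsewhere; replace it with an appeal to the actual hypotheses of Hornbostel's theorem, which is what the paper's two-line proof does.
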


We denote this model category structure by $\caspc$ and call it the \emph{category of motivic commutative algebras}.

\subsection{Equivariant motivic spaces}\label{sec:Gobj}

For any group $G$, let $\Gmot$ denote the category of objects in $\sPre{\Sm{S}}$ equipped with an (objectwise) left $G$-action and of $G$-equivariant morphisms or, equivalently, the category of presheaves of simplicial left $G$-sets on $\Sm{S}$. Similarly, let $\motG$ denote the category of presheaves of simplicial right $G$-sets on $\Sm S$.
There are two adjoint pairs 
\[
\xymatrix@C=5pc {
\Gmot\
\ar@<-0ex>[r]|-(.5){\ U\ }
&
\spre (\Sm{S}),\
\ar@/^1pc/[l]^-(.5){\Hom(G,-)} \ar@/_1pc/[l]_-(.5){G\ti-}
}
\]
with $(G\ti-)\dashv U\dashv \Hom (G,-)$, where $G\times -$ denotes the objectwise product with the constant simplicial set $G$ and similarly for $\Hom (G,-)$.

In the next lemma we define both left- and right-induced model structures on $\Gmot$ from $\spc$, so that, as required in \fullref{conv:modelcat2}, both the adjunction between trivial $G$-action functor and the fixed point functor and the adjunction between the trivial $G$-action functor and the orbit functor are Quillen pairs,  the first with respect to the left-induced structure and the second with respect to the right-induced structure. There are well known conditions under which a right adjoint from a category $\cat{C}$ to a model category $\sM$ creates a \emph{right-induced model structure} on $\cat{C}$ (see, for example, \cite{schwede-shipley}). For a discussion of the dual situation of left-induction, see \fullref{appendix}.

\begin{lem}\label{lem:left_ind_unstable} There are left- and right-induced model structures on $\Gmot$, created by $U$ from the model structure $\spc$.
\end{lem}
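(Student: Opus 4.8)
The plan is to invoke the standard machinery for left- and right-induced model structures along the forgetful functor $U\colon \Gmot \to \spc$, exploiting that $\spc$ is nice enough (simplicial, left proper, cellular, closed symmetric monoidal, and every object cofibrant) by \fullref{lem:cat_mot}, and that $U$ sits in the chain of adjunctions $(G\times-)\dashv U\dashv \operatorname{Hom}(G,-)$ displayed above.

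\textbf{Right-induced structure.} First I would construct the right-induced structure, where a map $f$ in $\Gmot$ is a weak equivalence (resp.\ fibration) exactly when $Uf$ is one in $\spc$. Since $G$ is a (discrete) group, $\Gmot$ is the category of diagrams $\sPre{\Sm S}^{BG}$ over the one-object groupoid $BG$; it is locally presentable because $\sPre{\Sm S}$ is, it has all small limits and colimits (computed objectwise), and $U$ preserves them. One then verifies the acyclicity/smallness hypotheses of the standard right-induction criterion (e.g.\ \cite[Theorem 11.3.2]{Hirschhorn}, or the version in Schwede--Shipley \cite{schwede-shipley} / \fullref{appendix}): the generating cofibrations and trivial cofibrations of $\Gmot$ are $G\times I_{\spc}$ and $G\times J_{\spc}$ (where $I_{\spc},J_{\spc}$ generate $\spc$), their domains and codomains are small since $\sPre{\Sm S}$ is locally presentable, and one must check that relative $(G\times J_{\spc})$-cell complexes are weak equivalences after applying $U$. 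For the last point, $U$ preserves colimits, so $U$ of such a cell complex is built from pushouts of coproducts of copies of maps in $J_{\spc}$ (here $U(G\times j)\cong \coprod_{G} j$), hence is a relative $J_{\spc}$-cell complex in $\spc$ and thus a trivial cofibration. This yields the cofibrantly generated right-induced structure.

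\textbf{Left-induced structure.} For the left-induced structure — weak equivalences and \emph{cofibrations} detected by $U$ — I would appeal to the technology surveyed in \fullref{appendix} (the Bayeh--Hess--Karpova--K\k{e}dziorek--Riehl--Shipley framework, or Makkai--Rosick\'y-type arguments): left-induced model structures exist along an accessible functor out of a locally presentable model category whose \emph{fibrant replacement} (or a functorial factorization) can be built acceptably, provided the source category is locally presentable and the relevant acyclicity condition holds. Concretely, $\Gmot$ is locally presentable, $U$ is a right adjoint (to $G\times-$) and also a left adjoint (to $\operatorname{Hom}(G,-)$), so it is accessible and preserves both limits and colimits; the key acyclicity input is that a map with the right lifting property against all $U$-detected cofibrations is a $U$-detected weak equivalence, which one checks using the postnikov/fibrant-replacement argument of \fullref{appendix} together with the fact that $\spc$ is combinatorial-like (cellular + left proper suffices for the cited theorems, or one first transfers to a Quillen-equivalent combinatorial model). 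The acyclic-cofibration generators here come from the left adjoint $\operatorname{Hom}(G,-)$: one uses that $\operatorname{Hom}(G,-)$ sends fibrations to $U$-fibrations, dualizing the right-induced argument.

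\textbf{Main obstacle.} The routine part is verifying smallness and the closure properties of cell complexes, which are immediate from local presentability. The genuine content — and the step I expect to be the crux — is the \emph{acyclicity condition for left-induction}: showing that every map in $\Gmot$ having the right lifting property with respect to all $U$-detected cofibrations is already a $U$-detected weak equivalence. Unlike right-induction, where the generating acyclic cofibrations are handed to us by $G\times J_{\spc}$, for left-induction there is no obvious generating set of acyclic cofibrations, so one must run the ``acyclicity'' machinery of \fullref{appendix} (typically: build a fibrant replacement functor in the putative structure, or use a Postnikov-presentation / $\kappa$-smallness argument) and check its hypotheses against the cellularity and left-properness of $\spc$ established in \fullref{lem:cat_mot}. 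Once both induced structures are in place, the Quillen-pair assertions of \fullref{conv:modelcat2} involving $\triv G$, $(-)^G$, and $(-)_G$ follow formally: $\triv G\dashv(-)^G$ is Quillen for the left-induced structure because $\triv G = $ (restriction along $BG\to *$) creates cofibrations, and $(-)_G\dashv\triv G$ is Quillen for the right-induced structure dually.
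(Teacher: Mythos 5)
Your right-induction argument matches the paper's: it cites Schwede--Shipley \cite[Theorem 4.1]{schwede-shipley}, and your unwinding of why $U(G\times j)\cong\coprod_G j$ keeps acyclic cofibrations acyclic is exactly the content of that citation. The left-induction part, however, has a genuine gap. You correctly identify the acyclicity condition (``RLP against $U$-detected cofibrations implies $U$-detected weak equivalence'') as the crux, but you do not actually resolve it; you gesture at ``build a fibrant replacement functor in the putative structure, or use a Postnikov-presentation / $\kappa$-smallness argument,'' none of which is carried out, and none of which is what the paper's \fullref{appendix} supplies. You also misattribute a role to $\operatorname{Hom}(G,-)$: it is the \emph{right} adjoint of $U$, and there is no a priori set of generating acyclic cofibrations for a left-induced structure, so ``dualizing the right-induced argument'' does not produce them.

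The paper sidesteps the direct acyclicity check entirely by a two-step route you did not find. Step one: work first with the \emph{global} injective model structure on $\sPre{\Sm S}$, where every object is cofibrant and the usual cylinder object construction lifts to $\Gmot$; under those hypotheses the \emph{dual of Quillen's path object argument} \cite[Theorem 2.2.1]{HKRS} gives the left-induced structure outright, with no need to verify the acyclicity inclusion by hand. Step two: $\spc$ is a left Bousfield localization of the global injective structure, and \fullref{cor:left_ind_left_Bousf} says a left-induced structure persists across any left Bousfield localization of the base. Without the detour through the global structure (to get ``all objects cofibrant'' plus a good cylinder) and without \fullref{cor:left_ind_left_Bousf}, you have no actual proof of the acyclicity condition, so your left-induction claim remains unproved. (Minor point: there is no need to ``first transfer to a Quillen-equivalent combinatorial model''; the global injective structure is already combinatorial by \cite[Proposition A.2.8.2]{HTTLurie}.)
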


We call these structures the \emph{left- and right-$U$-lifted model structures} and denote them by $\Gspcl$ and $\Gspcr$. Similar left- and right-induced model structures, denoted $\spclG$ and $\spcrG$  exist on $\motG$. By the definition of induced model structures, a morphism in $\Gspcl$ is a weak equivalence or cofibration if it is so in $\spc$, while a morphism in $\Gspcr$ is a weak equivalence or fibration if it is so in $\spc$.

\begin{proof}
We first treat the case of the left-induced structure.
We start with the injective model structure on $\spre(\Sm{S})$, where weak equivalences and cofibrations are defined objectwise. Consider the adjoint pair

\[ \adjunction{\Gmot}{\spre (\Sm{S}).}{U}{\Hom(G,-)}
\]

In the global injective model structure on $\spre(\Sm{S})$, all objects are cofibrant. 
Moreover, the usual cylinder object construction in the global injective model structure on $\spre(\Sm{S})$ lifts to provide a cylinder in $\Gmot$. The dual of Quillen's path object argument \cite[Theorem 2.2.1]{HKRS} therefore implies the existence of a model structure on $\Gmot$ left-induced from the global injective model structure.

By \fullref{cor:left_ind_left_Bousf}, it follows that  there is a left-induced model structure on $\Gmot$ created by $U$ from $\spc $, as this is just a left Bousfield localisation of the global injective model structure.

The existence of the right-induced structure on $\Gmot$ follows immediately from \cite[Theorem 4.1]{schwede-shipley}, since $\spc$ is cofibrantly generated and monoidal and satisfies the monoid axiom.
\end{proof}

The next lemma ensures that the final condition of \fullref{conv:modelcat2} holds for motivic spaces.
\begin{lem}\label{lem:homX} For every fibrant object $X$ in $\spc$, the adjunction
$$\adjunction{\Gspcl}{\big((\spcrG)^{op}}{\Hom (-,X)}{\Hom (-,X)}$$
is a Quillen pair.
\end{lem}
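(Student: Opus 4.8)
The plan is to check the Quillen condition on the \emph{left} adjoint $\operatorname{Hom}(-,X)\colon\Gspcl\to(\spcrG)^{op}$, because for this functor both the source and target cofibrations are explicitly described, whereas the ``right adjoint preserves fibrations'' formulation would force us to understand the fibrations of a left-induced structure and the cofibrations of a right-induced one, neither of which is given concretely. Concretely: by \fullref{lem:left_ind_unstable} the structure on $\Gspcl$ is left-induced along the forgetful functor $U\colon\Gmot\to\spc$, so $f$ is a cofibration (resp.\ an acyclic cofibration) in $\Gspcl$ exactly when $Uf$ is a motivic cofibration (resp.\ an acyclic motivic cofibration); dually, the structure on $\spcrG$ is right-induced along $U\colon\motG\to\spc$, so a morphism is a cofibration (resp.\ an acyclic cofibration) in $(\spcrG)^{op}$ exactly when the opposite morphism in $\spcrG$ becomes, after applying $U$, a motivic fibration (resp.\ an acyclic motivic fibration) in $\spc$.

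First I would record that $U$ commutes with the internal hom into $X$: since the monoidal product, the internal hom, and the $G$-actions on $\sPre{\Sm S}$ are all defined objectwise, one has a natural identification $U\operatorname{Hom}(M,X)=\operatorname{Hom}(UM,X)$ for $M\in\Gmot$, where on the right $\operatorname{Hom}$ is the internal hom of $\spc$ and the right $G$-action on the left side is precomposition with the left $G$-action on $M$ (this is the dualization of left and right actions of \fullref{rem:fp-orb}; the self-adjunction in the statement is the equivariant form of the closed-monoidal self-adjunction of \fullref{conv:modelcat2}). In particular $U\operatorname{Hom}(f,X)=\operatorname{Hom}(Uf,X)$ for every morphism $f$ of $\Gmot$.

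Then, given an (acyclic) cofibration $f\colon A\to B$ in $\Gspcl$, so that $Uf$ is an (acyclic) motivic cofibration, the heart of the argument is that $\operatorname{Hom}(Uf,X)\colon\operatorname{Hom}(UB,X)\to\operatorname{Hom}(UA,X)$ is an (acyclic) motivic fibration in $\spc$. This is the pullback-hom form of the pushout--product axiom for the monoidal model category $\spc$, established in \fullref{lem:cat_mot}, applied to the cofibration $Uf$ together with the fibration $X\to\ast$ --- a fibration precisely because $X$ is fibrant --- using that $\operatorname{Hom}(-,\ast)$ is terminal so the comparison pullback collapses to $\operatorname{Hom}(UA,X)$, and that the resulting map is moreover acyclic when $Uf$ is. By the translations of the first two paragraphs, this says exactly that $\operatorname{Hom}(f,X)$ is an (acyclic) cofibration in $(\spcrG)^{op}$; hence the left adjoint is left Quillen and the adjunction is a Quillen pair.

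I do not anticipate a genuine obstacle: the mathematical content is just the nonequivariant pushout--product axiom for motivic spaces plus the definitions of left- and right-induced (co)fibrations. The only place requiring care is bookkeeping --- keeping the variances straight, handling the passage to the opposite category, and noting that it is the \emph{left}-induced structure on $\Gspcl$ together with the \emph{right}-induced structure on $\spcrG$ that makes both relevant classes of cofibrations visible --- together with the (immediate but worth stating) fact that $U$ commutes with $\operatorname{Hom}(-,X)$.
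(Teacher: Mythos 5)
Your proposal is correct and follows essentially the same route as the paper's proof: both reduce to the observation that $U$ commutes with $\operatorname{Hom}(-,X)$, use that cofibrations in $\Gspcl$ are detected by $U$ and fibrations in $\spcrG$ are detected by $U$, and then invoke the monoidal model category structure of $\spc$ (with $X$ fibrant) to conclude that $\operatorname{Hom}(Uf,X)$ is an (acyclic) fibration whenever $Uf$ is an (acyclic) cofibration. The only difference is that you spell out the pullback-hom reformulation of the pushout-product axiom more explicitly, while the paper just cites that $\spc$ is a monoidal model category.
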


\begin{proof}  Let $j\colon Y \to Z$ be a cofibration in $\Gspcl$, i.e., $Uj\colon UY \to UZ$ is a cofibration in $\spc$.  Since $\spc$ is a monoidal model category, and $X$ is fibrant, 
$$\Hom (Uj, X)\colon \Hom (UZ, X) \to \Hom (UY, X)$$ 
is a fibration in $\spc$.  Because $U\Hom (j,X) = \Hom (Uj, X)$, we conclude that 
$$\Hom (j,X)\colon \Hom (Z,X) \to \Hom (Y,X)$$ is a fibration in $\spcrG$ and therefore represents a cofibration in $\big((\spcrG)^{op}$.   A similar argument shows that $\Hom (-,X)$ also preserves acyclic cofibrations.  
\end{proof}

The following pair of lemmas, and their analogues later in this section, which are easy to prove, are needed for \fullref{conv:modelcat2}.
 
\begin{lem}\label{lem:trivOrb} The adjunction between the trivial-right-$G$-action and $G$-orbits functors 
\[
\adjunction{\spcrG}{\spc}{(-)_{G}}{{\trivial_{G}}}
\]
is a Quillen pair.
\end{lem}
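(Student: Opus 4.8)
The plan is to verify directly that the right adjoint $\trivial_G\colon \spc \to \spcrG$ is a right Quillen functor, i.e. that it preserves fibrations and acyclic fibrations; equivalently (and more convenient here), that the left adjoint $(-)_G\colon \spcrG \to \spc$ preserves cofibrations and acyclic cofibrations. The model structure $\spcrG$ on $\motG$ is right-$U$-lifted from $\spc$ (by \fullref{lem:left_ind_unstable}), so a map in $\spcrG$ is a weak equivalence, respectively fibration, exactly when it is so in $\spc$ after applying $U$. The subtlety is that $(-)_G$ is the \emph{left} adjoint in a right-induced situation, so cofibrations in $\spcrG$ are not detected by a simple forgetful functor, and I cannot just check the statement on generating cofibrations in a naive way.

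First I would observe that it suffices, by the usual adjointness reformulation, to check that $\trivial_G$ preserves fibrations and acyclic fibrations. A map $q\colon X\to Y$ in $\spc$ is a (acyclic) fibration iff it has the right lifting property against (acyclic) motivic cofibrations; since the motivic injective model structure has all objects cofibrant and cofibrations defined objectwise (before localization), and since fibrations/acyclic fibrations are characterized by lifting, I would show $\trivial_G q$ is a fibration in $\spcrG$ by exhibiting lifts. Concretely, given a commutative square from an (acyclic) cofibration $j\colon A\to B$ in $\spcrG$ to $\trivial_G q$, I would use the adjunction $(-)_G \dashv \trivial_G$ to transpose it into a square from $j_G\colon A_G\to B_G$ to $q$ in $\spc$. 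Thus the whole proof reduces to the claim that the orbit functor $(-)_G$ sends (acyclic) cofibrations of $\spcrG$ to (acyclic) cofibrations of $\spc$.

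To prove that reduction claim, I would again pass through the non-localized global injective model structure, as is done repeatedly in this section (e.g. in the proof of \fullref{lem:left_ind_unstable}). In the global injective model structure on $\spre(\Sm S)$, cofibrations are objectwise monomorphisms of simplicial sets and every object is cofibrant. The right-$U$-lifted structure on $\motG$ from the global injective structure has fibrations and weak equivalences detected by $U$; one checks that its cofibrations are precisely the maps that are objectwise cofibrations of simplicial $G$-sets for the \emph{free} (i.e. projective, in the $G$-direction) model structure — equivalently, the maps whose orbit map is a cofibration and which are objectwise $G$-free on the complement. Granting this identification of cofibrations, the orbit functor visibly sends such a map to an objectwise monomorphism of simplicial sets, hence a global injective cofibration; and since $(-)_G$ is a left adjoint, it preserves the acyclic ones too because they are generated by smashing free cells with acyclic monomorphisms. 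Finally, to pass from the global to the motivic structure, I invoke the fact (already used via \fullref{cor:left_ind_left_Bousf} in the proof of \fullref{lem:left_ind_unstable}) that $\spcrG$ and $\spc$ are obtained from their global counterparts by left Bousfield localizations that are compatible under $(-)_G \dashv \trivial_G$; a left Quillen functor between global structures that descends to the localizations remains left Quillen, because left Bousfield localization does not change the cofibrations. Hence $(-)_G$ remains left Quillen after localization, which is exactly the claim.

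The main obstacle I expect is the explicit identification of the cofibrations of the right-$U$-lifted structure $\spcrG$ — one must argue that they are objectwise free cofibrations in the $G$-direction, so that applying $(-)_G$ lands in cofibrations. This is the one genuinely non-formal point; everything else is bookkeeping with adjunctions and the standard behavior of left Bousfield localization. An alternative that sidesteps this identification would be to note that $\trivial_G$ preserves all weak equivalences (since a $G$-equivariant map with trivial action is a weak equivalence in $\spcrG$ iff its underlying map is one in $\spc$, which is automatic) and preserves fibrations by the transposition argument above applied only to acyclic cofibrations being unnecessary — but preservation of fibrations still requires controlling cofibrations of $\spcrG$, so the obstacle persists. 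I would therefore present the free-cofibration identification as a short lemma or cite the relevant statement about right-induced model structures on diagram categories of $G$-objects.
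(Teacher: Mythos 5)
Your proposal overlooks a one-line proof that you in fact had in your hands, and in the process introduces a misstep. You correctly record at the outset that the model structure $\spcrG$ is \emph{right}-$U$-lifted, so that a map is a fibration (resp.\ weak equivalence) there precisely when $U$ sends it to one in $\spc$. You also need only the observation that $U \circ \trivial_G = \Id$ on $\spc$: if $q$ is a (acyclic) fibration in $\spc$, then $U(\trivial_G q) = q$ is a (acyclic) fibration in $\spc$, hence by definition $\trivial_G q$ is a (acyclic) fibration in $\spcrG$. That is the entire proof, and it is what the paper does. There is no need to control the cofibrations of $\spcrG$ at all.

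Your alternative paragraph comes close to this, but then you assert that ``preservation of fibrations still requires controlling cofibrations of $\spcrG$,'' which contradicts your own earlier (and correct) description of how fibrations in a right-induced model structure are detected. Fibration-hood in $\spcrG$ is not checked by exhibiting lifts against cofibrations; it is checked by applying $U$. The long route via an explicit identification of the cofibrations as objectwise $G$-free monomorphisms, plus a compatibility-with-localization argument, is not only unnecessary but also the one point you flag as genuinely non-formal; it would need its own proof and is not established in the paper. Discard that route and keep the simple argument.
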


\begin{proof} Since $U \trivial_{G} =\Id$, and $U$ creates the model structure $\spcrG$, it is immediate that $\trivial_{G}$ is a right Quillen functor. 
\end{proof}

\begin{lem}\label{lem:trivFP} The adjunction between the trivial-left-$G$-action and $G$-fixed points functors 
\[
\adjunction{\spc}{\Gspcl}{{\trivial_{G}}}{(-)^G}
\]
is a Quillen pair.
\end{lem}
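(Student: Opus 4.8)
The plan is to verify directly that the left adjoint $\trivial_{G}\colon \spc \to \Gspcl$ is a left Quillen functor; since a Quillen adjunction may be detected on either adjoint, this suffices to conclude that $\trivial_{G}\dashv (-)^{G}$ is a Quillen pair. The decisive structural fact is that, by \fullref{lem:left_ind_unstable}, the model structure $\Gspcl$ is \emph{left-induced} along the forgetful functor $U\colon \Gmot \to \spc$: a morphism $f$ in $\Gspcl$ is a cofibration (respectively, an acyclic cofibration) exactly when $Uf$ is a cofibration (respectively, an acyclic cofibration) in $\spc$.

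First I would record the evident identity $U\circ \trivial_{G} = \Id_{\spc}$, valid on objects and morphisms, since the underlying motivic space of a motivic $G$-space with trivial action is the space itself. Then, given any cofibration $f$ in $\spc$, the morphism $\trivial_{G}f$ in $\Gspcl$ satisfies $U(\trivial_{G}f) = f$, which is a cofibration in $\spc$; by the defining property of the left-induced structure, $\trivial_{G}f$ is a cofibration in $\Gspcl$. Repeating the argument verbatim with ``acyclic cofibration'' in place of ``cofibration'' shows that $\trivial_{G}$ also preserves acyclic cofibrations, so $\trivial_{G}$ is left Quillen.

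This is the exact mirror image of the proof of \fullref{lem:trivOrb}, where $U$ creates a right-induced structure and the identity $U\trivial_{G} = \Id$ forces $\trivial_{G}$ to be right Quillen; here $U$ creates a left-induced structure and the same identity forces $\trivial_{G}$ to be left Quillen. There is essentially no obstacle: the only point that genuinely needs the earlier work is that the left-induced structure of \fullref{lem:left_ind_unstable} detects cofibrations and acyclic cofibrations through $U$ (not merely weak equivalences), which is immediate from its construction via the dual path-object argument together with \fullref{cor:left_ind_left_Bousf}.
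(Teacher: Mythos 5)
Your proof is correct and is essentially the paper's own argument, spelled out in more detail: the paper's one-line proof invokes exactly the identity $U\trivial_{G}=\Id$ together with the fact that $U$ creates the left-induced structure $\Gspcl$, from which it is immediate that $\trivial_{G}$ preserves (acyclic) cofibrations. No substantive difference.
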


\begin{proof} Since $U \trivial_{G} =\Id$, and $U$ creates the model structure $\Gspcl$, it is immediate that $\trivial_{G}$ is a left Quillen functor. 
\end{proof}

Note that in the previous lemmas we could have  interchanged the roles of $\Gspc$ and $\spcG$, i.e., the left and right $G$-actions, and the associated  adjunctions would still be Quillen pairs.

\begin{defn}  Let $X$ be an object in $\Gmot$.  For any fibrant replacement $X^{f}$ of $X$ in $\Gspcl$, we call $(X^{f})^{G}$ \emph{(a model for) the homotopy fixed points} of $X$. 
\end{defn}

\begin{notation} Abusing notation in the standard manner, any model for the homotopy fixed points of $X$ is denoted $X^{hG}$, suppressing explicit reference to the fibrant replacement.
\end{notation}

As in the case of topological spaces with a $G$-action, one model of the homotopy fixed points of a motivic $G$-space can be constructed by mapping out of a free $G$-space that is non-equivariantly contractible. 
Let $E_{\bullet}G$ denote the simplicial $G$-set that is the usual one-sided bar construction on $G$, which can also be viewed as an objectwise-constant simplicial object in $\Gmot$ or,  after forgetting the $G$-action, in $\spre(\Sm S)$. Its geometric realization in $\spre(\Sm S)$ is contractible.

For any fibrant $Y$ in $\spc$, applying $\Hom (-,Y)$ levelwise to $E_{\bullet}G$ gives rise to a cosimplicial object in $\Gmot$. {It} is levelwise fibrant with respect to the model structure $\Gspcl$, since $\Hom (G\times K, Y)\cong \Hom (G, \Hom (K, Y))$ for any set $K$.  Applying Dugger's cosimplicial replacement construction \cite[Section 5.7]{dugger:hocolim} gives rise to a Reedy fibrant cosimplicial object $\mathsf{crep}\Hom (E_{\bullet}G, Y)$ in $\Gspcl$, the totalization of which is fibrant by {Bousfield} \cite[2.8]{BousfieldCosimplicialResolutions} and {is} a model of the homotopy limit of $\Hom (E_{\bullet}G, Y)$. Note that 
$$\mathsf{crep}\Hom (E_{\bullet}G, Y){\cong} \Hom (\mathsf{srep} E_{\bullet}G, Y),$$
where $\mathsf{srep}$ denotes Dugger's simplicial replacement construction \cite[Section 4.4]{dugger:hocolim}.    

\begin{notation}
We let $EG$ denote $|\mathsf{srep} E_{\bullet}G|$, the geometric realization of $\mathsf{srep} E_{\bullet}G$, which is weakly equivalent to the geometric realization of $E_{\bullet} G$, since $E_{\bullet}G$ is  Reedy cofibrant. For any fibrant $Y$ in $\spc$, 
$$\Hom(EG, Y)=\Hom \big(|\mathsf{srep} E_{\bullet}G|, Y\big)\cong \operatorname{Tot} \mathsf{crep}\Hom (E_{\bullet}G, Y).$$ 
\end{notation}

\begin{lem}\label{lem:unstable_fibrant_repl} Let $X$ be an object in $\Gmot$. If $(UX)^f$ is a fibrant replacement of $UX$ in $\spc$, then ${\Hom}\big(EG,(UX)^f\big)$ is a fibrant replacement of $X$ in $\Gspcl$.
\end{lem}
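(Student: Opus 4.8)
The plan is to show that $\Hom(EG,(UX)^f)$ is fibrant in $\Gspcl$ and that the natural map $X \to \Hom(EG,(UX)^f)$ is a weak equivalence in $\Gspcl$, i.e.\ (by the definition of the left-$U$-lifted structure) a weak equivalence after applying $U$. First I would record the map: the unit $X \to \Hom(EG,X)$ of the $(EG \times -) \dashv \Hom(EG,-)$ adjunction, composed with $\Hom(EG, X \to (UX)^f)$, using that a fibrant replacement $X \to (UX)^f$ in $\spc$ can be chosen $G$-equivariantly (apply a functorial fibrant replacement in the global/local injective structure to the presheaf of $G$-objects, which lands back in $\Gmot$ since the monoidal and limit structures are objectwise).

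For fibrancy, I would use the identification $\Hom(EG,(UX)^f) \cong \operatorname{Tot}\mathsf{crep}\Hom(E_\bullet G,(UX)^f)$ recorded just above the lemma. Since $(UX)^f$ is fibrant in $\spc$ and $\spc$ is a monoidal (hence simplicial, and here cellular/left proper) model category, each $\Hom(E_n G,(UX)^f) \cong \Hom(G^{\times n}, (UX)^f)$ is fibrant in $\Gspcl$, because $U\Hom(G^{\times n},-) = \Hom(G^{\times n},-)$ preserves fibrant objects (right adjoint of the Quillen pair $G^{\times n}\times - \dashv \Hom(G^{\times n},-)$, or directly from $\Hom(G\times K,Y)\cong\Hom(G,\Hom(K,Y))$). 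Then Dugger's cosimplicial replacement makes it Reedy fibrant in $\Gspcl$, and its totalization is fibrant by the Bousfield reference cited. So $\Hom(EG,(UX)^f)$ is fibrant in $\Gspcl$.

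For the weak equivalence, apply $U$: we must show $UX \to U\Hom(EG,(UX)^f) = \Hom(EG,(UX)^f)$ (computed nonequivariantly) is a motivic equivalence. But $EG$ is a nonequivariantly contractible motivic space — it is the realization of $\mathsf{srep}E_\bullet G$, weakly equivalent to $|E_\bullet G|$, which is contractible in $\spre(\Sm S)$ and hence in $\spc$ — and $(UX)^f$ is fibrant, so $\Hom(EG,(UX)^f) \simeq \Hom(*,(UX)^f) = (UX)^f \simeq UX$, where the first equivalence uses that $\Hom(-,(UX)^f)$ sends weak equivalences between cofibrant objects to weak equivalences (all objects are cofibrant in $\spc$) and that $EG \to *$ is such a weak equivalence. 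Chasing that $UX \to (UX)^f$ is the composite equivalence completes the argument.

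The main obstacle is the fibrancy claim: one must be careful that "fibrant in $\Gspcl$" — a left-induced structure, whose fibrations are \emph{not} detected by $U$ — actually holds for the totalization. The argument cannot simply invoke that $U$ of the object is fibrant; instead it relies on the chain "levelwise $\Gspcl$-fibrant $\Rightarrow$ Reedy $\Gspcl$-fibrant after $\mathsf{crep}$ $\Rightarrow$ $\Gspcl$-fibrant totalization," each link of which needs the cited results (Dugger, Bousfield) applied in the model category $\Gspcl$ rather than in $\spc$. Establishing that each $\Hom(G^{\times n},(UX)^f)$ is genuinely $\Gspcl$-fibrant, and that the Reedy machinery is available for $\Gspcl$ (which is simplicial, left proper, cellular as a left Bousfield localization of a left-induced structure), is where the care is required; the weak-equivalence half is essentially formal from contractibility of $EG$.
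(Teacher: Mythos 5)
Your proof is correct but takes a genuinely different route in the two non-fibrancy steps. The paper constructs the map $X \to \Hom\big(EG,(UX)^f\big)$ by iterating the unit of the adjunction $U \dashv \Hom(G,-)$ to obtain a morphism of cosimplicial objects in $\Gmot$ out of the constant cosimplicial object on $X$, then totalizing; the crucial design choice there is that $(UX)^f$ carries \emph{no} $G$-action — the $G$-structure on $\Hom\big(EG,(UX)^f\big)$ is the cofree one coming purely from $EG$, and the $G$-action on $X$ enters only through the unit map. You instead choose $(UX)^f$ to be $G$-equivariant (via a functorial fibrant replacement applied to the $G$-object $X$) and use the conjugation action on $\Hom$. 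This works, and you may pass to one choice of $(UX)^f$ WLOG because fibrant replacements are weakly equivalent and $\Hom(EG,-)$ takes weak equivalences between fibrant objects to weak equivalences; but you should then check that your conjugation-action object is $G$-equivariantly isomorphic to the cofree-action object whose $\Gspcl$-fibrancy was established in the discussion preceding the lemma — the isomorphism exists (twist the values by the last coordinate of $E_nG$), but it is a small computation, not a formality, and without it the fibrancy claim does not transfer. For the weak-equivalence half the paper invokes Bousfield's simplicial homotopy equivalence $UX \to U\operatorname{Tot}\Hom(\mathsf{srep}E_\bullet G, UX)$, which requires neither fibrancy of $UX$ nor contractibility of $EG$; your Ken-Brown-plus-contractibility argument is a valid alternative that is arguably more direct here since every object of $\spc$ is cofibrant. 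Two small slips worth fixing: the map $X \to \Hom(EG,X)$ is the image of $EG \to *$ under $\Hom(-,X)$, not the unit of $(EG\times-)\dashv\Hom(EG,-)$ (that unit lands in $\Hom(EG, EG\times X)$); and of your two parenthetical justifications that $\Hom(E_nG,(UX)^f)$ is $\Gspcl$-fibrant, only the second — via $\Hom(G\times K,Y)\cong\Hom(G,\Hom(K,Y))$ with $\Hom(G,-)$ right Quillen for $U\dashv\Hom(G,-)$ — actually sees the $\Gspcl$-model structure, since the adjunction $G^{\times n}\times- \dashv \Hom(G^{\times n},-)$ lives entirely within $\spc$ and only produces $\spc$-fibrancy.
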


\begin{proof}  We know already that ${\Hom}\big(EG,(UX)^f\big)$ is fibrant in $\Gspcl$.  
There is a morphism of cosimplicial objects in $\Gmot$ from the constant cosimplicial object on $X$ to   ${\Hom}\big(\mathsf{srep} E_{\bullet}G, UX\big)$, given essentially by iterating the unit map of the $U\dashv \Hom (G,-)$ adjunction. Composing with the morphism 
$$\Hom (\mathsf{srep} E_{\bullet}G, UX)\to {\Hom}\big(\mathsf{srep} E_{\bullet}G,(UX)^f\big)$$ induced by the replacement map $UX \to (UX)^{f}$ and then totalizing gives rise to a map $X\to  {\Hom}\big(EG,(UX)^f\big)$.  
To see that it is a weak equivalence, observe that after applying the functor $U$, which commutes with both limits and colimits, we obtain a morphism  
$$UX \lra U\mathrm{Tot} {\Hom}\big(\mathsf{srep} E_{\bullet}G,(UX)^f\big)=U{\Hom} \big(EG, (UX)^f\big)$$
that  is a weak equivalence, since $UX \to U\mathrm{Tot} \Hom(\mathsf{srep} E_{\bullet}G,UX)$ is a simplicial homotopy equivalence by Bousfield \cite[Propostion 2.13]{BousfieldCosimplicialResolutions}
and $UX \to (UX)^{f}$ is a weak equivalence.
\end{proof}

For the model categories of equivariant motivic spaces to fit into the formal homotopic Galois theory framework {of \fullref{sec:gal-formal}}, we need the following compatibility between its model and monoidal structure.

\begin{prop}\label{prop:monoidal_mod_right_left_ind} With respect to the objectwise product, $\Gspcl$ and $\spcrG$  are cofibrantly generated, monoidal model categories that satisfy the monoid axiom.
\end{prop}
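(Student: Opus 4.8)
The plan is to deduce everything from the corresponding facts about the non-equivariant motivic model structure $\spc$ (resp. $\pspc$), which \fullref{lem:cat_mot} already establishes, together with the fact that the forgetful functor $U$ creates the left- and right-$U$-lifted model structures on $\Gmot$ and $\motG$ (\fullref{lem:left_ind_unstable}). The key observation is that the objectwise product on $\Gmot$ is computed by applying the diagonal $G$-action to the objectwise product underlying each pair of objects, so $U$ is strict monoidal: $U(Y\times Z)\cong UY\times UZ$, and likewise $U$ of the internal hom is the internal hom of the underlying objects. Cofibrant generation is immediate: a left- or right-induced model structure on a locally presentable category along an adjunction from a cofibrantly generated model category is cofibrantly generated, with generating (acyclic) cofibrations obtained by applying the left adjoint (here $G\times-$ for the right-induced structure, and the left adjoint $\Hom(G,-)^{\text{op}}$-style analysis for the left-induced structure — concretely, for $\Gspcl$ one uses that left-induction along a right adjoint from a combinatorial model category is combinatorial, which is part of the machinery recalled in \fullref{appendix}) to the generators downstairs.

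Next I would verify the pushout-product (monoid) axioms. For $\spcrG$ this is essentially formal: cofibrations and weak equivalences in $\spcrG$ are detected by $U$, and $U$ preserves pushouts, tensor products, and the pushout-product construction; since $\spc$ satisfies the pushout-product axiom and the monoid axiom (\fullref{lem:cat_mot}), and every object of $\spc$ is cofibrant, the same holds in $\spcrG$. The monoid axiom requires a little care because it concerns the class generated under transfinite composition and pushout by maps of the form $\{j\}\otimes Z$ with $j$ an acyclic cofibration; but again $U$ carries such maps to maps of the same form in $\spc$, $U$ preserves the relevant colimits, and acyclic cofibrations in $\spcrG$ are exactly those detected by $U$, so the axiom descends. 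For $\Gspcl$ the cofibrations are detected by $U$ but the \emph{fibrations} are not, which is the point where one must be more careful: I would instead check the pushout-product axiom directly using that cofibrations and weak equivalences in $\Gspcl$ are both created by $U$, so a pushout-product of cofibrations in $\Gspcl$ maps under $U$ to a pushout-product of cofibrations in $\spc$, hence is a cofibration (and acyclic if one factor is), and being a cofibration in $\Gspcl$ is detected by $U$. The monoid axiom for $\Gspcl$ follows the same way, using that $U$ detects acyclic cofibrations and preserves the colimits in question.

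The remaining structure — that $\Gspcl$ and $\spcrG$ are \emph{closed} symmetric monoidal — I would note is inherited objectwise exactly as in the non-equivariant case: the internal hom is $\Hom(Y,Z)$ equipped with the conjugation $G$-action, symmetry and associativity are inherited from $\sSet_*$ (resp. $\sSet$) levelwise, and the tensor-hom adjunction is checked on underlying objects. I would also remark that the left- and right-$U$-lifted structures agree on cofibrant–fibrant objects up to the usual comparison, so there is no ambiguity in speaking of "the" monoidal model structure for the purposes of \fullref{conv:modelcat1}.

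The main obstacle is the pushout-product and monoid axioms for $\Gspcl$, precisely because fibrations there are not detected by $U$: one cannot simply transport the SM7-type argument from $\spc$ along $U$ as one does for $\spcrG$. The resolution is to exploit the fact that \emph{cofibrations} and \emph{acyclic cofibrations} in $\Gspcl$ \emph{are} created by $U$ (both classes being closed under the relevant constructions and detected downstairs), so every instance of the pushout-product and monoid axioms that needs checking can be reduced to the corresponding instance in $\spc$ by applying $U$, which is strict monoidal and colimit-preserving. Once this reduction is in place, the proposition follows from \fullref{lem:cat_mot} and \fullref{lem:left_ind_unstable}. The pointed case $\pGspcl$, $\pspcrG$ is identical, replacing $\times$ by $\wedge$ and $\sSet$ by $\sSet_*$.
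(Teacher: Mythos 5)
There is a genuine gap: you have swapped which of the two model structures is ``easy'' for the pushout-product axiom, and in doing so you rely on a false characterization of $\spcrG$.

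In $\Gspcl$, which is left-induced, $U$ creates cofibrations and weak equivalences, so your direct argument there (push the pushout-product down with the strict monoidal $U$, check cofibrancy downstairs, deduce it upstairs) is correct. That is exactly what is packaged by Hess--Shipley's Proposition A.9, which the paper cites, and it is the \emph{easy} case. Your worry that $\Gspcl$ is problematic because fibrations are not detected is misplaced: the pushout-product axiom is a statement about cofibrations and acyclic cofibrations, and in $\Gspcl$ both are $U$-detected.

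The problem is $\spcrG$, which is right-induced: there $U$ creates \emph{fibrations} and weak equivalences, not cofibrations. Your claims that ``cofibrations and weak equivalences in $\spcrG$ are detected by $U$'' and that ``acyclic cofibrations in $\spcrG$ are exactly those detected by $U$'' are false. For a nontrivial finite group $G$, the map $\emptyset\to\ast_{\mathrm{triv}}$ has image under $U$ an (acyclic) cofibration in $\spc$, yet it fails the left lifting property in $\spcrG$ against, e.g., $G\to\ast$ (a right-fibrant surjection after passing to the motivic space of the free $G$-orbit), since a free orbit has no fixed point to map to from a trivial-action point. So your ``essentially formal'' transport for $\spcrG$ has no valid starting point, and the monoid-axiom step likewise collapses. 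The correct argument has to exploit the specific form of the generating (acyclic) cofibrations, which are $G\times f$: one computes $(G\times f)\Box(G\times g)\cong (G\times G)\times(f\Box g)$ and uses that $G\times -$ is left Quillen both from $\spc$ to $\spcrG$ (by construction) and that $U$ itself is left Quillen from $\spcrG$ to $\spc$ (because $G\times -$ preserves monomorphisms), bootstrapping the extra $G$-factor into a cofibration. For the monoid axiom on $\spcrG$, the paper uses a further observation you do not have: acyclic cofibrations in $\spcrG$ are also acyclic cofibrations in $\spclG$ (the \emph{left}-induced structure on $\motG$), which satisfies the monoid axiom by Hess--Shipley and has the same weak equivalences, so the monoid axiom transfers. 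Neither of these essential steps appears in your proposal.
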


\begin{proof} In the case of the left-induced structure, the result is an immediate consequence of {Hess and Shipley} \cite[Proposition A.9]{hess-shipley:waldhausen}, since the model category structure $\Gspcl$ is left-lifted using a strong monoidal functor. For the right-induced structure, we refer to Schwede and Shipley \cite[Theorem 4.1]{schwede-shipley} for the proof that $\spcrG$ is cofibrantly generated. 
To see that it is a monoidal model category it is enough to verify the pushout-product axiom for generating cofibrations and generating acyclic cofibrations, which are of the form $G\times f$, where $f$ is a generating (acyclic) cofibration in $\spc$. 

Observe that for every pair of  morphisms $f$ and $g$ in $\spc$,
$$(G\times f) \Box (G\times g) \cong (G\times G) \times (f\Box g),$$
where $-\Box -$ denotes the pushout-product. If $f$ and $g$ are cofibrations, then $f\Box g$ is a cofibration, which is acyclic if $f$ or $g$ is acyclic, since $\spc$ is a monoidal model category.  By definition of the right-induced model structure, if follows that  $G\times (f\Box g)$ is a cofibration (respectively, acyclic cofibration) in $\spcrG$. Since the functor $G\times -$ preserves monomorphisms and therefore 
\[U: \spcrG \lra \spc\] 
is also a left Quillen functor, $G\times (f\Box g)$ is a cofibration (respectively, acyclic cofibration) in $\spc$ and therefore $G\times G\times (f\Box g)$ is a cofibration (respectively, acyclic cofibration) in $\spcrG$.

Finally, the right-induced model structure satisfies the monoid axiom because acyclic cofibrations in $\spcrG$ are also acyclic cofibrations in $\spclG$, and the left-induced model structure satisfies the monoid axiom.
\end{proof}

We can now apply {Schwede and Shipley} \cite[Theorem 4.1(3)]{schwede-shipley} to obtain the desired model structures on equivariant motivic algebras.

\begin{cor}\label{cor:Galgright}
There exist model category structures on the category $\aGspreS$ of algebras in $\Gmot$, right-induced from $\Gspcl$, and on the category $\aspreSG$ of algebras in $\motG$ right-induced from $\spcrG$ by the adjunctions
$$\adjunction{\Gmot}{\aGspreS}{F}{U}\text{ and }\adjunction{\motG}{\aspreSG}{F}{U}, $$
where $F$ denotes the free associative algebra functor and $U$ the forgetful functor.
\end{cor}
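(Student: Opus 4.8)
The plan is to invoke the transfer theorem of Schwede and Shipley \cite[Theorem 4.1(3)]{schwede-shipley}, which asserts that for any cofibrantly generated monoidal model category $\cat C$ satisfying the monoid axiom (together with a mild smallness hypothesis), the category of monoids in $\cat C$ carries a model structure right-induced along the free--forgetful adjunction $F\dashv U$, with weak equivalences and fibrations created by $U$. I would apply this once to $\cat C=\Gspcl$ and once to $\cat C=\spcrG$; since the monoidal product in both cases is the objectwise one and $F$, $U$ are exactly the free-associative-algebra and underlying-object functors appearing in the statement, the two desired model structures on $\aGspreS$ and $\aspreSG$ fall out simultaneously.

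Next I would check that the hypotheses of \cite[Theorem 4.1(3)]{schwede-shipley} are met. The main ones are furnished verbatim by \fullref{prop:monoidal_mod_right_left_ind}: with respect to the objectwise product, both $\Gspcl$ and $\spcrG$ are cofibrantly generated monoidal model categories satisfying the monoid axiom. The residual smallness condition is automatic here: the global injective model structure on $\sPre{\Sm S}$ is cellular and combinatorial, left Bousfield localization and left- or right-induction along $U$ (an accessible functor between locally presentable categories, being simultaneously a left and a right adjoint) preserve combinatoriality, so $\Gspcl$ and $\spcrG$ are combinatorial and in particular every object is small relative to the whole category.

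The only delicate point, and the one I would take care to spell out, is that the cited theorem applies in the right-induced case $\spcrG$ just as in the left-induced case $\Gspcl$; here the essential input is the final clause of \fullref{prop:monoidal_mod_right_left_ind}, namely that $\spcrG$ satisfies the monoid axiom, deduced from the fact that its acyclic cofibrations are acyclic cofibrations in $\spclG$. Granting this, \cite[Theorem 4.1(3)]{schwede-shipley} yields the conclusion directly, with no further computation required.
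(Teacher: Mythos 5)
Your argument is exactly the one the paper uses: \fullref{prop:monoidal_mod_right_left_ind} shows that $\Gspcl$ and $\spcrG$ are cofibrantly generated monoidal model categories satisfying the monoid axiom, and then \cite[Theorem 4.1(3)]{schwede-shipley} right-induces the model structure on algebras along the free--forgetful adjunction. The extra remarks you make about smallness and the right-induced case are fine but not needed beyond what \fullref{prop:monoidal_mod_right_left_ind} already supplies.
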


We denote these model structures by $\aGspcl$ and $\aspcGr$.

\begin{rem}\label{rem:Galgrigh}
Since the model structures $\aspc$ and $\aGspcl$ are right-induced from $\spc$ and $\Gspcl$, respectively, the  $\big(\trivial_{G}, (-)^G\big)$-adjunction lifts to a Quillen pair
\[
\adjunction{\aspc}{\aGspcl,}{\trivial}{(-)^G}
\]
and the fibrant replacement of an object $A$ in $\aGspcl$ is given by ${\Hom}\big(EG,(UA)^{f}\big)$, where $(UA)^{f}$ is the fibrant replacement of $UA$ in  $\aspc$. 

Similarly, the $\big((-)_{G}, \trivial_{G}\big)$-adjunction lifts to a Quillen pair
\[
\adjunction{\aspcGr}{\aspc.}{(-)_{G}}{\trivial_{G}}
\]
\end{rem}

The category of commutative algebras in  equivariant motivic spaces also admits lifted model structures.

\begin{prop}\label{prop:Gcommalgright}
There exist model category structures on the category $\caGspreS$ of commutative algebras in $\Gmot$, right-induced from $\Gspcl$, and on the category $\caspreSG$ of commutative algebras in $\motG$ right-induced from $\spcrG$ by the adjunctions
$$\adjunction{\Gmot}{\caGspreS}{\widetilde F}{\widetilde U}\text{ and }\adjunction{\motG}{\caspreSG}{\widetilde F}{\widetilde U}, $$
where $\widetilde F$ denotes the free commutative algebra functor and $\widetilde U$ the forgetful functor.
\end{prop}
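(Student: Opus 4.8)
The plan is to adapt the proof of \fullref{cor:calgright}: Hornbostel's proof of \cite[Theorem 3.12]{Hornbostel}, which establishes the right-induced model structure on commutative algebras in $\spc$, generalizes to $\Gspcl$ and $\spcrG$, because every one of its verifications can be performed after applying the strong symmetric monoidal, colimit-preserving forgetful functor $U$ down to $\spc$ (we write $U$ also for the induced underlying-motivic-space functors on categories of commutative algebras). One applies the transfer theorem along the free--forgetful adjunctions $(\widetilde F,\widetilde U)$ and checks the small-object-argument hypotheses together with the acyclicity condition. The former are routine: $\caGspreS$ and $\caspreSG$ are cocomplete, $\widetilde U$ preserves filtered colimits, and $\Gspcl$ and $\spcrG$ are cofibrantly generated by \fullref{prop:monoidal_mod_right_left_ind}, so the domains of the free maps on the generating (acyclic) cofibrations are small; \fullref{prop:monoidal_mod_right_left_ind} also tells us that $\Gspcl$ and $\spcrG$ are symmetric monoidal model categories satisfying the monoid axiom in which every object is cofibrant.

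The work lies in the acyclicity condition: every relative cell complex built from free maps $\widetilde F(j)$, with $j$ an acyclic cofibration of $\spcrG$ (resp.\ of $\Gspcl$), must be a weak equivalence, equivalently --- since weak equivalences in $\spcrG$ and $\Gspcl$ are created by $U$ --- must be sent by $U$ to a weak equivalence in $\spc$. By transfinite composition of the filtration below it suffices to treat a single cell attachment, i.e.\ a pushout $R\to R\otimes_{\widetilde F(X)}\widetilde F(Y)$ of a map $\widetilde F(j)$, and here I would invoke the standard filtration of such a pushout: its underlying object is the colimit of a sequence $R=P_0\to P_1\to\cdots$ in which each $P_{m-1}\to P_m$ is a pushout of $R\otimes\lambda_m(j)$, where $\lambda_m(j)$ denotes the $m$-th symmetric-power latching map associated to $j$. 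Since $U$ preserves colimits and is strong symmetric monoidal, it commutes with the formation of symmetric powers, of the latching maps $\lambda_m$, and of the entire filtration, so $U(P_{m-1})\to U(P_m)$ is the pushout in $\spc$ of $U(R)\otimes\lambda_m(Uj)$. Because $U$ is a left Quillen functor out of $\spcrG$ (by \fullref{prop:monoidal_mod_right_left_ind}) and out of $\Gspcl$ (by construction of the left-induced structure), $Uj$ is an acyclic cofibration in $\spc$; the symmetric-power analysis underlying Hornbostel's theorem --- the same input used in \fullref{cor:calgright} --- then shows $\lambda_m(Uj)$ is an acyclic cofibration in $\spc$, and since $U(R)$ is cofibrant and $\spc$ satisfies the monoid axiom, $U(R)\otimes\lambda_m(Uj)$ is an acyclic cofibration, hence so is its pushout $U(P_{m-1})\to U(P_m)$. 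As transfinite composites of acyclic cofibrations are acyclic cofibrations, $U$ sends the original relative cell complex to an acyclic cofibration in $\spc$; this establishes the acyclicity condition and yields the right-induced model structures, which we denote $\caGspcl$ and $\caspcGr$.

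The step I expect to be the main obstacle is precisely this last one: one must carefully check that the forgetful functor commutes with the symmetric-power latching construction and with the reflexive-coequalizer/transfinite-colimit presentation of pushouts of free commutative-algebra maps, and that $U(R)$ is cofibrant in $\spc$. All of this does hold here --- the first point because $U$ preserves colimits and finite products, the second because every object of $\spc$ is cofibrant --- but it needs to be written out with care. It is worth emphasizing that this argument never uses the full ``commutative monoid axiom'' for $\spcrG$ itself, where the cofibrations are not detected objectwise and a direct check would be delicate; reducing the symmetric-power estimates along $U$ to the already-settled case of $\spc$ circumvents that entirely.
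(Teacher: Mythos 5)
Your proof is correct, but it follows a genuinely different route from the paper's. The paper establishes both model structures in one stroke by applying \fullref{thm:square} (the square-of-adjunctions transfer theorem of \cite{HKRS}) to the commuting square of adjunctions formed by $U\dashv\Hom(G,-)$ (resp.\ $G\times-\dashv U$) and $\widetilde F\dashv\widetilde U$, with $\spc$, $\Gspcl$, and $\caspc$ in the other three corners; for the right-$G$-action case it then has to do a small extra dance, observing that $\caspreSG\cong G/\caspreS$ so that the left- and right-induced structures created by $U$ on it coincide, to pass from what \fullref{thm:square} directly gives to the structure right-induced from $\spcrG$. You instead verify the acyclicity condition of the transfer theorem by hand, pushing the symmetric-power latching filtration of a free cell attachment through $U$ and reducing to the settled case of $\caspreS$ over $\spc$ from \fullref{cor:calgright}; this works because $U$ is strong symmetric monoidal, preserves all colimits (it has both adjoints), creates weak equivalences in $\Gspcl$ and $\spcrG$, and sends (acyclic) cofibrations there to (acyclic) cofibrations of $\spc$. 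Two small remarks: you do not really need to re-derive the filtration in $\spc$ --- once you have commuted $U$ past $\widetilde F$ and the pushout, the statement that the resulting map is a weak equivalence is exactly the acyclicity already implicit in \fullref{cor:calgright}, so you could cite that directly; and the cofibrancy of $U(R)$ that you invoke is automatic since every object of $\spc$ is cofibrant, which you correctly note. Your approach is more elementary and self-contained (it does not need the general \fullref{thm:square}) and it sidesteps the under-category argument entirely; the paper's is shorter once \fullref{thm:square} is available and avoids touching the internals of free-commutative-algebra pushouts.
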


{We denote these model structures by $\caGspcl$ and $\caspcGr$. }

\begin{proof}  In the case of the model structure  $\caGspcl$ on $\caGspreS$ right-induced from $\Gspcl$, we
apply \fullref{thm:square} to the diagram

\[ \xymatrix@R=4pc@C=4pc{ \sPre {\Sm S} \ar@{}[r]|-{\perp} \ar@<-1ex>[d]_-{\widetilde F} \ar@<-1ex>[r]_-{\Hom (G,-)} & G\text{-}\sPre {\Sm S}  \ar@<1ex>[d]^-{\widetilde F}\ar@<-1ex>[l]_-U\\  
\ar@{}[r]|-{\top} \ar@{}[u]|-{\dashv} \caspreS\ar@<1ex>[r]^-{\Hom (G,-)} \ar@<-1ex>[u]_-{\widetilde U} & \caGspreS, \ar@<1ex>[u]^-{\widetilde U} \ar@<1ex>[l]^-U\ar@{}[u]|-{\vdash} }\] 
where $\sPre {\Sm S}$ is equipped with its motivic injective model structure $\spc$, $G\text{-}\sPre {\Sm S}$ with the left-induced structure $\Gspcl$, and $\caspreS$ 
with the right-induced structure of {\fullref{cor:calgright}}, $\caspc$.

To establish the existence of the model structure on $\caspreSG$ right-induced from $\spcrG$, we observe first that  \fullref{thm:square} applies equally well to the case of right $G$-actions and implies moreover that there also exists a model structure on $\caspreSG$ left-induced by $U\colon \caspreSG\to \caspreS$ from {$\caspc$}. Since $\caspreSG$ is isomorphic to  $G/\caspreS$, a full subcategory of the under-category $G/\aspreS$, with respect to which $U$ can be seen as the functor forgetting the map from $G$,  the model structure left-induced by $U$ is necessarily the same as the model structure right-induced by $U$ (i.e., $U$ creates all three classes of maps: cofibrations, fibrations and weak equivalences). Since $U$ creates the model structure $\spcrG$, and $U\widetilde{U}=\widetilde{U}U$, it follows that $\widetilde{U}$ creates a model structure on {$\caspreSG$}, right-lifted from $\spcrG$.
\end{proof}

As before, easy proofs analogous to those of \fullref{lem:trivFP} and \fullref{lem:trivOrb} establish the following results.

\begin{lem} The adjunction
{\[
\adjunction{\caspc}{\caGspcl}{\triv G}{(-)^G}
\] }
 is a Quillen pair.
\end{lem}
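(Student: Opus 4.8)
The plan is to imitate directly the (easy) proofs of \fullref{lem:trivFP} and \fullref{rem:Galgrigh}, exploiting the way the model structure $\caGspcl$ was constructed. Recall from the proof of \fullref{prop:Gcommalgright} that, via \fullref{thm:square}, the model structure $\caGspcl$ on $\caGspreS$ is (among other things) \emph{left-induced} by the forgetful functor $U\colon\caGspreS\to\caspreS$ from the motivic injective model structure $\caspc$ on commutative motivic algebras; exactly as in the right-action case treated there, \fullref{thm:square} produces a single model structure on $\caGspreS$ that is simultaneously left-induced along $U$ and right-induced along $\widetilde U\colon\caGspreS\to\Gmot$. In particular, $U$ creates the cofibrations and the weak equivalences of $\caGspcl$.

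The key steps are then very short. First I would record the identity $U\circ\triv G=\Id_{\caspreS}$: equipping a commutative motivic algebra with the trivial $G$-action and then forgetting the action returns the original algebra. Next, let $f$ be a cofibration (respectively, an acyclic cofibration) in $\caspc$. Then $U(\triv G f)=f$ is a cofibration (respectively, an acyclic cofibration) in $\caspc$, so by the definition of the left-induced model structure $\triv G f$ is a cofibration (respectively, an acyclic cofibration) in $\caGspcl$. Hence $\triv G\colon\caspc\to\caGspcl$ is a left Quillen functor, and the adjunction $\triv G\dashv(-)^G$ is a Quillen pair.

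There is no substantial obstacle; the only point requiring a moment's attention is the justification that $\caGspcl$ is genuinely left-induced along $U$ (rather than merely right-induced along $\widetilde U$), which is precisely what \fullref{thm:square} guarantees, just as in the proof of \fullref{prop:Gcommalgright}. If one prefers to avoid invoking the left-induced description, the dual route works equally well: $\widetilde U$ creates the fibrations and acyclic fibrations of $\caGspcl$ (and of $\caspc$); the categorical fixed-point functor commutes with the forgetful functors to underlying ($G$-)presheaves, since $G$-fixed points are a limit and the forgetful functor from commutative algebras preserves limits; and $(-)^G\colon\Gspcl\to\spc$ is right Quillen by \fullref{lem:trivFP}. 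Chasing these three facts shows that $(-)^G\colon\caGspcl\to\caspc$ preserves fibrations and acyclic fibrations, yielding the same conclusion.
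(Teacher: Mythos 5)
Your primary argument hinges on the claim that $\caGspcl$ is left-induced by $U\colon\caGspreS\to\caspreS$ from $\caspc$, and you justify this by appeal to \fullref{thm:square}. This is a gap. Theorem \fullref{thm:square} produces \emph{two} a priori distinct model structures on $\caGspreS$: one right-induced along $\widetilde U$ from $\Gspcl$ (this is how $\caGspcl$ is actually defined in \fullref{prop:Gcommalgright}), and one left-induced along $U$ from $\caspc$; the theorem only says the identity is left Quillen from the former to the latter, so $\Cof_{\caGspcl}\subseteq\Cof_{\mathrm{left}}$. (The weak equivalences do coincide, since both are created by $U\widetilde U=\widetilde U U$ from $\spc$, but the cofibrations need not.) Consequently, showing $U(\triv G f)=f$ is a cofibration only places $\triv G f$ in $\Cof_{\mathrm{left}}$, not in the potentially smaller class $\Cof_{\caGspcl}$. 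The paper does prove that the left- and right-induced structures coincide, but only for right $G$-actions and via a separate under-category argument inside \fullref{prop:Gcommalgright}; that coincidence is not asserted for $\caGspcl$, and you cannot read it off \fullref{thm:square} alone.

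Your ``dual route'' is, by contrast, complete and correct, and it is the argument that genuinely parallels the one-line proofs of \fullref{lem:trivOrb} and \fullref{lem:trivFP}: $\widetilde U$ creates fibrations and weak equivalences of $\caGspcl$ from $\Gspcl$ and of $\caspc$ from $\spc$; $\widetilde U\circ(-)^G=(-)^G\circ\widetilde U$ because fixed points are a limit and $\widetilde U$, being a right adjoint, preserves limits; and $(-)^G\colon\Gspcl\to\spc$ preserves (acyclic) fibrations by \fullref{lem:trivFP}. Chasing a fibration (resp.\ acyclic fibration) $f$ in $\caGspcl$ through this square shows $\widetilde U(f^G)=(\widetilde U f)^G$ is a fibration (resp.\ acyclic fibration) in $\spc$, hence $f^G$ is one in $\caspc$. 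You should promote this from a fallback to the proof; the left-induction route should be dropped or supplemented with an explicit argument (analogous to the under-category argument for right actions) that the two induced structures on $\caGspreS$ agree.
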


\begin{lem} The adjunction
{\[
\adjunction{\caspcGr}{\caspc}{(-)_{G}}{\triv G}
\] }
is a Quillen pair.
\end{lem}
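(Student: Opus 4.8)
The plan is to mimic exactly the strategy already used for the associative case in \fullref{cor:Galgright} and \fullref{rem:Galgrigh}, namely to observe that the model structures on commutative algebras were themselves \emph{right}-induced (from $\spc$ and $\spcrG$ respectively), so that the adjunction between the trivial-action functor and the orbits functor is automatically a Quillen adjunction for purely formal reasons. Concretely, recall from \fullref{prop:Gcommalgright} that the model structure $\caspcGr$ on $\caspreSG$ is right-induced along $\widetilde U\colon \caspreSG \to \spcrG$, while $\caspc$ is right-induced along $\widetilde U\colon \caspreS \to \spc$ by \fullref{cor:calgright}. Thus in both categories the forgetful functor $\widetilde U$ creates fibrations and weak equivalences.

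The key step is then to check that $\widetilde U$ intertwines the two $(-)_{G}$ functors, i.e. that $U\circ (-)_{G} = (-)_{G}\circ \widetilde U$ as functors $\caspcGr \to \spc$, which is immediate because the orbits functor $(-)_{G} = I\otimes_{G}(-)$ is built from the monoidal product, which on $\sPre{\Sm S}$ is computed objectwise, and the free commutative algebra functor commutes with it; equivalently, one notes that $(-)_G$ on commutative algebras is the left adjoint to $\trivial_G$ and that $\trivial_G$ is manifestly compatible with the forgetful functors since $U\,\trivial_G = \Id$ and $\widetilde U\,\trivial_G = \trivial_G\,\widetilde U$. Since $\widetilde U\,\trivial_G = \trivial_G\,\widetilde U$ and $\widetilde U$ creates the model structure $\caspcGr$ from $\spcrG$, and since $\trivial_G\colon \spc \to \spcrG$ is already known to be a right Quillen functor by \fullref{lem:trivOrb}, it follows formally that $\trivial_G\colon \caspc \to \caspcGr$ preserves fibrations and acyclic fibrations: a fibration (resp. acyclic fibration) in $\caspc$ is by definition one whose image under $\widetilde U$ is a fibration (resp. acyclic fibration) in $\spc$, and applying $\trivial_G$ and then $\widetilde U$ gives the same thing as applying $\widetilde U$ and then $\trivial_G$, which lands in the fibrations (resp. acyclic fibrations) of $\spcrG$, i.e. exactly the maps that $\widetilde U$ detects as fibrations (resp. acyclic fibrations) in $\caspcGr$. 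Hence $\trivial_G$ is a right Quillen functor and its left adjoint $(-)_G$ is a left Quillen functor, so the adjunction is a Quillen pair.

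I do not anticipate a serious obstacle here; the statement is genuinely formal once \fullref{prop:Gcommalgright} has supplied the right-induced model structures, and it is the commutative-algebra analogue of the already-dispatched \fullref{lem:trivOrb}. The only point requiring a moment's care is the identity $\widetilde U\,\trivial_G = \trivial_G\,\widetilde U$ (and similarly $U\,\trivial_G = \Id$ at the level of underlying motivic spaces), which holds because $\trivial_G$ adds a trivial action without altering the underlying commutative-monoid structure; with that in hand the argument is a one-line formal consequence of the definition of a right-induced model structure, exactly as indicated in the paper's sentence "easy proofs analogous to those of \fullref{lem:trivFP} and \fullref{lem:trivOrb}." So the proof will simply read: since $\widetilde U\,\trivial_G = \trivial_G\,\widetilde U$, and $\widetilde U$ creates the model structure $\caspcGr$ while $\trivial_G\colon \spc\to\spcrG$ is right Quillen, $\trivial_G\colon \caspc \to \caspcGr$ is right Quillen, so $(-)_G \dashv \trivial_G$ is a Quillen pair.
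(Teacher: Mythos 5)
Your final argument is correct and is precisely the paper's intended one-line proof in the style of \fullref{lem:trivOrb}: since $\widetilde{U}\,\triv G = \triv G\,\widetilde{U}$, since $\widetilde{U}$ creates fibrations and weak equivalences in $\caspcGr$ from $\spcrG$, and since $\triv G\colon\spc\to\spcrG$ is already a right Quillen functor, $\triv G\colon\caspc\to\caspcGr$ preserves fibrations and acyclic fibrations.

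One caution about the middle of your write-up: the asserted identity $\widetilde{U}\circ(-)_G = (-)_G\circ\widetilde{U}$ is \emph{not} correct, and it is not ``equivalent'' to the intertwining $\widetilde{U}\,\triv G = \triv G\,\widetilde{U}$. The orbits functor $(-)_G$ on commutative algebras is the left adjoint of $\triv G$ in $\cat{CAlg}$, hence a colimit construction, and the forgetful functor $\widetilde{U}$ from commutative algebras to the underlying monoidal category is a right adjoint that does not commute with such colimits in general (concretely, the objectwise orbit $B/G$ of a commutative monoid $B$ with $G$-action need not inherit a monoid structure). Mates of commuting right adjoints need not themselves commute. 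Fortunately you never use this claim: the proof you actually run checks that the \emph{right} adjoint $\triv G$ is right Quillen by pushing fibrations and acyclic fibrations down through $\widetilde{U}$, which only needs $\widetilde{U}\,\triv G = \triv G\,\widetilde{U}$, and that identity is correct. So the proof stands, but you should delete the claim about $(-)_G$ commuting with $\widetilde{U}$ and the word ``equivalently.''
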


As above, we obtain fibrant replacements of the desired special form for commutative, motivic $G$-algebras.

\begin{prop} \label{prop:fibreplacement_pos}    
For any commutative, motivic $G$-algebra $Y$, the commutative algebra ${\Hom}\big(EG,(UY)^f\big)$ is a fibrant replacement in $\caGspcl$, where $(UY)^f$ is  a fibrant replacement of $UY$ in {$\caspc$}.  
\end{prop}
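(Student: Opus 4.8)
The plan is to follow exactly the same strategy used for the non-commutative case in \fullref{rem:Galgrigh} and for the associative-algebra version, but now at the level of commutative algebras, combined with the identification of homotopy fixed points established in \fullref{lem:unstable_fibrant_repl} and its algebra analogue. First I would recall that, by \fullref{prop:Gcommalgright}, the model structure $\caGspcl$ on commutative algebras in $\Gmot$ is right-induced from $\Gspcl$ along the forgetful functor $\widetilde U$, and likewise $\caspc$ is right-induced from $\spc$ along $\widetilde U$. Hence $\widetilde U$ creates fibrations and weak equivalences in both cases, and the square of adjunctions in the proof of \fullref{prop:Gcommalgright} commutes with the forgetful functors, so that $U\widetilde U = \widetilde U U$ where $U\colon \Gmot \to \spc$ (and its algebra lifts) is the forgetful functor along the trivial-action/fixed-point adjunction.

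Next I would observe that the functor $\Hom(G,-)$ (applied objectwise, levelwise in the bar construction) commutes with the free commutative algebra functor $\widetilde F$ up to natural isomorphism — more precisely, for a set $K$ and a commutative algebra $A$ in $\spc$, $\Hom(K, A)$ inherits a natural commutative algebra structure, and $\Hom(G\times K, A)\cong \Hom(G,\Hom(K,A))$ as commutative algebras — so that the cosimplicial object $\Hom(\mathsf{srep} E_\bullet G, (UY)^f)$ is a cosimplicial object in $\caGspcl$ which is levelwise fibrant there (each level being $\Hom(G^{\times n}, (UY)^f)$ with $(UY)^f$ fibrant in $\caspc$). Totalizing as in the earlier notation, $\Hom(EG, (UY)^f)$ is then fibrant in $\caGspcl$ by the same Reedy-fibrancy and Bousfield totalization arguments invoked in the proof of \fullref{lem:unstable_fibrant_repl}, since the forgetful functor $\widetilde U$ creates fibrations.

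Then I would produce the comparison map $Y \to \Hom(EG,(UY)^f)$ in $\caGspcl$ exactly as in the proof of \fullref{lem:unstable_fibrant_repl}: iterate the unit of the $U \dashv \Hom(G,-)$ adjunction (which is a map of commutative algebras) from the constant cosimplicial object on $Y$ to $\Hom(\mathsf{srep} E_\bullet G, UY)$, postcompose with the map induced by the commutative-algebra fibrant replacement $UY \to (UY)^f$ in $\caspc$, and totalize. To check this is a weak equivalence in $\caGspcl$, I apply $\widetilde U$ (which creates weak equivalences) and further $U$ (which creates weak equivalences in $\caspc$ and commutes with all limits and colimits), reducing to the statement that $UY \to \mathrm{Tot}\,\Hom(\mathsf{srep} E_\bullet G, UY)$ is a simplicial homotopy equivalence (Bousfield \cite[Proposition 2.13]{BousfieldCosimplicialResolutions}, which is insensitive to the algebra structure since it only uses the constant-cosimplicial-object inclusion) composed with the weak equivalence $\Hom(\mathsf{srep} E_\bullet G, -)$ applied to $UY \to (UY)^f$; preservation of this weak equivalence uses that $\Hom(\mathsf{srep} E_\bullet G, -)$, being built levelwise from right Quillen functors $\Hom(G^{\times n}, -)$ on $\spc$ together with a homotopy limit, preserves weak equivalences between fibrant objects in $\spc$.

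The main obstacle — really the only subtle point — is verifying that the free commutative algebra functor $\widetilde F$ interacts well enough with the bar-construction and totalization that everything in sight can be regarded as genuinely living in the commutative-algebra category, i.e. that the levelwise commutative algebra structures on $\Hom(E_\bullet G, (UY)^f)$ assemble into a cosimplicial commutative algebra whose totalization is the commutative algebra $\Hom(EG,(UY)^f)$ and is fibrant in $\caGspcl$. This is exactly the content of the commuting square of free/forgetful adjunctions set up in the proof of \fullref{prop:Gcommalgright} together with \fullref{thm:square}, so once that square is in hand the argument is entirely formal and parallel to the associative case recorded in \fullref{rem:Galgrigh}; I would simply cite \fullref{prop:Gcommalgright}, \fullref{lem:unstable_fibrant_repl}, and Bousfield \cite[Proposition 2.13]{BousfieldCosimplicialResolutions}, and remark that no new input beyond the commutativity of $\widetilde U$ with the relevant functors is required.
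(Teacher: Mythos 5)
Your proposal is correct and follows exactly the strategy the paper has in mind: the paper's entire proof is a one-line reference back to \fullref{lem:unstable_fibrant_repl}, relying on the reader to observe that, since $\caGspcl$ and $\caspc$ are right-induced via the forgetful functors $\widetilde U$, the underlying-object argument of that lemma transports to commutative algebras without change; you have simply written out the details of that transport. One small point of phrasing: at the end you attribute the subtle step to the interaction of the free commutative-algebra functor $\widetilde F$ with the bar construction, but $\widetilde F$ plays no role here — what is actually being used is that the right adjoints $\widetilde U$ and $\Hom(G,-)$ commute and that $\widetilde U$ preserves limits (so totalization is computed on underlying objects), together with the fact that $\widetilde U$ creates fibrations and weak equivalences in the right-induced structure. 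That said, the commuting square of adjunctions you cite does encode this, so the argument goes through.
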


\begin{proof} The desired result follows by the same argument as in \fullref{lem:unstable_fibrant_repl}.
\end{proof}

\begin{rem} One can prove results in the pointed case analogous to all those in this section, starting from the $\pspc $ instead of  $\spc$, {replacing the functors $\Hom(G,-)$ with $\Hom(G_+,-)$ and $G \times - $ with $G_+\smsh -$.}
\end{rem}

\subsection{Motivic spectra}\label{sec:mot-spectra}

For any object $X$ in $\spre_*(\Sm{S})$, let 
$$\spe \big(\spre_*(\Sm{S}),X\big)$$ 
denote the category of \emph{symmetric $X$-spectra} of objects in $\spre_*(\Sm{S})$ {(see Hovey \cite{hovey-spectra})}.  Objects in $\spe \big(\spre_*(\Sm{S}),X\big)$  are sequences  $(Y_{0},Y_{1},..., Y_{n},...)$ of objects in  $\spre_*(\Sm{S})$, where $Y_{n}$ is equipped with a $\Sigma_{n}$-action for every $n$, together with structure maps $Y_{n}\wedge X \to Y_{n+1}$ that are appropriately equivariant.  The category of symmetric $X$-spectra is symmetric monoidal, with respect to the graded smash product $\wedge$ over a symmetric sequence built from $X$ (given by the usual coequaliser in symmetric sequences).

There is a model structure on $\spe \big(\spre_*(\Sm{S}),X\big)$ in which both fibrations and weak equivalences are lifted levelwise from {$\pspc$}, 
by \cite[Theorem 8.2]{hovey-spectra}, which we call the \emph{levelwise {motivic} model structure}.

\begin{defn}\label{defn:mot_spec_mod}
The \emph{stable motivic model structure} on the category of symmetric $X$-spectra, denoted $\spt$, 
is the left Bousfield localization of the levelwise {motivic} model structure with respect to a set of maps described explicitly in \cite[Definition 8.7]{hovey-spectra}. 
\end{defn}

The model category $\spt$ has the same cofibrations as the levelwise {motivic} model structure. We call its weak equivalences the \emph{stable motivic weak equivalences.}
It is compatible with the graded smash product, in the sense that $(\spt, \wedge )$ is a symmetric monoidal model category \cite[Theorem 8.11]{hovey-spectra}.  {Smashing with $X$ levelwise} is a Quillen equivalence on $\spt$.

\begin{lem}\label{lem:monoid_spect}
The monoid axiom is satisfied in  $(\spt, \wedge)$. 
\end{lem}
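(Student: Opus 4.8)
The plan is to deduce the monoid axiom for $(\spt,\smsh)$ from the corresponding property of $\pspc$. By \fullref{lem:cat_mot}, $\pspc$ is a cofibrantly generated, cellular, left proper, closed symmetric monoidal model category satisfying the monoid axiom, in which moreover every object is cofibrant; in particular the fixed pointed motivic space $X$ is cofibrant. These are exactly the hypotheses under which Hovey's construction of the stable model structure on symmetric $X$-spectra is well-behaved, so the approach I would take is to invoke \cite[Theorem 8.11]{hovey-spectra}, together with the treatment of the monoid axiom in \cite{hovey-spectra}, for the assertion that $\spt$ inherits the monoid axiom.

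If an argument from first principles is wanted, here is how I would proceed. Since $\spt$ is cofibrantly generated, it suffices by the small object argument to verify (i) that stable motivic weak equivalences are closed under transfinite composition and under cobase change along cofibrations, and (ii) that for every generating stable acyclic cofibration $j$ and every symmetric $X$-spectrum $Z$, the map $j\smsh Z$ is a stable motivic weak equivalence --- in fact a stable acyclic cofibration, as it is a cofibration by the pushout-product axiom of \cite[Theorem 8.11]{hovey-spectra}. Statement (i) follows from left properness of $\spt$ together with the fact that stable motivic equivalences are detected by bigraded homotopy sheaves, which commute with the filtered colimits arising in transfinite compositions along cofibrations. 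For (ii) one takes as generating stable acyclic cofibrations the levelwise generating acyclic cofibrations $F_n(j')$, with $j'$ ranging over the generating acyclic cofibrations of $\pspc$ and $n\ge 0$, together with cofibration-corrected versions of the stabilization maps $F_{n+1}(X\smsh K)\to F_n(K)$ at which the levelwise structure is localized.

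For the maps $F_n(j')$, unwinding the definition of the graded smash product shows that $F_n(j')\smsh Z$ is built levelwise out of maps of the form $j'\smsh W$ with $W$ an object of $\pspc$, together with coproducts and free symmetric-group quotients, under which acyclic cofibrations are stable; since the monoid axiom for $\pspc$ applies to such maps --- here using crucially that every object of $\pspc$ is cofibrant --- the map $F_n(j')\smsh Z$ is a levelwise, hence stable, acyclic cofibration. The genuinely non-formal point, which I expect to be the main obstacle, is the stabilization maps: smashing one of them with a non-cofibrant $Z$ need not be a stable equivalence on the nose, so one must show instead that the transfinite composites of pushouts of the maps $j\smsh Z$ that arise in the small object argument are nonetheless stable equivalences. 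This is precisely the compatibility of the Bousfield localization defining $\spt$ with the graded smash product, and I would establish it by following Hovey's argument in \cite{hovey-spectra}, feeding in left properness of $\spt$ and the monoid axiom for $\pspc$; everything else is bookkeeping with the free functors $F_n$ and the small object argument.
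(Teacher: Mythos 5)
Your proposal does not follow the paper's route, and it has a genuine gap at precisely the point you flag as the ``main obstacle.'' You correctly observe that smashing the stabilization maps $F_{n+1}(X\smsh K)\to F_n(K)$ with an arbitrary symmetric $X$-spectrum $Z$ need not give a stable equivalence on the nose, so the pushouts and transfinite composites arising in the small object argument have to be controlled by some other means. But your resolution --- ``follow Hovey's argument in \cite{hovey-spectra}, feeding in left properness and the monoid axiom for $\pspc$'' --- is not an argument: \cite{hovey-spectra} establishes the pushout-product axiom (Theorem~8.11), not the monoid axiom, and there is no general passage from the one to the other. Your alternative opening move, invoking ``the treatment of the monoid axiom in \cite{hovey-spectra},'' points at something that is not there. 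So the stabilization case is left unresolved.

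The paper's proof is both shorter and takes a different decomposition. Instead of reducing to a set of generating stable acyclic cofibrations and analyzing the levelwise generators $F_n(j')$ and stabilization maps separately, it works with an intermediate saturated class: by Jardine \cite[Proposition~4.19]{JardineMotivicSymSpec}, for \emph{any} acyclic cofibration $j$ in $\spt$ and any $Y$, the map $j\smsh Y$ is a levelwise monomorphism that is a stable weak equivalence. This single cited input handles both your levelwise case and your stabilization case at once, so the hard half of your outline is precisely what Jardine's proposition supplies. The remaining half --- that the class of levelwise monomorphisms which are stable weak equivalences is closed under pushouts and transfinite composition --- is taken from the proof of Hoyois \cite[Lemma~4.2]{HoyoisFromAlgebraicCobordism}. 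Your discussion of detecting stable equivalences by bigraded homotopy sheaves and their compatibility with filtered colimits along cofibrations is pointed in roughly the same direction as the Hoyois closure statement, so that part of your plan is salvageable; but without a substitute for Jardine's Proposition~4.19 (or an actual proof that the relevant transfinite cell complexes built from stabilization maps smashed with $Z$ are stable equivalences), the proposal does not close.
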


\begin{proof}
Let $J$ be a class of acyclic cofibrations in $\spt$, and let $Y$ be  a symmetric $X$-spectrum. By Jardine \cite[Proposition 4.19]{JardineMotivicSymSpec}, the class $J \wedge Y$ is comprised of levelwise monomorphisms that are stable weak equivalences.
It is therefore enough to show that the class of maps that are stable equivalences and levelwise monomorphisms is stable under pushouts and transfinite compositions, which is established in the proof of {Hoyois} \cite[Lemma 4.2]{HoyoisFromAlgebraicCobordism}.\end{proof}

\begin{rem}\label{rem:algrighspec}
It follows from \fullref{lem:monoid_spect} that 
the stable motivic model structure $\spt$ right-induces a model structure on its associated category of algebras, giving rise to a model category we denote by $\aspt$.
\end{rem}

\subsection{Equivariant motivic spectra}

We now introduce an equivariant version of the symmetric $X$-spectra considered above, but only in the case when $X$ 
is a motivic space with a trivial $G$-action. This is all we need for the examples treated in \fullref{sec:examples}.

For any finite group $G$ and any object $X$ in $\spre_*(\Sm{S})$, let $$\spe\big(\pGmot, \triv{G}X\big)$$ denote the category of \emph{symmetric $\triv{G}X$-spectra} of objects in $\pGmot$, defined analogously to $\spe \big(\spre_*(\Sm{S}),X\big)$.  Note that there is an obvious isomorphism of categories 
$$\spe \big(\pGmot, \triv{G}X\big)\cong G\text{-}\spe \big(\spre_*(\Sm{S}), X\big).$$

Since $U: \pGmot \lra \spre_*(\Sm{S})$ is a strong symmetric monoidal functor, the functors $U$ and $\Hom (G_{+},-)$  lift to levelwise adjoint functors on spectra, 
as does levelwise smashing with $G_{+}$,
giving rise to two adjoint pairs 
\begin{equation}\label{eqn:Gsmsh}
\xymatrix@C=5pc {
\spe \big(\pGmot, \triv{G}X\big)\
\ar@<-0ex>[r]|-(.4){\ U\ }
&
\spe (\spre_*(\Sm{S}), X),\
\ar@/^1.5pc/[l]^-{\Hom(G_{+},-)} \ar@/_1.5pc/[l]_-{G_{+}\wedge-}
}
\end{equation}
with $(G_{+}\wedge-)\dashv U\dashv \Hom (G_{+},-)$.

As in the unstable case, in the next lemma we define both left- and right-induced model structures on $\spe \big(\pGmot, \triv{G}X\big)$ from $\spt$, so that, as required in \fullref{conv:modelcat2}, both the adjunction between the trivial $G$-action functor and the fixed point functor and the adjunction between the trivial $G$-action functor and the orbit functor are Quillen pairs,  the first with respect to the left-induced structure and the second with respect to right-induced structure.

\begin{prop}\label{U-lifted-stable}The category $\spe \big(\pGmot, \triv{G}X\big)$ admits  cofibrantly generated model structures left- and right-induced by $U$ from $\spt$ through the adjunctions {of (\ref{eqn:Gsmsh})}.
\end{prop}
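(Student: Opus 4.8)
The plan is to use the identification of $\spe\big(\pGmot,\triv{G}X\big)$ with $\Gspt$, the category of $G$-objects in $\spt$. For the right-induced structure I would view $G$-objects in $\spt$ as left modules over the group monoid $\mmS[G]=G_+\wedge\mmS$ in $(\spt,\wedge)$, where $\mmS$ is the unit object, so that $U$ becomes the evident restriction functor; then \cite[Theorem 4.1]{schwede-shipley} applies, exactly as in the proof of \fullref{lem:left_ind_unstable}, because $\spt$ is cofibrantly generated, is a symmetric monoidal model category (\fullref{defn:mot_spec_mod} and the ensuing discussion), and satisfies the monoid axiom (\fullref{lem:monoid_spect}). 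The mechanism is that $U$ preserves all colimits, being left adjoint to $\Hom(G_+,-)$, and carries the generating (acyclic) cofibrations $G_+\wedge I$ and $G_+\wedge J$ of $\Gspt$ to wedges of copies of maps in $I$ and $J$; the monoid axiom then forces relative $(G_+\wedge J)$-cell complexes to be stable motivic weak equivalences, which is the acyclicity condition needed to transfer the model structure along $U$.

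For the left-induced structure I would follow the template of the proof of \fullref{lem:left_ind_unstable}, with the global injective structure there replaced by the levelwise motivic model structure on symmetric $X$-spectra. By \fullref{cor:left_ind_left_Bousf}, it suffices to produce a model structure on $G\text{-}\spe\big(\spre_*(\Sm{S}),X\big)$ left-induced by $U$ from the levelwise motivic structure, since $\spt$ is the left Bousfield localization of that structure at an explicit set of maps (\fullref{defn:mot_spec_mod}). The levelwise motivic structure is cofibrantly generated and, being built levelwise from $\pspc$, is in fact combinatorial (the injective model structure on a presheaf category is combinatorial, and combinatoriality is inherited under left Bousfield localization), so $G\text{-}\spe\big(\spre_*(\Sm{S}),X\big)$ is locally presentable and the smallness hypotheses needed for the dual of Quillen's path object argument \cite[Theorem 2.2.1]{HKRS} are available. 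To apply that argument I would lift the standard cylinder object on an $X$-spectrum, assembled levelwise from cylinders in $\pspc$, to a functorial cylinder in $G\text{-}\spe\big(\spre_*(\Sm{S}),X\big)$; since $U$ preserves colimits, $U$ applied to this lifted cylinder returns the original one, so it is a cylinder object on any cofibrant $G$-spectrum. Combined with a functorial cofibrant replacement produced by the small object argument, this yields the left-induced model structure, whose cofibrant generation again follows from combinatoriality; \fullref{cor:left_ind_left_Bousf} then transports it across the stable localization, giving the left-$U$-lifted structure on $\spe\big(\pGmot,\triv{G}X\big)$.

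I expect the main obstacle to be checking the precise hypotheses of \cite[Theorem 2.2.1]{HKRS} at the level of spectra. In the unstable case of \fullref{lem:left_ind_unstable} every object of the global injective structure is cofibrant, so a cylinder and a cofibrant replacement are immediate; but the levelwise motivic structure on symmetric $X$-spectra has cofibrations strictly smaller than the levelwise monomorphisms, owing to the interaction with the $\Sigma_n$-actions, so one must genuinely supply a functorial cofibrant-replacement functor on $G\text{-}\spe\big(\spre_*(\Sm{S}),X\big)$ compatible with $U$ and check that the lifted cylinder objects are functorial and well behaved with respect to the putative left-induced weak equivalences. A secondary technical point is to confirm that \fullref{cor:left_ind_left_Bousf} applies with the stable localizing set, i.e.\ that passing from the levelwise to the stable motivic structure is compatible with left-induction along $U$; this should be formal given the explicit description of the localizing maps but must be checked.
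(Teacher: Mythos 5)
Your treatment of the right-induced structure is essentially the same as the paper's (an appeal to \cite[Theorem 4.1]{schwede-shipley}, using that $\spt$ is cofibrantly generated, monoidal, and satisfies the monoid axiom by \fullref{lem:monoid_spect}). The difference, and the gap, is in the left-induced half.

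For the left-induced structure, you propose to run the dual of Quillen's path object argument \cite[Theorem 2.2.1]{HKRS} directly from the levelwise motivic model structure, and you yourself flag that the needed ingredients — a functorial cofibrant replacement compatible with $U$ and well-behaved cylinder objects at the spectrum level, where (unlike in \fullref{lem:left_ind_unstable}) not every object is cofibrant — are not obviously available. That observation is exactly why the paper does not take this route: as written, your argument leaves this essential verification unresolved, so it is a genuine gap, not merely a technical loose end. The paper instead builds a \emph{levelwise $U$-lifted} model structure on $\spe\big(\pGmot,\triv{G}X\big)$ in which fibrations and weak equivalences are lifted levelwise from $\pGspcl$ (using the same levelwise machinery that produces the levelwise motivic structure on nonequivariant spectra), observes that relative to this structure and the levelwise motivic structure $U$ is a left Quillen functor that preserves all weak equivalences, and then invokes \fullref{prop:left_ind_along_leftQuillen}. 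That proposition only requires verifying the acyclicity condition $\RLP(V^{-1}\Cof)\subseteq V^{-1}\WE$, which is automatic when the left adjoint already underlies a Quillen pair and preserves all weak equivalences; no cylinder or cofibrant-replacement construction is needed. The final step, passing from the levelwise to the stable motivic structure via \fullref{cor:left_ind_left_Bousf}, you do have and it agrees with the paper. To repair your argument you would either need to actually supply the functorial cylinder and cofibrant replacement (nontrivial, because of the $\Sigma_n$-equivariance constraints on levelwise cofibrations), or, more efficiently, replace the path-object step with the paper's intermediate levelwise $U$-lifted model structure and an appeal to \fullref{prop:left_ind_along_leftQuillen}.
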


We denote these structures 
$$\Gsptl \quad\text{and}\quad \Gsptr.$$
Similar left- and right-induced model structures exist on $\spe \big(\pmotG, \triv{G}X\big)$.

\begin{proof}
We start with the left-induced structure. As in the unstable case, there is a model structure on $\spe\big(\pGmot, \triv{G}X\big)$ in which both fibrations and  weak equivalences are lifted levelwise from  $\pGspcl$, which we call the \emph{levelwise $U$-lifted model structure}. {Furthermore}, $U\dashv\Hom(G_+,-)$ is a Quillen pair with respect to the levelwise $U$-lifted model structure on the category $\spe \big(\pGmot, \triv{G}X\big)$ and the {levelwise motivic model structure on $\spe (\spre_*(\Sm{S}), X\big)$}.

Since $U$ is a left Quillen functor and preserves all weak equivalences, \fullref{prop:left_ind_along_leftQuillen} implies that there exists a model structure on $\spe \big(\pGmot, \triv{G}X\big)$ that is left-induced by $U$ from the levelwise motivic model structure on  $\spe (\spre_*(\Sm{S}), X\big)$.  It then follows by \fullref{cor:left_ind_left_Bousf}  that the functor $U$ left-induces a model structure on $\spe \big(\pGmot, \triv{G}X\big)$  from $\spt$. {Since $\spt$ is cofibrantly generated, so is this new model category.}

Just as in the case of motivic spaces, the existence of the right-induced structure follows immediately from \cite[Theorem 4.1]{schwede-shipley}.
\end{proof}

The next lemma, the proof of which is essentially identical to that of \fullref{lem:homX}, ensures that the final condition of \fullref{conv:modelcat2} holds for motivic spectra.

\begin{lem}\label{lem:homXstable} For every fibrant object $X$ in $\spt$, the adjunction
$$\adjunction{\Gsptl}{\big(\sptrG)^{op}}{\Hom (-,X)}{\Hom (-,X)}$$
is a Quillen pair.
\end{lem}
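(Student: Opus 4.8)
The plan is to imitate the proof of \fullref{lem:homX} line for line, replacing $\spc$ by $\spt$ and the unstable $U$-lifted structures of \fullref{lem:left_ind_unstable} by the stable ones of \fullref{U-lifted-stable}. To show that the displayed adjunction is a Quillen pair it suffices to verify that the left adjoint $\Hom(-,X)\colon \Gsptl \to (\sptrG)^{op}$ is a left Quillen functor, i.e., that it carries (acyclic) cofibrations of $\Gsptl$ to (acyclic) cofibrations of $(\sptrG)^{op}$, equivalently, to (acyclic) fibrations of $\sptrG$. Note that the functor does land in $\sptrG$, since by \fullref{rem:fp-orb} the object $\Hom(Y,X)$ acquires a right $G$-action whenever $Y$ carries a left one.

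First I would take a cofibration $j\colon Y\to Z$ in $\Gsptl$. Since $\Gsptl$ is left-induced from $\spt$ by $U$ (\fullref{U-lifted-stable}), the map $Uj\colon UY\to UZ$ is a cofibration in $\spt$. Because $(\spt,\wedge)$ is a symmetric monoidal model category by Hovey \cite[Theorem 8.11]{hovey-spectra} and $X$ is fibrant, the adjoint (pullback-hom) form of the pushout-product axiom gives that
$$\Hom(Uj,X)\colon \Hom(UZ,X)\to \Hom(UY,X)$$
is a fibration in $\spt$. Now $U\colon \spe\big(\pGmot,\triv GX\big)\to \spe\big(\spre_*(\Sm S),X\big)$ is strong symmetric monoidal and, exactly as in the unstable case, commutes with forming the internal hom into an object carrying a trivial $G$-action, so that $U\Hom(j,X)=\Hom(Uj,X)$. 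Since $U$ creates the right-induced structure $\sptrG$, it follows that $\Hom(j,X)\colon \Hom(Z,X)\to \Hom(Y,X)$ is a fibration in $\sptrG$, i.e., represents a cofibration in $(\sptrG)^{op}$. Replacing ``cofibration'' by ``acyclic cofibration'' throughout, and using that acyclic cofibrations of $\Gsptl$ are likewise created by $U$ while $U$ detects acyclic fibrations in $\sptrG$, the same argument shows that $\Hom(-,X)$ preserves acyclic cofibrations, which completes the proof.

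The only step demanding any care — and the one I would flag as the main (and rather minor) obstacle — is the identity $U\Hom(j,X)=\Hom(Uj,X)$: one must know that, for $Y$ a left $G$-spectrum and $X$ a spectrum with trivial action, the underlying spectrum of the internal hom $\Hom(Y,X)$, equipped with its induced right $G$-action, is precisely $\Hom(UY,X)$. This is a formal consequence of $U$ being strong symmetric monoidal with adjoints on both sides (so that $G$-objects in $\spt$ inherit a closed symmetric monoidal structure whose internal hom is created by $U$), exactly as invoked implicitly in \fullref{lem:homX}; no phenomenon special to the spectral setting intervenes.
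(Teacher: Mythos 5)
Your proof is correct and matches the paper's intent exactly: the paper simply states that the proof is ``essentially identical to that of \fullref{lem:homX},'' and you carry out that transposition faithfully, replacing $\spc$, $\Gspcl$, $\spcrG$ by $\spt$, $\Gsptl$, $\sptrG$ and invoking \cite[Theorem 8.11]{hovey-spectra} for the monoidal model category axiom in the stable setting.
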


As in the unstable case, we can deduce  the existence of fibrant replacements in $\Gsptl$ of a particular form.

\begin{lem}\label{stable_fib_rep} For any symmetric {$\triv GX$}-spectrum $Y$, 
$\Hom(EG,(UY)^f)$ is a fibrant replacement of $Y$ in  $\Gsptl$, where $(UY)^f$ is a fibrant replacement of $UY$ in $\spt$.
\end{lem}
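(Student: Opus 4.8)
The plan is to transport the argument of \fullref{lem:unstable_fibrant_repl} levelwise to symmetric $\triv GX$-spectra. Recall that, by construction, $\Hom(EG,-)$ is the totalization $\Tot\mathsf{crep}\Hom(E_{\bullet}G,-)\cong\Tot\Hom(\mathsf{srep}E_{\bullet}G,-)$, where $E_{\bullet}G$ is the one-sided bar construction on $G$ (with a disjoint basepoint adjoined, since everything in this section is pointed), and that $\Hom(EG,(UY)^{f})$ carries a $G$-action inherited from $E_{\bullet}G$, so that it is naturally an object of $\Gsptl$ with $U\Hom(EG,(UY)^{f})\cong\Hom(UEG,(UY)^{f})$. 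So there are two things to verify: that $\Hom(EG,(UY)^{f})$ is fibrant in $\Gsptl$, and that the natural comparison map $Y\to\Hom(EG,(UY)^{f})$ is a stable motivic equivalence.

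For fibrancy I would argue as follows. By \fullref{U-lifted-stable}, $U\colon\Gsptl\to\spt$ is left Quillen for the left-induced structure, so its right adjoint $\Hom(G_{+},-)\colon\spt\to\Gsptl$ is right Quillen and hence preserves fibrant objects. Each level $E_{n}G$ of the bar construction is a free $G$-set, say $E_{n}G_{+}\cong G_{+}\smsh K_{n,+}$ for a set $K_{n}$, so $\Hom(E_{n}G_{+},(UY)^{f})\cong\Hom\big(G_{+},\Hom(K_{n,+},(UY)^{f})\big)$ is fibrant in $\Gsptl$, because $\Hom(K_{n,+},(UY)^{f})$ is a product of copies of the $\spt$-fibrant object $(UY)^{f}$. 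Thus $[n]\mapsto\Hom(E_{n}G_{+},(UY)^{f})$ is a levelwise fibrant cosimplicial object of $\Gsptl$; its cosimplicial replacement is Reedy fibrant by Dugger \cite{dugger:hocolim}, and the totalization of a Reedy fibrant cosimplicial object is fibrant by Bousfield \cite[2.8]{BousfieldCosimplicialResolutions}, exactly as in the unstable case, so $\Hom(EG,(UY)^{f})$ is fibrant in $\Gsptl$.

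For the comparison map I would iterate the unit of $U\dashv\Hom(G_{+},-)$ to obtain a map of cosimplicial spectra from the constant object on $Y$ to $\Hom(\mathsf{srep}E_{\bullet}G,UY)$, compose with the map induced by the fibrant replacement $UY\to(UY)^{f}$, and totalize, producing a natural morphism $Y\to\Hom(EG,(UY)^{f})$ in $\Gsptl$. Since $U$ creates the weak equivalences of $\Gsptl$ and commutes with limits and colimits, it suffices that $UY\to\Tot\Hom(\mathsf{srep}E_{\bullet}G,(UY)^{f})$ be a stable motivic equivalence. By naturality of $Z\mapsto\big(Z\to\Tot\Hom(\mathsf{srep}E_{\bullet}G,Z)\big)$ this map factors as $UY\to(UY)^{f}\to\Tot\Hom(\mathsf{srep}E_{\bullet}G,(UY)^{f})$, where the first arrow is a weak equivalence by the choice of fibrant replacement and the second is a simplicial homotopy equivalence by Bousfield \cite[Proposition 2.13]{BousfieldCosimplicialResolutions} — just as in the proof of \fullref{lem:unstable_fibrant_repl}, using that $|\mathsf{srep}E_{\bullet}G|$ is non-equivariantly contractible — hence a stable motivic equivalence, since $\spt$ is a simplicial model category.

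The main obstacle will be confirming that the Dugger--Bousfield totalization machinery — Reedy fibrancy of the cosimplicial replacement, fibrancy of the totalization of a Reedy fibrant cosimplicial object, and the homotopy equivalence $Z\to\Tot\Hom(\mathsf{srep}E_{\bullet}G,Z)$ — transfers verbatim to symmetric $\triv GX$-spectra equipped with the left-induced model structure $\Gsptl$, and that $U$ commutes with the relevant cotensors over simplicial sets. These are immediate levelwise upgrades of facts already established for $\spc$ and $\Gspcl$ in the proof of \fullref{lem:unstable_fibrant_repl}, so I do not anticipate any difficulty beyond bookkeeping.
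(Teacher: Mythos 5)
Your proof is correct and follows essentially the same strategy as the paper's: transport the Dugger--Bousfield totalization argument from the unstable Lemma~\ref{lem:unstable_fibrant_repl} to the stable setting. Your fibrancy argument (decomposing each $E_nG_+$ as $G_+\wedge K_{n,+}$, obtaining a levelwise-fibrant cosimplicial object in $\Gsptl$, passing to its Reedy-fibrant cosimplicial replacement, and totalizing) is exactly what the paper gestures at with ``an argument analogous to that in the proof of Lemma~\ref{lem:unstable_fibrant_repl}.''

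On the stable-equivalence half, the two arguments are organized slightly differently, and yours is in fact the cleaner one. The paper remarks that $(UY)^f$ is levelwise fibrant and claims that the natural map $Y\to\Hom(EG,(UY)^f)$ is a \emph{levelwise} weak equivalence; but $UY\to(UY)^f$ is only a stable equivalence, so only the second leg $(UY)^f\to\Hom(EG,(UY)^f)$ of the factorization is levelwise. Your version sidesteps this by factoring the underlying map as $UY\to(UY)^f\to\Tot\Hom(\mathsf{srep}E_\bullet G,(UY)^f)$, with the first arrow a stable equivalence by definition and the second a simplicial homotopy equivalence by Bousfield, hence a stable motivic equivalence in the simplicial model category $\spt$. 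This lands directly on the conclusion the paper states (``and thus a stable weak equivalence'') without the intermediate levelwise claim. Both arguments reach the right conclusion; yours records the factorization more precisely.
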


\begin{proof} Since $(UY)^f$ is in particular levelwise fibrant (the fibrations in the stable motivic model structure are a subclass of those in the levelwise {motivic} model structure), the natural map  
$$Y \lra \Hom(EG,(UY)^f)$$  is a levelwise weak equivalence by \fullref{lem:unstable_fibrant_repl}, and thus a stable weak equivalence. 
An argument analogous to that in the proof of \fullref{lem:unstable_fibrant_repl} implies that $\Hom(EG,(UY)^f)$ is fibrant in $\Gsptl$.
\end{proof} 

Easy proofs, essentially identical to those of \fullref{lem:trivOrb} and \fullref{lem:trivFP}, establish the following results.

\begin{lem}\label{lem:stabletrivOrb} The adjunction between the trivial-right-$G$-action and $G$-orbits functors 
\[
\adjunction{\sptrG}{\spt}{(-)_{G}}{{\trivial_{G}}}
\]
is a Quillen pair.
\end{lem}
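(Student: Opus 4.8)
The plan is to imitate the proof of \fullref{lem:trivOrb} essentially verbatim, transporting it from unstable motivic $G$-spaces to motivic $\triv GX$-spectra. The key structural input is \fullref{U-lifted-stable}: the model structure $\sptrG$ is right-induced by the forgetful functor $U\colon \spe\big(\pmotG,\triv GX\big)\to\spt$, so that, by definition of a right-induced structure, a morphism in $\sptrG$ is a fibration (respectively, an acyclic fibration) precisely when its image under $U$ is a fibration (respectively, an acyclic fibration) in $\spt$.

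First I would record that, exactly as in the unstable case, the trivial-right-action functor $\trivial_G\colon\spt\to\sptrG$ is a section of $U$, i.e., $U\circ\trivial_G=\Id_{\spt}$: equipping a symmetric $X$-spectrum levelwise with the trivial $G$-action and then forgetting it returns the original spectrum.

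Next, I would check that $\trivial_G$ is a right Quillen functor. Given a fibration (respectively, an acyclic fibration) $f$ in $\spt$, the morphism $\trivial_G(f)$ lies in $\sptrG$, and it is a fibration (respectively, an acyclic fibration) there if and only if $U\big(\trivial_G(f)\big)=f$ is a fibration (respectively, an acyclic fibration) in $\spt$, which holds by hypothesis. Since $\trivial_G$ is right adjoint to $(-)_G$, this shows that the adjunction $(-)_G\dashv\trivial_G$ is a Quillen pair.

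I do not expect any genuine obstacle: once \fullref{U-lifted-stable} is in hand, the argument is the same one-line observation used for \fullref{lem:trivOrb}. The only points requiring a little care are bookkeeping — remembering that $(-)_G$ is the left adjoint and $\trivial_G$ the right adjoint, so that it suffices to verify $\trivial_G$ preserves fibrations and acyclic fibrations — and citing \fullref{U-lifted-stable} for both the existence of $\sptrG$ and its explicit description as the structure created by $U$, which is precisely what makes the splitting identity $U\circ\trivial_G=\Id_{\spt}$ do all the work.
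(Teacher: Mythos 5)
Your proof is correct and follows exactly the route the paper indicates: the paper states that this lemma and \fullref{lem:stabletrivial_fixedpoints} have proofs "essentially identical" to \fullref{lem:trivOrb} and \fullref{lem:trivFP}, and your argument — using $U\circ\trivial_G=\Id$ together with the fact that $U$ creates the fibrations and weak equivalences of $\sptrG$ — is precisely the one-line proof given for \fullref{lem:trivOrb}.
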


\begin{lem}\label{lem:stabletrivial_fixedpoints}
The $\big(\triv G, (-)^{G}\big)$-adjunction
$$\adjunction{\spt}{\Gsptl}{\triv G}{(-)^G}$$
is a Quillen pair.
\end{lem}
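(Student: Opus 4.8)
The plan is to repeat verbatim the argument used for \fullref{lem:trivFP} and \fullref{lem:trivOrb}, with motivic spectra in place of motivic spaces. Recall from \fullref{U-lifted-stable} that $\Gsptl$ is the model structure on $\spe\big(\pGmot,\triv GX\big)$ that is left-induced by the forgetful functor $U$ from $\spt$; that is, a morphism is a cofibration (respectively, a weak equivalence) in $\Gsptl$ if and only if its image under $U$ is a cofibration (respectively, a stable motivic weak equivalence) in $\spt$.

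First I would observe that $U\circ\triv G$ is the identity functor on $\spt$, since equipping a symmetric $X$-spectrum with the trivial $G$-action and then forgetting that action returns the spectrum unchanged. Consequently, given any cofibration (respectively, acyclic cofibration) $f$ in $\spt$, the morphism $\triv G f$ in $\Gsptl$ satisfies $U(\triv G f)=f$, which is a cofibration (respectively, acyclic cofibration) in $\spt$; by the defining property of the left-induced model structure, $\triv G f$ is therefore a cofibration (respectively, acyclic cofibration) in $\Gsptl$. Hence $\triv G\colon\spt\to\Gsptl$ preserves cofibrations and acyclic cofibrations, so it is a left Quillen functor, and the adjunction $\triv G\dashv(-)^G$ is a Quillen pair. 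The same reasoning, with the roles of the left and right $G$-actions interchanged, yields the corresponding statement for $\spe\big(\pmotG,\triv GX\big)$.

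The argument is purely formal, so there is no genuine obstacle. The only point requiring a moment's care is bookkeeping about which classes of maps are created: one must use that in the \emph{left}-induced structure $\Gsptl$ it is precisely the cofibrations and weak equivalences (and not the fibrations) that are detected by $U$, which is exactly the content of \fullref{U-lifted-stable}, and that it is the left adjoint $\triv G$ — rather than its right adjoint $(-)^G$ — whose behaviour on (acyclic) cofibrations is being tested.
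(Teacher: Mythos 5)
Your proof is correct and is essentially identical to the paper's: the paper simply observes that $U\circ\triv G=\Id$ and that $U$ creates the left-induced model structure $\Gsptl$, so $\triv G$ is left Quillen, referring back to the argument for \fullref{lem:trivFP}. Your more explicit spelling-out of which classes of maps are detected by $U$ is just an expansion of the same one-line argument.
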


In order to ensure that equivariant motivic spectra fit into the formal framework for homotopical Galois theory, we need {the following result.}

\begin{lem} {If $\wedge$ denotes the usual smash product of symmetric $X$-spectra, then 
$$\big(\Gsptl, \wedge\big)\quad\text{and}\quad\big(\sptrG,\wedge\big)$$ 
are monoidal model categories that satisfy the monoid axiom.}
\end{lem}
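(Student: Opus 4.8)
The plan is to reproduce, in the stable setting, the proof of \fullref{prop:monoidal_mod_right_left_ind}, with $\big(\spt,\wedge\big)$ — which is a cofibrantly generated monoidal model category satisfying the monoid axiom by \cite[Theorems 8.2, 8.11]{hovey-spectra} and \fullref{lem:monoid_spect} — playing the role that $\spc$ played there. For the left-induced structure $\Gsptl$ the argument is immediate: by \fullref{U-lifted-stable} it is left-induced from $\spt$ along the strong symmetric monoidal functor $U$, so \cite[Proposition A.9]{hess-shipley:waldhausen} applies verbatim, giving that $\big(\Gsptl,\wedge\big)$ is a cofibrantly generated monoidal model category satisfying the monoid axiom; the same holds for the $U$-left-induced structure on $\spe\big(\pmotG,\triv GX\big)$.

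For the right-induced structure $\sptrG$ I would first record that its generating cofibrations and generating acyclic cofibrations have the form $G_+\wedge f$, with $f$ a generating (acyclic) cofibration of $\spt$, since $\sptrG$ is right-induced along $U$, whose left adjoint is $G_+\wedge-$. The one genuinely non-formal step is to show that $U\colon \sptrG\to \spt$ is \emph{also} a left Quillen functor: the underlying spectrum of $G_+\wedge Y$ is a wedge of $|G|$ copies of $Y$, and cofibrations (resp.\ acyclic cofibrations) in $\spt$ are closed under coproducts, so $U$ carries each $G_+\wedge f$ to a coproduct of copies of $f$, hence to a (acyclic) cofibration; passing to retracts of transfinite composites of pushouts shows that $U$ preserves all (acyclic) cofibrations.

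With this in hand, the pushout-product axiom is verified on generators exactly as in \fullref{prop:monoidal_mod_right_left_ind}: for $f,g$ in $\spt$ there is a natural isomorphism
$$(G_+\wedge f)\,\Box\,(G_+\wedge g)\;\cong\;(G_+\wedge G_+)\wedge(f\Box g),$$
because $(G_+\wedge G_+)\wedge-\cong (G\times G)_+\wedge-$ is a left adjoint and so commutes with the pushout defining $f\Box g$. If $f$ and $g$ are cofibrations, then $f\Box g$ is a cofibration of $\spt$ by monoidality, hence $G_+\wedge(f\Box g)$ is a cofibration of $\sptrG$ by definition of the right-induced structure, hence a cofibration of $\spt$ since $U$ is left Quillen, hence $(G_+\wedge G_+)\wedge(f\Box g)$ is again a cofibration of $\sptrG$; and the same chain of implications shows it is acyclic whenever $f$ or $g$ is. For the monoid axiom on $\sptrG$, one notes that, $U$ being left Quillen, every acyclic cofibration of $\sptrG$ is an acyclic cofibration of $\Gsptl$, and the weak equivalences of the two structures coincide; therefore the monoid-axiom class for $\sptrG$ is contained in that for $\Gsptl$, which consists of weak equivalences by the left-induced case. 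The identical argument treats $\sptrG$ for a right $G$-action.

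The main obstacle I anticipate is precisely the verification that the forgetful functor $U\colon \sptrG\to \spt$ preserves cofibrations and acyclic cofibrations; everything else is a direct transcription of the unstable proof, so I would not expect it to cause difficulty.
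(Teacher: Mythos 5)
Your proposal is correct and follows essentially the same route as the paper: the left-induced case is dispatched by Hess--Shipley \cite[Proposition A.9]{hess-shipley:waldhausen}, and the right-induced case is the stable transcription of the proof of \fullref{prop:monoidal_mod_right_left_ind}, which is precisely what the paper's (very terse) proof invokes. You correctly identify the one step that does not transport verbatim from the unstable case: there the paper concludes that $U\colon\spcrG\to\spc$ is left Quillen because $G\times-$ preserves monomorphisms and cofibrations in $\spc$ are the monomorphisms, an argument that fails in $\spt$ where cofibrations are not simply monomorphisms. Your replacement — observing that $U(G_+\wedge f)$ is a coproduct of $|G|$ copies of $f$ and invoking closure of (acyclic) cofibrations under coproducts, retracts, transfinite composites and pushouts — is exactly the right fix and in fact works uniformly in both the unstable and stable settings. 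One small slip: in the final monoid-axiom step you should compare $\sptrG$ with the \emph{left}-induced structure on the same underlying category of right $G$-objects, i.e.\ $\sptlG$, not with $\Gsptl$ (which lives on left $G$-objects); the argument is otherwise as in the paper.
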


Note that a smash product of $G$-objects is endowed with the diagonal $G$-action.

\begin{proof} Since $U$ is a strong monoidal functor, and $\Big(\spt, \wedge \Big)$ is a monoidal model category satisfying the monoid axiom, the case of the left-induced structure is an immediate consequence of {Hess and Shipley} \cite[Proposition A.9]{hess-shipley:waldhausen}.
The proof for the right-induced model structure follows the same pattern as the proof of \fullref{prop:monoidal_mod_right_left_ind}.
\end{proof}

We can now apply {Schwede and Shipley} \cite[Theorem 4.1(3)]{schwede-shipley} to obtain the desired model structures on equivariant motivic ring spectra. 

\begin{cor}\label{cor:Galg_spectra}
There exist  model structures on the categories $\aGspt$ and $\asptG$  of algebras in $\spe \big(\pGmot, \triv{G}X\big)$ and  $\spe \big(\pmotG, \triv{G}X\big)$, respectively, right-induced 
{from $\Gsptl$ and $\sptrG$}
by the adjunction
between the free associative algebra functor $F$ and the forgetful functor.
\end{cor}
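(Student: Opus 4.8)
The plan is to apply Schwede and Shipley's existence result for model structures on categories of monoids, \cite[Theorem 4.1(3)]{schwede-shipley}: for any cofibrantly generated monoidal model category satisfying the monoid axiom, the category of monoids therein inherits a model structure whose weak equivalences and fibrations are created by the forgetful functor to the underlying category. All the needed hypotheses have already been established in the preceding results, so the proof consists in assembling them and quoting the theorem.

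First I would record that, by \fullref{U-lifted-stable}, both $\Gsptl$ and $\sptrG$ are cofibrantly generated. The lemma immediately preceding this corollary states that, equipped with the graded smash product $\wedge$ of symmetric $X$-spectra (where a smash product of $G$-objects carries the diagonal $G$-action), both $\big(\Gsptl,\wedge\big)$ and $\big(\sptrG,\wedge\big)$ are monoidal model categories satisfying the monoid axiom. These are precisely the hypotheses of \cite[Theorem 4.1(3)]{schwede-shipley}.

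Applying that theorem to $\Gsptl$ produces a model structure on the category of monoids in $\spe\big(\pGmot,\triv GX\big)$, which is exactly the category $\aGspt$ of algebras, right-induced along the free--forgetful adjunction $F\dashv U$; a morphism of such algebras is then a weak equivalence or fibration if and only if its underlying morphism of symmetric $\triv GX$-spectra is one in $\Gsptl$. Applying the theorem to $\sptrG$ gives the analogous model structure on $\asptG$. Since the argument is a straightforward citation, there is no genuine obstacle here; the only subtlety, already addressed in the preceding lemma, is that the smash product of $G$-objects must be taken with the diagonal action, so that $U$ remains strong monoidal and the monoid axiom genuinely transfers to the equivariant setting.
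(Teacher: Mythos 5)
Your proof is correct and follows exactly the paper's own reasoning: both simply cite Schwede--Shipley \cite[Theorem 4.1(3)]{schwede-shipley}, whose hypotheses (cofibrant generation from \fullref{U-lifted-stable}, and the monoidal model and monoid axiom from the immediately preceding lemma) have already been established.
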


We denote these model structures by $\aGsptl$ and $\asptrG$.

\begin{rem}\label{rem:homeg_fib}
Since the model structures on $\aspt$ and $\aGsptl$ are right-induced from $\spt$ and $\Gsptl$ respectively, the lifted adjunction 
\[
\adjunction{\aspt}{\aGsptl}{\triv G}{(-)^G}
\]
is still a Quillen pair by \fullref{lem:stabletrivial_fixedpoints}.  The analogous result holds for the 
{$(-)_{G}\dashv \triv G$} adjunction as well. Moreover, the fibrant replacement of an algebra $A$ in $\aGsptl$ is given by ${\Hom}\big(EG,(UA)^{f}\big)$, where $ (UA)^{f}$ denotes the fibrant replacement of $UA$ in $\aspt$. 
\end{rem}

\subsection{Commutative algebras in (equivariant) motivic spectra}\label{section:comalg_motSp}
Let $G$ be a finite group, and let $X$ be any object in $\spre_*(\Sm{S})$.  Let $X\otimes -$ denote the left adjoint to the forgetful functor from $\spe \big(\spre_*(\Sm{S}),X\big)$ to the category of symmetric sequences in $\spre_*(\Sm{S})$.

Let $\caspt$ denote the category of commutative algebras in $\spe \big(\spre_*(\Sm{S}),X\big)$, and let
\begin{equation}\label{eqn:freecomm}\adjunction{\spe \big(\spre_*(\Sm{S}),X\big)}{\caspt}{\widetilde F}{\widetilde{U}}\end{equation}
denote the free commutative algebra adjunction.

\begin{prop}[{Hornbostel} {\cite[Theorems 3.4 and 3.6]{Hornbostel}}]\label{prop:hornbostel}The category $\spe \big(\spre_*(\Sm{S}),X\big)$ admits a model category structure in which the weak equivalences are the stable motivic weak equivalences, and the class of cofibrations is   $X \otimes M$-cof, where $M$ is a class of monomorphisms in symmetric sequences (and cof is explained in \fullref{notation:cof}). This model category structure lifts to a model category stucture on $\calg\spe$, right-induced by the adjunction {(\ref{eqn:freecomm})}.
\end{prop}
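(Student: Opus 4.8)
The plan is to deduce this proposition from Hornbostel's general results, once we verify that the motivic setting meets his running hypotheses. In \cite[Theorem 3.4]{Hornbostel} it is shown that, for a sufficiently nice symmetric monoidal model category $\cat C$ and a cofibrant object $X$, the category $\spe\big(\cat C,X\big)$ of symmetric $X$-spectra admits a model structure having the same (stable) weak equivalences as the usual stable one, but whose cofibrations form the saturated class $X\otimes M$-cof for a suitable set $M$ of \emph{positive} monomorphisms of symmetric sequences (the positivity being what ultimately makes the relevant symmetric-group actions sufficiently free). Then \cite[Theorem 3.6]{Hornbostel} shows that this model structure is right-induced along the free commutative algebra adjunction $\widetilde{F}\dashv\widetilde{U}$ of (\ref{eqn:freecomm}). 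So it suffices to check the hypotheses of \cite{Hornbostel} for $\cat C=\spre_*(\Sm S)$ equipped with its motivic injective model structure $\pspc$.

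First I would collect the structural properties of the base that Hornbostel requires: by \fullref{lem:cat_mot}, the model category $\pspc$, whose underlying category is $\spre_*(\Sm S)$, is left proper, cellular (hence cofibrantly generated), and closed symmetric monoidal, every object is cofibrant, and the monoid axiom holds; moreover the stable motivic model structure $\spt$ on $\spe\big(\spre_*(\Sm S),X\big)$ is itself cellular and monoidal and satisfies the monoid axiom, by \cite[Theorems 8.2 and 8.11]{hovey-spectra} together with \fullref{lem:monoid_spect}. Since every object of $\spre_*(\Sm S)$ is cofibrant, $X$ is in particular cofibrant, so the spectrification functor $X\otimes-$ has the expected homotopical behaviour on the class $M$.

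The one genuinely non-formal ingredient --- and the step I expect to be the main obstacle to simply quoting Hornbostel --- is the symmetric-power condition (a commutative-monoid-type axiom): for every cofibration $f$ in the model structure of \cite[Theorem 3.4]{Hornbostel} on $\spe\big(\spre_*(\Sm S),X\big)$ and every $n\ge 1$, the induced map on $n$-th symmetric powers $f^{\Box n}/\Sigma_n$ must again be a cofibration, and a stable motivic weak equivalence whenever $f$ is. This is precisely the content of Hornbostel's analysis of symmetric powers in the motivic stable category, and it is the reason the positive-type cofibrations, rather than the ordinary stable ones, must be used. Granting this condition, the right-induced model structure on $\calg\spe$ is produced by the transfer argument of \cite[Section 3]{Hornbostel}: the symmetric-power condition is exactly what is needed to verify the acyclicity condition for the transfer, while the small-object argument supplies the factorizations, and no further motivic input is required.
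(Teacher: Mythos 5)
Your proposal takes the same approach as the paper: \fullref{prop:hornbostel} is stated there as a direct attribution to Hornbostel's Theorems 3.4 and 3.6, with no separate proof given, and the work reduces to checking that $\pspc$ satisfies his hypotheses (left proper, cellular, monoidal, monoid axiom, all objects cofibrant), which you correctly locate in \fullref{lem:cat_mot}, Hovey's results, and \fullref{lem:monoid_spect}. Your identification of the symmetric-power / positivity condition as the key non-formal input is exactly the content of Hornbostel's Lemma 3.5 and surrounding arguments, so the proposal is a faithful expansion of the citation the paper makes.
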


We call the model structure of the proposition above the \emph{positive flat stable model structure} and denote it $\Pspt$.  We denote the right-induced model structure on the category of commutative motivic algebras by $\Pcaspt$.

We now establish the existence of two types of lifted model structures on categories of commutative equivariant motivic algebras.

\begin{lem}\label{lem:Gmot_spectra_pos_right}
The forgetful functor, right adjoint to $-\wedge G_{+}$,
$$U\colon \spe \big(\pmotG, \triv{G}X\big) \to \spe \big(\spre_*(\Sm{S}),X\big)$$ 
right-induces a model structure on $ \spe \big(\pmotG, \triv{G}X\big)$ from $\Pspt$.
\end{lem}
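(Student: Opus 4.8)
The plan is to realize the asserted model structure as one \emph{right-induced} along $U$ in the strict sense: a morphism $f$ of $\spe\big(\pmotG, \triv{G}X\big)$ should be a fibration (resp.\ a weak equivalence) exactly when $Uf$ is one in $\Pspt$, and the cofibrations are then forced by the left lifting property. To produce such a structure I would invoke the standard recognition criterion for right-induced (a.k.a.\ lifted) model structures, exactly as in the right-induced half of the proof of \fullref{U-lifted-stable} and in \fullref{prop:hornbostel} (see Schwede--Shipley \cite[Theorem 4.1]{schwede-shipley} and \fullref{appendix}). Let $I$ and $J$ denote the sets of generating cofibrations and generating acyclic cofibrations of $\Pspt$ furnished by \fullref{prop:hornbostel}, and let $F = -\wedge G_{+}$ be the left adjoint of $U$. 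The criterion requires two things: (i) a smallness hypothesis ensuring that $\{Fi : i\in I\}$ and $\{Fj : j\in J\}$ generate a cofibrantly generated candidate; and (ii) the acyclicity condition, namely that $U$ sends every relative $\{Fj : j\in J\}$-cell complex to a stable motivic weak equivalence.

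The verification of (ii) is where the particular nature of $U$ enters, and it is what makes the argument essentially formal. As in $(\ref{eqn:Gsmsh})$ for left actions, the forgetful functor here also admits a \emph{right} adjoint $\Hom(G_{+},-)$, so $U$ preserves all small colimits; and since $G$ is finite, $G_{+}$ is a finite wedge of copies of $S^{0}$, so there is a natural isomorphism $U(FY)\cong \bigvee_{g\in G} Y$ in $\spe\big(\spre_*(\Sm{S}),X\big)$, and in particular $U(Fj)\cong \bigvee_{g\in G} j$ for each $j\in J$. A coproduct of acyclic cofibrations is an acyclic cofibration, so each $U(Fj)$ is an acyclic cofibration of $\Pspt$; and because $U$ preserves colimits it carries a relative $\{Fj\}$-cell complex to a relative cell complex built from the maps $\bigvee_{g\in G} j$, which is again an acyclic cofibration of $\Pspt$, hence a stable motivic weak equivalence. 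Condition (i) is equally immediate: the underlying category of $\Pspt$ is locally presentable, so all its objects are small, a property preserved by the left adjoint $F$. The recognition criterion then yields the cofibrantly generated model structure on $\spe\big(\pmotG, \triv{G}X\big)$ with generating cofibrations $\{Fi\}$ and generating acyclic cofibrations $\{Fj\}$, in which $U$ creates fibrations and weak equivalences --- precisely the structure right-induced from $\Pspt$.

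I do not anticipate a genuine obstacle: the only point that needs attention is whether the acyclicity condition (ii) holds, and it does so automatically because $U$, being simultaneously a left adjoint, preserves colimits, which is exactly what lets one avoid any delicate cofibrancy analysis of the positive flat generators of \fullref{prop:hornbostel}. The same reasoning, with $G$ acting on the other factor, gives the analogous model structure right-induced from $\Pspt$ on the category of left $G$-objects $\spe\big(\pGmot, \triv{G}X\big)$.
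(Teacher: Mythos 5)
Your proposal is correct and follows essentially the same route as the paper: both verify the acyclicity condition of the transfer theorem by observing that $f\wedge G_{+}$ is a finite coproduct of copies of $f$ and hence an acyclic cofibration of $\Pspt$, with closure under pushouts and transfinite compositions doing the rest. The paper packages this by viewing $\spe\big(\pmotG,\triv G X\big)$ as right $I[G_{+}]$-modules and citing Schwede--Shipley Theorem 4.1(1) together with Remark 4.2 (and Hornbostel's monoidality of $\Pspt$), while you invoke the bare recognition criterion and make the colimit-preservation of $U$ explicit via its right adjoint $\Hom(G_{+},-)$; these amount to the same argument.
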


 We will call this model structure the \emph{right-lifted positive model structure} and denote it $\GPsptr$.
 
\begin{proof} Let  $I=(S^{0},X, X\wedge X, ....)$, the unit $X$-spectrum. Since $\spe \big(\pmotG, \triv{G}X\big)$ is the category of right $I[G_{+}]$-modules in $\spe \big(\spre_*(\Sm{S}),X\big)$, it is enough to show that $\Pspt$ satisfies the conditions of \cite[Theorem 4.1(1)]{schwede-shipley}. Hornbostel showed in \cite[Lemma 3.5]{Hornbostel} that $\Pspt$ is a monoidal model category.
By \cite[Remark 4.2]{schwede-shipley}, it is enough then to show that if $f$ is an acyclic cofibration in $\Pspt$, then $(f\wedge G_+)$-cof$_\mathrm{reg}$ (i.e., the class obtained from $f\wedge G_+$  by taking pushouts and transfinite compositions) consists of acyclic cofibrations.
Since $f\wedge G_+ \cong (\coprod_{|G|}f)_+$, if $f$ is an acyclic cofibration, then so is  $f\wedge G_+$. Since acyclic cofibrations are closed under pushouts and transfinite compositions, we conclude that the desired right-induced model structure on  $ \spe \big(\pmotG, \triv{G}X\big)$ exists.
\end{proof}

\begin{lem}\label{lem:Gmot_spectra_pos}
The forgetful functor, left adjoint to $\Hom(G_+,-)$,
$$U\colon \spe \big(\pGmot, \triv{G}X\big) \to \spe \big(\spre_*(\Sm{S}),X\big)$$ 
left-induces a model structure on $ \spe \big(\pGmot, \triv{G}X\big)$ from $\Pspt$. 
\end{lem}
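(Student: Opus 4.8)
The plan is to mirror the argument for \fullref{U-lifted-stable}, substituting the positive flat model structures for the levelwise and stable motivic ones throughout. Recall from \fullref{prop:hornbostel} that $\Pspt$ is obtained, in the manner of Hornbostel, by left Bousfield localization from a \emph{positive flat levelwise} model structure on $\spe\big(\spre_*(\Sm{S}),X\big)$ whose weak equivalences are the levelwise motivic equivalences and whose cofibrations form the class $X\otimes M$-cof. So I would first equip the source category $\spe\big(\pGmot,\triv{G}X\big)\cong G\text{-}\spe\big(\spre_*(\Sm{S}),X\big)$ with the corresponding positive flat levelwise structure, generated by $X\otimes M_G$, where $M_G$ denotes the class of monomorphisms of symmetric sequences in $\pGmot$. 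Because monomorphisms are detected objectwise and simplicially, hence independently of the $G$-action, the forgetful functor $U$ carries $X\otimes M_G$ into $X\otimes M$ and levelwise motivic equivalences to levelwise motivic equivalences; consequently $U\dashv\Hom(G_+,-)$ is a Quillen pair between this positive flat levelwise structure on the source and the positive flat levelwise structure on the target, and $U$ moreover preserves \emph{all} weak equivalences.

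With that in hand, the argument would proceed exactly as in the proof of \fullref{U-lifted-stable}: since $U$ is a left Quillen functor that preserves all weak equivalences, \fullref{prop:left_ind_along_leftQuillen} produces a model structure on $\spe\big(\pGmot,\triv{G}X\big)$ left-induced by $U$ from the positive flat levelwise structure on $\spe\big(\spre_*(\Sm{S}),X\big)$, and applying \fullref{cor:left_ind_left_Bousf} to the left Bousfield localization exhibiting $\Pspt$ promotes this to the model structure left-induced by $U$ from $\Pspt$. Cofibrant generation is inherited from that of $\Pspt$, and the analogous statement for right $G$-actions follows identically.

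I expect the main obstacle to be the first step, namely checking that the positive flat levelwise structure on $\spe\big(\pGmot,\triv{G}X\big)$ actually exists and that $U$ is genuinely left Quillen with respect to it. This amounts to confirming that the class $M_G$ of equivariant symmetric-sequence monomorphisms is well enough behaved -- closed under pushouts and transfinite composition, and compatible with $X\otimes-$ -- for the general spectrification machinery of Hovey and Hornbostel to apply over the base $\pGmot$, equipped with its left-induced, cofibrantly generated, monoidal structure satisfying the monoid axiom (cf.\ the preceding lemma), and that $U$ sends relative $X\otimes M_G$-cell complexes to relative $X\otimes M$-cell complexes. Once this is settled, everything else is a formal transcription of \fullref{U-lifted-stable} together with the appendix results on left-induction and its interaction with left Bousfield localization.
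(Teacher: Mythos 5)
Your overall strategy matches the paper's: build a positive flat \emph{levelwise} model structure on $\spe\big(\pGmot,\triv{G}X\big)$, show that $U$ is a left Quillen functor to Hornbostel's positive flat levelwise structure that preserves all weak equivalences, invoke \fullref{prop:left_ind_along_leftQuillen} to get the left-induced structure from the levelwise model, and finally promote that to a left-induced structure from $\Pspt$ via \fullref{cor:left_ind_left_Bousf}. That chain of reductions is exactly right.

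However, the step you yourself flag as ``the main obstacle'' is precisely the part you leave unresolved, and it is not a formality. Constructing the positive flat levelwise structure on the equivariant category is the crux of the argument: you need to know that the candidate classes (generated, as you say, by $X\otimes M_G$) actually form a cofibrantly generated model structure over the base $\pGspcl$, and you cannot just assert that Hovey--Hornbostel ``spectrification machinery'' applies without identifying the hypotheses that license it. The paper closes this gap by citing Gorchinskiy and Guletskii \cite[Proposition 1]{Gorchinskiy-Guletskii}: since $\pGspcl$ satisfies the hypotheses of their proposition, there is a levelwise positive flat model structure on $\spe\big(\pGmot,\triv{G}X\big)$ based on $\Gspcl$. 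Once that existence statement is in hand, checking that $U$ sends generating cofibrations to cofibrations and preserves all weak equivalences is straightforward, and the rest of your outline goes through verbatim. So the gap is genuine but localized: supply the verification that $\pGspcl$ meets the Gorchinskiy--Guletskii conditions (or an equivalent existence argument for the equivariant levelwise positive flat structure), and the remainder of the proof is correct.
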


We call the model structure of the lemma above the \emph{left-lifted positive model structure} and denote it $\GPsptl$.

\begin{proof} Since $\pGspcl$ satisfies the conditions of {Gorchinskiy and Guletskii} \cite[Proposition 1]{Gorchinskiy-Guletskii}, there is a levelwise positive flat model structure on $\spe \big(\pGmot, \triv{G}X\big)$ in the sense of \cite{Gorchinskiy-Guletskii}, based on { $\Gspcl$} .  Hornbostel \cite{Hornbostel} defines what he also calls a levelwise positive flat model structure on $\spe \big(\spre_*(\Sm{S}),X\big)$, from which he derives the positive flat stable model structure of \fullref{prop:hornbostel} by left Bousfield localization. With respect to these two levelwise positive flat  model structures, the forgetful functor $U$ is a left Quillen functor, as it sends generating cofibrations to cofibrations and preserves all weak equivalences. \fullref{prop:left_ind_along_leftQuillen} implies that  $U$ therefore creates  a model structure on $\spe \big(\pGmot, \triv{G}X\big)$,  left-induced from the levelwise positive flat model structure.  It follows then from \fullref{cor:left_ind_left_Bousf}   {that} there is a left-induced model structure on $\spe \big(\pGmot, \triv{G}X\big)$ created by $U$ from $\Pspt$. 
\end{proof}

Let $\caGspt$ denote the category of commutative algebras in the symmetric monoidal category $\spe \big(\pGmot, \triv{G}X\big)$, and let
\begin{equation}\label{eqn:gfreecomm}\adjunction{\spe \big(\pGmot, \triv{G}X\big)}{\caGspt}{\widetilde F}{\widetilde{U}}\end{equation}
again denote the free commutative algebra adjunction, of which there is also a version for right $G$-actions.

\begin{prop} \label{prop:CalgG_model}
The model structure $\GPsptl$ lifts to a model structure on $\caGspt$, right-induced by the adjunction {(\ref{eqn:gfreecomm})}.
\end{prop}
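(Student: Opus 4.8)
The plan is to deduce the proposition from \fullref{thm:square}, applied to the commuting square of adjunctions
\[ \xymatrix@R=4pc@C=4pc{ \spe \big(\spre_*(\Sm S),X\big) \ar@{}[r]|-{\perp} \ar@<-1ex>[d]_-{\widetilde F} \ar@<-1ex>[r]_-{\Hom (G_+,-)} & \spe \big(\pGmot, \triv G X\big)  \ar@<1ex>[d]^-{\widetilde F}\ar@<-1ex>[l]_-U\\  \ar@{}[r]|-{\top} \ar@{}[u]|-{\dashv} \caspt\ar@<1ex>[r]^-{\Hom (G_+,-)} \ar@<-1ex>[u]_-{\widetilde U} & \caGspt, \ar@<1ex>[u]^-{\widetilde U} \ar@<1ex>[l]^-U\ar@{}[u]|-{\vdash} }\]
in complete analogy with the proof of \fullref{prop:Gcommalgright}, but now one level up in spectra and with positive flat structures in place of the motivic injective ones. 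Here $\spe\big(\spre_*(\Sm S),X\big)$ is to carry the positive flat stable model structure $\Pspt$, the category $\caspt$ the right-induced structure $\Pcaspt$ of \fullref{prop:hornbostel}, and $\spe\big(\pGmot,\triv G X\big)$ the structure $\GPsptl$ of \fullref{lem:Gmot_spectra_pos}, which is by construction left-induced along $U$ from $\Pspt$, so that $U$ both preserves and reflects weak equivalences. First I would check that the square commutes: this is because the forgetful functor $U\colon \spe\big(\pGmot,\triv G X\big)\to \spe\big(\spre_*(\Sm S),X\big)$ is strong symmetric monoidal, whence $U\widetilde F\cong \widetilde F U$ and, passing to right adjoints, $\widetilde U\circ\Hom(G_+,-)\cong \Hom(G_+,-)\circ\widetilde U$.

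Then I would feed the three input data to \fullref{thm:square}, exactly as in the proof of \fullref{prop:Gcommalgright}: the base corner $\spe\big(\spre_*(\Sm S),X\big)$ carries a cofibrantly generated monoidal model structure (\fullref{prop:hornbostel} together with \cite[Lemma 3.5]{Hornbostel}); the upper-right corner $\GPsptl$ is left-induced from $\Pspt$ along $U$ by \fullref{lem:Gmot_spectra_pos}; and the lower-left corner $\Pcaspt$ is right-induced from $\Pspt$ along $\widetilde U$ by \fullref{prop:hornbostel}. The theorem then produces a model structure on $\caGspt$ that is simultaneously right-induced from $\GPsptl$ along $\widetilde U$ and left-induced from $\Pcaspt$ along $U$; the first of these descriptions is precisely the assertion to be proved. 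As a byproduct, the second description shows that $U\colon \caGspt\to \caspt$ creates all three classes of maps, which is convenient for later use.

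The only step that is not pure formalism is the acyclicity condition underlying the right-induction along $\widetilde U$, namely that relative $\widetilde F(J)$-cell complexes in $\caGspt$ are stable motivic weak equivalences, where $J$ generates the acyclic cofibrations of $\GPsptl$. This is precisely the statement that fails for naive commutative monoids of spectra and is the reason the positive flat stable structure was introduced by Hornbostel; here it is supplied by \fullref{thm:square}, which extracts it from the already-established analogue for $\Pcaspt$ together with the facts that $U$ creates $\GPsptl$ and preserves weak equivalences. Thus the only genuinely equivariant input required is the strong monoidality of $U$, which makes the square commute and transports the non-equivariant control of symmetric powers to the $G$-equivariant setting; no further analysis of symmetric powers or of free commutative algebras is needed beyond \fullref{prop:hornbostel} and \fullref{lem:Gmot_spectra_pos}. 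I expect that verifying the hypotheses of \fullref{thm:square} will be the only real chore, and that it proceeds essentially word for word as in the proof of \fullref{prop:Gcommalgright}.
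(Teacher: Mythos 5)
Your proposal is correct and matches the paper's proof essentially verbatim: the paper also applies \fullref{thm:square} to the same square of adjunctions (commuting because $U$ is strong symmetric monoidal), with $\Pspt$ in the base corner, $\GPsptl$ as the left-induced structure, and $\Pcaspt$ as the right-induced structure. The extra remarks you make about the acyclicity condition being supplied by \fullref{thm:square} and about $U$ creating all three classes of maps on $\caGspt$ are accurate and simply elaborate what the paper leaves implicit.
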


We denote this model category structure by $\caGPsptl$.

\begin{proof}
Since  $U\colon  \spe \big(\pGmot, \triv{G}X\big) \lra \spe \big(\spre_*(\Sm{S}),X\big)$ is strong symmetric monoidal, there is a square of adjunctions
\[ \xymatrix@R=4pc@C=4pc{ \spe \big(\spre_*(\Sm{S}),X\big) \ar@{}[r]|-{\perp} \ar@<-1ex>[d]_-{\widetilde F} \ar@<-1ex>[r]_-{\Hom(G_+,-)} & \spe \big(\pGmot, \triv{G}X\big)  \ar@<1ex>[d]^-{\widetilde F} \ar@<-1ex>[l]_-U \\  
\ar@{}[r]|-{\top} \ar@{}[u]|-{\dashv} \caspt \ar@<1ex>[r]^-{\Hom(G_+,-)} \ar@<-1ex>[u]_-{\widetilde{U}} & \caGspt \ar@<1ex>[u]^-{\widetilde{U} }\ar@<1ex>[l]^-U\ar@{}[u]|-{\vdash} }\] 
in which $U\widetilde F=\widetilde FU$ and $ \widetilde U U= U\widetilde U$.
It follows from  \fullref{thm:square} that the required right-induced model structure exists, when we consider the model structures $\GPsptl$, $\Pspt$, and $\Pcaspt$ in the diagram above.
\end{proof}

\begin{rem}\label{rem:rightliftedGcomalg}  The proposition above holds for right $G$-actions as well.   \fullref{thm:square} implies moreover that there also exists a left-induced model structure on $\casptG$ created by $U\colon \casptG\to \caspt$. Since $\casptG$ is isomorphic to  $I[G_+]/\caspt$, a full subcategory of the under category $I[G_+]/\aspt$, with respect to which $U$ can be seen as the functor forgetting the map from $I[G_{+}]$,  the model structure left-induced by $U$ is necessarily the same as the model structure right-induced by $U$ (i.e., $U$ creates all three classes of maps: cofibrations, fibrations and weak equivalences).
\end{rem}

\begin{prop}\label{prop:GCalg_model}
The model structure $\PsptrG$ lifts to a model category stucture on $\caPsptG$, right-induced by the free commutative algebra adjunction.
\end{prop}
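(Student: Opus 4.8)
The plan is to mimic the proof of \fullref{prop:CalgG_model} with left $G$-actions replaced by right $G$-actions and the left-lifted positive structure $\GPsptl$ replaced by the right-lifted positive structure $\PsptrG$ of \fullref{lem:Gmot_spectra_pos_right}, and then to invoke \fullref{thm:square}. Concretely, I would first set up the square of adjunctions obtained from the free/forgetful commutative-algebra adjunctions $\widetilde F\dashv\widetilde U$ on $\spe\big(\spre_*(\Sm{S}),X\big)$ and on $\spe\big(\pmotG,\triv GX\big)$ together with the horizontal adjunctions $-\smsh G_+\dashv U$ and their counterparts on commutative algebras, exactly as in the diagram displayed in the proof of \fullref{prop:CalgG_model}. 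Here $U$ is strong symmetric monoidal, so $U\widetilde F=\widetilde FU$ and $\widetilde UU=U\widetilde U$, and the two horizontal forgetful functors $U$ are right adjoints (to $-\smsh G_+$, respectively to the corresponding functor on commutative algebras).

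Next I would equip three corners of the square with their known model structures: $\spe\big(\spre_*(\Sm{S}),X\big)$ with $\Pspt$ and $\caspt$ with $\Pcaspt$, where $\Pcaspt$ is right-induced from $\Pspt$ along $\widetilde U$ by \fullref{prop:hornbostel}; and $\spe\big(\pmotG,\triv GX\big)$ with $\PsptrG$, which is right-induced from $\Pspt$ along the horizontal $U$ by \fullref{lem:Gmot_spectra_pos_right}. Since the top and the left model structures in the square are both right-induced from the top-left corner and the square commutes in the required sense, \fullref{thm:square} produces on the remaining corner $\casptG$ a model structure that is right-induced from $\PsptrG$ along $\widetilde U$ --- equivalently, right-induced from $\Pcaspt$ along the horizontal $U$ --- by the free commutative algebra adjunction; this is the asserted structure $\caPsptG$. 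As in \fullref{rem:rightliftedGcomalg}, I would also note the alternative observation that $\casptG$ is an under-category of $\caspt$ along $I\smsh G_+$, which forces the left- and right-induced structures on it to coincide and gives a second route to the same model structure.

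I expect no substantive obstacle. The real content is already in \fullref{lem:Gmot_spectra_pos_right} --- whose proof relies on $\Pspt$ being a cofibrantly generated monoidal model category satisfying the monoid axiom (\fullref{lem:monoid_spect} together with Hornbostel's work) and on $-\smsh G_+\cong\big(\coprod_{|G|}(-)\big)_+$ carrying acyclic cofibrations to acyclic cofibrations --- and in \fullref{prop:hornbostel}; the cofibrant-generation and acyclicity/smallness hypotheses of \fullref{thm:square} then transfer from these with no new verification. The only point that needs care is the bookkeeping of which forgetful functor plays the role of the "inducing" right adjoint along each leg of the square. This is the same bookkeeping already carried out for \fullref{prop:CalgG_model} and \fullref{rem:rightliftedGcomalg}, and it is in fact easier here, since all legs of the square are right-induced rather than a mixture of left- and right-induced ones.
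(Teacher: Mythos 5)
Your main route misapplies \fullref{thm:square}. That theorem has an asymmetric hypothesis: of the two corners adjacent to $\sK$, one must carry the \emph{left}-induced structure (along the left adjoint $V$) and the other the \emph{right}-induced structure (along the right adjoint $U$); only then does it produce the remaining corner's structures. You propose to take $\Pspt$, $\PsptrG$, and $\Pcaspt$ on three corners, but $\PsptrG$ (\fullref{lem:Gmot_spectra_pos_right}) and $\Pcaspt$ (\fullref{prop:hornbostel}) are \emph{both} right-induced from $\Pspt$, so neither adjacent corner carries a left-induced structure and the theorem's hypotheses fail. There is no variant of \fullref{thm:square} with both adjacent corners right-induced, and a two-step ``iterated right transfer'' is not free either: it requires knowing that $U\colon\casptG\to\caspt$ right-induces from $\Pcaspt$, which is exactly what is not yet established.

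The observation you relegate to an aside is in fact the crux. The paper first applies \fullref{thm:square} to the right-action analogue of \fullref{prop:CalgG_model}, with the \emph{left}-lifted positive structure $\PsptlG$ on the $\sM$ corner (so the hypotheses are met), obtaining a model structure on $\casptG$ \emph{left}-induced by $U\colon\casptG\to\caspt$. The under-category identification $\casptG\cong I[G_+]/\caspt$ is then used to upgrade this to the statement that $U$ creates all three classes, i.e.\ that this same structure is also \emph{right}-induced by $U$. Only now is the bookkeeping available: $U\widetilde{U}=\widetilde{U}U$ right-creates from $\Pspt$, and since $U$ right-creates $\PsptrG$ from $\Pspt$, one concludes that $\widetilde{U}$ right-creates the structure on $\casptG$ from $\PsptrG$. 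So your proposal has the right ingredients but inverts their roles: the under-category identification is not an ``alternative route,'' it is the step that makes the argument go through, and the left-lifted structure $\PsptlG$ (for right $G$-actions) must enter the \fullref{thm:square} square in place of $\PsptrG$.
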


We denote this model structure $\caPsptrG$.

\begin{proof} By \fullref{rem:rightliftedGcomalg} the composite functor $U\widetilde U$ creates a model structure on $\casptG$, right-lifted from $\Pspt$. Since $U$ creates the model structure $\PsptrG$, and $U\widetilde{U}= \widetilde{U}U$, it follows that $\widetilde{U}$ creates a model structure on $\casptG$, right-lifted from $\PsptrG$.
\end{proof}

Once more, easy proofs along the lines of those of \fullref{lem:trivFP} and \fullref{lem:trivOrb} establish the following results.

\begin{lem} The adjunction
\[
\adjunction{\Pcaspt}{\caGPsptl}{\triv G}{(-)^G}
\] 
 is a Quillen pair  and therefore induces a Quillen pair on the level of commutative algebras as well.
\end{lem}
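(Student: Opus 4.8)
The plan is to argue exactly as in the proofs of \fullref{lem:trivFP} and \fullref{lem:stabletrivial_fixedpoints}: to exhibit $\triv G$ as a functor into a left-induced model structure whose cofibrations and acyclic cofibrations are created by the relevant forgetful functor, so that the assertion reduces to the triviality $U\circ\triv G=\Id$. First I would record that the model structure $\caGPsptl$ built in \fullref{prop:CalgG_model} is not merely right-induced from $\GPsptl$ along $\widetilde U$, but is simultaneously left-induced from $\Pcaspt$ along the forgetful functor $U\colon\caGspt\to\caspt$; this is precisely what \fullref{thm:square}, applied to the square of adjunctions in the proof of \fullref{prop:CalgG_model}, delivers, and it is the commutative-algebra, left-action counterpart of the double description of the analogous structure for right actions noted in \fullref{rem:rightliftedGcomalg}. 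In particular, $U\colon\caGPsptl\to\Pcaspt$ creates the cofibrations and the weak equivalences.

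Granting this, the argument is a one-liner. Applying the trivial-$G$-action functor and then forgetting the action is the identity, so $U\circ\triv G=\Id_{\caspt}$; hence, if $f$ is a cofibration (respectively, an acyclic cofibration) in $\Pcaspt$, then $U(\triv G f)=f$ is a cofibration (respectively, acyclic cofibration) in $\Pcaspt$, so $\triv G f$ is a cofibration (respectively, acyclic cofibration) in $\caGPsptl$ because $U$ creates these classes. Therefore $\triv G$ is a left Quillen functor and $\triv G\dashv(-)^G$ is a Quillen pair. This is the commutative-algebra instance of the Quillen pairs demanded by \fullref{conv:modelcat2}; the spectrum- and algebra-level instances are \fullref{lem:stabletrivial_fixedpoints} and \fullref{rem:homeg_fib}.

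I do not foresee a genuine obstacle: this is formal bookkeeping of the same kind as the earlier ``trivial action'' lemmas. The one point that must be pinned down is the claim that $\caGPsptl$ is left-induced from $\Pcaspt$, not merely right-induced from $\GPsptl$. If one prefers not to lean on that, I would instead verify directly that the right adjoint $(-)^G\colon\caGPsptl\to\Pcaspt$ is right Quillen: by \fullref{prop:CalgG_model} and \fullref{prop:hornbostel}, fibrations and acyclic fibrations in $\caGPsptl$ and $\Pcaspt$ are created by $\widetilde U\colon\caGPsptl\to\GPsptl$ and $\widetilde U\colon\Pcaspt\to\Pspt$ respectively; since fixed points are computed on underlying objects, $\widetilde U\circ(-)^G=(-)^G\circ\widetilde U$; and $(-)^G\colon\GPsptl\to\Pspt$ is right Quillen because its left adjoint $\triv G\colon\Pspt\to\GPsptl$ is left Quillen (again via $U\circ\triv G=\Id$ and the fact from \fullref{lem:Gmot_spectra_pos} that $\GPsptl$ is left-induced from $\Pspt$ along $U$). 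Then, for a fibration $g$ of $\caGPsptl$, the identity $\widetilde U\big((-)^G g\big)=(-)^G\big(\widetilde U g\big)$ shows that $(-)^G g$ is a fibration of $\Pcaspt$, and the same for acyclic fibrations, which finishes the proof.
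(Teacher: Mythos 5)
Your primary route has a gap: the claim that $\caGPsptl$ is \emph{also} left-induced from $\Pcaspt$ along $U\colon\caGspt\to\caspt$ is not delivered by \fullref{thm:square}. That theorem produces \emph{two} model structures on $\sP$ — a right-induced one created by $\widetilde U\colon\caGspt\to\GPsptl$ (this is $\caGPsptl$) and a left-induced one created by $U\colon\caGspt\to\caspt$ — together with the assertion that the identity is left Quillen from the former to the latter; it does \emph{not} assert they coincide. The coincidence argument in \fullref{rem:rightliftedGcomalg} hinges on $\casptG$ being (a full subcategory of) an under-category $I[G_+]/\caspt$, which is special to \emph{right} $G$-actions; the left-action category $\caGspt$ is not an under-category of $\caspt$, so the mirror of that argument does not go through. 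Without an independent argument that the two structures on $\caGspt$ agree, you cannot conclude that $U\colon\caGPsptl\to\Pcaspt$ creates cofibrations and weak equivalences, and so the one-liner via $U\circ\triv G=\Id$ does not apply at the commutative-algebra level.

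Your fallback argument, however, is correct and is in fact the intended one: fibrations and acyclic fibrations in $\caGPsptl$ and $\Pcaspt$ are created by $\widetilde U$ (right-induction from $\GPsptl$ and $\Pspt$ respectively, per \fullref{prop:CalgG_model} and \fullref{prop:hornbostel}); since $(-)^G$ is a limit and $\widetilde U$ preserves limits, $\widetilde U\circ(-)^G=(-)^G\circ\widetilde U$; and $(-)^G\colon\GPsptl\to\Pspt$ is right Quillen because its left adjoint $\triv G$ is left Quillen (via $U\circ\triv G=\Id$ and the genuine left-induction in \fullref{lem:Gmot_spectra_pos}). This is exactly the pattern the paper invokes with its citation of \fullref{lem:trivFP}/\fullref{lem:trivOrb} and the parallel reasoning recorded in \fullref{rem:Galgrigh} and \fullref{rem:homeg_fib}. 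If you lead with the fallback and drop the unsupported coincidence claim, the proof is clean.
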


\begin{lem} The adjunction
\[
\adjunction{\caPsptrG}{\Pcaspt}{(-)_{G}}{\triv G}
\] 
 is a Quillen pair  and therefore induces a Quillen pair on the level of commutative algebras as well.
\end{lem}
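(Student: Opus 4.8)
The plan is to mimic, almost verbatim, the arguments given for \fullref{lem:trivOrb} and \fullref{lem:trivFP} (and their commutative-algebra refinements above): one shows that the right adjoint $\trivial_G$ is a right Quillen functor by observing that it becomes the identity after forgetting the $G$-action, and that the $G$-forgetting functor creates the model structure in question. Since right Quillenness of $\trivial_G$ is exactly what it means for $(-)_G \dashv \trivial_G$ to be a Quillen pair, this will suffice.

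Concretely, I would first pin down the relevant forgetful functor. Although $\caPsptrG$ is introduced in \fullref{prop:GCalg_model} as the model structure right-induced from $\PsptrG$ along the forgetful functor $\widetilde U$ of the free commutative algebra adjunction, \fullref{rem:rightliftedGcomalg} shows that it is equally well created by the functor $U\colon \caPsptrG \to \Pcaspt$ forgetting the (right) $G$-action: because $\casptG$ is isomorphic to the full subcategory $I[G_+]/\caspt$ of the under-category $I[G_+]/\aspt$ on which $U$ is the functor ``forget the structure map from $I[G_+]$'', the model structure left-induced along $U$ coincides with the one right-induced along $U$, and in particular $U$ creates the fibrations, cofibrations, and weak equivalences of $\caPsptrG$. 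Next I would note that $U\circ \trivial_G = \Id_{\Pcaspt}$, since equipping a commutative motivic algebra with the trivial $G$-action and then forgetting it returns the original algebra. Because $U$ creates (acyclic) fibrations and the identity functor preserves them, it follows at once that $\trivial_G$ sends (acyclic) fibrations of $\Pcaspt$ to (acyclic) fibrations of $\caPsptrG$; hence $\trivial_G$ is right Quillen and $(-)_G \dashv \trivial_G$ is a Quillen pair. The final clause of the statement then requires nothing further: the displayed adjunction already lives at the level of commutative algebras, and the analogous adjunction $(-)_G \dashv \trivial_G\colon \PsptrG \to \Pspt$ on the underlying equivariant motivic spectra is a Quillen pair by exactly the same reasoning, using that $U\colon \PsptrG \to \Pspt$ creates the model structure (see \fullref{prop:GCalg_model}).

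I do not expect any genuine obstacle here. The only point that calls for a moment's care is the bookkeeping of the previous paragraph --- namely that it is $U$, rather than $\widetilde U$, that creates the model structure $\caPsptrG$ --- since that is precisely what makes the ``$U\circ\trivial_G = \Id$'' trick applicable; everything else is purely formal.
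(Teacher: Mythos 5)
Your proof is correct and is essentially the paper's own argument: the paper says these lemmas follow by proofs "along the lines of those of \fullref{lem:trivFP} and \fullref{lem:trivOrb}," which is exactly the observation that $U\circ\trivial_G=\Id$ together with $U$ creating the model structure on the source forces $\trivial_G$ to be right Quillen, and you correctly invoke \fullref{rem:rightliftedGcomalg} to see that the $G$-forgetting $U$ (rather than $\widetilde U$) creates $\caPsptrG$. The only nit is a citation slip at the end: the fact that $U\colon\PsptrG\to\Pspt$ creates the model structure comes from the right-action analogue of \fullref{lem:Gmot_spectra_pos_right}, not from \fullref{prop:GCalg_model}.
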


As above, we also obtain fibrant replacements of the desired special form for motivic $G$-spectra and their commutative algebras.

\begin{prop} \label{prop:fibreplacement_pos}    
For any symmetric $\triv G X$-spectrum $Y$, ${\Hom}\big(EG,(UY)^f\big)$ is a fibrant replacement in  $\GPsptl$, where $(UY)^f$ is  a fibrant replacement of $UY$ in $\Pspt$.  If $A$ is a commutative algebra in $\spe \big(\spre_*(\Sm{S}),\triv GX\big)$, then  ${\Hom}\big(EG_{+},(UA)^f\big)$ is a fibrant replacement in $\caGPsptl$.
\end{prop}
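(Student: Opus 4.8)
The plan is to follow the pattern of \fullref{lem:unstable_fibrant_repl} and \fullref{stable_fib_rep}. For the first assertion I would take a symmetric $\triv GX$-spectrum $Y$ and verify two things: that $\Hom\big(EG,(UY)^f\big)$ is fibrant in $\GPsptl$, and that the natural map $Y\to\Hom\big(EG,(UY)^f\big)$ — constructed exactly as in \fullref{lem:unstable_fibrant_repl}, by iterating the unit of $U\dashv\Hom(G_+,-)$ levelwise on $E_\bullet G$, composing with $UY\to(UY)^f$, and totalizing — is a weak equivalence in $\GPsptl$. (Here I use that $EG=|\mathsf{srep}E_\bullet G|$ is non-equivariantly contractible, and that $\Hom(EG,-)$ agrees with the pointed cotensor $\Hom(EG_+,-)$ appearing in the statement.)

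For fibrancy, recall from \fullref{lem:Gmot_spectra_pos} that $\GPsptl$ is left-induced by $U$ from $\Pspt$, so $U$ is left Quillen and hence $\Hom(G_+,-)\colon\Pspt\to\GPsptl$ is right Quillen and preserves fibrant objects. Applying $\Hom\big(-,(UY)^f\big)$ levelwise to $E_\bullet G$ produces a cosimplicial object of $\spe\big(\pGmot,\triv GX\big)$ each of whose entries is of the form $\Hom\big(G_+,\Hom(K,(UY)^f)\big)$ for a cofibrant pointed presheaf $K$ (a finite pointed set); since $\Pspt$ is monoidal \cite[Lemma 3.5]{Hornbostel} and $(UY)^f$ is levelwise fibrant, $\Hom\big(K,(UY)^f\big)$ is fibrant in $\Pspt$, so each entry is fibrant in $\GPsptl$. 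Dugger's cosimplicial replacement \cite{dugger:hocolim} then yields a Reedy fibrant cosimplicial object $\mathsf{crep}\Hom\big(E_\bullet G,(UY)^f\big)$ in $\GPsptl$, whose totalization $\Hom\big(EG,(UY)^f\big)$ is fibrant by \cite[2.8]{BousfieldCosimplicialResolutions}.

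For the comparison map, note that $U$ creates the weak equivalences of the left-induced structure $\GPsptl$, and that $U$ commutes with limits and colimits, so $U\Hom\big(EG,(UY)^f\big)=\Hom\big(EG,(UY)^f\big)$ computed non-equivariantly. The map $UY\to U\Hom\big(EG,(UY)^f\big)$ factors as $UY\to(UY)^f\to\operatorname{Tot}\mathsf{crep}\Hom\big(E_\bullet G,(UY)^f\big)$; the first arrow is a weak equivalence in $\Pspt$ by construction, and the second is, at each spectrum level, a simplicial homotopy equivalence by \cite[Proposition 2.13]{BousfieldCosimplicialResolutions}, since $(UY)^f$ is levelwise fibrant (fibrant objects of $\Pspt$, a left Bousfield localization of the levelwise positive flat model structure of \fullref{prop:hornbostel}, are levelwise fibrant). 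The composite is thus a levelwise, hence a stable motivic, weak equivalence, so $Y\to\Hom\big(EG,(UY)^f\big)$ is a weak equivalence in $\GPsptl$.

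For the commutative statement, recall from \fullref{prop:CalgG_model} that $\caGPsptl$ is right-induced from $\GPsptl$ along the free commutative algebra adjunction, so the forgetful functor $\widetilde U$ creates its fibrations and weak equivalences, and it commutes with cotensors and with the analogous forgetful functor for the non-equivariant category. Since $U$ is strong symmetric monoidal, $\Hom(G_+,-)$ is lax symmetric monoidal, so the totalization construction promotes $\Hom\big(EG_+,(UA)^f\big)$ to a commutative algebra in $\spe\big(\pGmot,\triv GX\big)$ whose underlying symmetric $\triv GX$-spectrum is $\Hom\big(EG_+,\widetilde U(UA)^f\big)$. As $(UA)^f$ is fibrant in $\Pcaspt$, which is right-induced from $\Pspt$, its underlying spectrum $\widetilde U(UA)^f$ is a fibrant replacement of $U\widetilde{U}A$ in $\Pspt$, so the first part of the proposition applies and shows $\Hom\big(EG_+,\widetilde U(UA)^f\big)$ is fibrant in $\GPsptl$; hence $\Hom\big(EG_+,(UA)^f\big)$ is fibrant in $\caGPsptl$, and applying $\widetilde U$ (then $U$) to the comparison map recovers the weak equivalence of underlying spectra already established. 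The step I expect to be the main obstacle is precisely this compatibility between the homotopy-fixed-point construction $\Hom(EG,-)$ and the \emph{left-induced} positive structures: one must confirm that $\Hom(G_+,-)$ is right Quillen into $\GPsptl$ (so that Bousfield's totalization criterion applies) and, in the commutative case, that the lax monoidal structure genuinely upgrades $\Hom(EG_+,(UA)^f)$ to a commutative algebra whose fibrancy is detected on underlying spectra. Everything else is a direct transcription of the arguments of \fullref{lem:unstable_fibrant_repl} and \fullref{stable_fib_rep}.
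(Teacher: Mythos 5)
Your proposal is correct and takes essentially the same route as the paper, whose own proof is simply a pointer to Lemma~\ref{lem:unstable_fibrant_repl} and Lemma~\ref{stable_fib_rep} (``follows by the same argument''); you have written out the details of that transfer, and the concern you flag at the end is already resolved by the very definition of the left-induced structure, which makes $U$ left Quillen and hence $\Hom(G_+,-)$ right Quillen.
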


\begin{proof} The desired result follows by the same argument as in \fullref{lem:unstable_fibrant_repl} and \fullref{stable_fib_rep}.
\end{proof}

\subsection{Relation to the formal framework}\label{sec:motivic-framework} 
We conclude this section by relating the symmetric monoidal model categories defined above to the abstract framework for homotopical Galois theory in a symmetric monoidal model category $(\cat M, \otimes, I)$ described in \fullref{sec:gal-formal}.  In particular we show that all of the theory developed in \fullref{sec:gal-formal} holds in the motivic context: homotopical Galois extensions of (pointed or unpointed) commutative motivic monoids and of commutative motivic ring spectra are well defined and satisfy invariance under cobase change, as well as the forward part of a potential Galois correspondence. It remains an open, interesting, and certainly hard question to determine under what conditions one can establish a backward Galois correspondence, analogous to \cite[Theorem 11.2.2]{rognes}.  Inspired by Rognes's proof in the classical case, we hypothesize that  a motivic version of Goerss-Hopkins obstruction theory should suffice to prove a motivic backward Galois correspondence.

\subsubsection*{Motivic spaces}
We consider first $\cat{M} = \spc$, as defined in \fullref{lem:cat_mot}. As mentioned above, the monoidal product  is given by the objectwise product of simplicial sets. The unit $I$ is  the constant simplicial set on one point $\ast$. In this context, the category $\alg$ of algebras in $\cat M$ is given the model structure $\aspc$ defined in \fullref{cor:algright}.  Since $\spc$ is cofibrantly generated and satisfies the monoid axiom, it follows from \cite[Theorem 4.1]{schwede-shipley} that for any motivic algebra $A$, the free $A$-module adjunction
$$\adjunction{\sPre{\Sm{S}}}{\cat {Mod}_{A}(S)}{-\times A}{U}$$
right-induces a model structure on the category $\cat{Mod}_A(S)$ of $A$-modules in $\sPre{\Sm{S}}$, in which $A$ itself is cofibrant, since $\ast$ is cofibrant in $\spc$.  

For $G$ a finite group, one can choose $H$ to be the constant simplicial set on the underlying set of $G$. We then have 
$${}_{H}\cat {Mod}=\Gspcl \quad\text{and}\quad \cat {Mod}_{H}=\spcrG$$
as defined in \fullref{lem:left_ind_unstable}
and 
$${}_{H}\alg =\aGspcl \quad\text{and}\quad \alg_{H} =\aspcGr$$ as in \fullref{cor:Galgright}. Like in the nonequivariant case, since $\Gspc$ is cofibrantly generated and satisfies the monoid axiom, for any $G$-motivic algebra $A$, the free $A$-module adjunction
$$\adjunction{G\text{-}\sPre{\Sm{S}}}{G\text{-}\cat {Mod}_{A}(S)}{-\times A}{U}$$
right-induces a model structure on the category $G\text{-}\cat{Mod}_A(S)$ of $A$-modules in $G\text{-}\sPre{\Sm{S}}$.

Moreover, there are appropriate model structures on categories of commutative algebras in $\sPre{\Sm{S}}$, $G\text{-}\sPre{\Sm{S}}$, and $\sPre{\Sm{S}}\text{-}G$, as established in \fullref{prop:Gcommalgright}.

All of the adjunctions  in \fullref{conv:modelcat2} are indeed Quillen pairs with respect to the model category structures specified above, as either established in detail in \fullref{sec:Gobj} or easily verified from the definitions of these structures.  Moreover, every equivariant motivic space (respectively, algebra or commutative algebra) $X$ admits a fibrant replacement of the form $\Hom \big(EG, (UX)^{f}\big)$, where the fibrant replacement of $UX$ is computed in the underlying category of nonequivariant spaces (respectively, algebras or commutative algebras), whence $\Hom \big(EG, (UX)^{f}\big)^{G}$ is a model of the homotopy fixed points of $X$.

\subsubsection*{Pointed motivic spaces}
When $\cat{M} = \pspc$ as described in \fullref{lem:cat_mot}, the situation is analogous to that above. In this case, the monoidal  product is given by the levelwise smash of simplicial sets, $I$ is the constant simplicial set on two points $\ast_{+}$. 
For $G$ a finite group, $H$ is the constant simplicial set $G_+$. 
As in the unpointed case, all of the adjunctions  in \fullref{conv:modelcat2} are indeed Quillen pairs with respect to the model category structures specified here.

\subsubsection*{Motivic spectra}  

When $\cat{M} =  \spt$ as defined in \fullref{defn:mot_spec_mod}, the unit $I$ is the $X$-spectrum $(S^{0},X, X\wedge X, ....)$. In this context, the category $\alg$ of algebras in $\cat M$, also known as \emph{motivic ring spectra}, is the model category $\aspt$ of \fullref{rem:algrighspec}. Since $\spt$ is cofibrantly generated and satisfies the monoid axiom, it follows from \cite[Theorem 4.1]{schwede-shipley} that for any motivic ring spectrum $A$, the free $A$-module adjunction
$$\adjunction{\spe \big(\spre_*(\Sm{S}),X\big)}{\cat {Mod}_A\spt}{-\wedge A}{U}$$
right-induces a model structure on the category $\cat{Mod}_A\spt$ of $A$-modules in $\spe \big(\spre_*(\Sm{S}),X\big)$, in which $A$ itself is cofibrant, since $I$ is cofibrant in $\spt$.  

For $G$ a finite group, the Hopf algebra $H$ is $I[G]=\bigvee_{G}I$, and
 $${}_{H}\cat{Mod}=\Gsptl\quad\text{and}\quad \cat{Mod}_{H}=\sptrG$$ 
 of \fullref{U-lifted-stable}.  Moreover
 $${}_{H}\cat{Alg}=\aGsptl\quad\text{and}\quad \cat{Alg}_{H}=\asptrG$$
 of \fullref{cor:Galg_spectra}. By \fullref{stable_fib_rep} and \fullref{rem:homeg_fib}, we again obtain a special model for homotopy fixed points of any equivariant motivic (ring) spectrum, where $EG_{+}$ replaces $EG$.

The category $\caspt$ of commutative algebras in $\spe \big(\spre_*(\Sm{S}),X\big)$ admits the model structure  of \fullref{prop:hornbostel}, while
$${}_{H}\caspt= \caGPsptl \quad\text{and}\quad \caspt_{H}=\caPsptrG,$$
by  \fullref{prop:CalgG_model} and \fullref{prop:GCalg_model}.
Moreover \fullref{prop:fibreplacement_pos} implies the usual formula for the homotopy orbits also holds for commutative, equivariant motivic ring spectra.

Each of the adjunctions  in \fullref{conv:modelcat2} is indeed a Quillen pair with respect to the model category structures specified above, as has either already been verified above or can easily be seen from the definitions of the model structures.

\section{Examples of motivic Galois extensions}\label{sec:examples}

In \fullref{sec:em}, we study a motivic analogue of Galois extensions of Eilenberg--MacLane spectra $HR \to HT$ (see Rognes \cite[Proposition 4.2.1]{rognes}).  
In \fullref{sec:kgl}, we consider the motivic analogue of the classical $C_2$--Galois extension $KO \ra KU$. 
We will show that this is a motivic Galois extension under certain conditions on our base scheme $S$.

\subsection{Preliminaries}
Throughout this section $\SH S$ denotes the homotopy category of $\mathsf{Sp}_{\mathbb P^1}(S)$. We let $\mathbb S$ denote the motivic sphere spectrum.
For any finite group $G$, we specialize the general definition of Galois extension from \fullref{defn:galois-ext} to the framework of motivic 
$\trivial_{G}\mathbb P^1$-spectra, 
as set up in \fullref{sec:motivic-framework}. The Hopf algebra we consider is the suspension spectrum of $G_{+}$, which we also denote $G_+$, in the standard abuse of notation.
Let $\vp:  \triv{G_{+}}{A}\to B^{\circlearrowleft G_{+}}$ be commutative Galois data in $G\text{-}\mathsf{Sp}_{\mathbb P^1}(S)$.

Henceforth, we  implicitly work in the homotopy category. We will use $\smsh$ to denote $\otimes^{\mathbb{L}}$ and $F(-,-)$ to denote $\mathbb{R}\Hom(-,-)$.
Further, we use $B^{hG}$ and $\vp ^{hG}$ to denote what should really be called $B^{hG_{+}}$ and $\vp ^{hG_{+}}$. In this spirit, the definition of a Galois extension of motivic spectra can be formulated as follows.

\begin{defn}
Commutative Galois data $\vp:  \triv{G_{+}}{A}\to B^{\circlearrowleft G_{+}}$ is a \emph{homotopical Galois extension} if the following two conditions hold in $\SH S$.
\begin{enumerate}
\item The map $\beta_{\vp}\colon B\smsh_A B \to F(G_+, B)$ is an isomorphism.
\smallskip
 \item The map $\vp ^{hG}\colon A \to B^{hG}$ is an isomorphism.
\end{enumerate}
\end{defn}
Since $G$ is finite and discrete,  $F(G_+, B) \cong \prod_G B$ in $\SH S$, and the map
\begin{align}\label{eqn:mapFGB}
{\beta_{\vp}\colon}B\smsh_A B \to F(G_+, B) \cong \prod_G B
\end{align}
is simply the map whose factors are the composites $B\smsh_A B  \xrightarrow{1 \smsh_A g} B\smsh_A B \xra{\bar\mu} B$.

Finally, we let $\underline{\pi}_{p,q}B$ denote the presheaf of bigraded abelian groups on $\Sm S$
\begin{equation}\label{eqn:presheaf}
\underline{\pi}_{p,q}B(U) = \Hom_{\SH{S}}(\mmS^{p,q}\smsh\Sigma^{\infty}U_{+}, B).
\end{equation}
The presheaf $\spi_{p,q}(-)$ detects weak equivalences of motivic spectra (see Jardine \cite[Lemma 3.7]{JardineMotivicSymSpec}) and therefore detects isomorphisms in the homotopy category $\SH{S}$. For $S$ our base scheme, we let
\[ \pi_{p,q}B := \spi_{p,q}B(S). \]
The functor $\pi_{p,q}(-)$ detects isomorphisms in $\SH{S}$ between \emph{cellular} motivic spectra, that is, those spectra that are obtained from the stable motivic spheres $\mmS^{p,q}$ by iterated homotopy colimits (see Dugger and Isaksen \cite[Corollary 7.2]{dugisa}). Note that the $G_{+}$--action on $B$, which gives rise to a map of $A$--algebras $g:B\to B$ for every $g\in G$,  induces a  $G$-action on $\spi_{*,*}B$.

\begin{warn}
Some authors use $\underline{\pi}_{*,*}$ to denote the associated Nisnevich sheaf.
\end{warn}

\subsection{Motivic Eilenberg-MacLane spectra}\label{sec:em}
We let $S$ be a scheme satisfying the conditions of \cite{hoyois}, i.e., $S$ is Noetherian of finite Krull dimension. Let $A$ be an abelian group and $\HA$  the motivic Eilenberg-MacLane spectrum representing motivic cohomology, so that for any smooth scheme $X$,
\[ H^{p,q}(X,A) \cong \Hom_{\SH S}(\Sigma^{\infty}X_{+}, \mmS^{p,q}\smsh \HA) = \spi_{-p,-q} \HA (X).\]
An explicit construction of the motivic spectrum $\HA$ is described in Hoyois \cite[Section 4.2]{hoyois}. If $A$ is a $G$--module,
the functoriality of the construction induces an action of $G$ on $\HA$, and $\HA$ is a $G$-object in motivic spectra.

The main result of this section is the following theorem.
\begin{thm}\label{thm:galEM}
Let $G$ be a finite group, and let $R \to T$ be a homomorphism of commutative rings with $G$ acting on $T$ via $R$--linear maps. Then $R \to T$ is a Galois extension of rings if and only if $\HR \to \HT$ is a {homotopical} $G$--Galois extension of motivic ring spectra.
\end{thm}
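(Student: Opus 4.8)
The plan is to prove the two directions separately, translating the defining conditions of a homotopical $G$-Galois extension, as stated in \fullref{defn:galois-ext} (specialized via \eqref{eqn:mapFGB}), into algebra over the rings $R$ and $T$. The key technical input is that the bigraded homotopy presheaves $\spi_{*,*}$ detect weak equivalences of motivic spectra (Jardine), so that the two conditions defining a Galois extension can be checked after applying $\spi_{*,*}$. The second key input is the computation of $\spi_{*,*}$ of smash products and function spectra of motivic Eilenberg--MacLane spectra: since $\HR$ is a motivic ring spectrum with $\spi_{*,*}\HR$ concentrated appropriately, and $\HR$ and $\HT$ are cellular (being built from the motivic sphere and having suitable flatness properties), one should have $\spi_{*,*}(\HT \smsh_{\HR} \HT) \cong \pi_{*,*}\HT \otimes_{\pi_{*,*}\HR} \pi_{*,*}\HT$ via a K\"unneth-type isomorphism. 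Concretely, I would first record the relevant computation: motivic cohomology of a point with $\Z$-coefficients over a nice base $S$ has $\pi_{0,0}\HZ = \Z$, and more generally the relevant part of $\spi_{*,*}\HR$ in degree $(0,0)$ is $R$ itself, so that the $(0,0)$-part of $\spi_{*,*}(\HT\smsh_{\HR}\HT)$ recovers $T \otimes_R T$.

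First I would establish the reverse implication (if $R\to T$ is $G$-Galois, then $\HR\to\HT$ is a homotopical $G$-Galois extension). Classical $G$-Galois-ness of $R\to T$ says exactly that $R = T^G$ and that $T\otimes_R T \to \prod_G T$, $x\otimes y \mapsto (x\cdot g(y))_g$, is an isomorphism of rings. Applying $\HR\smsh(-)$ or, more precisely, interpreting these statements motivically: the map $\beta_\vp\colon \HT\smsh_{\HR}\HT \to F(G_+,\HT)\cong \prod_G \HT$ has, after applying $\spi_{*,*}$ and using the K\"unneth isomorphism, underlying algebraic map precisely $\pi_{*,*}\HT \otimes_{\pi_{*,*}\HR}\pi_{*,*}\HT \to \prod_G \pi_{*,*}\HT$. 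Since $\pi_{*,*}\HT$ is a free (or flat) module over $\pi_{*,*}\HR$ obtained by base change --- here I would use that $\HT \cong \HR\smsh_{H\Z} H T$ or the direct construction --- the classical isomorphism $T\otimes_R T\xrightarrow{\cong}\prod_G T$ tensors up to give that $\beta_\vp$ is an isomorphism on $\spi_{*,*}$, hence an isomorphism in $\SH S$. For the homotopy fixed point condition, I would use the homotopy fixed point spectral sequence $H^s(G;\spi_{*,*}\HT)\Rightarrow \spi_{*,*}(\HT^{hG})$; since $T$ is $G$-Galois over $R$, $T$ is a projective $R[G]$-module, so $H^s(G;\pi_{*,*}\HT)$ vanishes for $s>0$ and equals $R$-related data in $s=0$, forcing $\spi_{*,*}\HT^{hG}\cong \spi_{*,*}\HR$ and $\vp^{hG}$ an isomorphism. (Here I would lean on the fibrant-replacement model $\Hom(EG_+,-)$ from \fullref{sec:motmodstructures} so that the spectral sequence is the one coming from the skeletal filtration of $EG$.)

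For the forward implication, suppose $\HR\to\HT$ is a homotopical $G$-Galois extension. Evaluating the two defining isomorphisms on $\spi_{0,0}$ at the base $S$: the isomorphism $\vp^{hG}\colon \HR\to\HT^{hG}$ gives, via the edge homomorphism of the homotopy fixed point spectral sequence, that $R = (\pi_{0,0}\HT)^G = T^G$ (and one must argue the higher cohomology groups contribute nothing in total degree $(0,0)$, which follows because the spectral sequence is concentrated in the relevant quadrant and the abutment is exactly $R$). The isomorphism $\beta_\vp$, evaluated on $\pi_{0,0}$ and combined with the K\"unneth isomorphism for $\spi_{*,*}(\HT\smsh_{\HR}\HT)$, gives that $T\otimes_R T\to \prod_G T$ is an isomorphism of rings. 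Together with finiteness/flatness of $T$ over $R$ (which can be extracted from \fullref{prop:galois-dualizable}: $\HT$ is dualizable over $\HR$, hence $\pi_{*,*}\HT$ is a finitely generated projective $\pi_{*,*}\HR$-module, hence $T$ is finite projective over $R$), these are precisely the conditions that $R\to T$ is $G$-Galois in the classical Chase--Harrison--Rosenberg sense.

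The main obstacle --- and the step deserving the most care --- is the K\"unneth/flat base change isomorphism $\spi_{*,*}(\HT\smsh_{\HR}\HT)\cong \pi_{*,*}\HT\otimes_{\pi_{*,*}\HR}\pi_{*,*}\HT$ and, relatedly, the precise computation of $\pi_{0,0}$ of motivic Eilenberg--MacLane spectra and their smash powers over the base $S$ satisfying Hoyois's hypotheses. This is where one genuinely uses the structure theory of motivic cohomology (the module structure of $\spi_{*,*}\HZ$ over the base, cellularity of $\HR$ and $\HT$, and the behavior of the construction $A\mapsto\HA$ under the relevant tensor operations); the comparison of the motivic $G$-action on $\HT$ with the algebraic $G$-action on $T$ also needs to be tracked through the construction in Hoyois \cite[Section 4.2]{hoyois}. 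Once that computational backbone is in place, the rest is a faithful transcription of the classical argument (cf. Rognes \cite[Proposition 4.2.1]{rognes}) together with the homotopy fixed point spectral sequence bookkeeping.
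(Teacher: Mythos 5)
Your proposal is correct and takes essentially the same route as the paper: both directions are proved by applying $\spi_{*,*}$ and running the homotopy fixed point spectral sequence (\fullref{prop:hfss}) and the K\"unneth spectral sequence (\fullref{prop:kun}), with the flat base-change isomorphism $\HA\smsh_{\HR}\HB\simeq\mathsf{H(A\otimes_R B)}$ (\fullref{lem:MN}) as the main computational input, which you correctly flag as the delicate step. The paper is more precise at a few points you gloss over --- the vanishing needed for the edge maps is packaged as `negativity' of bi-graded abelian groups (\fullref{defn:neg}) rather than a quadrant-concentration heuristic, and the reverse fixed-point computation identifies $H^0$ via the $R[G]$-dual $T^\vee$ and the isomorphism $T^\vee\otimes_{R[G]}R\cong R$ rather than directly asserting $(\pi_{*,*}\HR\otimes_R T)^G\cong\pi_{*,*}\HR$ --- while your appeal to \fullref{prop:galois-dualizable} in the forward direction is unnecessary, since the Chase--Harrison--Rosenberg conditions are exactly the two statements you already read off the edge homomorphisms in degree $(0,0)$.
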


To prove \fullref{thm:galEM}, we will use two spectral sequences. The first is a homotopy fixed point spectral sequence. Its construction uses the cellular filtration of $EG$ and is analogous to the classical construction (see \fullref{sec:Gobj} for discussion on $EG$ and \cite[Chapter XI, Section 7]{bouskan} for discussion on the spectral sequence). This spectral sequence already appears in the literature in Isaksen and Shkembi \cite[Theorem 3.8]{isashk} and Berrick et al. \cite[(1-d)]{bako}.

\begin{prop}\label{prop:hfss}
Let $G$ be a finite group and $X$ be a $G$--spectrum. There is a tri-graded spectral sequence 
\[E_2^{s,(p,q)}=H^s(G, \spi_{p,q}X) \Longrightarrow \spi_{p-s,q}X^{hG}\]
with differentials $d_r: E_r^{s,(p,q)} \to E_r^{s+r,(p+r-1,q)}$, which converges completely if $\displaystyle \varprojlim_r{}^1  E_r^{s,(p,q)}=0$ for all $(s,(p,q))$.
\end{prop}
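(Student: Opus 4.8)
The plan is to construct the spectral sequence from the skeletal filtration of $EG$ exactly as in the classical case, transporting the argument sketched in \cite[Chapter XI, Section 7]{bouskan} into the motivic setting. First I would recall that, by \fullref{stable_fib_rep}, the homotopy fixed points $X^{hG}$ are modelled by $\Hom\big(EG,(UX)^f\big)^G$, where $EG=|\mathsf{srep}E_\bullet G|$. The geometric realization $EG$ carries a filtration $EG^{(0)}\subseteq EG^{(1)}\subseteq\cdots$ by skeleta, with associated graded pieces $EG^{(n)}/EG^{(n-1)}$ a wedge of copies of $S^n_s\wedge G_+$ (simplicial $n$-spheres smashed with a free $G$-set), since $E_\bullet G$ is a free simplicial $G$-set with $n$-simplices a free $G$-set on $G^{n}$. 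Applying $\Hom\big(-,(UX)^f\big)$ turns this increasing filtration into a tower of fibrations of $G$-objects, whose homotopy fibers are $\Hom\big(EG^{(n)}/EG^{(n-1)},(UX)^f\big)$; taking $G$-fixed points (which is right Quillen, so preserves this tower of fibrations up to equivalence) yields a tower converging to $X^{hG}$.

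Next I would identify the fiber terms. Because $EG^{(n)}/EG^{(n-1)}$ is a wedge of free cells $S^n_s\wedge G_+$ indexed by a basis of the normalized chains $C_n(EG)$, we get
\[
\Hom\big(EG^{(n)}/EG^{(n-1)},(UX)^f\big)^G \simeq \Hom_G\big(\textstyle\bigvee S^n_s\wedge G_+,(UX)^f\big)\simeq \prod \Omega^n_s (UX)^f,
\]
and unwinding the $G$-equivariance the $n$-th layer computes $\operatorname{Hom}_{\mathbb Z[G]}\big(C_n(EG),\spi_{*,*}X\big)$ in homotopy, i.e.\ the cochain complex computing $H^*(G,\spi_{*,*}X)$. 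Here the key motivic input is simply that $\spi_{p,q}(-)$ is an additive functor on $\SH S$ that sends the simplicial suspension $S^1_s$ to a degree shift in $p$ only (leaving $q$ fixed), so the simplicial cells contribute exactly the topological grading. This gives the $E_1$-page as the Moore cochain complex and hence $E_2^{s,(p,q)}=H^s(G,\spi_{p,q}X)$, with the differential $d_r$ raising filtration by $r$ and lowering total stem by one, i.e.\ $d_r\colon E_r^{s,(p,q)}\to E_r^{s+r,(p+r-1,q)}$, the simplicial weight $q$ being inert throughout. The conditional convergence statement is the standard $\varprojlim^1$ criterion for the homotopy spectral sequence of a tower (Bousfield--Kan), applied to $\spi_{*,*}$ of the tower of $G$-fixed points.

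The main obstacle, I expect, is being careful about two points that are automatic topologically but need a line of justification here: (i) that $\Hom\big(EG^{(n)}/EG^{(n-1)},(UX)^f\big)$ is still the \emph{correct} (derived) mapping object and that the filtration of $EG$ by skeleta is by cofibrations of $G$-objects whose $G$-fixed points behave well --- this follows from $E_\bullet G$ being Reedy cofibrant and the realization being a cellular object, together with \fullref{lem:homX}/\fullref{lem:homXstable} guaranteeing $\Hom(-,(UX)^f)$ is right Quillen from $(\Gsptr)^{\mathrm{op}}$; and (ii) pinning down that taking strict $G$-fixed points of $\Hom(\text{free cell},(UX)^f)$ really yields $\spi$ of a product of suspensions of $X$, i.e.\ that no Tate-type correction appears at finite stages --- this is exactly the statement that for a free $G$-cell the fixed points and homotopy fixed points of the mapping object agree, which is where finiteness/freeness of $E_\bullet G$ in each simplicial degree is used. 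Once these are in place, the rest is the standard bookkeeping of the skeletal spectral sequence, and I would not write out the reindexing in detail, instead citing \cite{bouskan} and the prior appearances in \cite{isashk} and \cite{bako}.
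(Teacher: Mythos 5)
Your argument is correct and matches the approach the paper indicates: the paper does not actually write out a proof of this proposition, but simply states that the construction uses the cellular filtration of $EG$ and is analogous to the classical one, citing Bousfield--Kan and the prior appearances in Isaksen--Shkembi and Berrick et al. Your more detailed walk-through --- the skeletal filtration of $EG$, the use of the fibrant replacement $\Hom\big(EG,(UX)^f\big)^G$ of $X^{hG}$, the free-cell computation of the layers, and the resulting identification of the $E_1$-page with the cochain complex $\Hom_{\Z[G]}\big(C_*(EG),\spi_{*,*}X\big)$ --- is exactly the expansion of the outline the paper gestures at, and the two technical points you flag (Reedy cofibrancy/cellularity of $EG$ and the identification of strict fixed points of $\Hom(\text{free cell},-)$ with its homotopy fixed points) are the right ones.
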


In practice, complete convergence means that the spectral sequence computes $\spi_{*,*}X^{hG}$.  
\begin{rem}\label{rem:descU}
Note that for every fixed $q$ and every smooth scheme $U$, we obtain a spectral sequence of graded abelian groups
\[E_2^{s,(p,q)}(U)=H^s(G, \spi_{p,q}X (U)) \Longrightarrow \spi_{p-s,q}X^{hG}(U).\]
\end{rem}

We will also use a motivic K\"unneth spectral sequence, which was constructed by Dugger and Isaksen in \cite[Proposition 7.7]{dugisa}.
\begin{prop}\label{prop:kun}
Let $E$ be a motivic ring spectrum, $M$ a right $E$--module and $N$ a left $E$--module. If $E$ and $M$ are cellular, then there is a strongly convergent tri-graded spectral sequence
\[ E_{a,(b,c)}^2 = \Tor_{a,(b,c)}^{\pi_{*,*}E}(\pi_{*,*}M, \pi_{*,*}N) \Longrightarrow \pi_{a+b,c}(M \smsh_{E} N )\]
with differentials $d_r : E^r_{a,(b,c)} \to E^r_{a-r, b+r-1,c}$.
\end{prop}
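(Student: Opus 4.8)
The plan is to construct the spectral sequence exactly as one builds the Künneth (or universal coefficient) spectral sequence for modules over a ring spectrum, adapted to the bigraded motivic setting; the statement is due to Dugger and Isaksen \cite[Proposition 7.7]{dugisa}, and I would follow their argument. Write $R_{*,*} = \pi_{*,*}E$. The idea is to replace the right $E$-module $M$ by an explicit \emph{cellular} $E$-module assembled from a free resolution of $\pi_{*,*}M$ over $R_{*,*}$, to smash this model with $N$ over $E$, and to extract the skeletal spectral sequence of the resulting filtration.

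First I would choose a resolution $\cdots \to P_1 \to P_0 \to \pi_{*,*}M \to 0$ by bigraded free $R_{*,*}$-modules, realize $P_s = \pi_{*,*}W_s$ with $W_s = \bigvee_i \mmS^{p_i,q_i}\smsh E$ a wedge of bigraded suspensions of $E$, and realize the differentials of $P_\bullet$ together with the augmentation to $\pi_{*,*}M$ by maps of $E$-modules --- possible because an $E$-module map out of the free module $W_s$ is detected on $\pi_{*,*}$. Assembling these via iterated mapping cones, as in the topological case, produces an exhaustive increasing filtration $\ast = \widetilde M_{-1} \subseteq \widetilde M_0 \subseteq \widetilde M_1 \subseteq \cdots$ of $E$-modules with $\widetilde M_s/\widetilde M_{s-1} \simeq \Sigma^s W_s$, together with a map $\widetilde M := \hocolim_s \widetilde M_s \to M$. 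Since $P_\bullet \to \pi_{*,*}M$ is exact, this map is a bigraded $\spi_{*,*}$-isomorphism; since $E$ is cellular so is $\widetilde M$, and $M$ is cellular by hypothesis, so the Dugger--Isaksen criterion \cite[Corollary 7.2]{dugisa} (or Jardine's \cite[Lemma 3.7]{JardineMotivicSymSpec}) upgrades it to a stable motivic equivalence, hence to an equivalence of $E$-modules; in particular $M \smsh_E N \simeq \widetilde M \smsh_E N$.

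Next I would smash the filtration with $N$ over $E$, obtaining an exhaustive, bounded-below filtration of $M \smsh_E N$ whose associated exact couple gives a trigraded spectral sequence. Because each $W_s$ is a wedge of suspensions of $E$, the object $W_s \smsh_E N$ is the matching wedge of suspensions of $N$, so up to a shift in the resolution degree the $E^1$-term is $E^1_{s,(\ast,\ast)} \cong P_s \ot_{R_{*,*}} \pi_{*,*}N$, and a diagram chase through the attaching maps identifies $d_1$ with $\partial_\bullet \ot_{R_{*,*}} \pi_{*,*}N$. Taking homology, and using that $P_\bullet$ is a projective resolution of $\pi_{*,*}M$, gives $E^2 = \Tor^{R_{*,*}}(\pi_{*,*}M, \pi_{*,*}N)$; re-indexing yields the asserted form, with $E^2_{a,(b,c)} = \Tor^{\pi_{*,*}E}_{a,(b,c)}(\pi_{*,*}M,\pi_{*,*}N)$, differentials $d_r\colon E^r_{a,(b,c)} \to E^r_{a-r,\,b+r-1,\,c}$, and abutment $\pi_{a+b,c}(M\smsh_E N)$. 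Strong convergence follows from the filtration being exhaustive and bounded below in the resolution degree $a \ge 0$, by the standard argument for the homology spectral sequence of such a filtration.

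The main obstacle --- and the only place the cellularity hypotheses enter --- is the comparison step $\widetilde M \to M$: the constructed model is automatically a $\spi_{*,*}$-isomorphism, but one needs cellularity of both $E$ and $M$ to conclude that it is an actual stable motivic equivalence, and hence that the cellular model computes $M \smsh_E N$. Everything else is the familiar resolution-plus-skeletal-filtration machinery, the only motivic wrinkle being careful bookkeeping of the bigraded suspensions $\mmS^{p,q}$ so that both indices of $\Tor$ and the weight are tracked correctly.
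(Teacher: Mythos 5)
The paper does not give a proof of this proposition; it states it as a direct citation of Dugger and Isaksen \cite[Proposition 7.7]{dugisa}, with no argument of its own. Your reconstruction of the Dugger--Isaksen argument --- realize a free $\pi_{*,*}E$-resolution of $\pi_{*,*}M$ by wedges of bigraded suspensions of $E$, assemble it into a filtered cellular $E$-module $\widetilde M$ with a $\spi_{*,*}$-isomorphism $\widetilde M\to M$, use cellularity of $E$ and $M$ together with \cite[Corollary 7.2]{dugisa} to promote this to a stable equivalence, and then take the filtration spectral sequence of $\widetilde M\smsh_E N$ --- is correct and is exactly the proof of the cited reference, so this is the same approach the paper implicitly invokes.
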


To apply \fullref{prop:kun} to Eilenberg-MacLane spectra, we need the following result of Hoyois \cite[Proposition 8.1]{hoyois}.
\begin{thm}[Hoyois]
The spectrum $\HA$ is cellular for every abelian group $A$.
\end{thm}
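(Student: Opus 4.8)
The statement is \cite[Proposition 8.1]{hoyois}; the plan is to reduce it to the single case $A=\Z$ and then invoke Hoyois's explicit construction of the motivic Eilenberg--MacLane spectrum. Recall that the cellular motivic spectra form the smallest full subcategory of $\SH S$ containing the spheres $\mmS^{p,q}$ and closed under arbitrary homotopy colimits; in particular it is closed under (de)suspension, wedges, retracts, filtered homotopy colimits, and cofibre sequences (if two of the three terms of a cofibre sequence in $\SH S$ are cellular, so is the third). I would use exactly these closure properties, together with the facts that the construction of $\HA$ recalled in \cite[Section 4.2]{hoyois} is additive in the coefficient group and commutes with filtered colimits of coefficient groups.

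First I would reduce to finitely generated $A$. Writing $A=\colim_i A_i$ as the filtered colimit of its finitely generated subgroups, the coefficientwise free construction defining $\HA$ preserves filtered colimits, so $\HA\simeq \hocolim_i \mathsf{H}A_i$ in $\SH S$; since a filtered homotopy colimit of cellular spectra is cellular, it suffices to treat finitely generated $A$. Every such group is a finite direct sum of copies of $\Z$ and of cyclic groups $\Z/n$, and additivity of the construction gives $\mathsf{H}(A'\oplus A'')\simeq \mathsf{H}A'\vee \mathsf{H}A''$, so I am reduced to $A=\Z$ and $A=\Z/n$. For $A=\Z/n$ the short exact sequence $0\to\Z\xrightarrow{n}\Z\to\Z/n\to 0$ induces a cofibre sequence $\mathsf{H}\Z\xrightarrow{n}\mathsf{H}\Z\to\mathsf{H}(\Z/n)$ in $\SH S$ (the fibre of $\mathsf{H}\Z\to\mathsf{H}(\Z/n)$ has bigraded homotopy groups $\Z$ and $0$, hence is $\mathsf{H}\Z$), so $\mathsf{H}(\Z/n)$ is cellular as soon as $\mathsf{H}\Z$ is.

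It remains to prove that $\mathsf{H}\Z$ is cellular, and this is the step I expect to be the genuine obstacle, since it is not a formal consequence of closure properties. Here I would unwind Hoyois's point-set model: $\mathsf{H}\Z$ is a symmetric spectrum whose constituent motivic spaces are the $\mathbb{A}^1$-local, Nisnevich-local replacements of the free abelian presheaves generated by the smash powers $(\mathbb{P}^1)^{\wedge m}$. The free abelian group functor carries an exhaustive increasing ``word-length'' filtration whose successive subquotients are built, up to (de)suspension, from the smash powers $(\mathbb{P}^1)^{\wedge md}$ by homotopy colimits along symmetric group actions; by induction each stage of the filtration is cellular, hence so is its colimit, and since $\mathbb{A}^1$- and Nisnevich-localisation preserve cellularity, each constituent space — and therefore $\mathsf{H}\Z$ — is cellular. (Over a perfect field this is classical, cf. Voevodsky and Dugger--Isaksen \cite{dugisa}; Hoyois's contribution is precisely to carry the argument out over an arbitrary Noetherian base scheme of finite Krull dimension, exploiting the continuity and base-change properties of the construction.) Feeding this into the reductions of the previous paragraph completes the proof.
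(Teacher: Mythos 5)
The paper does not actually prove this statement: it is quoted directly from Hoyois \cite[Proposition 8.1]{hoyois}, so there is no internal argument to compare against, and what has to be judged is whether your reconstruction would itself establish the theorem. Your formal reductions are fine and are the standard ones: writing $A$ as a filtered colimit of its finitely generated subgroups and using that $\mathsf{H}(-)$ carries filtered colimits of coefficients to homotopy colimits, direct sums to wedges, and $0\to\Z\xrightarrow{n}\Z\to\Z/n\to 0$ to a cofibre sequence, together with the closure of cellular spectra under homotopy colimits, (de)suspension and two-out-of-three in cofibre sequences, correctly reduces the theorem to the cellularity of $\mathsf{H}\Z$.

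The gap is exactly where you predicted it, and your sketch for $\mathsf{H}\Z$ does not close it. First, the model is misdescribed: the spaces of the spectrum constructed in \cite[Section 4.2]{hoyois} are not localizations of free abelian presheaves on the smash powers of $\mathbb{P}^1$, but involve free presheaves \emph{with transfers} (finite correspondences); the transfer-free linearization of $(\mathbb{P}^1)^{\wedge n}$ represents a different theory (its zeroth $\A^1$-homotopy sheaf is Milnor--Witt rather than Milnor $K$-theory), so the object your filtration analyzes is not $\mathsf{H}\Z$. Second, and more seriously, the claim that the word-length filtration has subquotients ``built from smash powers by homotopy colimits along symmetric group actions'' begs the question: quotients by non-free actions of symmetric groups are not homotopy colimits, and proving that symmetric-power-type constructions preserve cellularity is precisely the hard geometric content. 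Dugger and Isaksen carry out such an argument only over fields of characteristic zero, where Voevodsky's Dold--Thom-type identification of the spaces of $\mathsf{H}\Z$ with symmetric products is available---not over arbitrary perfect fields, as your parenthetical suggests---and Hoyois's actual route is global rather than space-by-space: cellularity is deduced from the Hopkins--Morel--Hoyois identification of $\mathsf{H}\Z$ (with the residue characteristic exponent suitably handled) with a quotient $\MGL/(a_1,a_2,\dots)$ of the cellular spectrum $\MGL$, together with base-change arguments. As written, your final paragraph asserts rather than proves the one statement carrying all the content, so the argument is incomplete.
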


Recall that if $R \to T$ is a Galois extension, then $T$ is a projective $R$--module and hence is flat over $R$. Using the following result, this will allow us to relate the homotopy groups of $\HR$ and $\HT$.

\begin{prop}\label{cor:tensor}
Let $R$ be a commutative ring, $M$ be a flat $R$--module and $A$ be any $R$--module. Then 
\[ \underline{\pi}_{p,q} \mathsf{H(A\otimes_R M)} \cong (\spi_{p,q}\HA)  \otimes_R M.\]
\end{prop}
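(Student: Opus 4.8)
The plan is to reduce the statement to a computation of motivic homotopy groups of Eilenberg--MacLane spectra, using the explicit model of $\HA$ from Hoyois \cite[Section 4.2]{hoyois} and its compatibility with tensoring. First I would recall that the construction $A \mapsto \HA$ is built functorially from the free presheaf-with-transfers (or the relevant free abelian group construction) applied levelwise to the motivic spheres, so that on homotopy presheaves it produces motivic cohomology with coefficients in $A$. Since $M$ is flat over $R$, the functor $-\otimes_R M$ is exact, hence commutes with the formation of homotopy groups; concretely, for any $R$-module $A$ one has a natural identification of presheaves $\mathsf{H}(A\otimes_R M) \simeq \HA \smsh_{\mathsf{H}R}^{?}\dots$---but rather than go through a smash product, the cleanest route is to observe that the defining construction of $\HA$ is, levelwise, given by free $A$-modules on the (reduced) free simplicial abelian group generated by the motivic spheres, and $(-)\otimes_R M$ applied to a free $A$-module is the corresponding free $(A\otimes_R M)$-module. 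Flatness ensures this operation preserves the stable motivic weak equivalences used to identify $\HA$, and hence preserves the homotopy presheaves.

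The key steps, in order, would be: (1) unwind Hoyois's construction of $\HA$ to exhibit, for each smooth $U$ and each bidegree, a natural isomorphism $\spi_{p,q}\HA(U) \cong \widetilde{H}^{-p,-q}_{\mathrm{mot}}(U; A)$, which for fixed $U$ is computed by an explicit chain complex $C^{*,*}(U)$ of $R$-modules (independent of $A$, with coefficients then inserted as $C^{*,*}(U)\otimes_{\mathbb{Z}} A$, or already as $R$-modules after tensoring the integral complex with $R$); (2) apply the universal coefficient / flat base change isomorphism $H_*(C^{*,*}(U)\otimes_R A)\otimes_R M \cong H_*(C^{*,*}(U)\otimes_R (A\otimes_R M))$, valid because $M$ is flat so $-\otimes_R M$ is exact and commutes with homology; (3) check naturality in $U$ so that the isomorphism is one of presheaves, i.e. $\spi_{p,q}\mathsf{H}(A\otimes_R M)(U)\cong (\spi_{p,q}\HA)(U)\otimes_R M$ compatibly with restriction maps. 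One then concludes by noting that a morphism of motivic spectra is a stable weak equivalence iff it induces isomorphisms on all $\spi_{p,q}(-)$ (Jardine \cite[Lemma 3.7]{JardineMotivicSymSpec}), although here we only need the presheaf-level identification, which is the statement itself.

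The main obstacle I expect is step (1)--(2): making precise that Hoyois's model $\HA$ is built from $A$ in a way that literally commutes with $-\otimes_R M$ up to a zig-zag of stable motivic equivalences, and that flatness of $M$ suffices to preserve those equivalences. The delicate point is that $\HA$ is not just ``$A$ inserted into a fixed spectrum''---its $n$-th space is a free $A$-module on the motivic $n$-sphere (suitably pointed and symmetrized), so one must verify that for a levelwise free module, applying $-\otimes_R M$ levelwise yields the free $(A\otimes_R M)$-module model, and that this is compatible with the structure maps and the stabilization. Once the levelwise statement is in hand, flatness makes the passage to homotopy presheaves formal: $-\otimes_R M$ commutes with the filtered colimits and finite limits computing $\spi_{p,q}$, and with the homology of the associated complexes. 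An alternative, perhaps slicker, approach is to use the motivic K\"unneth spectral sequence of \fullref{prop:kun} with $E = \mathsf{H}R$, $M = \mathsf{H}M$ (cellular by Hoyois), and $N = \HA$: flatness of $M$ over $R$ forces the $\Tor$ to collapse, $\Tor_0^{\pi_{*,*}\mathsf{H}R}(\pi_{*,*}\mathsf{H}M, \pi_{*,*}\HA) = \pi_{*,*}\mathsf{H}M \otimes_{\pi_{*,*}\mathsf{H}R} \pi_{*,*}\HA$, and one identifies $\mathsf{H}M \smsh_{\mathsf{H}R}\HA \simeq \mathsf{H}(M\otimes_R A)$; but this requires knowing $\pi_{*,*}\mathsf{H}M$ is flat over $\pi_{*,*}\mathsf{H}R$ in the appropriate bigraded sense and identifying the smash product with an Eilenberg--MacLane spectrum, which circles back to essentially the same bookkeeping. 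I would therefore favor the direct presheaf computation, relegating the levelwise verification to a short lemma citing the exactness of $-\otimes_R M$.
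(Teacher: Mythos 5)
Your preferred ``direct presheaf computation'' is essentially the paper's argument: the paper reduces to the identification $H^{p,q}(X,A) = R^p\Gamma\big(X,A(q)\big)$, where $A(q) = \mathbb{Z}(q)\otimes A$ is the motivic complex of Mazza--Voevodsky--Weibel, observes that $(A\otimes_R M)(q) \cong A(q)\otimes_R M$, and then uses exactness of $-\otimes_R M$ to commute it past $R^p\Gamma$. That is exactly your steps (1)--(2), with $C^{*,*}(U)$ being the motivic complex.

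The one place your proposal takes a longer road than needed is the opening: you spend considerable effort on whether Hoyois's spectrum-level model of $\HA$ commutes levelwise with $-\otimes_R M$ up to a zig-zag of stable motivic equivalences. The paper avoids this entirely: since the \emph{statement} is about the homotopy presheaves $\spi_{p,q}$, which by representability are motivic cohomology groups, one can work purely on the chain level from the start and never touch the spectrum model. This also makes visible the role of the Noetherian hypothesis in the proposition, which you don't comment on: writing the flat $R$-module $M$ as a filtered colimit of finitely generated free modules, one needs Nisnevich hypercohomology over $X$ to commute with that filtered colimit, and this is what Noetherianness of $X$ buys. Your K\"unneth alternative is indeed circular as you suspect, so you were right to set it aside.
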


\begin{proof}
This follows from the fact that, for $X$ a Noetherian scheme,
\[H^{p,q}(X, A\otimes_R M) \cong  H^{p,q}(X, A)\otimes_R M.\]
Indeed, let $\mathbb{Z}(q)$ be the motivic complex defined in \cite[Definition 3.1]{motcoh}.
For an abelian group $M$, $M(q) = \mathbb{Z}(q) \otimes M$ and
$H^{p,q}(X, M) = R^p\Gamma(X, M(q))$. However, by the flatness hypothesis, $\Gamma(X, M(q)) \cong \Gamma(X, A(q)) \otimes_R M$. Further, since $ - \otimes_R M$ is an exact functor, $R^p\Gamma(X, M(q)) \cong (R^p\Gamma(X,A(q))) \otimes_R M$, which proves the claim.
\end{proof}

In order to streamline the discussion, we make the following definition.
\begin{defn}\label{defn:neg}
A bigraded abelian group $A_{*,*}$ is \emph{negative} if $A_{p,q} = 0$ whenever
\begin{itemize}
\item $q> 0$, and
\item $q=0$ and $p\neq 0$. 
\end{itemize}
\end{defn}

\noindent
We recognize that this definition seems artificial. However, for any smooth scheme $X$, $\spi_{p,q}\HA(X)$ is negative (see for example \cite[Corollary 4.26]{hoyois}). The next few results depend only on the vanishing properties captured by \fullref{defn:neg}.

Suppose that $M_{*,*}$ and $N_{*,*}$ are $R_{*,*}$--modules, where $R_{*,*}$ is a bi-graded commutative ring that is negative as a bi-graded abelian group. If $M_{*,*}$ and $N_{*,*}$ are negative as bi-graded abelian groups,
then so is any submodule of $M_{*,*}$ and any quotient of $M_{*,*}$. Further, $M_{*,*} \otimes_{R_{*,*}}N_{*,*}$ is negative, and $M_{*,*}$ admits a free resolution by $R_{*,*}$--modules that are also negative.
These observations together imply that $\Tor_{s, (*,*)}^{R_{*,*}}(M_{*,*}, N_{*,*} )$
is negative as a bi--graded abelian group for each $s$. 

\begin{defn}
A motivic spectrum $B$ is \emph{negative} if $\pi_{*,*}B$ is a negative bi-graded abelian group.
\end{defn}

\begin{lem}\label{lem:fixpoints}
If $B$ is a negative spectrum equipped with an action of $G$, then the natural map $B^{hG} \to B$ induces an isomorphism
\[\pi_{0,0}B^{hG} \cong (\pi_{0,0}B)^{G}.\]
\end{lem}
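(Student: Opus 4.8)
The plan is to use the homotopy fixed point spectral sequence of \fullref{prop:hfss} and exploit the negativity hypothesis to collapse it in the relevant degree. First I would write down the spectral sequence
$$E_2^{s,(p,q)} = H^s(G, \spi_{p,q}B) \Longrightarrow \spi_{p-s,q}B^{hG},$$
and observe that we only care about the abutment in bidegree $(0,0)$, i.e.\ the groups $\spi_{-s,0}B^{hG}$ contributing via $E_\infty^{s,(p,q)}$ with $p - s = 0$ and $q = 0$; that forces $p = s$ and $q = 0$.

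\textbf{Key vanishing step.} Since $B$ is negative, $\spi_{p,q}B = 0$ whenever $q = 0$ and $p \neq 0$ (and whenever $q > 0$). Hence for $q = 0$ the only possibly nonzero column is $p = 0$, so the only term on the line $p = s$, $q = 0$ that survives is $E_2^{0,(0,0)} = H^0(G, \spi_{0,0}B) = (\spi_{0,0}B)^G$. For $s \geq 1$ we would need $p = s \geq 1$ with $q = 0$, but then $\spi_{p,0}B = 0$, so $E_2^{s,(s,0)} = 0$. Thus on the diagonal contributing to total degree $0$ in weight $0$ there is a single nonzero $E_2$-term, sitting in filtration $0$. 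I would also check that no differential can hit or leave this term: a differential $d_r$ out of $E_r^{0,(0,0)}$ lands in $E_r^{r,(r-1,0)}$, which vanishes since $r - 1 \geq 0$ and, if $r - 1 \neq 0$, negativity kills $\spi_{r-1,0}B$, while if $r=1$ there is no $d_1$ in this indexing (or it lands in $E_1^{1,(0,0)}$, still the $p=0$ column but filtration $1$, total degree $-1$, irrelevant); and a differential into $E_r^{0,(0,0)}$ would have to come from $E_r^{-r,(1-r,0)}$, which is zero because negative cohomological degree $s = -r < 0$ gives $H^s(G,-) = 0$. Therefore $E_2^{0,(0,0)} = E_\infty^{0,(0,0)}$ and it is the only contribution, giving $\spi_{0,0}B^{hG} \cong (\spi_{0,0}B)^G$; evaluating at $S$ yields the stated isomorphism $\pi_{0,0}B^{hG} \cong (\pi_{0,0}B)^G$. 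I would confirm that the map realizing this isomorphism is indeed the edge map induced by $B^{hG} \to B$ (the inclusion of homotopy fixed points), which is the standard identification of the filtration-$0$ quotient with the image in $\pi_{0,0}B$.

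\textbf{Convergence.} The one subtlety is complete convergence: \fullref{prop:hfss} only guarantees convergence when $\varprojlim_r^1 E_r^{s,(p,q)} = 0$. Here the issue is harmless because for the bidegrees relevant to total degree $0$ and weight $0$ the spectral sequence is concentrated in a single spot and degenerates at $E_2$, so the tower of $r$-th terms is eventually constant and the $\varprojlim^1$ vanishes automatically. I would spell this out: for fixed total degree $0$, weight $0$, the groups $E_r^{s,(s,0)}$ are all zero for $s \geq 1$ and equal to $(\spi_{0,0}B)^G$ for $s = 0$ and all $r$, so the relevant inverse systems are Mittag-Leffler (indeed constant) and the abutment is computed correctly.

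\textbf{Main obstacle.} The genuinely delicate point is not the collapse — that is forced by negativity — but making sure the bookkeeping of the tri-graded indexing is right, in particular that a differential entering $E_r^{0,(0,0)}$ really would originate in cohomological degree $s < 0$ (hence vanish), and that the identification of the surviving term with $(\spi_{0,0}B)^G$ is compatible with the map $B^{hG} \to B$. Once the index arithmetic is pinned down, the rest is immediate from \fullref{defn:neg}.
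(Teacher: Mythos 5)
Your proof is correct and follows essentially the same route as the paper: apply the homotopy fixed point spectral sequence of \fullref{prop:hfss} with $q=0$, observe that negativity kills all $E_2^{s,(s,0)}$ for $s\geq 1$ and that differentials out of $E_r^{0,(0,0)}$ land in groups of the form $H^r(G,\pi_{r-1,0}B)$ which vanish, and identify the surviving term via the edge map. Your extra remarks on incoming differentials (vanishing by $s<0$) and on convergence are sensible housekeeping that the paper elides but do not constitute a different argument.
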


\begin{proof}
For $q=0$ and $U = S$, 
\fullref{rem:descU} gives a spectral sequence
\[E_2^{s,(p,q)} = H^s(G, \pi_{p,0}B) \Longrightarrow \pi_{p-s,0}B^{hG}.\]
The contributions to $\pi_{0,0}B^{hG}$ come from 
 \[E_2^{s,(s,0)} = H^s(G, \pi_{s,0}B) = \begin{cases}  (\pi_{0,0}B)^G & s=0\\
 0 & s \neq 0.
 \end{cases}\]
 Hence, it suffices to prove that elements in $E_r^{0,(0,0)}$ do not support differentials.
The targets of such differentials lie in $ E_r^{r, (r-1,0)}$, which is a sub-quotient of $H^r(G, \pi_{r-1,0}B)$. However,
$\pi_{r-1,0}B = 0$ for $r\geq 2$, since $B$ is negative. Hence, $E_r^{r, (r-1,0)}=0$. 

The map on $\pi_{0,0}$ induced by the natural map $B^{hG} \to B$ factors through the edge homomorphism $\pi_{0,0}B^{hG} \to E_2^{0,(0,0)} \cong (\pi_{0,0}B)^G$ of the spectral sequence, giving the desired isomorphism.
\end{proof}

\begin{prop}\label{prop:homsmash}
Let $A$ and $B$ be negative spectra that are modules over a ring spectrum $R$ that is also negative as a spectrum. If $A$ and $R$ are cellular, or $A$ is $R$--cellular, then the edge homomorphism
\[  \pi_{0,0}A \otimes_{\pi_{0,0}R} \pi_{0,0}B \xra{e} \pi_{0,0}(A \smsh_{R} B) \]
of the K\"unneth spectral sequence is an isomorphism.
\end{prop}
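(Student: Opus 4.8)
The plan is to analyze the motivic Künneth spectral sequence of \fullref{prop:kun} in low degrees, exploiting the negativity hypotheses to show that nothing interferes with the edge homomorphism in bidegree $(0,0)$. Write $\pi_{*,*}R$, $\pi_{*,*}A$, $\pi_{*,*}B$ for the relevant bigraded homotopy rings and modules; by hypothesis each is negative in the sense of \fullref{defn:neg}, and $\pi_{*,*}R$ is a negative bigraded commutative ring. Under the cellularity hypothesis (either $A$ and $R$ cellular, or $A$ $R$-cellular) the spectral sequence of \fullref{prop:kun} converges strongly to $\pi_{*,*}(A\smsh_R B)$ with $E^2$-page $\Tor^{\pi_{*,*}R}_{a,(b,c)}(\pi_{*,*}A,\pi_{*,*}B)$ and differentials $d_r\colon E^r_{a,(b,c)}\to E^r_{a-r,(b+r-1,c)}$.

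\textbf{Step 1: Identify the relevant $E^2$-entries.} The edge homomorphism lands in the associated graded of $\pi_{0,0}(A\smsh_R B)$ coming from the filtration piece $E^\infty_{0,(0,0)}$, and by construction $E^2_{0,(0,0)}=\Tor^{\pi_{*,*}R}_{0,(0,0)}(\pi_{*,*}A,\pi_{*,*}B)=\big(\pi_{*,*}A\otimes_{\pi_{*,*}R}\pi_{*,*}B\big)_{0,0}$. Because all three modules are negative, the degree-$(0,0)$ part of the tensor product over $\pi_{*,*}R$ is exactly $\pi_{0,0}A\otimes_{\pi_{0,0}R}\pi_{0,0}B$: in a negative bigraded module, everything in weight $0$ is concentrated in simplicial degree $0$, and the tensor relations over $\pi_{*,*}R$ using positive-degree elements of $R$ can only produce elements outside bidegree $(0,0)$, so they impose no relations there. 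Hence $e$ is identified with the map $\pi_{0,0}A\otimes_{\pi_{0,0}R}\pi_{0,0}B\to \pi_{0,0}(A\smsh_R B)$ in the statement.

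\textbf{Step 2: Kill incoming and outgoing differentials.} For the edge map to be an isomorphism I must show (i) no differential out of $E^r_{0,(0,0)}$ is nonzero, and (ii) no differential into the $(0,0)$-spot contributes, i.e.\ every $E^r_{r,(1-r,0)}$ that could map to $E^r_{0,(0,0)}$ vanishes, and moreover that there is no nontrivial filtration above $E^\infty_{0,(0,0)}$ in $\pi_{0,0}(A\smsh_R B)$. For (i): a differential $d_r$ out of $E^r_{0,(0,0)}$ lands in $E^r_{-r,(r-1,0)}$, which has negative homological degree $-r<0$, so it is zero. For (ii): a differential into $E^r_{0,(0,0)}$ comes from $E^r_{r,(1-r,0)}$, a subquotient of $\Tor^{\pi_{*,*}R}_{r,(1-r,0)}(\pi_{*,*}A,\pi_{*,*}B)$; since the internal bidegree $(1-r,0)$ has first coordinate $1-r<0$ for $r\geq 2$, and Tor of negative modules is negative (as observed in the paragraph preceding \fullref{lem:fixpoints}), this group vanishes. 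Finally, the other filtration quotients of $\pi_{0,0}(A\smsh_R B)$ are the groups $E^\infty_{a,(-a,0)}$ with $a\geq 1$, again subquotients of negative Tor-groups in internal bidegree with negative first coordinate, hence zero. Therefore $E^\infty_{0,(0,0)}$ is all of $\pi_{0,0}(A\smsh_R B)$ and equals $E^2_{0,(0,0)}$, so the edge homomorphism is an isomorphism.

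\textbf{Main obstacle.} The only delicate point is the bookkeeping in Step 1 identifying the degree-$(0,0)$ part of $\Tor_0=\otimes_{\pi_{*,*}R}$ with the ordinary tensor product over $\pi_{0,0}R$: one must argue carefully that elements of $\pi_{*,*}R$ of nonzero degree, acting in the bimodule structure, cannot affect bidegree $(0,0)$ — this is exactly the negativity statement about $M_{*,*}\otimes_{R_{*,*}} N_{*,*}$ being negative and its degree-$(0,0)$ component being computed by the degree-$0$ parts, which the paper has already essentially recorded before \fullref{lem:fixpoints}. Everything else is a routine vanishing-line argument identical in spirit to the proof of \fullref{lem:fixpoints}. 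I would also remark that this is the precise analogue of Rognes's argument for classical Eilenberg–MacLane spectra and cite it for the reader's convenience.
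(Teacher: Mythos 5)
Your proof follows the same line of argument as the paper's: identify $E^2_{0,(0,0)}$ with $\pi_{0,0}A\otimes_{\pi_{0,0}R}\pi_{0,0}B$ via negativity, then kill outgoing differentials by the vanishing of Tor in negative homological degree and incoming differentials by negativity of the Tor groups, with your additional (correct) remark that the higher filtration quotients $E^\infty_{a,(-a,0)}$ for $a\geq 1$ also vanish. This last point is left implicit in the paper's phrase that "the contributions to $\pi_{0,0}(A\smsh_R B)$ come from $E^2_{0,(0,0)}$," so your version is slightly more explicit but not a different approach.
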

\begin{proof}
In the spectral sequence
\[E^2_{s,(p,q)} = \Tor^{\pi_{*,*}R}_{s,(p,q)}(\pi_{*,*}A, \pi_{*,*}B) \Longrightarrow \pi_{p+s,q} (A \smsh_R B), \]
the contributions to $\pi_{0,0} (A \smsh_R B)$ come from 
\begin{align*}E^2_{0,(0,0)}= \Tor^{\pi_{*,*}R}_{0,(0,0)}(\pi_{*,*}A, \pi_{*,*}B) &\cong (\pi_{*,*}A \otimes_{\pi_{*,*}R} \pi_{*,*}B)_{(0,0)}\\
& \cong \pi_{0,0}A \otimes_{\pi_{0,0}R} \pi_{0,0}B. \end{align*}
The targets of the differentials $d_r:E^r_{0,(0,0)} \to E^r_{-r, (r-1,0)}$ are zero for degree reasons. The sources of the differentials
$d_r:  E^r_{r, (1-r,0)}  \to E^r_{0,(0,0)}$ are zero
since $E^r_{r, (*,*)}$ is negative, and $r\geq 2$ implies that $1-r\neq 0$.
\end{proof}

\begin{prop}\label{prop:galpi0}
Let $A \to B$ be a {homotopical} $G$--Galois extension of cellular, negative, commutative ring spectra. Then $\pi_{0,0}A \to  \pi_{0,0}B$ is a $G$--Galois extension of commutative rings.
\end{prop}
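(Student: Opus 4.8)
The plan is to apply the functor $\pi_{0,0}(-)$ to the two structural isomorphisms guaranteed by the hypothesis that $\vp\colon A\to B$ is a homotopical $G$--Galois extension---namely $\vp^{hG}\colon A\xrightarrow{\cong}B^{hG}$ and $\beta_{\vp}\colon B\smsh_{A}B\xrightarrow{\cong}F(G_{+},B)\cong\prod_{G}B$ in $\SH S$---and then to check that the induced isomorphisms on $\pi_{0,0}$ are precisely the two maps whose invertibility defines a $G$--Galois extension of commutative rings in the sense of Chase--Harrison--Rosenberg \cite{c-h-r}: the natural map $\pi_{0,0}A\to(\pi_{0,0}B)^{G}$, and the canonical map $\pi_{0,0}B\otimes_{\pi_{0,0}A}\pi_{0,0}B\to\prod_{G}\pi_{0,0}B$ sending $b_{1}\otimes b_{2}$ to $\big(b_{1}\,g(b_{2})\big)_{g\in G}$. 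As a preliminary step I would observe that $\pi_{0,0}\vp$ makes $\pi_{0,0}B$ into a commutative $\pi_{0,0}A$--algebra and that the $G$--action on $B$ through maps of $A$--algebras induces a $G$--action on $\pi_{0,0}B$ by $\pi_{0,0}A$--algebra automorphisms, so that the statement makes sense.

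For the fixed-point condition, I would apply $\pi_{0,0}$ to $\vp^{hG}$ and compose with the isomorphism $\pi_{0,0}B^{hG}\cong(\pi_{0,0}B)^{G}$ of \fullref{lem:fixpoints}, whose hypothesis holds since $B$ is negative. Because the proof of \fullref{lem:fixpoints} realizes this isomorphism via the natural comparison $B^{hG}\to B$, and because the composite $A\xrightarrow{\vp^{hG}}B^{hG}\to B$ is a model for $\vp$ itself, the resulting isomorphism $\pi_{0,0}A\xrightarrow{\cong}(\pi_{0,0}B)^{G}$ is exactly the corestriction of $\pi_{0,0}\vp$; this is the first defining condition.

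For the descent condition, I would apply $\pi_{0,0}$ to $\beta_{\vp}$, use the natural identification $\pi_{0,0}\big(\prod_{G}B\big)\cong\prod_{G}\pi_{0,0}B$ (a finite product), and invoke the motivic K\"unneth spectral sequence of \fullref{prop:homsmash}---applicable since $A$ and $B$ are cellular and negative---to identify its edge homomorphism $\pi_{0,0}B\otimes_{\pi_{0,0}A}\pi_{0,0}B\xrightarrow{\cong}\pi_{0,0}(B\smsh_{A}B)$. By the explicit description (\ref{eqn:mapFGB}) of $\beta_{\vp}$, whose $g$--th component is $\bar\mu\circ(1\smsh_{A}g)$, together with functoriality of the K\"unneth spectral sequence in the two module variables, this component becomes the map $b_{1}\otimes b_{2}\mapsto b_{1}\,g(b_{2})$ after the edge-homomorphism identification. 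Assembling the components shows that the canonical Galois map is $\pi_{0,0}\beta_{\vp}$ transported along the edge isomorphism, hence an isomorphism since $\beta_{\vp}$ is one in $\SH S$. Citing \cite{c-h-r} then concludes that $\pi_{0,0}A\to\pi_{0,0}B$ is a $G$--Galois extension of commutative rings.

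The main obstacle is bookkeeping rather than substance: one must carefully check that the abstract isomorphisms produced by the two spectral sequences' edge homomorphisms are compatible with the specific classical maps (the fixed-point inclusion and the canonical Galois map). This amounts to using naturality of the edge homomorphisms in \fullref{lem:fixpoints} and \fullref{prop:homsmash}, and identifying the composite $A\to B^{hG}\to B$ with $\vp$; everything else---the ring and module structures, the finiteness of the product, and the hypotheses needed to apply the two cited results---is immediate from the assumption that $A$, $B$ are cellular, negative, commutative ring spectra with $\vp$ a homotopical $G$--Galois extension.
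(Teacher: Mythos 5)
Your proposal is correct and follows essentially the same route as the paper's proof: both invoke \fullref{lem:fixpoints} for the fixed-point condition (with a slightly more explicit justification on your part that the resulting isomorphism is the corestriction of $\pi_{0,0}\vp$), and both invoke the edge homomorphism of the K\"unneth spectral sequence via \fullref{prop:homsmash} and then check, component by component using (\ref{eqn:mapFGB}), that the resulting isomorphism $\pi_{0,0}B\otimes_{\pi_{0,0}A}\pi_{0,0}B\to\prod_G\pi_{0,0}B$ is the classical canonical Galois map $b_1\otimes b_2\mapsto (b_1 g(b_2))_{g}$. The paper makes the compatibility check explicit with a commuting square relating the edge map, $1\otimes g$, and multiplication, which is exactly the naturality argument you gesture at; otherwise the two arguments coincide.
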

\begin{proof}
The map $\pi_{0,0}A \to (\pi_{0,0}B)^G$ is an isomorphism by \fullref{lem:fixpoints}. Since (\ref{eqn:mapFGB}) is an equivalence, it induces an isomorphism on $\pi_{*,*}$,
which we can precompose with the edge homomorphism to get a map
\begin{align}\label{eqn:map2}
 \pi_{*,*}B \otimes_{\pi_{*,*}A}  \pi_{*,*}B \to \prod_G \pi_{*,*}B.
 \end{align}
Its factors are the composites defined by the following commutative diagram:
\[\xymatrix{ \pi_{*,*}B \otimes_{\pi_{*,*}A}  \pi_{*,*}B  \ar[d]_-{e} \ar[r]^{1 \otimes g} \ar[r]  &  \pi_{*,*}B \otimes_{\pi_{*,*}A}  \pi_{*,*}B \ar[d]_-{e} \ar[dr]^-{m} & \\   
\pi_{*,*}(B\smsh_A B) \ar[r]_-{\pi_{*,*}(1\smsh g)}   & \pi_{*,*}(B\smsh_A B) \ar[r]_-{\pi_{*,*}\mu} &  \pi_{*,*} B. }\]
If we restrict (\ref{eqn:map2}) to $\pi_{0,0}$, the edge homomorphism is an isomorphism, so that the map $\pi_{0,0}B \otimes_{\pi_{0,0}A}  \pi_{0,0}B \to \prod_G \pi_{0,0}B$ is an isomorphism. One verifies easily that this is the map that sends $b_1 \otimes b_2$ to the function $\{g \mapsto b_1 g(b_2)\}$, so that $\pi_{0,0}(A) \to  \pi_{0,0}B$ is indeed a Galois extension.
\end{proof}

{Together with the fact that $\pi_{0,0}\HA =A$,} one direction of \fullref{thm:galEM} follows immediately from \fullref{prop:galpi0}, and we record it here.
\begin{prop}\label{prop:EMforward}
Let $R \to T$ be a homomorphism of commutative rings. If $\HR \to \HT$ is a {homotopical} $G$--Galois extension of motivic ring spectra, then $R \to T$ is a $G$--Galois extension of commutative rings.
\end{prop}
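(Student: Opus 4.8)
The plan is to derive this as an immediate corollary of \fullref{prop:galpi0}: all the homotopy-theoretic substance --- the homotopy fixed point and K\"unneth spectral sequences, and the vanishing arguments that make their edge homomorphisms isomorphisms --- is already carried out there, so the only task is to check that its hypotheses hold with $A=\HR$ and $B=\HT$. First I would record the basic structural facts: for a commutative ring $R$ the motivic Eilenberg--MacLane spectrum $\HR$ is a commutative motivic ring spectrum, the functoriality of Hoyois's construction turns the ring map $R\to T$ into a map $\HR\to\HT$ of commutative ring spectra, and a $G$-action on $T$ by $R$-linear ring automorphisms induces a $G$-action on $\HT$ lying over $\HR$; thus the assumption that $\HR\to\HT$ is a homotopical $G$-Galois extension is precisely of the form treated in \fullref{defn:galois-ext}.

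Next I would verify the three adjectives demanded by \fullref{prop:galpi0}. Commutativity is automatic from the commutativity of $R$ and $T$. Cellularity of $\HR$ and $\HT$ is exactly Hoyois's theorem quoted above. Negativity of $\HA$ for any abelian group $A$ follows from the vanishing properties of $\spi_{p,q}\HA(X)$ recorded in the text (following \cite[Corollary 4.26]{hoyois}), specialized to $X=S$: one has $\pi_{p,q}\HA=0$ for $q>0$ and for $q=0$, $p\neq0$. The remaining ingredient is the identification $\pi_{0,0}\HA\cong A$, which the text already asserts; I would justify it by noting that $\spi_{0,0}\HA(S)=H^{0,0}(S,A)$ and that the weight-zero motivic complex $\Z(0)$ is $\Z$ placed in cohomological degree $0$, so $H^{0,0}(S,A)=H^0_{\mathrm{Nis}}(S,A)=A$ (using connectedness of $S$ where needed).

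Finally, applying \fullref{prop:galpi0} with $\HR$ in the role of $A$ and $\HT$ in the role of $B$ yields that $\pi_{0,0}\HR\to\pi_{0,0}\HT$ is a $G$-Galois extension of commutative rings, and by the previous step this map is exactly $R\to T$, which completes the proof. I do not expect a genuine obstacle here: the content lives entirely in \fullref{prop:galpi0} and the spectral sequences feeding it, and the only points meriting a moment's care are the bookkeeping ensuring that "homotopical $G$-Galois extension" has the same meaning on the spectral and the algebraic sides, and the weight-zero computation $\pi_{0,0}\HA=A$ over a possibly disconnected base $S$.
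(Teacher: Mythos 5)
Your proposal is correct and follows exactly the paper's route: reduce to \fullref{prop:galpi0} by checking that $\HR$ and $\HT$ are cellular, negative commutative ring spectra, and then use $\pi_{0,0}\mathsf{H}A\cong A$ to translate back to rings. The paper records this in a single sentence, while you spell out the hypothesis checks and the weight-zero computation, which are sound (your parenthetical caution about disconnected base is reasonable but the paper glosses over it too).
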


Before proving the converse, we note the following result.
\begin{lem}\label{lem:MN}
Let $R \to A$ and $R \to B$ be maps of commutative rings.  If $B$ is projective as an $R$--module, then the natural map
\[\HA \smsh_{\HR} \HB \to \mathsf{H(A \otimes_R B)}\] 
is an {isomorphism in $\SH{S}$.}
\end{lem}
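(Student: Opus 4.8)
The plan is to reduce the claim to a Künneth spectral sequence computation, exploiting that $B$ projective over $R$ implies $B$ is flat over $R$, so that the relevant $\Tor$ groups collapse. First I would note that all spectra involved—$\HR$, $\HA$, $\HB$—are cellular by Hoyois's theorem (cited just above), so Proposition~\ref{prop:kun} applies with $E = \HR$, $M = \HA$, $N = \HB$, giving a strongly convergent spectral sequence
\[
E^2_{a,(b,c)} = \Tor^{\pi_{*,*}\HR}_{a,(b,c)}\big(\pi_{*,*}\HA,\ \pi_{*,*}\HB\big) \Longrightarrow \pi_{a+b,c}\big(\HA \smsh_{\HR} \HB\big).
\]
The next step is to identify the $E^2$-term. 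Since $B$ is flat over $R$, Proposition~\ref{cor:tensor} (applied with $M = B$, and with $A$ there being $A$ here) gives $\pi_{*,*}\HB \cong (\pi_{*,*}\HR)\otimes_R B$ as a $\pi_{*,*}\HR$-module; in particular $\pi_{*,*}\HB$ is a flat $\pi_{*,*}\HR$-module, because flatness is detected bidegreewise and $\pi_{*,*}\HR$ is flat over $R$ in each bidegree as $R$ is flat over itself. Hence $\Tor^{\pi_{*,*}\HR}_{a,(b,c)}(\pi_{*,*}\HA, \pi_{*,*}\HB) = 0$ for $a > 0$, and the spectral sequence degenerates: the edge homomorphism is an isomorphism
\[
\pi_{*,*}\HA \otimes_{\pi_{*,*}\HR} \pi_{*,*}\HB \xrightarrow{\ \cong\ } \pi_{*,*}\big(\HA \smsh_{\HR} \HB\big).
\]

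It then remains to identify the left-hand side with $\pi_{*,*}\mathsf{H(A\otimes_R B)}$ and to check the natural map is the one in question. Using Proposition~\ref{cor:tensor} once more (now with $M = B$ flat, coefficient module $A$), $\pi_{*,*}\mathsf{H(A\otimes_R B)} \cong (\pi_{*,*}\HA)\otimes_R B$. On the other hand, $\pi_{*,*}\HA \otimes_{\pi_{*,*}\HR}\pi_{*,*}\HB \cong \pi_{*,*}\HA \otimes_{\pi_{*,*}\HR} \big((\pi_{*,*}\HR)\otimes_R B\big) \cong (\pi_{*,*}\HA)\otimes_R B$, so the two agree. Since $\pi_{*,*}(-)$ detects isomorphisms between cellular motivic spectra (Dugger--Isaksen \cite[Corollary 7.2]{dugisa}) and both $\HA\smsh_{\HR}\HB$ and $\mathsf{H(A\otimes_R B)}$ are cellular, it follows that the natural map is an isomorphism in $\SH S$; one checks that, under these identifications, the natural ring-spectrum map $\HA\smsh_{\HR}\HB \to \mathsf{H(A\otimes_R B)}$ induces precisely the edge homomorphism composed with the canonical isomorphism, which is what makes the argument go through.

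The main obstacle I anticipate is bookkeeping with bigraded flatness and the compatibility of the natural maps, rather than any conceptual difficulty: one must be careful that ``flat $R$-module $B$'' genuinely yields ``flat $\pi_{*,*}\HR$-module $\pi_{*,*}\HB$'' in the bigraded sense (this uses that $\pi_{*,*}\HR$ is, bidegreewise, obtained from $R$ by the flat operations of Proposition~\ref{cor:tensor} with $M = R$), and that the edge map of the Künneth spectral sequence is compatible with the evident natural transformation of functors $\HA\smsh_{\HR}\HB \to \mathsf{H}(A\otimes_R B)$ coming from the lax monoidal structure of $\mathsf{H}(-)$. Both of these are routine but need to be stated precisely; everything else is a formal consequence of the spectral sequences already recorded in the excerpt.
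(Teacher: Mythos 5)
Your proposal follows essentially the same route as the paper: identify $\pi_{*,*}\HB$ and $\pi_{*,*}\mathsf{H(A\otimes_R B)}$ via Proposition~\ref{cor:tensor}, observe that $\pi_{*,*}\HB$ is flat (the paper uses projective) over $\pi_{*,*}\HR$ so the K\"unneth computation collapses to the edge map, and conclude using cellularity of $\HA\smsh_{\HR}\HB$ and detection of isomorphisms by $\pi_{*,*}$; the paper invokes \cite[Lemma~5.2]{hoyois} rather than running the spectral sequence of Proposition~\ref{prop:kun}, but these amount to the same thing. One small caveat: your justification that $\pi_{*,*}\HB$ is flat over $\pi_{*,*}\HR$ (``flatness is detected bidegreewise and $\pi_{*,*}\HR$ is flat over $R$ in each bidegree as $R$ is flat over itself'') is muddled --- the correct reason is the standard base-change fact that $\pi_{*,*}\HR\otimes_R B$ is flat over $\pi_{*,*}\HR$ because $-\otimes_{\pi_{*,*}\HR}(\pi_{*,*}\HR\otimes_R B)\cong -\otimes_R B$ and $B$ is flat over $R$.
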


\begin{proof}
Since $B$ is projective as an $R$--module, it follows from \fullref{cor:tensor} that $\pi_{*,*}\HB \cong \pi_{*,*}\HR \otimes_R B$ and that $\pi_{*,*} \mathsf{H(A \otimes_R B)} \cong  \pi_{*,*}\HA  \otimes_{R} B $. Further, the projectivity of $B$ implies that $\pi_{*,*}\HR \otimes_R B$ is a projective $\pi_{*,*}\HR$--module. Combining this with the fact that $\HA$ is cellular, \cite[Lemma 5.2]{hoyois} implies that the edge homomorphism
\[\pi_{*,*}\HA \otimes_{\pi_{*,*}\HR} \pi_{*,*}\HB  \to  \pi_{*,*}(\HA \smsh_{\HR} \HB)  \]
is an isomorphism. We have a commutative diagram, where all maps are obvious universal maps,
\[\xymatrix{ \pi_{*,*}\HA \otimes_R B \ar[r]^-{\cong}  \ar[d]_-{\cong} & \pi_{*,*}\HA \otimes_{\pi_{*,*}\HR} \pi_{*,*}\HB \ar[d]^-{\cong} \\
\pi_{*,*} \mathsf{H(A \otimes_R B)} & \pi_{*,*}(\HA \smsh_{\HR} \HB).  \ar[l]   }
 \]
It follows that the bottom map also induces an isomorphism on $\pi_{*,*}$. Since $\HA \smsh_{\HR} \HB$ is a co-equalizer of cellular objects, it is also cellular. Equivalences of cellular motivic spectra are detected by $\pi_{*,*}$, so this proves the claim.
\end{proof}

\begin{lem}\label{lem:EMrev1}
If $R \to T$ is a $G$--Galois extension of commutative rings, then the map
\begin{align*}
\HT \smsh_{\HR} \HT  \to F(G_{+}, \HT),
\end{align*}
which is the transpose of $\HT \smsh_{\HR}\HT \smsh G_{+}  \to \HT$, is {an isomorphism in $\SH{S}$}.
\end{lem}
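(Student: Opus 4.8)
The plan is to reduce the statement about $\HT \smsh_{\HR} \HT \to F(G_+, \HT)$ to the algebraic fact that $R \to T$ is $G$-Galois, working entirely in the homotopy category $\SH S$ and using cellularity to detect isomorphisms on bigraded homotopy presheaves. First I would recall that since $R \to T$ is $G$-Galois, $T$ is a finitely generated projective $R$-module; in particular $T$ is flat over $R$. This lets me apply \fullref{lem:MN} with $A = B = T$ to identify
\[ \HT \smsh_{\HR} \HT \xrightarrow{\ \cong\ } \mathsf{H}(T \otimes_R T) \]
in $\SH S$. On the other side, since $G$ is finite, $F(G_+, \HT) \cong \prod_G \HT$, and $\prod_G \HT \cong \HT[G] \cong \mathsf{H}(\prod_G T)$ — here I would either invoke that the motivic Eilenberg--MacLane functor is additive (a finite product of $\mathsf{H}$'s is the $\mathsf{H}$ of the product), or simply observe that $\pi_{*,*}$ of a finite product is the product of the $\pi_{*,*}$'s and that $\prod_G \HT$ is again cellular.

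Next I would identify the map. Under the identifications above, the map $\HT \smsh_{\HR} \HT \to F(G_+,\HT)$ from the statement, being the transpose of the action-and-multiplication map, becomes (on $\pi_{*,*}$, and hence by the diagram in the proof of \fullref{prop:galpi0} / \fullref{prop:homsmash}) the map
\[ \mathsf{H}(T \otimes_R T) \to \mathsf{H}\Big(\prod_G T\Big) \]
induced by applying $\mathsf{H}$ to the canonical $R$-algebra homomorphism $h\colon T \otimes_R T \to \prod_G T$, $b_1 \otimes b_2 \mapsto \big(g \mapsto b_1 \cdot g(b_2)\big)$. This is exactly the statement-level content of the K\"unneth edge-homomorphism diagram already used in \fullref{prop:galpi0}: the edge homomorphisms are isomorphisms here because $T$ is projective (so $\pi_{*,*}\HT$ is projective over $\pi_{*,*}\HR$), by \cite[Lemma 5.2]{hoyois} exactly as in the proof of \fullref{lem:MN}, so the factors of the map on $\pi_{*,*}$ really are computed by $g$-twisted multiplication.

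Now the definition of a $G$-Galois extension of commutative rings says precisely that $h\colon T \otimes_R T \to \prod_G T$ is an isomorphism of $R$-algebras (the second Galois condition, the first being $R \cong T^G$). Applying the functor $\mathsf{H}(-)$ — which sends isomorphisms of abelian groups to isomorphisms in $\SH S$ — we conclude that $\mathsf{H}(h)$ is an isomorphism, hence that $\HT \smsh_{\HR} \HT \to F(G_+, \HT)$ is an isomorphism in $\SH S$. For rigor I would check cellularity of every spectrum in sight ($\HT$, $\HR$, $\mathsf{H}(T\otimes_R T)$, $\prod_G \HT$ — all cellular since they are $\mathsf{H}$ of abelian groups, by Hoyois's cellularity theorem, and coequalizers/finite products of cellular spectra are cellular), so that detecting the isomorphism on $\pi_{*,*} = \spi_{*,*}(S)$ is legitimate via \cite[Corollary 7.2]{dugisa}.

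The main obstacle, and the only genuinely non-formal point, is matching up the transpose map in the statement with $\mathsf{H}$ applied to the ring-theoretic map $h$ at the level of homotopy groups — i.e. chasing the same commutative diagram of edge homomorphisms as in \fullref{prop:galpi0}, but now in all bidegrees $(*,*)$ rather than just $(0,0)$, which is why the projectivity (not merely flatness) of $T$ over $R$ is needed to force the edge homomorphism $\pi_{*,*}\HT \otimes_{\pi_{*,*}\HR}\pi_{*,*}\HT \to \pi_{*,*}(\HT \smsh_{\HR}\HT)$ to be an isomorphism. Everything else is bookkeeping with \fullref{lem:MN}, \fullref{cor:tensor}, and the finiteness of $G$.
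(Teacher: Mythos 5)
Your proof is correct and follows essentially the same route as the paper's own argument: apply \fullref{lem:MN} (using projectivity of $T$ over $R$) to identify $\HT \smsh_{\HR} \HT$ with $\mathsf{H}(T\otimes_R T)$, use the Galois condition to see that $T\otimes_R T \to \prod_G T$ is a ring isomorphism, identify $F(G_+,\HT)$ with $\mathsf{H}(\prod_G T)$, and check via the commutative diagram of edge homomorphisms that the factors of the transpose map match the $g$-twisted multiplications. Your extra remarks on cellularity and on why projectivity (rather than mere flatness) is needed to control the edge homomorphism in all bidegrees are sound and make explicit what the paper uses implicitly through \fullref{lem:MN}.
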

\begin{proof}
If $R \to T$ is a Galois extension, then $T$ is a finitely generated projective $R$--module.  In particular, it is flat over $R$. By \fullref{lem:MN}, the canonical map $\HT \smsh_{\HR} \HT \to \mathsf{H(T \otimes_R T)}$ 
is {an isomorphism in $\SH{S}$}. By assumption, the map
\[T \otimes_R T  \to \prod_{G} T, \]
of which the factors are the composites $T \otimes_R T \xra{1 \otimes g} T \otimes_R T \xra{m} T$, 
is an isomorphism of commutative rings. So the induced map
\[ \mathsf{H(T \otimes_R T)} \to \mathsf{H\left(\prod_{G} T \right)}  \]
is {an isomorphism in $\SH{S}$}. Further, $\mathsf{H(\prod_{G} T)} \simeq \prod_G \HT$, and the factors of
the map $\mathsf{H(T \otimes_R T)} \to \prod_{G} \HT$ are the composites 
\[\mathsf{H(T \otimes_R T  )} \xra{\mathsf{H}(1\otimes g)} \mathsf{H(T \otimes_R T)} \xra{\mathsf{H}(m)} \HT.  \]
Since the following diagram commutes
\[\xymatrix{  \HT \smsh_{\HR} \HT \ar[r]^{1 \smsh g} \ar[d] & \HT \smsh_{\HR} \HT \ar[dr]^-{\mu} \ar[d] &  \\
\mathsf{H(T \otimes_R T  )} \ar[r]_-{\mathsf{H}(1\otimes g)}  & \mathsf{H(T \otimes_R T)} \ar[r]_-{\mathsf{H}(m)} & \HT, }\]
the map $\HT \smsh_{\HR} \HT \to  \prod_G \HT $ with factors
\[\HT \smsh_{\HR} \HT \xra{1 \smsh g} \HT \smsh_{\HR} \HT \xra{\mu} \HT\]
is {also an isomorphism  in $\SH{S}$}.
\end{proof}

The last condition needed to prove \fullref{thm:galEM} is checked in the following result.

\begin{prop}\label{prop:EMreverse}
If $R \to T$ is a $G$--Galois extension of commutative rings, then $\HR \to \HT^{hG} $ is {an isomorphism in $\SH{S}$}. 
\end{prop}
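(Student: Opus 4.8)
The plan is to compute $\spi_{p,q}\big(\HT^{hG}\big)$ by means of the homotopy fixed point spectral sequence of \fullref{prop:hfss}, in the form of \fullref{rem:descU}, and to show that it collapses onto a single column reproducing $\spi_{p,q}\HR$. Since the presheaves $\spi_{p,q}(-)$ detect isomorphisms in $\SH S$, it is enough to prove that $\vp^{hG}\colon \HR\to \HT^{hG}$ induces an isomorphism $\spi_{p,q}\HR(U)\xrightarrow{\cong}\spi_{p,q}\HT^{hG}(U)$ for every smooth $U$ over $S$ and every bidegree $(p,q)$.

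First I would identify the $E_2$-page. Because $R\to T$ is a Galois extension, $T$ is a finitely generated projective, hence flat, $R$-module, so \fullref{cor:tensor} (with $A=R$ and $M=T$) yields a natural isomorphism of presheaves $\spi_{p,q}\HT\cong \spi_{p,q}\HR\otimes_R T$; this isomorphism is $G$-equivariant, with $G$ acting through the tensor factor $T$ and trivially on $\spi_{p,q}\HR$, because both $G$-actions are induced by the functoriality of $H^{*,*}(U,-)$ in the coefficient group. Hence $E_2^{s,(p,q)}(U)\cong H^s\!\big(G,\ \spi_{p,q}\HR(U)\otimes_R T\big)$.

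The key input is then the strong cohomological triviality of a Galois extension: for every $R$-module $N$ one has $\big(N\otimes_R T\big)^G\cong N$ and $H^s\!\big(G,N\otimes_R T\big)=0$ for $s>0$. This follows from faithfully flat descent: the Galois condition provides an isomorphism $T\otimes_R T\cong \operatorname{Map}(G,T)$ under which the augmented Amitsur complex $R\to T\to T\otimes_R T\to T\otimes_R T\otimes_R T\to\cdots$ of the faithfully flat extension $R\to T$ becomes the standard cochain complex computing $H^*(G,T)$; applying $N\otimes_R-$ gives, since each $\operatorname{Map}(G^{n},T)=\prod_{G^n}T$ is a finite product, precisely the augmented cochain complex computing $H^*(G,N\otimes_R T)$, and this complex remains exact because $R\to T$ is of effective descent for modules. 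Therefore $E_2^{s,(p,q)}(U)$ is concentrated in the column $s=0$, where it is $(\spi_{p,q}\HT(U))^G\cong\spi_{p,q}\HR(U)$. With $E_2$ a single column there is no room for differentials, the $E_r$-tower is eventually constant so $\varprojlim_r^1 E_r=0$ and the spectral sequence converges completely, and the edge homomorphism gives $\spi_{p,q}\HT^{hG}(U)\cong\spi_{p,q}\HR(U)$. Finally I would check this agrees with $\vp^{hG}$: the composite $\HR\xrightarrow{\vp^{hG}}\HT^{hG}\to\HT$ equals $\mathsf H$ applied to $R\to T$, which on $\spi_{p,q}(U)$ is the map $N\to N\otimes_R T$, $n\mapsto n\otimes1$, factoring through the inclusion of $G$-invariants $(N\otimes_R T)^G\cong N$; since the map $\HT^{hG}\to\HT$ factors on homotopy through the edge homomorphism, which we have just shown to be an isomorphism here, $\vp^{hG}$ induces an isomorphism on all $\spi_{p,q}(-)(U)$ and hence is an isomorphism in $\SH S$.

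I expect the point requiring the most care to be the strong cohomological triviality statement $H^{>0}(G,N\otimes_R T)=0$: the identification of the Amitsur complex with the group cochain complex, and the descent-theoretic exactness after $N\otimes_R-$, must be set up with the $G$-action appearing on the correct tensor factor of $T\otimes_R T$, matching the cochain differential under the chosen isomorphism $T\otimes_R T\cong\operatorname{Map}(G,T)$. The remainder is bookkeeping with the spectral sequence and its edge map, parallel to the classical computation of homotopy fixed points of Eilenberg--MacLane spectra.
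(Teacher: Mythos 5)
Your proof is correct and follows the paper's overall strategy — run the homotopy fixed point spectral sequence for $\HT^{hG}$, use flatness of $T$ over $R$ to rewrite the $E_2$-page via $\spi_{p,q}\HT \cong \spi_{p,q}\HR \otimes_R T$, and show the page is concentrated in filtration $s = 0$ where it recovers $\spi_{p,q}\HR$ — but you establish the key algebraic collapse by a genuinely different route. The paper proves $H^s(G, N \otimes_R T)$ vanishes for $s > 0$ and equals $N$ for $s = 0$ by homological algebra with projectives: it rewrites $N \otimes_R T \cong \Hom_R(T^\vee, N)$, invokes that $T$ (hence $T^\vee$) is finitely generated projective over $R[G]$ (citing Rognes's Proposition 2.3.4(c)), uses this to shift an $\Ext_{R[G]}$ group to an $\Ext_R$ group, and then shows $T^\vee \otimes_{R[G]} R \cong R$ by a double-dualization. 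You instead prove the same cohomological triviality by faithfully flat descent: under the Galois isomorphism $T \otimes_R T \cong \Map(G, T)$ the augmented Amitsur complex of $R \to T$ with coefficients in $N$ becomes the group cochain complex for $H^*(G, N \otimes_R T)$, and exactness of the Amitsur complex (descent) gives the vanishing. The descent argument has a more classical Galois-cohomology flavor and avoids needing $R[G]$-projectivity of $T$ directly, at the cost of having to carefully track the $G$-action under the iterated Galois identifications (which you correctly flag as the delicate point); the paper's $\Ext$-shifting is more mechanical once the $R[G]$-projectivity is granted. Both are complete proofs of the same collapse, and your convergence and edge-homomorphism bookkeeping matches the paper's.
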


\begin{proof}
Fix $U \in \Sm{S}$. By \fullref{rem:descU}, there is a  homotopy fixed point spectral sequence 
\[E_2^{s,(p,q)}(U)  = H^s(G, \spi_{p,q}\HT(U)) \Longrightarrow \spi_{p-s,q}\HT^{hG}(U).\]
Further, since $T$ is a projective $R$--module, it is flat. By \fullref{cor:tensor}, we have
\begin{align*}
E_2^{s,(p,q)}(U) 
\cong \Ext^s_{R[G]}(R, \spi_{p,q}\HT(U) )  \cong \Ext^s_{R[G]}(R,  \spi_{p,q}\HR(U)  \otimes_R T),\end{align*}

Let $T^\vee =\Hom_R(T, R)$ with left $G$ action given by $(g\phi)(t) = \phi(g^{-1}t)$. 
Since $T$ is finitely generated 
as an $R$--module, $(T^\vee)^\vee\cong T$ as $R[G]$--modules, and
\[ \spi_{p,q}\HR(U)  \otimes_R T\cong  \Hom_R(T^\vee,  \spi_{p,q} \HR(U)). \]
Now, note that $T$ is also projective and finitely generated as an $R[G]$-module \cite[Proposition 2.3.4(c)]{rognes}). That is, there exists an $R[G]$--module $Q$ such that $Q \oplus T \cong R[G]^n$. It follows that
\[\Hom_R(T, R) \oplus \Hom_R(Q, R) \cong \Hom_R(R[G]^n, R) \cong R[G]^n   \]
as $R[G]$--modules, so that $T^{\vee}$ is a projective $R[G]$--module. 
Therefore, given a resolution $P^\ast\to R$ by projective $R[G]$-modules, 
$P^\ast \otimes_{R[G]} T^\vee \to R\otimes_{R[G]} T^\vee$ is a resolution of $R\otimes_{R[G]} T^\vee$ by projective $R$--modules. 
It then follows that
\[\Ext^s_{R[G]}(R,   \Hom_R(T^\vee,  \spi_{p,q} \HR(U))) \cong \Ext^s_{R}(T^\vee  \otimes_{R[G]} R  ,\spi_{p,q}\HR(U) ).\]
Finally, since $T^\vee  \otimes_{R[G]} R$ is finitely generated as an $R$--modules, $T^\vee  \otimes_{R[G]} R \cong ((T^\vee  \otimes_{R[G]} R)^{\vee})^{\vee}$.
However,
\begin{align*} 
 (T^\vee  \otimes_{R[G]} R)^{\vee} &\cong \Hom_{R[G] } (R, \Hom_R(T^\vee, R)) \\
 &\cong  \Hom_{R[G]}(R, T)\cong T^G\cong R,
\end{align*} 
so that $T^\vee  \otimes_{R[G]} R \cong R$. 

We conclude that
\begin{align*} 
E_2^{s,(p,q)}(U)
& \cong \Ext^{s}_R(R, \spi_{p,q}\HR(U)) \cong \begin{cases}  \spi_{p,q}\HR(U) & s =0 \\
0 &\text{otherwise.}
\end{cases} \end{align*}
The spectral sequence collapses and converges completely. In particular, the edge homorphism $\pi_{*,*}\HR(U) \to (\pi_{*,*}\HT(U))^G \cong \pi_{*,*}\HT^{hG}(U)$ is an isomorphism.
Hence, the map $\HR\to \HT$ induces an isomorphism on $\spi_{*,*}$.
\end{proof}

 \fullref{thm:galEM} follows immediately from \fullref{prop:EMforward}, \fullref{lem:EMrev1}, and \fullref{prop:EMreverse}.

\subsection{The extension $\KQ \to \KGL$}\label{sec:kgl}
In \cite[Proposition 5.3.1]{rognes}, Rognes proved that the complexification $KO \ra KU$
is a faithful Galois extension of ring spectra. In this section, we consider the analogous map $\KQ \to \KGL$ of motivic spectra. Here, $\KQ$ is the Hermitian $K$--theory spectrum (see Hornbostel \cite{hornbostel-a1}) and $\KGL$ is the algebraic $K$--theory spectrum (see Voevodsky et al. \cite[Section 3.2]{mot-hom-book-nordfjordeid}). The cyclic group $C_2$ acts on $\KGL $ by  sending a vector bundle to its dual.

The problem of whether or not $\KQ \to \KGL^{hC_2}$ is a stable weak equivalence is closely related to the homotopy limit problem of Thomason \cite{thomason}. This problem was rephrased by Willams \cite[p.627]{kthyhb} as the question of whether or not $\KQ \to \KGL^{hC_2}$ becomes an equivalence after profinite completion. Hu, Kriz and Ormsby in \cite[Theorem 20]{hko} proved that if the base scheme $S$ is $Spec(k)$ for $k$ such that the $2$--cohomological dimension of $k[i]$ is finite, then $\KQ \to \KGL^{hC_2}$ is an equivalence after $2$-completion. More recently, Berrick et al. in \cite[Section 2]{bako} gave a very detailed account of the solution to William's conjecture. In particular, in \cite[Theorem 2.4, Corollary 2.6]{bako}, they give precise conditions on $S$ under which the map $\KQ \to \KGL^{hC_2}$ is an equivalence, even before $2$--completion. This result is the key to the proof of the following theorem.

\begin{thm}\label{thm:simKQ} Let $S$ be a scheme on which $2$ is invertible and which satisfies the following additional conditions:
\begin{enumerate}
\item $S $ is noetherian of finite Krull dimension,
\item $S$ has finite $2$-primary virtual cohomological dimension,
\item $S$ has an ample family of line bundles, and
\item $-1$ is a sum of squares in all the residue fields of $S$.
\end{enumerate}
Then, over $S$,  the natural map $\KQ \to \KGL$ is a $C_2$--Galois extension of ring spectra, which is faithful on $\eta$--complete $\KQ$--modules. 
\end{thm}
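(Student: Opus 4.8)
The plan is to verify directly the two conditions defining a homotopical $C_2$--Galois extension of motivic spectra (\fullref{defn:galois-ext}, specialised as at the start of \fullref{sec:examples}) for the complexification datum $\vp\colon \triv{C_2}{\KQ}\to\KGL^{\circlearrowleft C_2}$, where $C_2$ acts on $\KGL$ by duality, and then to extract the faithfulness statement from the description of $\KGL$ that emerges. Concretely, I must show that in $\SH S$: \textup{(i)} $\vp^{hC_2}\colon\KQ\to\KGL^{hC_2}$ is an isomorphism, and \textup{(ii)} $\beta_\vp\colon\KGL\smsh_\KQ\KGL\to F\big((C_2)_+,\KGL\big)\cong\KGL\times\KGL$ is an isomorphism.

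First I would dispose of \textup{(i)}. The assertion that $\vp^{hC_2}\colon\KQ\to\KGL^{hC_2}$ is an isomorphism is exactly the affirmative solution of Thomason's homotopy limit problem for Hermitian versus algebraic $K$--theory, which under hypotheses (1)--(4) is \cite[Theorem~2.4, Corollary~2.6]{bako} (the corresponding statement after $2$--completion being \cite[Theorem~20]{hko}). The one point deserving care is that the homotopy fixed points $\KGL^{hC_2}$ used in \cite{bako} agree with our model--categorical homotopy fixed points $\Hom\big(EC_2,(U\KGL)^f\big)^{C_2}$ from \fullref{sec:motmodstructures}: both are computed by the homotopy fixed point spectral sequence of \fullref{prop:hfss} built from the cellular filtration of $EC_2$, so the two constructions coincide in $\SH S$.

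For \textup{(ii)} I would invoke the motivic analogue of Wood's theorem: since $2$ is invertible on $S$, there is a cofiber sequence $\Sigma^{1,1}\KQ\xrightarrow{\ \eta\ }\KQ\xrightarrow{\ \vp\ }\KGL$ in $\SH S$, i.e.\ $\KGL\simeq\KQ/\eta$ (a motivic form of Wood's theorem; cf.\ the analysis of $\KQ$ and $\KGL$ as $C_2$--spectra in \cite{hko}). Smashing over $\KQ$ with $\KGL$ yields a cofiber sequence $\Sigma^{1,1}\KGL\xrightarrow{\ \bar\eta\ }\KGL\to\KGL\smsh_\KQ\KGL$, where $\bar\eta$ is the image of $\eta$ under the ring map $\vp$; since $\KGL$ is orientable, the Hopf map acts as zero, so $\bar\eta=0$, the sequence splits, and $\KGL\smsh_\KQ\KGL\simeq\KGL\vee\Sigma^{2,1}\KGL\simeq\KGL\vee\KGL$ by the $(2,1)$--periodicity of $\KGL$ --- abstractly, this is $F\big((C_2)_+,\KGL\big)\cong\KGL\times\KGL$. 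To check that $\beta_\vp$ \emph{itself} realises this identification, I would reuse the strategy of \fullref{sec:em}: $\KGL$ is cellular, so the motivic Künneth spectral sequence of \fullref{prop:kun} computes $\pi_{*,*}(\KGL\smsh_\KQ\KGL)$, and tracing the two factors of $\beta_\vp$ --- multiplication, and multiplication precomposed with the duality involution on the second factor --- through the commuting squares of the sort appearing in the proofs of \fullref{prop:galpi0} and \fullref{lem:EMrev1} shows that $\beta_\vp$ is an isomorphism on $\pi_{*,*}$; as both source and target are cellular, $\beta_\vp$ is then an isomorphism in $\SH S$. With \textup{(i)} and \textup{(ii)} in hand, $\KQ\to\KGL$ is a homotopical $C_2$--Galois extension.

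Finally, for faithfulness on $\eta$--complete modules: let $M$ be an $\eta$--complete $\KQ$--module with $M\smsh_\KQ\KGL\simeq\ast$. From $\KGL\simeq\KQ/\eta$ we get $M\smsh_\KQ\KGL\simeq M/\eta=\operatorname{cofib}\big(\Sigma^{1,1}M\xrightarrow{\ \eta\ }M\big)$, so $M/\eta\simeq\ast$ forces $\eta\colon\Sigma^{1,1}M\to M$ to be an isomorphism, i.e.\ $M$ is $\eta$--periodic. But an $\eta$--complete, $\eta$--periodic module is trivial, since $M\simeq M^{\wedge}_{\eta}\simeq\big(M[\eta^{-1}]\big)^{\wedge}_{\eta}\simeq\ast$; hence $M\simeq\ast$, and $\KGL$ is faithful on $\eta$--complete $\KQ$--modules. (For the record, $\KGL=\KQ\smsh\operatorname{cofib}\big(\Sigma^{1,1}\mmS\xrightarrow{\eta}\mmS\big)$ is dualizable over $\KQ$ by \fullref{lem:extend-dualizable}, so \fullref{prop:characterize} applies on the $\eta$--complete subcategory.) The main obstacle I expect is step \textup{(ii)}: showing that $\beta_\vp$ itself --- not merely an abstract equivalence of its source and target --- is an isomorphism requires carrying the duality $C_2$--action carefully through the Künneth spectral sequence, precisely the bookkeeping done for Eilenberg--MacLane spectra in \fullref{lem:EMrev1}; a secondary subtlety is the model comparison for $\KGL^{hC_2}$ needed to quote \cite{bako}.
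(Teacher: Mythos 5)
Your steps (i) and (iii) match the paper's proof essentially verbatim: you quote \cite[Theorem~2.4, Corollary~2.6]{bako} for $\vp^{hC_2}$ being an isomorphism (and correctly flag the model comparison for homotopy fixed points as the one point needing care), and your argument that an $\eta$--complete module with $M\smsh_{\KQ}\KGL\simeq\ast$ is forced to be trivial is the paper's \fullref{prop:faithKGL} word for word.

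Step (ii) is where you take a genuinely different route, and it is the weakest part of your proposal. The paper's \fullref{prop:mapKGL} follows Rognes's \cite[Proposition~5.3.1]{rognes} closely: it applies $\KGL\smsh_{\KQ}(-)$ to the cofiber sequence $\Sigma^{1,1}\KQ\xra{\eta}\KQ\xra{f}\KGL\xra{\partial}\Sigma^{2,1}\KQ$ and compares the resulting cofiber sequence, term by term, with $\KGL\xra{\Delta}F\big((C_2)_+,\KGL\big)\xra{\delta}\KGL$, establishing that the relevant squares commute via the three explicit identities $\delta\circ g\simeq\mu(1\smsh(1-t))$, $f\circ h\simeq 1+t$, and $\beta\circ\Sigma^{2,1}(1+t)\simeq(1-t)\circ\beta$ from R\"ondigs--{\O}stv{\ae}r \cite{ro}; the map $g=\beta_\vp$ is then an isomorphism by the five lemma, with no homotopy--group computation at all. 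Your proposal instead first splits the cofiber sequence to get an abstract equivalence $\KGL\smsh_{\KQ}\KGL\simeq\KGL\vee\KGL$ (fine: $\bar\eta=0$ since $\KGL=\KQ/\eta$), then tries to show that $\beta_\vp$ realises this equivalence by running the K\"unneth spectral sequence of \fullref{prop:kun}. This is where I would push back. Unlike the Eilenberg--MacLane case in \fullref{sec:em}, where everything is concentrated in a single bidegree and the relevant modules are flat, here $\pi_{*,*}\KGL$ is \emph{not} flat or projective over $\pi_{*,*}\KQ$ (there is nontrivial $\eta$--torsion and $\eta$--divisibility, so $\pi_{*,*}\KGL$ is not even literally $\pi_{*,*}\KQ/\eta$ as a module --- the long exact sequence of the cofiber sequence contributes both a cokernel and a kernel in each degree). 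Consequently the K\"unneth spectral sequence has genuine higher $\Tor$ terms, and degeneration, the action of the duality involution on the $E_2$--page, and the comparison with the edge homomorphism are no longer the one--line bookkeeping they were for $\HR\to\HT$. You would also need $\KQ$ to be cellular (a hypothesis of \fullref{prop:kun}) over the given base $S$, which is a nontrivial assumption you do not address, and you would need to detect isomorphisms in $\SH{S}$ using only $\pi_{*,*}$ at $U=S$, which again requires cellularity of both source and target. You correctly identify this step as the main obstacle, but the path you sketch around it is substantially harder than the cofiber--sequence comparison the paper actually uses, and I am not convinced it closes without significant additional work. If you want to write out step (ii), I would strongly recommend switching to the Rognes--style argument.

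Two minor remarks: the parenthetical claim that $\KGL$ is dualizable over $\KQ$ and hence \fullref{prop:characterize} applies ``on the $\eta$--complete subcategory'' is not quite right as stated, since \fullref{prop:characterize} characterises faithful Galois extensions in $\ho\cat{M}$ itself, not in a subcategory; the paper does not invoke \fullref{prop:characterize} here at all and simply proves faithfulness directly, which is cleaner. Also, \fullref{lem:extend-dualizable} is about base change of dualizable modules, not about cofibers of $\eta$ being dualizable; the dualizability of $\KGL$ over $\KQ$ would instead follow from \fullref{prop:galois-dualizable} once the Galois condition is known, which makes that aside circular.
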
 

Together with \cite[Theorem 2.4, Corollary 2.6]{bako}, the results of \fullref{prop:mapKGL} and \fullref{prop:faithKGL} below prove \fullref{thm:simKQ}.

\begin{prop}\label{prop:mapKGL}
The natural map $g: \KGL \wedge_{\KQ} \KGL \to F((C_2)_+, \KGL)$ is {an isomorphism in $\SH{S}$}.
\end{prop}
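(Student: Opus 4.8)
The plan is to compute $\KGL\smsh_{\KQ}\KGL$ directly from the \emph{motivic Wood cofiber sequence} and then to match the resulting splitting with $g=\beta_{\varphi}$. First I would recall the cofiber sequence of $\KQ$--modules
$$\Sigma^{1,1}\KQ\xrightarrow{\ \eta\ }\KQ\xrightarrow{\ \varphi\ }\KGL\xrightarrow{\ \partial\ }\Sigma^{2,1}\KQ,$$
the motivic analogue of $\KU\simeq\KO/\eta$, where $\varphi$ is the natural map and $\eta\in\pi_{1,1}\mmS$ is the motivic Hopf map. Smashing it over $\KQ$ with $\KGL$ is exact, since $\KGL\smsh_{\KQ}-$ preserves cofiber sequences of $\KGL$--modules, so it yields a cofiber sequence of $\KGL$--modules
$$\Sigma^{1,1}\KGL\xrightarrow{\ \bar\eta\ }\KGL\xrightarrow{\ \lambda\ }\KGL\smsh_{\KQ}\KGL\xrightarrow{\ \bar\partial\ }\Sigma^{2,1}\KGL,$$
in which $\lambda$ is the left unit and $\bar\eta$ is multiplication by the image $\varphi_{*}(\eta)\in\pi_{1,1}\KGL$. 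Since $\KGL$ is an $\MGL$--module (being orientable), $\eta$ acts trivially on it, so $\bar\eta=0$; hence the sequence splits, $\lambda$ being split by the multiplication $\mu\colon\KGL\smsh_{\KQ}\KGL\to\KGL$. Together with the Bott periodicity equivalence $\Sigma^{2,1}\KGL\cong\KGL$, this gives an equivalence of $\KGL$--modules $\KGL\smsh_{\KQ}\KGL\cong\KGL\vee\KGL$, while on the other side $F((C_2)_+,\KGL)\cong\prod_{C_2}\KGL\cong\KGL\vee\KGL$ as $\KGL$--modules as well.

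It then remains to see that $g$ itself is an isomorphism in $\SH S$. Since $\KGL$ is cellular (Dugger--Isaksen \cite{dugisa}), both source and target are cellular by the equivalences just established, and $\pi_{*,*}$ detects isomorphisms between cellular spectra, so it suffices to show that $g$ induces an isomorphism on $\pi_{*,*}$. Both sides are free $\KGL$--modules of rank two and $g$ is a map of $\KGL$--modules, so it is described by a $2\times2$ matrix over $\pi_{*,*}\KGL$; I would compute its entries from the description $g=(g_{e},g_{\sigma})$ of (\ref{eqn:mapFGB}), where $g_{e}=\bar\mu$ and $g_{\sigma}=\bar\mu\circ(1\smsh_{\KQ}\sigma)$ for $\sigma$ the generator of $C_2$ (duality of bundles). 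Using $\mu\lambda=\mathrm{id}$, the identities $g_{\sigma}\lambda=\mathrm{id}$ (because $\sigma$ fixes the image of $\KQ$) and $g_{\sigma}\rho=\sigma$ for the right unit $\rho$, the relation $\sigma_{*}(\beta)=-\beta$ on the Bott class, and the computation that the connecting map $\partial_{*}$ carries $\beta$ to twice a generator of $\pi_{0,0}\KQ$ — a consequence of the Wood long exact sequence for $\KQ$ and the nontriviality of $\eta$ in $\pi_{1,1}\KQ$ — one finds that, in suitable bases, the matrix of $g$ is $\left(\begin{smallmatrix}1&0\\1&u\beta\end{smallmatrix}\right)$ with $u$ a unit, so it has unit determinant; hence $g$ is a $\pi_{*,*}$--isomorphism, and therefore an isomorphism in $\SH S$.

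The main obstacle I expect lies in this last step: tracking the map $g$ rather than just the abstract objects, and in particular establishing the behaviour of the connecting map $\partial_{*}$. This is the motivic counterpart of the classical computation of $\KU\smsh_{\KO}\KU$ (and of the fact that $\partial_{*}(\beta)$ is exactly twice a generator), and verifying it in the generality of the base schemes at hand may require invoking the standing hypotheses of \fullref{thm:simKQ} — that $2$ is invertible on $S$ and the conditions on the residue fields — in order to control $\pi_{*,*}\KQ$ in low bidegrees. A secondary point is to pin down the precise reference for, and the hypotheses needed by, the motivic Wood cofiber sequence and the triviality of $\eta$ on $\KGL$; both are standard, but their optimal formulations matter for the exact statement of the proposition.
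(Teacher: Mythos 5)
Your strategy (smash the Wood cofiber sequence with $\KGL$ over $\KQ$, observe $\eta$ acts as zero so the sequence splits, then identify $g$ as a $2\times 2$ matrix over $\pi_{*,*}\KGL$ with unit determinant) is a genuine variant of the paper's argument, not the same proof. The paper instead sets up a commutative diagram comparing the top cofiber sequence $\KGL\smsh_{\KQ}\KQ \to \KGL\smsh_{\KQ}\KGL \xrightarrow{1\smsh\partial} \KGL\smsh_{\KQ}(\mmS^{2,1}\smsh\KQ)$ with the split sequence $\KGL\xrightarrow{\Delta} F((C_2)_+,\KGL)\xrightarrow{\delta}\KGL$, with vertical maps the identity, $g$, and $1\smsh\beta$. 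Once the diagram is shown to commute, $g$ is an isomorphism because the two outer vertical maps are, with no need to ever produce a matrix, identify free generators, or compute $\partial_*$ on homotopy. The nontrivial commutativity is reduced to the R\"ondigs--{\O}stv\ae{}r identities $f\circ h \simeq 1+t$ and $\beta\circ\mmS^{2,1}(1+t)\simeq(1-t)\circ\beta$, together with the formal identity $\delta\circ g\simeq \mu(1\smsh(1-t))$; crucially, the factorization $\partial = (\mmS^{2,1}\smsh h)\circ\beta^{-1}$ from R\"ondigs--{\O}stv\ae{}r is what makes the right-hand square tractable.

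Your sketch of the matrix computation is where the real difficulty lies, and as you acknowledge, you have not verified it. Two concrete points. First, the claim that $\partial_*(\beta)$ is twice a generator of $\pi_{0,0}\KQ$ is unnecessary and potentially a distraction: what you actually need in order to pin down the off-diagonal entry is an explicit splitting of $\bar\partial\colon \KGL\smsh_{\KQ}\KGL\to\mmS^{2,1}\smsh\KGL$, and this is precisely what the R\"ondigs--{\O}stv\ae{}r factorization $\partial=(\mmS^{2,1}\smsh h)\circ\beta^{-1}$ hands you; trying to substitute a $\pi_{*,*}$ computation would make your argument dependent on additional low-degree knowledge of $\pi_{*,*}\KQ$ that the paper's proof carefully avoids. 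Second, the identity $\sigma_*(\beta)=-\beta$, which you invoke without reference, is equivalent to the relation $\beta\circ\mmS^{2,1}(1+t)\simeq(1-t)\circ\beta$ used in the paper; if you phrase your argument as a matrix computation you will still have to cite or prove this, so you save nothing by the reformulation. In short, your approach is workable but would need exactly the same inputs from R\"ondigs--{\O}stv\ae{}r and would require more bookkeeping (a choice of basis for the splitting, careful degree tracking since the matrix entries live in $\pi_{0,0}$ and $\pi_{\pm 2,\pm 1}$, and verification that the resulting map of free $\KGL$-modules has an inverse respecting bidegrees), without yielding additional information.
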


\begin{proof}
The argument is the motivic analogue of the proof of Rognes \cite[Proposition 5.3.1]{rognes}, where he shows that $KU \wedge_{KO} KU \to  F((C_2)_+, KU)$ is a weak equivalence. In the proof, we use the notation of R\"ondigs-{{\O}}stv\ae{}r \cite{ro} to denote the maps. 
We warn the reader that the names of some of our maps are not the same as their analogues in Rognes \cite{rognes}. 
Let $\beta: \mmS^{2,1} \smsh \KGL \to \KGL$ be the Bott map. Consider the cofiber sequence \cite[(5)]{ro},
\begin{equation}\label{eqn:etaseq}
\mmS^{1,1} \smsh \KQ \xra{\eta} \KQ \xra{f} \KGL \xra{\partial} \mmS^{2,1}\smsh \KQ  .\end{equation}
By \cite[Theorem 4.4]{ro}, $\partial$ factors as
\[\KGL \xra{\beta^{-1}} \mmS^{2,1}\smsh \KGL \xra{\mmS^{2,1}\smsh h} \mmS^{2,1}\smsh \KQ\]
where $h$ is the hyperbolic map, the motivic analogue of realification.
We apply $\KGL\smsh_{\KQ}(-)$ to (\ref{eqn:etaseq}) to construct a diagram
\[\xymatrix{ \KGL\smsh_{\KQ} \KQ \ar[r] \ar[d] & \KGL\smsh_{\KQ} \KGL \ar[r]^-{1\smsh \partial} \ar[d]^-{g}&  \KGL\smsh_{\KQ} (\mmS^{2,1}\smsh \KQ) \ar[d]^{1 \smsh \beta} \\
\KGL  \ar[r]^-{\Delta} & F((C_2)_+, \KGL) \ar[r]^-{\delta} &   \KGL }\]
The map $\Delta$ is the map induced by $(C_2)_+ \to *_+$. This is the motivic analogue of the trivial Galois extension in the sense of Rognes \cite[Section 5.1]{rognes}. As in Rognes, the map $\delta$ is the difference of the two projection maps $F((C_2)_+, \KGL)  \simeq \prod_{C_2} \KGL \to \KGL$.

The left hand square commutes strictly since $\KQ$ has a trivial $C_2$--action, so we focus on the right hand square. We precompose it with the Bott map and restrict along $\KQ \to \KGL$ to obtain a diagram
\[\xymatrix{   \KQ \smsh_{\KQ}(\mmS^{2,1}\smsh \KGL) \ar[rr]^{f \smsh \mmS^{2,1}{\smsh} h} \ar[d]^-{g \circ (f \smsh \beta)}& &  \KGL\smsh_{\KQ} (\mmS^{2,1}\smsh \KQ) \ar[d]^{1 \smsh \beta} \\
 F((C_2)_+, \KGL) \ar[rr]^-{\delta} & & \KGL. }\]
 It suffices to prove that the two compositions are equivalent as $\KQ$--module maps. First, note that there is a commutative diagram
\[\xymatrix{  \KQ \smsh_{\KQ}(\mmS^{2,1}\smsh \KGL)  \ar[rr]^-{c \smsh \mmS^{2,1}{\smsh} h} \ar[d] & &  \KGL\smsh_{\KQ} (\mmS^{2,1}\smsh \KQ) \ar[d]  \\
 \mmS^{2,1}\smsh \KGL \ar[rr]^-{(\mmS^{2,1}{\smsh} f)\circ (\mmS^{2,1}{\smsh} h)}  & &  \mmS^{2,1}\smsh \KGL  }.\]
As in Rognes \cite[Proposition 5.3.1]{rognes}, we must prove that
\[   \beta  \circ  (\mmS^{2,1}{\smsh} f )\circ (\mmS^{2,1}{\smsh} h)  \simeq \delta \circ g \circ (f \smsh \beta).\]
The key identities are $\delta \circ g \simeq \mu (1 \smsh (1 - t))$ where $t $ is the action of the generator of $(C_2)_+$, denoted by $\Psi_{st}^{-1}$ in R\"ondigs-{{\O}}stv\ae{}r \cite[Section 5.2]{ro}, $f\circ h \simeq 1+t$ and $\beta \circ \mmS^{2,1}(1+t) \simeq (1-t) \circ \beta$. The first is formal and thus holds in our setting. The second is R\"ondigs-{{\O}}stv\ae{}r \cite[Equation (17)]{ro} and third follows from \cite[Equation (16)]{ro}.
\end{proof}

\begin{prop}\label{prop:faithKGL}
The map $\KQ \to \KGL$ is faithful on the subcategory of $\eta$--complete $\KQ$--modules.
\end{prop}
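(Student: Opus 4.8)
The plan is to use the cofiber sequence (\ref{eqn:etaseq}), which exhibits $\KGL$ as the cofiber of multiplication by $\eta$ on $\KQ$, and to push the faithfulness question onto the $\eta$-adic tower. Recall that a $\KQ$-module $N$ is \emph{$\eta$-complete} when the canonical map $N\to N^{\wedge}_{\eta}:=\holim_{n}N/\eta^{n}$ is an isomorphism in $\SH S$, where $N/\eta^{n}$ denotes the cofiber of $\eta^{n}\colon \mmS^{n,n}\smsh N\to N$. By the definition of faithfulness, restricted to the subcategory of $\eta$-complete $\KQ$-modules, what must be shown is: if $N$ is an $\eta$-complete $\KQ$-module with $\KGL\smsh_{\KQ}N\cong *$ in $\SH S$, then $N\cong *$.

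First I would smash (\ref{eqn:etaseq}) with $N$ over $\KQ$. Using $\KQ\smsh_{\KQ}N\cong N$, this produces a cofiber sequence
\[ \mmS^{1,1}\smsh N\xra{\eta}N\lra \KGL\smsh_{\KQ}N\lra \mmS^{2,1}\smsh N \]
in $\SH S$; in particular it identifies $N/\eta$ with $\KGL\smsh_{\KQ}N$, which is $*$ by hypothesis. Next, from the factorization of $\eta^{n}$ as $\eta$ followed by $\eta^{n-1}$, an octahedral argument produces cofiber sequences $\mmS^{n-1,n-1}\smsh(N/\eta)\to N/\eta^{n}\to N/\eta^{n-1}$, so induction on $n$ gives $N/\eta^{n}\cong *$ for every $n\ge 1$. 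Hence $N^{\wedge}_{\eta}=\holim_{n}N/\eta^{n}\cong *$, and since $N$ is $\eta$-complete we conclude $N\cong *$, as desired.

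The argument is essentially formal once the identification $\KGL\cong\KQ/\eta$ of R\"ondigs--{{\O}}stv\ae{}r is in hand, so the only real work is the bookkeeping of the $\eta$-adic tower and checking it interacts correctly with base change along $\KQ\to\KGL$; I do not expect a serious obstacle here. The point worth emphasizing is instead \emph{why} the restriction to $\eta$-complete modules is needed: over a general base $\eta$ is not nilpotent, and then $\KQ[\eta^{-1}]$ is a nonzero $\KQ$-module on which $\eta$ acts invertibly, so $\KGL\smsh_{\KQ}\KQ[\eta^{-1}]\cong(\KQ/\eta)[\eta^{-1}]\cong *$; thus $\KQ\to\KGL$ fails to be faithful on all of its modules, and the $\eta$-complete subcategory is precisely where the $\eta$-divisible phenomena have been eliminated and faithfulness survives.
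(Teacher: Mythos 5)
Your proof is correct and follows essentially the same route as the paper: identify $\KGL\smsh_{\KQ}N$ with the cofiber of $\eta$ on $N$, deduce that every $N/\eta^k\cong *$ (the paper phrases this as ``$\eta\smsh 1_N$ is an isomorphism, hence so is $\eta^k\smsh 1_N$,'' while you run the octahedral induction explicitly --- the same content), then pass to the homotopy limit and invoke $\eta$-completeness. Your closing remark explaining \emph{why} the $\eta$-complete hypothesis is needed, via the counterexample $\KQ[\eta^{-1}]$, is also consistent with the remark the paper makes immediately after this proposition using $\KT$.
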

\begin{proof}
Suppose $N$ is an  $\eta$-complete $\KQ$-module such that $N\wedge_{\KQ}\KGL \cong \ast$ in $\SH S$. We want to conclude that $N$ itself is contractible. Smashing the cofiber sequence 
\[ \mmS^{1,1}\smsh \KQ \xrightarrow{\eta} \KQ \to \KGL \]
with $N$, we see that $\eta \smsh 1_N$ is {an isomorphism in $\SH{S}$}. The $\eta$-completion is the homotopy limit of $C(\eta^k) \smsh N$, where $C(\eta^k)$ is the cofiber of $\eta^k$. However, since $\eta^k\smsh 1_N$ is an {isomorphism}, $C(\eta^k) \smsh N \cong \ast$. Hence, $N \simeq N^{\wedge}_{\eta} \cong \ast$.
\end{proof}

\begin{rem}
The Galois extension $\KQ \to \KGL$ is not faithful on all $\KQ$--module motivic spectra. For instance, let $\KT$ be the Balmer-Witt theory spectrum, obtained from $\KQ$ by inverting $\eta$. Let $\KQ \to \KT$ be the natural map. Then 
smashing the sequence
\[ \mmS^{1,1} \smsh \KQ \xrightarrow{\eta} \KQ \to \KGL \]
with $\KT$, we conclude that $\KT \wedge_{\KQ } \KGL \cong \ast$ in $\SH S$. On the other hand, $\KT$ itself is non-trivial. In particular, it has interesting $2$-torsion homotopy (see R\"ondigs-{{\O}}stv\ae{}r \cite{ro} for its slices).

In general, if $N\wedge_{\KQ}\KGL$ is contractible, then the $\eta$-completion of $N$ is contractible.
\end{rem}

\section{Future directions}\label{sec:futuredirections}

\subsection{\'Etale Galois Extensions}\label{sec:etalegalext}

In this paper we have focused on \emph{simplicial Galois extensions}, i.e., extensions that are Galois with respect to homotopy fixed points that are defined in terms of the geometric realization $EG$ of an appropriate replacement of the simplicial $G$-set $E_{\bullet}G$, viewed as a constant simplicial presheaf in $\Gmot$ (see \fullref{lem:unstable_fibrant_repl}). 
The motivic $G$-space $EG$ is a free contractible $G$-space, but is not universal with this property, as the following result, which we learned from Gepner and Heller, illustrates.

\begin{lem}
There exists a motivic $G$-space with free action for which there is no $G$-equivariant map to $EG$.
\end{lem}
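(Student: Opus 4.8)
The plan is to exhibit a concrete free contractible motivic $G$-space that cannot receive a $G$-equivariant map from a free action, by exploiting the difference between the simplicial $EG$ and the geometric (\'etale) $\mathbb{E}G$. First I would recall the construction of $\mathbb{E}G$: when $G$ acts freely on a smooth affine $S$-scheme $X$ with $X/G$ smooth (for instance $X$ a finite-dimensional approximation to a linear action on $\mathbb{A}^\infty \setminus 0$), the quotient is an algebro-geometric model of $BG$, and the colimit $\mathbb{E}G = \operatorname{colim}_n (\mathbb{A}^n \setminus 0)$ under a suitable free linear $G$-action represents the \'etale classifying space. The key point is that $\mathbb{E}G$ is a \emph{geometric} free contractible $G$-space, meaning its $G$-action comes from an action on actual schemes in $\Sm{S}$, whereas $EG = |\mathsf{srep}\, E_\bullet G|$ is built from the \emph{constant} simplicial presheaf on the finite set underlying $G$.

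The heart of the argument is then the following observation. A $G$-equivariant map $\mathbb{E}G \to EG$ of motivic $G$-spaces would, upon evaluating at a point (or more precisely, upon applying the global sections / constant-presheaf adjunction), produce compatible $G$-equivariant maps from the schemes $\mathbb{A}^n \setminus 0$ (with their free linear $G$-action) to a constant simplicial presheaf. But a map from a connected scheme $U$ to a constant presheaf $K$ factors through the value $K$ on the terminal object, i.e. lands in a single ``component,'' and $G$-equivariance would force that component to be $G$-fixed. Since $EG$ has \emph{free} $G$-action in the relevant sense, there are no $G$-fixed components, so no such map can exist. Thus I would take $\mathbb{E}G$ itself (or a sufficiently high finite stage $\mathbb{A}^n\setminus 0$ with its free linear $G$-action) as the desired motivic $G$-space with free action admitting no $G$-map to $EG$. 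The verification that $\mathbb{A}^n \setminus 0$ carries a genuinely free $G$-action in the motivic sense (étale-locally free, or free on a cofibrant model) is standard: choose a faithful representation $V$ of $G$ and let $G$ act on $V^{\oplus k}\setminus 0$ for $k$ large, which is free because $G$ acts freely off the union of fixed subspaces, which has codimension growing with $k$.

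The main obstacle, and the point requiring the most care, is pinning down precisely what ``free action'' means in the motivic category so that the nonexistence argument is airtight: one must rule out equivariant maps not just out of the scheme-level points but out of the whole simplicial presheaf, after cofibrant replacement, in the \emph{homotopy} category $\Ho(\Gspcl)$ versus on the nose. I would address this by working on the point-set level with a model of $\mathbb{E}G$ that is cofibrant in $\Gspcl$ (a filtered colimit of representable schemes with free $G$-action is already cofibrant since all objects are cofibrant, see \fullref{lem:cat_mot} and \fullref{lem:left_ind_unstable}), so that an equivariant map in the homotopy category is represented by an honest $G$-equivariant map of simplicial presheaves into a fibrant model of $EG$; then the connectedness-of-components argument applies levelwise, because fibrant replacement in $\Gspcl$ is computed via $\Hom(EG,(-)^f)$ (\fullref{lem:unstable_fibrant_repl}) and $\pi_0$ of this at a connected scheme still sees only $G$-fixed components of $\pi_0(EG)$. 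The contrast with classical topology — where $EG$ \emph{is} universal — is exactly that geometric (reduced, scheme-theoretic) free contractible objects like $\mathbb{A}^n\setminus 0$ are not constant presheaves and so cannot map to $EG$; this is the content we wish to highlight, motivating the study of \'etale Galois extensions in \fullref{sec:etalegalext}.
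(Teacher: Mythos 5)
Your overall strategy mirrors the paper's — exhibit a connected scheme with free $G$-action and argue that, since $EG$ is built out of constant presheaves, any equivariant map out of a connected scheme would be forced to land on a $G$-fixed vertex of $E_\bullet G$, of which there are none — but the specific object you propose does not in fact carry a free action. You claim $G$ acts freely on $V^{\oplus k}\setminus 0$ for $V$ faithful and $k$ large ``because $G$ acts freely off the union of fixed subspaces, which has codimension growing with $k$,'' but growing codimension is not emptiness: if some nontrivial $g\in G$ has a $1$-eigenvector in $V$ — as happens for essentially every faithful representation of a group like $\Z/2\times\Z/2$, which has no free linear action on any $V\setminus 0$ — then $(V^{\oplus k})^g\neq 0$ for all $k$, and $g$ fixes nonzero points of $V^{\oplus k}\setminus 0$. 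This is exactly why the construction of $\EGgeom$ in \fullref{sec:etalegalext} (which you also misquote as $\colim_n(\A^n\setminus 0)$) removes the \emph{entire} non-free locus $\bigcup_{e\neq H\subset G}\A(n\rho_G)^H$, not merely the origin. As written, your example does not satisfy the hypothesis of the lemma.

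Even with the corrected finite stage $\A(n\rho_G)\setminus\bigcup_{e\neq H}\A(n\rho_G)^H$ (which is connected, being dense open in an affine space), the passage to the homotopy category in your proposal — ``$\pi_0$ of this at a connected scheme still sees only $G$-fixed components of $\pi_0(EG)$'' — is left vague, and $\pi_0(EG)$ is in any case a point, so the freeness has to be tracked at the level of vertices, not $\pi_0$. The paper does this by citing a precise surjectivity result of Morel--Voevodsky (their Corollary 2.3.22) to reduce the computation of $[\,\cdot\,,|E_\bullet G|]_G$ to maps into the $0$-skeleton $\coprod_G\Spec k$. It also uses a much simpler free $G$-scheme: $\Spec L$ for a finite Galois extension $L/k$ with $G\subset\Gal(L/k)$ and base $S=\Spec k$, for which freeness is immediate because a field homomorphism is injective (so there is no $H$-equivariant map $L\to F$ when $F$ has trivial $H$-action) and the connectedness argument against equivariant maps to $\coprod_G\Spec k$ applies directly. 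You should either switch to this example, or correct the freeness claim and make the localization step precise.
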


\begin{proof}
Let  $S=\Spec(k)$ for a field $k$, and suppose $L/k$ is a finite Galois extension such that $G \subset \Gal(L/k)$. It follows that $\Spec L$ is a free $G$-scheme via the Galois action. 
Indeed, to show that $(\Spec L)^H$ is empty for every non-trivial subgroup $H$ of $G$, it suffices to show that $\Spec L$ admits no maps from $\Spec F $, where $F$ is any field. By adjunction,
\[ \Hom_{\Sm{S}} (\Spec F, (\Spec L)^H) \cong \Hom_{H\text{-}\Sm{S}}(\Spec F, \Spec L) \cong  \Hom_{H}(L, F).\]
However, since any map of fields is an inclusion, there are no $H$-equivariant maps $L\to F$ because $F$ has trivial action and $L$ does not.

To establish that there are no $G$-equivariant maps from $\Spec L$ to $EG$, we prove that the set $[\Spec L, EG]_G $ of $G$-homotopy classes of $G$-maps
 is empty.  Since the Bousfield-Kan map $EG \to |E_{\bullet}G|$ is an equivariant weak equivalence, this is equivalent to checking that $\big[\Spec L, |E_{\bullet}G|\big]_{G}$ is empty. By Morel and Voevodsky \cite[Corollary 2.3.22]{morelvo}, the map $ EG^{(0)}(\Spec L) \to \big[\Spec L, |E_{\bullet}G|\big]_G$ is surjective, where $EG^{(0)}$ is the zero skeleton of $|E_{\bullet}G|$, which is simply $\coprod_{G} \Spec k$.  It suffices therefore to prove that   $ EG^{(0)}(\Spec L)$ is empty.  
 
 Because $\Spec L$ is connected,  any map $f : \Spec L \to EG^{(0)}$  must factor through one of the components $\Spec k$ of the target.  Since the inclusion of a component into $EG^{(0)}$ is not equivariant, the map $f$ is not equivariant either.
\end{proof}

There is another free contractible $G$-space that does have the desired universal property. The \emph{\'etale} or \emph{geometric} $\EGgeom$ can be constructed as 
$$\EGgeom= \colim_n  \left( \bA(n\rho_G)-\bigcup_{e\neq H\subset G}\bA(n\rho_G)^H\right),$$  where $\rho_G$ is the regular representation of $G$, and the maps in the colimit are induced by the inclusions $n\rho_G \to (n+1)\rho_G$. 

If we set up appropriate model structures, then for nice enough motivic $G$-spaces $X$, we could define the \emph{\'etale} homotopy fixed points of $X$ to be $X^{h_{\text{\'et}}G} = \Hom(\EGg{G}_+, X)^{G}$. The corresponding theory of \emph{\'etale Galois extensions} should provide rich examples that we cannot capture with the simplicial version. For example, the following result, which the authors learned from Jeremiah Heller, is an analogue of Willams's conjecture for \'etale homotopy fixed points.

\begin{thm}\label{prop:hell}
The natural map $\KQ \to \KGL^{h_{\text{\'et}}C_2}$ is an equivalence of motivic spectra, as long as $2$ is invertible in the base $S$.
\end{thm}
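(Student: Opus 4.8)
The plan is to read the \'etale homotopy fixed points as the cofree completion of a genuine $C_2$-equivariant refinement of $\KGL$, and to reduce the statement to two inputs: that the genuine $C_2$-fixed points of this refinement are $\KQ$, and that the refinement is cofree. Throughout let $G=C_2$. First I would fix a model for genuine $C_2$-equivariant motivic spectra over $S$ (as developed by Heller--Krishna--\O stv\ae r and others) in which $\EGgeom$ is the terminal free $C_2$-motivic space, so that for fibrant $Y$ the object $\Hom(\EGgeom_+,Y)^{C_2}$ is its genuine homotopy fixed points, equipped with a comparison map $Y^{C_2}\to\Hom(\EGgeom_+,Y)^{C_2}$. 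I would then unwind $\EGgeom$: writing $\sigma$ for the sign representation one has $\bA\bigl(n(1+\sigma)\bigr)^{C_2}=\bA^{n}$, hence
\[
\EGgeom\;\simeq\;\colim_{n}\bigl(\bA^{n}\times(\bA^{n}\smallsetminus 0)\bigr),
\]
with $C_2$ acting antipodally on the factor $\bA^{n}\smallsetminus 0$; this action is free precisely because $2$ is invertible on $S$, which is the only hypothesis available. By $\bA^{1}$-invariance the contractible factor $\bA^{n}$ may be discarded, so that $\EGgeom$ is equivariantly equivalent to $\colim_{n}(\bA^{n}\smallsetminus 0)\simeq\colim_{n}\mmS^{2n-1,n}$ with its antipodal action.

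The main step is to promote $\KGL$, with its duality $C_2$-action, to the underlying spectrum of a genuine Real motivic $K$-theory spectrum $\KR$ whose genuine $C_2$-fixed points are Hermitian $K$-theory, $\KR^{C_2}\simeq\KQ$ --- the motivic avatar of $KR^{C_2}\simeq KO$, which should follow from the identification of $\KQ$ with the genuine fixed points of $K$-theory-with-duality. Since $\Hom(\EGgeom_+,-)^{C_2}$ depends only on the underlying Borel $C_2$-spectrum, we have $\Hom(\EGgeom_+,\KR)^{C_2}=\KGL^{h_{\text{\'et}}C_2}$, and the comparison map of the theorem becomes $\KR^{C_2}\to\Hom(\EGgeom_+,\KR)^{C_2}$; so the theorem is equivalent to the assertion that $\KR$ is \emph{cofree}, i.e.\ that $\KR\to\Hom(\EGgeom_+,\KR)$ is an equivalence of $C_2$-motivic spectra. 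I expect this cofreeness to be the crux. One handle on it is to compare with the \emph{simplicial} homotopy fixed points along the canonical map $|E_{\bullet}C_2|\to\EGgeom$: Berrick et al.\ \cite{bako} and Hu--Kriz--Ormsby \cite{hko} prove that $\KQ\to\KGL^{hC_2}$ is an equivalence only after $2$-completion, or under cohomological-dimension hypotheses on $S$, and the point would be that replacing $|E_{\bullet}C_2|$ by $\EGgeom$ supplies exactly the missing descent, since $\EGgeom$ builds in the representation spheres $\mmS^{2k,k}$ along which the Bott map $\beta\colon\mmS^{2,1}\smsh\KGL\to\KGL$ is invertible. A second, more direct, handle is to run this Bott-periodicity argument intrinsically, using the relations $f\circ h\simeq 1+t$ and $\beta\circ\bigl(\mmS^{2,1}\smsh(1+t)\bigr)\simeq(1-t)\circ\beta$ recorded in the proof of \fullref{prop:mapKGL}: the sign twist in the second relation is precisely the ``Real'' phenomenon that forces the completion to terminate over any base with $2$ invertible, with no cohomological-dimension assumption.

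Once cofreeness is in hand, the chain $\KQ\simeq\KR^{C_2}\simeq\Hom(\EGgeom_+,\KGL)^{C_2}=\KGL^{h_{\text{\'et}}C_2}$ gives the desired equivalence, and one checks that it is the canonical comparison map. As consistency checks one can (a) re-derive the identification on $\underline{\pi}_{\ast,\ast}$ by applying $\Hom(\EGgeom_+,-)^{C_2}$ to the $\eta$-cofibre sequence $\mmS^{1,1}\smsh\KQ\xrightarrow{\eta}\KQ\xrightarrow{f}\KGL\xrightarrow{\partial}\mmS^{2,1}\smsh\KQ$ of R\"ondigs--\O stv\ae r and comparing long exact sequences as in the proof of \fullref{prop:mapKGL}, and (b) observe that complex realization over a base embeddable in $\C$ carries the statement to the classical $KO\simeq KU^{hC_2}$. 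The technical point to watch throughout is the completeness of the towers involved --- the vanishing of the relevant $\varprojlim^{1}$ --- which in the geometric setting takes the place of the homotopy fixed point spectral sequence available in the simplicial case.
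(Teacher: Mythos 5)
The paper does not prove \fullref{prop:hell}: it records the statement as an unpublished result of Jeremiah Heller and explicitly defers the construction of the relevant framework (genuine $C_2$-equivariant motivic spectra and \'etale homotopy fixed points) to future work, so there is no in-paper argument against which to compare your proposal.

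Taken on its own terms, your sketch correctly identifies the natural reformulation. Granting a genuine $C_2$-equivariant refinement $\KR$ of $\KGL$ with categorical fixed points $\KR^{C_2}\simeq\KQ$, the theorem becomes precisely the assertion that $\KR$ is cofree, equivalently that $\Hom(\wEGg{C_2},\KR)\simeq *$, where $\wEGg{C_2}$ is the cofiber of $\EGgeom_+\to S^0$ (the map is automatically an equivalence on underlying spectra since $\EGgeom$ is non-equivariantly contractible, so cofreeness is exactly the statement on $C_2$-fixed points). Your unwinding of $\EGgeom$ through the regular representation, and the observation that invertibility of $2$ is exactly what makes the antipodal $C_2$-action on $\bA^{n}\smallsetminus 0$ free, are also correct. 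The gap is that the crux --- cofreeness --- is only gestured at, not argued. Your first handle (that $\EGgeom$ ``builds in the representation spheres along which the Bott map is invertible'') names the right phenomenon but derives nothing from it; your second (that the sign twist in $\beta\circ\bigl(\mmS^{2,1}\smsh(1+t)\bigr)\simeq(1-t)\circ\beta$ ``forces the completion to terminate'') is an assertion rather than a computation. To close this you would at minimum need a filtration of $\wEGg{C_2}$, say by the skeleta coming from $\colim_n(\bA^{n}\smallsetminus 0)$, together with a vanishing argument for $\Hom(-,\KR)$ on the associated graded and control of the $\varprojlim^{1}$ term that your own closing paragraph flags as the technical point. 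Separately, the existence of a motivic $\KR$ with $\KR^{C_2}\simeq\KQ$ in whatever model of genuine $C_2$-motivic spectra you fix is a substantial external input --- essentially Hu--Kriz--Ormsby \cite{hko} --- that would need to be imported carefully into a framework the present paper does not set up. As written, the proposal is a plausible route map, not a proof.
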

\noindent
Note that \fullref{prop:hell} holds before $2$--completion, and over a general base in which $2$ is invertible, i.e., the additional conditions of \fullref{thm:simKQ} are not required. Given the right framework, this would imply that $\KQ\to \KGL$ is an \'etale Galois extension.

One of the advantages of defining homotopy fixed points using $EG$ is that we can use the associated homotopy fixed point spectral sequence of \fullref{prop:hfss}, which is well understood. While $\EGg{G}$ can be given a skeletal filtration, the authors do not have a good understanding of the associated spectral sequence.

\subsection{Motivic Hopf-Galois extensions}\label{sec:hopfgalext}

Extending the notion of a Galois extension, Rognes \cite[Chapter 12]{rognes}, Hess \cite{hess-hg} and Hess--Berglund \cite{hessberg-hg} have developed a theory of homotopic Hopf-Galois extensions for a Hopf--algebra $H$. For example, if $H$ is the $\Bbbk$-dual of a group ring  $\Bbbk [G]$, one recovers the usual notion of Galois extension of $\Bbbk$-algebras..

For classical ring spectra, Rognes defined Hopf-Galois extensions as follows. Let $\phi: A \to B$ be a morphism of commutative ring spectra, and let $H$ be a ring spectrum equipped with a comultiplication $H \to H \smsh H$ that is a map of ring spectra, i.e., $H$ is a bialgebra spectrum. Suppose that $H$ coacts on $B$ so that $\phi$ is a morphism of $H$-comodules when $A$ is endowed with the trivial $H$-coaction. 
Consider the following maps.
\begin{enumerate}[(a)]
\item The Galois map $\beta_{\phi} : B \smsh_A B \to  B \smsh H$, which is the composite of the co-action on the right factor of $B$ followed by the multiplication of the left factors.
\item The natural map from $A$ to the homotopy coinvariants of the $H$--coaction on $B$, $A \to B^{hco H}$. Here, $ B^{hco H}$ is defined as $\Tot C^{\bullet}(H;B)$ for $C^{\bullet}(H;B)$ the Hopf cobar complex for $H$--coacting on $B$.
\end{enumerate}
If (a) and (b) are both weak equivalences, then $\phi: A \to B$ is a \emph{homotopic $H$-Hopf-Galois extension} in the sense of Rognes.

In  \cite[Section 12.2]{rognes}, Rognes proves that the map $\mathbb{S} \to MU$ is an $\mathbb{S}[BU]$-Hopf-Galois extension of spectra. The key input is the Thom isomorphism, which implies that 
\begin{align}\label{eqn:hMU}
h: MU \smsh MU \to MU \smsh \mathbb{S}[BU]
\end{align}
is a weak equivalence. This, in turn, implies that $\mathbb{S}^{\smsh}_{MU} \simeq  \Tot(C^{\bullet}(\mathbb{S}[BU];MU))$ (see \cite[Proposition 12.1.8]{rognes}). By the convergence of the Adams-Novikov spectral sequence, $\mathbb{S} \to \mathbb{S}^{\smsh}_{MU}$ is a weak equivalence.

With an adequate model category structure on $H$-comodule algebras in motivic spectra, one should be able to adapt this theory to study Hopf-Galois extensions of motivic spectra. In particular, one should then have the following example. 

Let $\MGL$ be the algebraic cobordism spectrum (see Voevodsky et al. \cite[Section 3.3]{mot-hom-book-nordfjordeid}).  Motivically, there is a Thom isomorphism (see, for example, Dugger and Isaksen \cite[Remark 8.10]{dugisa}). Hence,
\[\MGL \smsh \MGL \to \MGL \smsh \mmS[\BGL]\]
is an equivalence. As above, this implies that
\[ \mmS^{\smsh}_{\MGL} \to \Tot C^{\bullet}(\mmS[\BGL]; \MGL)\]
is a weak equivalence. However, since the motivic Adams-Novikov spectral sequence converges to the homotopy groups of $\mmS^{\wedge}_{\eta}$,
we will be able to conclude that
the map $\mmS^{\wedge}_{\eta} \to \MGL$ is an $\mmS[\BGL]$-Hopf-Galois extension, once we have set up the framework properly.

\appendix
\section{Model structure techniques}\label{appendix}

In this appendix we recall techniques from {Hess et al.}\cite{HKRS} for establishing the existence of induced model category structures.

\begin{notation}\label{notation:cof} For any class of maps $X$ in a category $\sM$, we let $\LLP(X)$ (respectively, $\RLP(X)$) denote the class of maps having the left lifting property (respectively, the right lifting property) with respect to all maps in $X$. We use notation $X$-cof for the class of maps $\LLP(\RLP(X))$.\end{notation}

\begin{defn} A \emph{weak factorization system} on a category $\sfC$ consists of a pair $(\mathcal L,\mathcal R)$ of classes of maps so that the following conditions hold.
\begin{itemize}
\item Any morphism in $\sfC$ can be factored as a morphism in $\mathcal L$ followed by a morphism in $\mathcal R$.
\item $\mathcal L = \LLP(\mathcal R)$ and $\mathcal R = \RLP(\mathcal L)$.
\end{itemize}
\end{defn}

In particular, if $(\sM,\Fib,\Cof,\WE)$ is a model category, then $(\Cof \cap \WE, \Fib)$ and $(\Cof, \Fib \cap \WE)$ are weak factorization systems.  If one additional condition is satisfied, the converse holds as well.

\begin{prop}[{Joyal and Tierney} {\cite[7.8]{joyal-tierney}}]\label{prop:model-via-wfs} If $\sM$ is a bicomplete category, and $\Fib, \Cof, \WE$ are classes of morphisms so that 
\begin{itemize}
\item $\WE$ satisfies the 2-of-3 property, and 
\item $(\Cof \cap \WE, \Fib)$ and $(\Cof, \Fib \cap \WE)$ are weak factorization systems,
\end{itemize}
then $(\sM,\Fib,\Cof,\WE)$ is a model category.
\end{prop}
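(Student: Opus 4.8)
The plan is to verify the five axioms of a model category directly from the hypotheses. Bicompleteness and the two-out-of-three property for $\WE$ are assumed outright, so I would be left with the factorization, lifting, and retract axioms. The factorization axiom is immediate from the two weak factorization systems: the system $(\Cof\cap\WE,\Fib)$ factors every morphism as an acyclic cofibration followed by a fibration, while the system $(\Cof,\Fib\cap\WE)$ factors every morphism as a cofibration followed by an acyclic fibration. The lifting axiom is equally immediate from the lifting halves of the two systems: the identity $\Cof\cap\WE=\LLP(\Fib)$ says that every acyclic cofibration has the left lifting property against every fibration, and $\Cof=\LLP(\Fib\cap\WE)$ says that every cofibration has the left lifting property against every acyclic fibration.

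For the retract axiom I would first record the elementary fact that any class of the form $\LLP(X)$ or $\RLP(X)$ is closed under retracts, via a short diagram chase. Applying this to $\Cof=\LLP(\Fib\cap\WE)$ and to $\Fib=\RLP(\Cof\cap\WE)$ disposes of the retract axiom for cofibrations and for fibrations. The only substantial point is that $\WE$ is closed under retracts, for which I would run Joyal's retract argument. First one observes that a morphism lies in $\WE$ exactly when it admits a factorization as an acyclic cofibration followed by an acyclic fibration: the ``if'' direction is two-out-of-three, and for ``only if'' one factors $f=pi$ with $i\in\Cof\cap\WE$ and $p\in\Fib$ and notes that $p\in\WE$ by two-out-of-three, so $p\in\Fib\cap\WE$. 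Now suppose $f\colon A\to B$ is a retract of $g\colon X\to Y$ with $g\in\WE$; factor $f=pi$ with $i\in\Cof\cap\WE$ and $p\in\Fib$, so that by two-out-of-three it suffices to show $p\in\WE$. Factoring $g=qj$ with $j\in\Cof\cap\WE$ and $q\in\Fib\cap\WE$ (the latter by two-out-of-three, since $g\in\WE$), I would use the retract data for $f$ over $g$ together with the lifting property of $i$ against $q$ and of $j$ against $p$ to build comparison maps between the two factorization objects. After the idempotent-splitting step of Joyal's argument, these exhibit $p$ as a retract of the acyclic fibration $q$ in the category of morphisms of $\sM$; since $\Fib\cap\WE=\RLP(\Cof)$ is closed under retracts, this forces $p\in\Fib\cap\WE$, and hence $p\in\WE$. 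With all five axioms in place, $(\sM,\Fib,\Cof,\WE)$ is a model category.

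The hard part will be this final step. The lifts produced by the comparison construction only yield an idempotent endomorphism of the factorization object of $f$ that fixes $i$ and commutes with $p$, rather than an honest retraction diagram; converting this into a genuine retract of $q$ is the technical heart of Joyal's argument and uses that $\sM$, being bicomplete, is idempotent-complete. If one prefers to avoid reproducing this argument, it suffices to cite \cite[7.8]{joyal-tierney} directly.
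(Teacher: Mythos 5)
The paper states this result purely as a citation to Joyal--Tierney \cite[7.8]{joyal-tierney} and gives no proof of its own, so there is no in-paper argument to compare against; I am assessing your proposal on its own terms. Your treatment of the factorization and lifting axioms, and of closure of $\Cof$ and $\Fib$ under retracts via the general fact that $\LLP$- and $\RLP$-classes are retract-closed, is correct. The reduction of the retract axiom for $\WE$ to showing that the fibration part $p$ of a factorization $f = pi$ lies in $\Fib \cap \WE = \RLP(\Cof)$ is also the right move.

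There is, however, a genuine gap in the step you yourself flag as hard. Writing $i\colon A\to E$, $p\colon E\to B$, $j\colon X\to Z$, $q\colon Z\to Y$, and letting $a,a',b,b'$ be the retraction data with $a'a = \mathrm{id}_A$ and $b'b = \mathrm{id}_B$, lifting $i$ against $q$ gives $u\colon E\to Z$ with $ui = ja$, $qu = bp$, and lifting $j$ against $p$ gives $v\colon Z\to E$ with $vj = ia'$, $pv = b'q$. The composite $e = vu$ does satisfy $ei = i$ and $pe = p$, but it is \emph{not} automatically idempotent: $e^2 = v(uv)u$, and $uvj = u(ia') = (ja)a' = j(aa')$, which is not $j$ since $aa'$ is not the identity on $X$. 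Nothing in the construction forces $vuvu = vu$, so there is no idempotent to split, and the argument stalls. The standard repair avoids idempotents entirely: form the pushout $E' = E\sqcup_A X$ of $i$ along the section $a$, so that $i'\colon X\to E'$ is an acyclic cofibration (cobase change of $i$) and the induced map $h\colon E'\to Y$ satisfies $hi' = g$, hence $h\in\WE$ by two-out-of-three. The retraction $a'$ induces $r\colon E'\to E$ with $r\circ(E\to E') = \mathrm{id}_E$ \emph{on the nose}, exhibiting $p$ as an honest retract of $h$ over $(b,b')$. Now factor $h = q'j'$ with $j'\in\Cof\cap\WE$ and $q'\in\Fib$; two-out-of-three gives $q'\in\Fib\cap\WE$, a single lift of $j'$ against $p$ exhibits $p$ as a retract of $q'$, and retract-closure of $\RLP(\Cof)$ gives $p\in\Fib\cap\WE$, hence $f\in\WE$ by two-out-of-three. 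Note that bicompleteness enters through the existence of this pushout, not through idempotent-completeness.
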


\begin{defn}
Let $(\sM,\Fib,\Cof,\WE)$ be a model category and consider a pair of adjunctions
\[ \xymatrix@C=4pc@R=4pc{ \sK \ar@<1ex>[r]^V \ar@{}[r]|\perp & \sM \ar@<1ex>[l]^R \ar@<1ex>[r]^L\ar@{}[r]|\perp & \sfC \ar@<1ex>[l]^U}\]
where the categories $\sfC$ and $\sK$ are bicomplete.
If they exist:
\begin{itemize}
\item the \emph{right-induced model structure} on $\sfC$ is given by \[\Big(\sfC,U^{-1}\Fib, \LLP\big({U^{-1}(\Fib \cap\WE)}\big), U^{-1}\WE\Big),\] and
\item the \emph{left-induced model structure} on $\sK$ is given by \[ \Big(\sK, \RLP{\big(V^{-1}(\Cof\cap\WE)\big)}, V^{-1}\Cof, V^{-1}\WE\Big).\]
\end{itemize}
\end{defn}

\begin{rem} The adjunction $(L,U)$ is a Quillen pair with respect to the right-induced model category structure on $\sfC$, when it exists.  Dually, the adjunction $(V,R)$ is a Quillen pair with respect to the left-induce model category structure on $\sK$, when it exists.
\end{rem}

Establishing an induced model category structure therefore reduces to proving the existence of appropriate weak factorization systems and checking a certain acyclicity condition.

\begin{prop}\label{prop:acyclicity-reduction}\cite[Proposition 2.14]{HKRS} Suppose $(\sM, \Fib, \Cof,\WE)$ is a model category, $\sfC$ and $\sK$ are bicomplete categories, and there exist adjunctions 
\[ \xymatrix@C=4pc@R=4pc{ \sK \ar@<1ex>[r]^V \ar@{}[r]|\perp & \sM \ar@<1ex>[l]^R \ar@<1ex>[r]^L\ar@{}[r]|\perp & \sfC \ar@<1ex>[l]^U}\] so that the right-induced weak factorization systems exists on $\sfC$, and the left-induced weak factorization systems exists on $\sK$.  It follows that
\begin{enumerate}
\item the right-induced model structure exists on $\sfC$ if and only if \[\LLP\,(U^{-1}\Fib) \subset U^{-1}\WE;\] and
\item the left-induced model structure exists on $\sK$ if and only if \[\RLP{(V^{-1}\Cof)} \subset V^{-1}\WE.\]
\end{enumerate}
\end{prop}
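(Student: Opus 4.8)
The final statement is Proposition~\ref{prop:acyclicity-reduction}, whose proof I sketch as follows. The strategy is to invoke the characterization of model categories via weak factorization systems (\fullref{prop:model-via-wfs}) and reduce the existence of the full induced model structure to the single acyclicity inclusion, once the two relevant weak factorization systems are assumed to exist. I treat the right-induced case (1); the left-induced case (2) follows by a dual argument, passing to opposite categories.

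First I would set $\WE_\sfC = U^{-1}\WE$, $\Fib_\sfC = U^{-1}\Fib$, and $\Cof_\sfC = \LLP\big(U^{-1}(\Fib\cap\WE)\big)$, and observe that $\WE_\sfC$ automatically satisfies the $2$-of-$3$ property, because $\WE$ does and $U$ is a functor. Next I would note that by hypothesis the pair $\big(\Cof_\sfC, U^{-1}(\Fib\cap\WE)\big)$ is a weak factorization system on $\sfC$, and similarly I am given a weak factorization system whose right class should be $U^{-1}\Fib$; the point is to identify its left class with $\LLP(U^{-1}\Fib)$, which is automatic from the defining property of a weak factorization system. So it remains to check that $\big(\Cof_\sfC\cap\WE_\sfC,\ \Fib_\sfC\big)$ and $\big(\Cof_\sfC,\ \Fib_\sfC\cap\WE_\sfC\big)$ are weak factorization systems. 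The second of these is exactly the first assumed weak factorization system $\big(\Cof_\sfC, U^{-1}(\Fib\cap\WE)\big)$, once one checks the identity $\Fib_\sfC\cap\WE_\sfC = U^{-1}(\Fib\cap\WE)$, which holds because $U^{-1}$ commutes with intersections.

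For the first pair, I would argue as follows. The acyclicity hypothesis $\LLP(U^{-1}\Fib)\subset U^{-1}\WE$ gives the inclusion $\LLP(\Fib_\sfC)\subset \WE_\sfC$. Combined with the inclusion $\LLP(\Fib_\sfC)\subset \Cof_\sfC$ (which holds since $\Fib_\sfC\supset U^{-1}(\Fib\cap\WE)$, so $\LLP(\Fib_\sfC)\subset \LLP(U^{-1}(\Fib\cap\WE)) = \Cof_\sfC$), we get $\LLP(\Fib_\sfC)\subset \Cof_\sfC\cap\WE_\sfC$. Conversely, any map in $\Cof_\sfC\cap\WE_\sfC$ has the left lifting property against $\Fib_\sfC$: using the factorization from the second weak factorization system, write such a map $f$ as a map $i\in\Cof_\sfC$ followed by a map $p\in U^{-1}(\Fib\cap\WE)\subset \Fib_\sfC$; then $p$ is a weak equivalence by construction and $f$ is a weak equivalence by hypothesis, so $i$ is a weak equivalence by $2$-of-$3$; the retract argument (the ``small object'' / retract trick, using that $f\in\LLP(\Fib_\sfC\cap\WE_\sfC)$ since $f\in\Cof_\sfC$) shows $f$ is a retract of $i$, hence $f\in\LLP(\Fib_\sfC)$. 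This establishes $\Cof_\sfC\cap\WE_\sfC = \LLP(\Fib_\sfC)$, and then $\Fib_\sfC = \RLP(\Cof_\sfC\cap\WE_\sfC)$ follows formally together with the existence of factorizations, so $\big(\Cof_\sfC\cap\WE_\sfC, \Fib_\sfC\big)$ is a weak factorization system. Applying \fullref{prop:model-via-wfs} finishes (1). For (2), I would replace $\sK$ by $\sK^{op}$ and $\sM$ by $\sM^{op}$, turning left-induction into right-induction, and quote (1).

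The main obstacle is bookkeeping rather than depth: one must be careful that the retract argument identifying $\Cof_\sfC\cap\WE_\sfC$ with $\LLP(\Fib_\sfC)$ genuinely only uses the \emph{existence} of the two assumed weak factorization systems plus the acyclicity inclusion, and does not secretly presuppose the model structure one is trying to build. Keeping track of which inclusions are automatic (from the definition of a weak factorization system) versus which require the hypothesis is the delicate point; everything else is formal manipulation of lifting properties. This is, of course, exactly the content of \cite[Proposition~2.14]{HKRS}, so I would present the argument concisely and refer there for the details omitted above.
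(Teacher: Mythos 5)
The paper itself gives no proof of this proposition, merely citing \cite[Proposition~2.14]{HKRS}, so the comparison is against the standard argument rather than a proof supplied in the source. Your overall strategy is the right one: reduce to \fullref{prop:model-via-wfs}, observe that $2$-of-$3$ transfers along $U$, identify $\Fib_\sfC\cap\WE_\sfC=U^{-1}(\Fib\cap\WE)$, and then show that the two assumed weak factorization systems coincide with $(\Cof_\sfC,\Fib_\sfC\cap\WE_\sfC)$ and $(\Cof_\sfC\cap\WE_\sfC,\Fib_\sfC)$ respectively. The forward inclusion $\LLP(\Fib_\sfC)\subseteq\Cof_\sfC\cap\WE_\sfC$ is handled correctly.

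However, your argument for the reverse inclusion $\Cof_\sfC\cap\WE_\sfC\subseteq\LLP(\Fib_\sfC)$ is circular as written. You factor $f\in\Cof_\sfC\cap\WE_\sfC$ using the weak factorization system $\big(\Cof_\sfC,\,U^{-1}(\Fib\cap\WE)\big)$, obtaining $f=p\circ i$ with $i\in\Cof_\sfC$ and $p\in\Fib_\sfC\cap\WE_\sfC$, and then conclude by $2$-of-$3$ that $i\in\Cof_\sfC\cap\WE_\sfC$. The retract argument does show $f$ is a retract of $i$, but $i$ is only known to lie in $\Cof_\sfC\cap\WE_\sfC$ --- precisely the class you are trying to identify with $\LLP(\Fib_\sfC)$ --- so you cannot yet conclude $f\in\LLP(\Fib_\sfC)$. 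The fix is to factor $f$ using the \emph{other} assumed weak factorization system, namely $\big(\LLP(\Fib_\sfC),\,\Fib_\sfC\big)$: write $f=p\circ i$ with $i\in\LLP(\Fib_\sfC)$ and $p\in\Fib_\sfC$. By the acyclicity hypothesis $i\in\WE_\sfC$, so $p\in\WE_\sfC$ by $2$-of-$3$, hence $p\in\Fib_\sfC\cap\WE_\sfC$. Since $f\in\Cof_\sfC=\LLP(\Fib_\sfC\cap\WE_\sfC)$, $f$ lifts against $p$, so the retract argument exhibits $f$ as a retract of $i$; as $\LLP(\Fib_\sfC)$ is closed under retracts, $f\in\LLP(\Fib_\sfC)$. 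With that correction the argument goes through; you should also note, for completeness, that the ``only if'' direction of the equivalence is immediate, since in any model structure the acyclic cofibrations are contained in the weak equivalences.
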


Under reasonable conditions on the categories $\sM$, $\sfC$, and $\sK$, the desired right- and left-induced weak factorization systems are guaranteed to exist, so that only the acyclicity conditions remains to be checked.

\begin{cor}\label{cor:cofib-gen}\cite[Corollaries 3.1.7 and 3.3.4]{HKRS} Suppose $(\sM, \Fib, \Cof,\WE)$ is a locally presentable, cofibrantly generated model category, $\sfC$ and $\sK$ are locally presentable categories, and there exist adjunctions 
\[ \xymatrix@C=4pc@R=4pc{ \sK \ar@<1ex>[r]^V \ar@{}[r]|\perp & \sM \ar@<1ex>[l]^R \ar@<1ex>[r]^L\ar@{}[r]|\perp & \sfC. \ar@<1ex>[l]^U}\] 
It follows that
\begin{enumerate}
\item the right-induced model structure exists on $\sfC$ if and only if \[\LLP\,(U^{-1}\Fib) \subset U^{-1}\WE;\] and
\item the left-induced model structure exists on $\sK$ if and only if \[\RLP{(V^{-1}\Cof)} \subset V^{-1}\WE.\]
\end{enumerate}
\end{cor}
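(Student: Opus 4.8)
The plan is to deduce this corollary from \fullref{prop:acyclicity-reduction}, whose hypotheses ask only that the right‑induced weak factorization systems exist on $\sfC$ and that the left‑induced weak factorization systems exist on $\sK$. Once those are in hand, \fullref{prop:acyclicity-reduction} reduces each of the two equivalences to be proved to precisely the stated acyclicity inclusion, so the whole task collapses to producing four weak factorization systems — two upstairs on $\sfC$, two downstairs on $\sK$ — under the standing assumptions of local presentability and cofibrant generation.

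For the right‑induced side I would use cofibrant generation of $\sM$ directly. Write $I$ and $J$ for generating sets of cofibrations and acyclic cofibrations of $\sM$, so that $\RLP(I)=\Fib\cap\WE$ and $\RLP(J)=\Fib$. Then $LI$ and $LJ$ are \emph{sets} of morphisms of $\sfC$, and since $\sfC$ is locally presentable the small object argument applies to each of them, yielding weak factorization systems $\big(\LLP(\RLP(LI)),\RLP(LI)\big)$ and $\big(\LLP(\RLP(LJ)),\RLP(LJ)\big)$. The adjunction $L\dashv U$ gives, for any morphism $f$ of $\sfC$, the equivalence $f\in\RLP(LI)\iff Uf\in\RLP(I)$, and likewise with $J$ in place of $I$; hence $\RLP(LI)=U^{-1}(\Fib\cap\WE)$ and $\RLP(LJ)=U^{-1}\Fib$. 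These are exactly the two right‑induced weak factorization systems $\big(\LLP(U^{-1}(\Fib\cap\WE)),\,U^{-1}(\Fib\cap\WE)\big)$ and $\big(\LLP(U^{-1}\Fib),\,U^{-1}\Fib\big)$ needed to invoke \fullref{prop:acyclicity-reduction}(1).

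The left‑induced side is the delicate point, and I expect it to be the main obstacle, because the candidate left classes $V^{-1}\Cof$ and $V^{-1}(\Cof\cap\WE)$ are a priori proper classes rather than sets, so the ordinary small object argument does not directly apply. The plan is to invoke the theory of accessible weak factorization systems (Makkai–Rosick\'y, building on Garner's algebraic small object argument): since $\sM$ is combinatorial, the classes $\Cof$ and $\Cof\cap\WE$, viewed as full subcategories of the arrow category $\sM^{\to}$, are accessible and accessibly embedded; because $V$ is a left adjoint between locally presentable categories it is an accessible functor, so the preimages $V^{-1}\Cof$ and $V^{-1}(\Cof\cap\WE)$ are again accessible and accessibly embedded in $\sK^{\to}$. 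One checks routinely that each is closed under coproducts, pushouts, transfinite composition and retracts, these properties being inherited from $\Cof$ and $\Cof\cap\WE$ through the colimit‑preserving functor $V$. An accessible, accessibly embedded, retract‑closed, saturated class in a locally presentable category is the left class of a functorial weak factorization system, which supplies the two left‑induced weak factorization systems $\big(V^{-1}\Cof,\,\RLP(V^{-1}\Cof)\big)$ and $\big(V^{-1}(\Cof\cap\WE),\,\RLP(V^{-1}(\Cof\cap\WE))\big)$. Feeding these, together with the right‑induced pair from the previous paragraph, into \fullref{prop:acyclicity-reduction} yields both statements of the corollary.
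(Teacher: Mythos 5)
Your reduction to \fullref{prop:acyclicity-reduction} is exactly the right structure, and your treatment of the right-induced side is correct and standard: the generating sets $I,J$ of $\sM$ push forward to sets $LI, LJ$ in the locally presentable $\sfC$, the small object argument applies, and the adjunction isomorphism $\Hom_{\sfC}(Li,f)\cong\Hom_{\sM}(i,Uf)$ for lifting problems gives $\RLP(LI)=U^{-1}(\Fib\cap\WE)$ and $\RLP(LJ)=U^{-1}\Fib$.

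The left-induced side is where I would push back. You correctly flag it as the delicate point, and the appeal to accessible weak factorization systems is the right framework. But the specific lemma you lean on — that any accessible, accessibly embedded, retract-closed, saturated full subcategory of $\sK^{\to}$ is the left class of a (functorial) weak factorization system — is not, as far as I am aware, a theorem available in ZFC in the form you state it; the closest result of this flavor (Makkai--Rosick\'y, that saturated accessible classes in a locally presentable category are cofibrantly generated) is established only under Vop\v{e}nka's principle. The argument actually used in HKRS sidesteps this: one first upgrades the cofibrantly generated WFS on $\sM$ to an \emph{algebraic} WFS via Garner's small object argument, and then invokes the theorem (Bourke--Garner; see also HKRS) that algebraic/accessible WFS transfer along left adjoints between locally presentable categories, producing an accessible WFS on $\sK$ whose left class is $V^{-1}\Cof$ by construction. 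That route is constructive, set-theoretically unconditional, and never passes through the ``saturated accessible $\Rightarrow$ left class'' characterization. So the endpoint of your argument is right, and your identification of the saturation and accessibility properties of $V^{-1}\Cof$ is correct, but the bridging theorem should be replaced by the algebraic/accessible WFS lifting theorem rather than a Makkai--Rosick\'y-style characterization, unless you can supply a proof of your characterization that avoids large-cardinal hypotheses.
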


The following consequences of \fullref{cor:cofib-gen} are frequently applied in \fullref{sec:motmodstructures} of this paper.

\begin{prop}\label{prop:left_ind_along_leftQuillen} Consider a Quillen adjunction between locally presentable categories
\[ \xymatrix@C=4pc{ \sK \ar@<1ex>[r]^V \ar@{}[r]|\perp & \sM, \ar@<1ex>[l]^R}\]
where $\sM$ is a cofibrantly generated model category and $\sK$ is a model category.
If $V$ preserves all weak equivalences, then the left-induced model structure created by $V$ exists on $\sK$. 
\end{prop}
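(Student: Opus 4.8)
The plan is to invoke part (2) of \fullref{cor:cofib-gen}, which reduces the existence of the left-induced model structure on $\sK$ to a single acyclicity condition. First I would check that the hypotheses of that corollary are met: $\sM$ is cofibrantly generated by assumption, and it is locally presentable since the adjunction $V\dashv R$ is between locally presentable categories, so $\sM$ is a locally presentable, cofibrantly generated model category; likewise $\sK$ is locally presentable. The corollary is phrased for a composable pair of adjunctions $\sK\rightleftarrows\sM\rightleftarrows\sfC$, but its statement (2) refers only to $V$, so one may harmlessly take $\sfC=\sM$ with $L=U=\Id$. It then remains to establish the inclusion $\RLP(V^{-1}\Cof)\subseteq V^{-1}\WE$, i.e., that every morphism of $\sK$ having the right lifting property against every $V$-preimage of a cofibration of $\sM$ is sent by $V$ to a weak equivalence of $\sM$.

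The key point is that $(V,R)$ is a Quillen pair with respect to the ambient model structure on $\sK$, so $V$ carries cofibrations of $\sK$ to cofibrations of $\sM$; in particular every cofibration of $\sK$ lies in $V^{-1}\Cof$. Since enlarging a class of morphisms shrinks its associated right lifting class, this gives $\RLP(V^{-1}\Cof)\subseteq\RLP(\Cof)=\Fib\cap\WE$ in $\sK$, so every map in $\RLP(V^{-1}\Cof)$ is an acyclic fibration of $\sK$, hence in particular a weak equivalence of $\sK$. By hypothesis $V$ preserves all weak equivalences, so the image under $V$ of any such map is a weak equivalence of $\sM$; that is, $\RLP(V^{-1}\Cof)\subseteq V^{-1}\WE$, as required. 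Part (2) of \fullref{cor:cofib-gen} then produces the left-induced model structure on $\sK$.

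I do not expect a genuine obstacle here: the argument is purely formal once the relevant definitions are unwound. The only subtlety worth flagging is bookkeeping — keeping straight which model structure on $\sK$ is in play (the ambient one witnessing that $(V,R)$ is Quillen and that $V$ preserves weak equivalences, as opposed to the left-induced one whose existence is being proved) and remembering that a larger left lifting class corresponds to a smaller right lifting class, which is precisely the inclusion that makes the acyclicity condition come out for free.
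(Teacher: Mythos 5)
Your argument is correct and is essentially the paper's own proof: both reduce to the acyclicity condition of \fullref{cor:cofib-gen}(2) and establish it via the same chain of inclusions $\RLP(V^{-1}\Cof_\sM) \subseteq \RLP(\Cof_\sK) = \Fib_\sK \cap \WE_\sK \subseteq \WE_\sK \subseteq V^{-1}\WE_\sM$, using that $V$ is left Quillen for the first inclusion and that $V$ preserves all weak equivalences for the last.
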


\begin{proof} By \fullref{cor:cofib-gen} we need only to check the acyclicity condition  
\[\RLP{(V^{-1}\Cof_\sM)} \subseteq V^{-1}\WE_\sM.\]
Since $\Cof_\sK \subseteq V^{-1}\Cof_\sM$, it follows that 
$$\RLP{(V^{-1}\Cof_\sM)} \subseteq \RLP(\Cof_\sK) = \WE_\sK \cap \Fib_\sK \subseteq \WE_\sK.$$ 
By hypothesis  $ \WE_\sK \subseteq V^{-1}\WE_\sM$, which finishes the proof.
\end{proof}

Similarly, a right-induced model structure along a right Quillen functor $U$ exists if $U$ preserves all weak equivalences.

\begin{prop} Let \[ \xymatrix@C=4pc{ \sK \ar@<1ex>[r]^V \ar@{}[r]|\perp & \sM. \ar@<1ex>[l]^R}\] be an adjunction between locally presentable categories.  If  there is a cofibrantly generated model structure on $\sM$, $(\Cof, \Fib, \WE)$, such that the left-induced model structure created by $V$ exists on $\sK$, then for any cofibrantly generated model structure on $\sM$, $(\widetilde{\Cof}, \widetilde{\Fib}, \widetilde{\WE})$, such that $\Cof \subseteq \widetilde{\Cof}$ and $\WE \subseteq \widetilde{\WE} $, there exists a left-induced model structure on $\sK$ created by $V$ from $(\widetilde{\Cof}, \widetilde{\Fib}, \widetilde{\WE})$.
\end{prop}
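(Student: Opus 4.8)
The plan is to invoke the acyclicity criterion of \fullref{cor:cofib-gen}(2), just as in the proof of \fullref{prop:left_ind_along_leftQuillen}, but now starting from the \emph{already known} left-induced model structure on $\sK$ rather than from a Quillen adjunction. First I would record the data: since the left-induced model structure created by $V$ from $(\Cof,\Fib,\WE)$ exists, its three classes of maps are $\big(\RLP(V^{-1}(\Cof\cap\WE)),\ V^{-1}\Cof,\ V^{-1}\WE\big)$, and in particular the acyclicity condition $\RLP(V^{-1}\Cof)\subseteq V^{-1}\WE$ holds for the original structure on $\sM$. Because both $(\Cof,\Fib,\WE)$ and $(\widetilde\Cof,\widetilde\Fib,\widetilde\WE)$ are cofibrantly generated model structures on the locally presentable category $\sM$, and $\sK$ is locally presentable, \fullref{cor:cofib-gen}(2) reduces the existence of the left-induced structure from $(\widetilde\Cof,\widetilde\Fib,\widetilde\WE)$ to verifying the single inclusion
\[
\RLP\big(V^{-1}\widetilde\Cof\big)\subseteq V^{-1}\widetilde\WE .
\]

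The key step is then a containment chase. By hypothesis $\Cof\subseteq\widetilde\Cof$, hence $V^{-1}\Cof\subseteq V^{-1}\widetilde\Cof$, and therefore the right lifting property against the larger class is stronger:
\[
\RLP\big(V^{-1}\widetilde\Cof\big)\subseteq \RLP\big(V^{-1}\Cof\big).
\]
Now apply the acyclicity condition already available for the original structure, $\RLP(V^{-1}\Cof)\subseteq V^{-1}\WE$, to get $\RLP(V^{-1}\widetilde\Cof)\subseteq V^{-1}\WE$. Finally, $\WE\subseteq\widetilde\WE$ gives $V^{-1}\WE\subseteq V^{-1}\widetilde\WE$, so stringing these together yields $\RLP(V^{-1}\widetilde\Cof)\subseteq V^{-1}\widetilde\WE$, which is exactly the condition needed. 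By \fullref{cor:cofib-gen}(2), the left-induced model structure on $\sK$ created by $V$ from $(\widetilde\Cof,\widetilde\Fib,\widetilde\WE)$ exists.

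Honestly, there is no serious obstacle here: the argument is a purely formal two-line inclusion chase, identical in spirit to \fullref{prop:left_ind_along_leftQuillen} (which is the special case where the original structure is trivially left-induced because $V$ is left Quillen and preserves all weak equivalences). The only points requiring a modicum of care are bookkeeping ones: confirming that both model structures on $\sM$ are cofibrantly generated and locally presentable so that \fullref{cor:cofib-gen} applies, and observing that one does \emph{not} need a comparison of fibrations or of acyclic cofibrations between the two structures on $\sM$ — only the stated inclusions $\Cof\subseteq\widetilde\Cof$ and $\WE\subseteq\widetilde\WE$ are used, and the direction of each inclusion is exactly what makes the chase go through. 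One might also remark, though it is not needed for the statement, that the identity functor $\sK\to\sK$ is then left Quillen from the new left-induced structure to the old one, mirroring the analogous fact downstairs on $\sM$.
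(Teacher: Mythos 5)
Your proof is correct and is essentially identical to the paper's own argument: both reduce, via \fullref{cor:cofib-gen}(2), to the acyclicity condition $\RLP(V^{-1}\widetilde\Cof)\subseteq V^{-1}\widetilde\WE$ and then verify it by the same chain of inclusions $\RLP(V^{-1}\widetilde\Cof)\subseteq\RLP(V^{-1}\Cof)\subseteq V^{-1}\WE\subseteq V^{-1}\widetilde\WE$.
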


\begin{proof}It is enough to check the acyclicity condition $RLP(V^{-1}(\widetilde{\Cof})) \subseteq V^{-1}\widetilde{\WE}$, which follows from the sequence of inclusions
$$RLP(V^{-1}\widetilde{\Cof}) \subseteq RLP(V^{-1}\Cof) \subseteq V^{-1}\WE \subseteq V^{-1} \widetilde{\WE}.$$  
\end{proof}

\begin{cor}\label{cor:left_ind_left_Bousf} Let \[ \xymatrix@C=4pc{ \sK \ar@<1ex>[r]^V \ar@{}[r]|\perp & \sM. \ar@<1ex>[l]^R}\] be an adjunction between locally presentable categories.  If there is a model structure on $\sM$, $(\Cof, \Fib, \WE)$, such that the left-induced model structure created by $V$ exists on $\sK$, then there exists a left-induced model structure created by $V$ on $\sK$ from any left Bousfield localization of $(\Cof, \Fib, \WE)$.
\end{cor}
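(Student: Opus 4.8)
The plan is to deduce this immediately from the preceding proposition, the only work being to identify which of the three distinguished classes of maps is altered by a left Bousfield localization. Fix a set (or class) $\mathcal S$ of maps in $\sM$ and let $L_{\mathcal S}\sM$ denote the corresponding left Bousfield localization of $(\Cof, \Fib, \WE)$. By the standard description of left Bousfield localizations (see Hirschhorn, or Lurie \cite{HTTLurie} in the combinatorial case), the cofibrations of $L_{\mathcal S}\sM$ coincide with $\Cof$, while its class of weak equivalences --- the $\mathcal S$-local equivalences --- is a class $\widetilde{\WE}$ containing $\WE$. Writing $\widetilde{\Cof}=\Cof$ and $\widetilde{\Fib}$ for the fibrations of $L_{\mathcal S}\sM$, we obtain a model structure $(\widetilde{\Cof}, \widetilde{\Fib}, \widetilde{\WE})$ on $\sM$ with $\Cof \subseteq \widetilde{\Cof}$ (indeed an equality) and $\WE \subseteq \widetilde{\WE}$.

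Next I would observe that $(\widetilde{\Cof}, \widetilde{\Fib}, \widetilde{\WE})$ is again cofibrantly generated, so that the hypotheses of the previous proposition are satisfied: since $\sM$ is locally presentable and $(\Cof, \Fib, \WE)$ is cofibrantly generated --- which is implicit in the assumption that a left-induced model structure exists, via \fullref{cor:cofib-gen} --- Smith's theorem on combinatorial model categories ensures that $L_{\mathcal S}\sM$ is combinatorial, hence cofibrantly generated, in all the cases of interest (where left properness holds). Applying the previous proposition with $(\widetilde{\Cof}, \widetilde{\Fib}, \widetilde{\WE})$ in the role of the tilded data there then yields a left-induced model structure on $\sK$ created by $V$ from $L_{\mathcal S}\sM$, as desired.

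I do not expect any genuine obstacle here: unwinding the previous proposition, the entire content reduces to the acyclicity inclusion $\RLP\big(V^{-1}\widetilde{\Cof}\big) \subseteq V^{-1}\widetilde{\WE}$, which is immediate from the chain
$$\RLP(V^{-1}\widetilde{\Cof}) = \RLP(V^{-1}\Cof) \subseteq V^{-1}\WE \subseteq V^{-1}\widetilde{\WE},$$
the middle inclusion being exactly the acyclicity condition that witnesses the existence of the left-induced structure from $(\Cof, \Fib, \WE)$. The only point deserving a remark is that the localized model structure remain cofibrantly generated, which is automatic for the localizations appearing in \fullref{sec:motmodstructures}.
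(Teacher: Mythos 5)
Your proof is correct and follows essentially the same route as the paper intends: the corollary is an immediate application of the preceding proposition, since a left Bousfield localization has the same cofibrations and a larger class of weak equivalences, so $\Cof \subseteq \widetilde{\Cof}$ and $\WE \subseteq \widetilde{\WE}$ hold, and the acyclicity chain you write is exactly the one in that proposition's proof. Your side remark about cofibrant generation of the localized structure (so that \fullref{cor:cofib-gen} applies) is a reasonable gloss on a hypothesis the paper leaves implicit, and is indeed satisfied in the combinatorial settings where the corollary is invoked in \fullref{sec:motmodstructures}.
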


We recall \cite[Theorem 2.3.2]{HKRS} below, which is crucial in \fullref{section:comalg_motSp}.

\begin{thm}\label{thm:square}\cite[Theorem 2.3.2]{HKRS} Given a square of adjunctions 
\[ \xymatrix@R=4pc@C=4pc{ \sK \ar@{}[r]|-{\perp} \ar@<-1ex>[d]_-L \ar@<-1ex>[r]_-{R} & \sM  \ar@<1ex>[d]^-L \ar@<-1ex>[l]_-V \\  
\ar@{}[r]|-{\top} \ar@{}[u]|-{\dashv} \sN \ar@<1ex>[r]^-{R} \ar@<-1ex>[u]_-U & \sP \ar@<1ex>[u]^-U \ar@<1ex>[l]^-V\ar@{}[u]|-{\vdash} }\] 
between locally presentable categories, suppose that $(\sK,\Cof_\sK,\Fib_\sK,\WE_\sK)$ is a model category such that the left-induced model structure  created by $V$, denoted $(\sM,\Cof_\sM,\Fib_\sM,\WE_\sM)$, and the right-induced model structure created by $U$, denoted $(\sN,\Cof_\sN,\Fib_\sN,\WE_\sN)$ both exist.

If $UV\cong VU$, $LV\cong VL$ (or, equivalently, $UR\cong RU$), 
then there exists a right-induced model structure on $\sP$, created by $U\colon \sP \lra \sM$, and a left-induced model structure on $\sP$, created by $V\colon \sP \lra \sN$, so that the identity is a left Quillen functor from the right-induced model structure to the left-induced one:
\[ \xymatrix@C=4pc@R=4pc{  \sP_{\mathrm{right}} \ar@<1ex>[r]^-\id \ar@{}[r]|-\perp & \sP_{\mathrm{left}}. \ar@<1ex>[l]^-\id}\]
\end{thm}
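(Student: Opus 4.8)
The plan is to follow Hess et al.\ \cite{HKRS} and reduce the whole statement to the acyclicity criterion of \fullref{cor:cofib-gen}. Since $\sK$, $\sM$, $\sN$, and $\sP$ are locally presentable and cofibrantly generated, \fullref{prop:acyclicity-reduction} and \fullref{cor:cofib-gen} guarantee that all four candidate weak factorization systems on $\sP$ exist — the two right-induced along $U\colon\sP\to\sM$ and the two left-induced along $V\colon\sP\to\sN$ — so I only need to verify the two acyclicity inclusions
\[ \LLP\bigl(U^{-1}\Fib_\sM\bigr)\ \subseteq\ U^{-1}\WE_\sM
\qquad\text{and}\qquad
\RLP\bigl(V^{-1}\Cof_\sN\bigr)\ \subseteq\ V^{-1}\WE_\sN, \]
and then read off the comparison of the two resulting structures.

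First I would observe that the two candidate classes of weak equivalences on $\sP$ coincide. Because the structure on $\sM$ is left-induced along $V\colon\sM\to\sK$ we have $\WE_\sM=V^{-1}\WE_\sK$, and because the structure on $\sN$ is right-induced along $U\colon\sN\to\sK$ we have $\WE_\sN=U^{-1}\WE_\sK$; hence $U^{-1}\WE_\sM=(VU)^{-1}\WE_\sK$ and $V^{-1}\WE_\sN=(UV)^{-1}\WE_\sK$, and these agree since $UV\cong VU$. Write $\WE_\sP$ for this common class.

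The core of the argument is to compare the two candidate classes of fibrations, and dually of cofibrations, on $\sP$ \emph{before} either model structure is known to exist, working only with the weak factorization systems. For the fibrations: unwinding the definition of the left-induced structure on $\sM$ and using the adjunction $L\dashv U\colon\sM\rightleftarrows\sP$, a map $q$ of $\sP$ lies in $U^{-1}\Fib_\sM$ exactly when it has the right lifting property against every $Lx$ with $x$ a map of $\sM$ such that $Vx\in\Cof_\sK\cap\WE_\sK$. Since $L\colon\sK\to\sN$ is left Quillen for the right-induced structure on $\sN$, such a map $L(Vx)$ is a trivial cofibration of $\sN$; and since $VL\cong LV$ (one of the square identities in the hypothesis), so is $V(Lx)$, i.e.\ $Lx$ lies in the candidate class $V^{-1}(\Cof_\sN\cap\WE_\sN)$ of trivial cofibrations of the left-induced structure on $\sP$. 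Taking right lifting properties gives $\RLP\bigl(V^{-1}(\Cof_\sN\cap\WE_\sN)\bigr)\subseteq U^{-1}\Fib_\sM$, and then taking left lifting properties and invoking the weak factorization system $\bigl(V^{-1}(\Cof_\sN\cap\WE_\sN),\ \RLP(\cdots)\bigr)$ yields
\[ \LLP\bigl(U^{-1}\Fib_\sM\bigr)\ \subseteq\ V^{-1}(\Cof_\sN\cap\WE_\sN)\ \subseteq\ \WE_\sP, \]
which is the first acyclicity condition. The symmetric argument — now using $V\dashv R\colon\sP\rightleftarrows\sN$, the identity $UR\cong RU$, and the fact that $R\colon\sK\to\sM$ is right Quillen for the left-induced structure on $\sM$ — shows that every candidate cofibration of the right-induced structure on $\sP$ lies in $V^{-1}\Cof_\sN$, whence $\RLP\bigl(V^{-1}\Cof_\sN\bigr)\subseteq U^{-1}(\Fib_\sM\cap\WE_\sM)\subseteq\WE_\sP$, the second acyclicity condition. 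By \fullref{cor:cofib-gen} both $\sP_{\mathrm{right}}$ and $\sP_{\mathrm{left}}$ then exist, and the two inclusions just established, together with the fact that $\sP_{\mathrm{right}}$ and $\sP_{\mathrm{left}}$ have the same weak equivalences, say precisely that $\id$ carries cofibrations and trivial cofibrations of $\sP_{\mathrm{right}}$ to those of $\sP_{\mathrm{left}}$, i.e.\ is left Quillen.

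The step I expect to be the main obstacle is this circularity-avoidance: the comparison of fibrations and cofibrations has to be performed entirely in terms of the a priori existing weak factorization systems (legitimate thanks to \fullref{prop:acyclicity-reduction} and \fullref{cor:cofib-gen}), since neither model structure on $\sP$ is available until the corresponding acyclicity condition has been checked. The remaining difficulty is pure bookkeeping: one must track carefully which of the four functors named $L$, $R$, $U$, $V$ is meant at each stage, and which face of the square supplies the natural isomorphism being applied.
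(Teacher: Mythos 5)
The paper does not give a proof of this theorem; it is cited verbatim from \cite[Theorem 2.3.2]{HKRS}. Your argument is correct and follows essentially the same route as the proof in that reference: reduce via \fullref{cor:cofib-gen} to the two acyclicity inclusions on $\sP$, characterize $U^{-1}\Fib_\sM$ (resp.\ $V^{-1}\Cof_\sN$) as the maps with the right (resp.\ left) lifting property against the transposed generators $\{Lx : Vx \in \Cof_\sK\cap\WE_\sK\}$ (resp.\ $\{Ry : Uy \in \Fib_\sK\cap\WE_\sK\}$), transport these generators across the square using that $L\colon\sK\to\sN$ and $R\colon\sK\to\sM$ are left/right Quillen together with $VL\cong LV$ and $UR\cong RU$, and then use the a priori existing weak factorization systems on $\sP$ (guaranteed by local presentability) to pass to lifting classes; the two inclusions so obtained simultaneously verify acyclicity and exhibit $\id\colon\sP_{\mathrm{right}}\to\sP_{\mathrm{left}}$ as left Quillen.
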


\begin{rem} All the results above can be generalized from cofibrantly generated model categories to \emph{accessible} model categories in the sense of \cite{HKRS}. For the definition and general properties of an accessible model category, see \cite{HKRS}.
\end{rem}

\newpage
\section{Glossary of model structures}\label{glossary}
\begin{center}
\includegraphics[height=0.95\textheight]{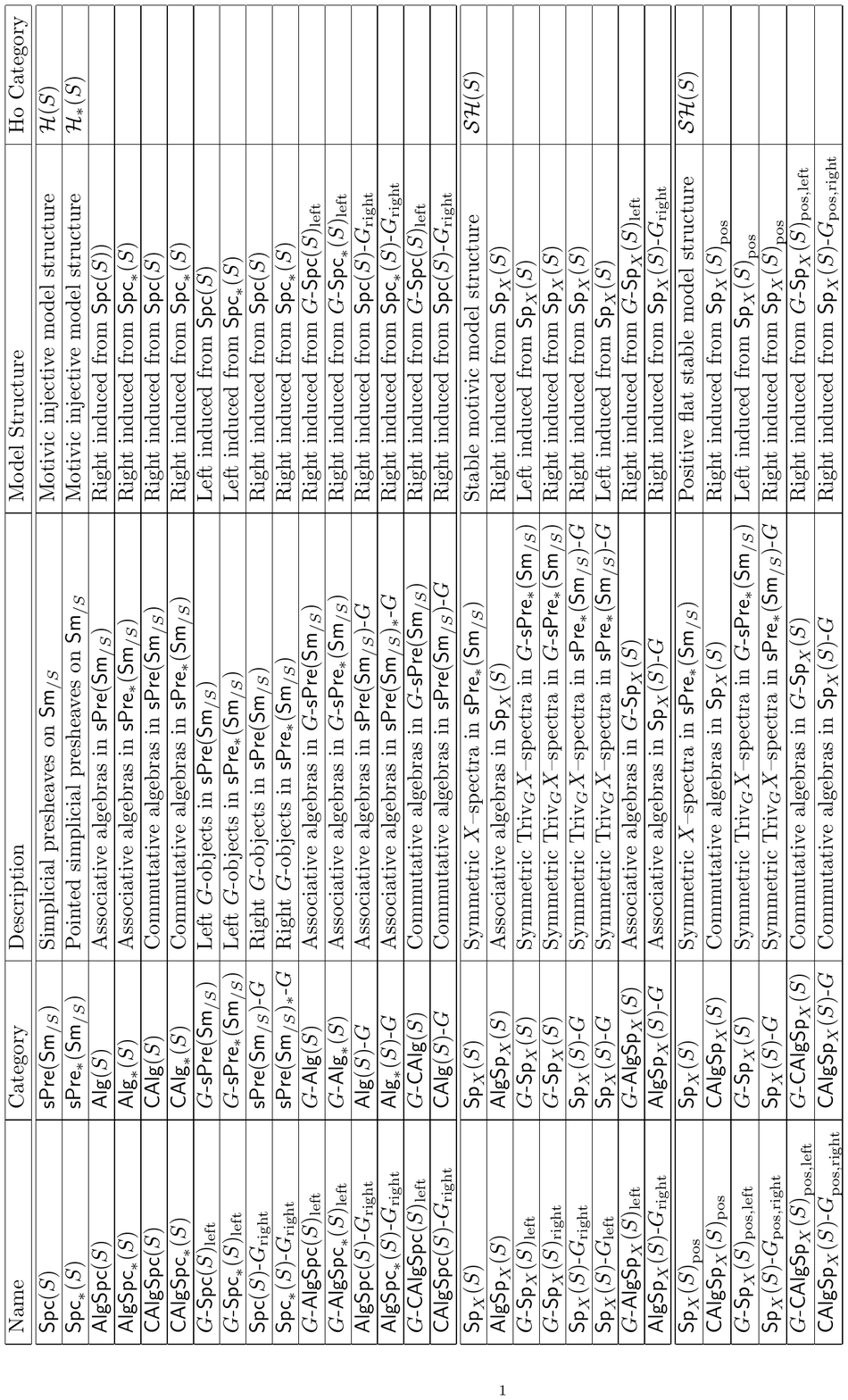}
\end{center}

\bibliographystyle{plain}

\end{document}